\documentclass[12pt,reqno]{amsart}
\usepackage{amsmath,amsthm,amssymb,enumitem,verbatim,stmaryrd,xcolor,microtype,graphicx,aliascnt}
\usepackage{url}
\usepackage{mathtools,bbm}
\usepackage{booktabs}

\usepackage[bookmarksdepth=2,linktoc=page,colorlinks,linkcolor={red!70!black},citecolor={red!80!black},urlcolor={blue!80!black},pdfpagemode=UseOutlines,pdfstartview={fitH}]{hyperref}

\usepackage{amsfonts}

\newlist{primenumerate}{enumerate}{1}
\setlist[primenumerate,1]{label={(\arabic*$'$)}}

\usepackage{etoolbox}
\makeatletter
\patchcmd{\@startsection}{\@afterindenttrue}{\@afterindentfalse}{}{}             
\patchcmd{\part}{\bfseries}{\bfseries\LARGE}{}{}
\patchcmd{\section}{\scshape}{\bfseries}{}{}\renewcommand{\@secnumfont}{\bfseries} 
\patchcmd{\@settitle}{\uppercasenonmath\@title}{\large}{}{}
\patchcmd{\@setauthors}{\MakeUppercase}{}{}{}
\makeatother

\usepackage{fancyhdr}

\usepackage[T1]{fontenc}
\usepackage[english]{babel} 
\usepackage[top=3.5cm,bottom=3.55cm,left=3.5cm,right=3.5cm]{geometry}

\usepackage[mathcal]{euscript}                               
\usepackage{cleveref}

 \usepackage{mathptmx}                                        
\usepackage[colorinlistoftodos,bordercolor=orange,backgroundcolor=orange!20,linecolor=orange,textsize=footnotesize,textwidth=23mm]{todonotes} \makeatletter \providecommand \@dotsep{5} \def\listtodoname{List of Todos} \def\listoftodos{\@starttoc{tdo}\listtodoname} \makeatother 

\allowdisplaybreaks 
\usepackage{tikz}
\usepackage{tikz-cd}
\usetikzlibrary{calc,arrows.meta,decorations.markings}

\tikzset{
  every picture/.style={>=Stealth, line cap=round, line join=round},
  point/.style={circle, fill=black, inner sep=1.2pt},
  chord/.style={very thick},
  chordSOne/.style={chord, red!70},
  chordSTwo/.style={chord, blue!70},
  chordSThree/.style={chord, teal!70!black},
  diameter/.style={chord, gray!70, dashed},
  circ/.style={gray!25, fill=gray!3},
  aux/.style={gray!60, dashed}
}

\newtheorem{theorem}{Theorem}[section]
\newtheorem{lemma}[theorem]{Lemma}
\newtheorem{proposition}[theorem]{Proposition}
\newtheorem{corollary}[theorem]{Corollary}
\theoremstyle{definition}
\newtheorem{definition}[theorem]{Definition}
\newtheorem{remark}[theorem]{Remark}
\newtheorem{example}[theorem]{Example}
\newtheorem{question}[theorem]{Question}
\newtheorem{conjecture}[theorem]{Conjecture}

\newcommand{\1}{\mathbf{1}}
\newcommand{\diag}{\operatorname{diag}}

\newcommand{\tr}{\operatorname{tr}}

\DeclareMathOperator{\upN}{N}   
\DeclareMathOperator{\upL}{L}   
\DeclareMathOperator{\upR}{R}   
\DeclareMathOperator{\Gr}{Gr}

\DeclareMathOperator{\Span}{Span}

\DeclareMathOperator{\supp}{supp}

\newcommand\C{{\mathbb C}}

\newcommand\N{{\mathbb N}}

\renewcommand\P{{\mathbb P}}

\newcommand\R{{\mathbb R}}

\newcommand\T{{\mathbb T}}

\renewcommand\max{\textup{max}}
\renewcommand{\min}{\textup{min}}
\renewcommand\geq{\geqslant}
\renewcommand\leq{\leqslant}

\renewcommand{\setminus}{\backslash}

\renewcommand\emptyset\varnothing

\newcommand{\Hd}{\mathcal{H}^{d}}
\newcommand{\Htwo}{\mathcal{H}^{2}}

\title{Lorentzian polynomials and matroids over triangular hyperfields\\[10pt] \normalsize Part 2: Analytic aspects}

\author{Matthew Baker}
\address{\rm Matthew Baker, Georgia Institute of Technology}
\email{mbaker@math.gatech.edu}

\author{June Huh}
\address{\rm June Huh, Princeton University and Korea Institute for Advanced Study}
\email{huh@princeton.edu}

\author{Mario Kummer}
\address{\rm Mario Kummer, Technische Universit\"at Dresden}
\email{mario.kummer@tu-dresden.de}

\author{Oliver Lorscheid}
\address{\rm Oliver Lorscheid, University of Groningen}
\email{o.lorscheid@rug.nl}

\hypersetup{pdftitle={Lorentzian polynomials and matroids over triangular hyperfields. Part 2},pdfauthor={Matthew Baker, June Huh, Mario Kummer and Oliver Lorscheid}}

\begin{document}

\begin{abstract}
Br\"and\'en and Huh showed in \cite{Branden-Huh20} that Lorentzian polynomials provide a unifying framework
for Hodge--Riemann relations in combinatorics. In particular, they proved that the support of every Lorentzian polynomial is an M-convex set, and conversely that every M-convex set supports a Lorentzian polynomial. 

In subsequent work, Baker--Huh--Kummer--Lorscheid showed in \cite{BHKL1} that for every $q>0$, the projectivized space $\P\upL_J$ of Lorentzian polynomials with support $J$ is homeomorphic to the thin Schubert cell $\Gr^{\rm w}_J(\T_q)$ of weak representations of $J$ over the generalized triangular hyperfield $\T_q$. Their proof relies crucially on the foundational containment
\[
\Gr^{\rm w}_J(\T_0)\subseteq \P\upL_J.
\]

In this paper, we study the quantitative relationship between Lorentzian polynomials and representations over triangular hyperfields. We prove that for every matroid $M$, there exists a constant $q>0$ depending on $M$ such that
\[
\Gr^{\rm w}_M(\T_q) \subseteq \P\upL_M \subseteq \Gr^{\rm w}_M(\T_2).
\]
Thus the (projectivized) space of Lorentzian polynomials with support $M$ is sandwiched between two thin Schubert cells, both of which are homeomorphic to $\P\upL_M$ itself.

More generally, for every M-convex set $J$, we prove a normalized sandwich theorem of the form
\[
\upN\Gr^{\rm w}_J(\T_q)\subseteq \P\upL_J \subseteq \upN\Gr^{\rm w}_J(\T_2)
\]
for some $q>0$ depending on $J$, where $\upN$ denotes the normalization operator.

We also study the extremal invariant
\[
q(M):=\sup\{q>0 \mid \Gr^{\rm w}_M(\T_q)\subseteq \P\upL_M\}.
\]

For $q(n):=q(U_{2,n})$, we prove that $q(4)=2$ and $q(5)=\log_2 3$, and we give matching upper and lower
bounds of order $1/n$, showing that $q(n)=\Theta(1/n)$. In particular, no universal positive
lower bound for $q(n)$ exists.

Finally, in an appendix, we discuss the tree-metric input underlying the foundational containment $\Gr^{\rm w}_J(\T_0)\subseteq \P\upL_J$.
\end{abstract}

\maketitle

\tableofcontents

\section{Introduction}

Lorentzian polynomials, introduced by Br\"and\'en and Huh in \cite{Branden-Huh20}, form a remarkable class of homogeneous polynomials with nonnegative coefficients satisfying strong Hodge--Riemann type inequalities. One of the basic structural results of \cite{Branden-Huh20} is that the support of every Lorentzian polynomial is an M-convex set, and conversely that every M-convex set supports a Lorentzian polynomial.

In \cite{BHKL1}, the authors established a close topological relationship between Lorentzian polynomials and representations over generalized triangular hyperfields. More precisely, if $J\subseteq \Delta_n^d$ is an M-convex set and $q>0$, then the projectivized space $\P\upL_J$ of Lorentzian polynomials with support $J$ is homeomorphic to the weak thin Schubert cell $\Gr^{\rm w}_J(\T_q)$. 

We will use the following normalization operator throughout the paper.  If
$\rho=(\rho_\alpha)_{\alpha\in J}$ is a projective coefficient vector supported
on $J\subseteq \Delta_n^d$, set
\[
\alpha!:=\alpha_1!\cdots \alpha_n!
\]
and define
\[
\upN(\rho)_\alpha:=\frac{\rho_\alpha}{\alpha!}.
\]
Equivalently, $\upN$ sends the homogeneous polynomial
$\sum_{\alpha\in J}\rho_\alpha x^\alpha$
to $\sum_{\alpha\in J}\rho_\alpha \frac{x^\alpha}{\alpha!}$.

The starting point for the present paper is the following pair of containments.
First, Br\"and\'en and Huh proved in \cite{Branden-Huh20} that
\[
\Gr^{\rm w}_J(\T_0)\subseteq \P\upL_J
\]
for every M-convex set $J$.  Second, a direct calculation shows that
\[
\P\upL_J\subseteq \upN\Gr^{\rm w}_J(\T_2)
\]
for every $J$; in the multi-affine case, $\alpha!=1$ for every
$\alpha\in J$, so $\upN$ acts trivially after projectivization and this
simplifies to
\[
\P\upL_M\subseteq \Gr^{\rm w}_M(\T_2)
\]
for every matroid $M$.

Since the weak representation spaces $\Gr_J^{\rm w}(\T_q)$ are monotone in $q$,
it is natural to ask whether a given Lorentzian stratum $\P\upL_J$ always sits
between two triangular-representation spaces with $q>0$.  The main result of this
paper shows that this is indeed the case after applying the natural normalization
operator.

\begin{theorem}[Normalized sandwich theorem]
\label{thm:intro-normalized-sandwich}
For every M-convex set $J\subseteq \Delta_n^d$, there exists $q>0$ (depending on $J$) such that
\[
\upN\Gr_J^{\rm w}(\T_q)
\subseteq
\P\upL_J
\subseteq
\upN\Gr_J^{\rm w}(\T_2).
\]
\end{theorem}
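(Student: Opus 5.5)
Throughout I use two facts: the Br\"and\'en--Huh reduction (a polynomial with M-convex support $J$ is Lorentzian iff every quadratic derivative $\partial^\beta f$, $\beta\in\Delta_n^{d-2}$, has at most one positive eigenvalue), and the standard description of weak $\T_q$-representations by the three-term Pl\"ucker relations. Concretely, for each $\beta\in\Delta_n^{d-2}$ and each quadruple $i,j,k,l$, the three products
\[
\rho_{\beta+e_i+e_j}\rho_{\beta+e_k+e_l},\qquad \rho_{\beta+e_i+e_k}\rho_{\beta+e_j+e_l},\qquad \rho_{\beta+e_i+e_l}\rho_{\beta+e_j+e_k}
\]
must satisfy the $\T_q$ triangle condition: after raising to the power $1/q$, each is at most the sum of the other two. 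Both containments are statements about the finitely many quadratic forms $\partial^\beta f$. The normalization $\upN$ is exactly what appears when one differentiates: the coefficient of $x_ix_j$ in $\partial^\beta f$ is $\rho_{\beta+e_i+e_j}$ times a factorial factor. So after writing $f=\sum \upN(\rho)_\alpha x^\alpha$, i.e.\ replacing $\rho$ by its normalization, the Hessian of $\partial^\beta f$ has entries $\rho_{\beta+e_i+e_j}$ (up to a global scalar).

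For the right containment I would do the direct calculation. Restrict a Lorentzian quadratic form $\partial^\beta f$ to any four coordinates $i,j,k,l$ (and to three coordinates with repeated indices, where the Hessian entries are the diagonal terms $\rho_{\beta+2e_i}$). A symmetric nonnegative matrix with at most one positive eigenvalue has, for every $2\times2$ principal minor, $a_{ij}^2\ge a_{ii}a_{jj}$. For the off-diagonal $4\times4$ pattern, the condition of at most one positive eigenvalue is equivalent to the reverse-triangle condition on $\sqrt{a_{ij}a_{kl}},\sqrt{a_{ik}a_{jl}},\sqrt{a_{il}a_{jk}}$. This is the classical fact that $\sqrt{\text{products}}$ of a Lorentzian-signature matrix satisfy the Ptolemy-type inequality, obtained by evaluating the form on suitable vectors in a $2$-dimensional negative definite subspace. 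Squaring, this is precisely the $\T_2$ condition on the three products above, which gives $\P\upL_J\subseteq\upN\Gr^{\rm w}_J(\T_2)$.

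For the left containment I would pass to logarithmic coordinates $x_\alpha=\log\rho_\alpha$. Here the $\T_q$ condition says that in each three-term relation the maximum of the three log-products exceeds the second largest by at most $q\log 2$. Thus $\log\Gr^{\rm w}_J(\T_q)$ lies in the set $P_\varepsilon$ of vectors satisfying the tropical (max-attained-twice) conditions up to additive error $\varepsilon=q\log2$, and $P_0=\log\Gr^{\rm w}_J(\T_0)$ is a finite union of rational polyhedral cones. The plan has two steps.
\begin{enumerate}
\item[(a)] A Hoffman-type error bound, using that there are finitely many linear inequalities depending only on $J$: every $x\in P_\varepsilon$ lies within $\ell^\infty$-distance $C_J\varepsilon$ of some $x'\in P_0$.
\item[(b)] A uniform robustness statement: there is $\delta_J>0$ such that if $x'\in P_0$ and $\|x-x'\|_\infty\le\delta_J$, then every $\partial^\beta f$ built from $x$ (with normalization) still has at most one positive eigenvalue.
\end{enumerate}
Choosing $q$ with $C_J q\log 2\le\delta_J$ then finishes the proof.

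Step (b) is the main obstacle. The Br\"and\'en--Huh proof of $\Gr^{\rm w}_J(\T_0)\subseteq\P\upL_J$ is qualitative, and points of $P_0$ can be on the boundary of the Lorentzian region, so robustness cannot come from openness alone. My first attempt is to re-run the tree-metric argument from the appendix quantitatively. A quadratic form in $P_0$ corresponds to a tree (ultra)metric: on a block, the entries $a_{ij}$ are constant along the clusters of a rooted tree. Such matrices decompose as nonnegative combinations of block matrices $\1_S\1_S^{T}$ along a nested family of subsets $S$. I would show that multiplying each entry by a factor in $[e^{-\delta},e^{\delta}]$ keeps the form in the set of forms with at most one positive eigenvalue. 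To do this I would use that nested block sums have a positive Perron eigenvalue separated from the rest by an amount controlled by ratios of consecutive cluster levels. Where that spectral gap degenerates, the matrix is close to a rank-one matrix $vv^T$, and the cone of forms with one positive eigenvalue near $vv^T$ is controlled by the $2\times2$ and $4\times4$ minor conditions already used above. Since $J$ is fixed there are finitely many tree types, so compactness of the space of tree shapes modulo scaling yields a uniform $\delta_J$. If this case analysis fails near degenerate trees, the fallback is induction on $n$ via the known result that contraction and deletion preserve both sides, reducing to configurations where all clusters are nondegenerate.
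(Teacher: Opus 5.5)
There is a genuine gap in the lower containment. Step (b) is false. In fact, the inclusion $P_\varepsilon\subseteq\log\upL_J$ that (a) and (b) together would give fails for every $\varepsilon>0$, already for $J=U_{2,4}$, where normalization plays no role. Take $\rho_{12}=\rho_{13}=\rho_{23}=\rho_{24}=\rho_{34}=1$ and $\rho_{14}=\eta$. The three Pl\"ucker products are $1,1,\eta$, so this is a $\T_0$ point. Now multiply $\rho_{12}$ by $e^{\delta}$. The new point lies within $\ell^\infty$-log distance $\delta$ of $P_0$ and lies in $P_\delta$. But by Proposition~\ref{prop:U24} it is Lorentzian if and only if $e^{\delta/2}\le 1+\sqrt{\eta}$, which fails once $\eta$ is small. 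So no uniform $\delta_J$ exists: $\T_0$ points whose tree has a long internal edge (here of length $\tfrac12\log(1/\eta)$) come arbitrarily close, in log coordinates, to the boundary of the Lorentzian region. There is also no compactness ``modulo scaling'', because that region is not a cone in log coordinates. Your fallback induction cannot help either, since the failure already occurs for $n=4$ in rank $2$. The underlying problem is that $P_\varepsilon$ records only that the largest product is within a factor $2^q$ of the second one, and forgets the third. The actual $\T_q$ condition $T_{\max}^{1/q}\le T_{\mathrm{mid}}^{1/q}+T_{\min}^{1/q}$ forces the top two products to be nearly equal when the third is small, and that is exactly the information Lorentzianity needs.

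So you need an argument that uses the full triangle inequalities. For $U_{2,n}$, the paper normalizes $A_{1i}=1$ by diagonal congruence. The quartets through $1$ then say that $d=B^{1/q}$ is a metric on the other $n-1$ points. The quantitative snowflake theorem (Theorem~\ref{thm:snowflake-embedding}) embeds $(X,d^{q/2})$ affinely independently in Euclidean space for $q<\log_2(1+\frac1{n-2})$. Hence $B$ is a squared Euclidean distance matrix, and the bordered matrix has inertia $(1,n-1)$ by Lemma~\ref{lem:squared-edm-block}.

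A second issue is your reading of $\upN$. The polynomial $\sum_\alpha\rho_\alpha x^\alpha/\alpha!$ you use is the unnormalized $f_\rho$, for which the lower inclusion is false. For $J=\Delta_2^2$ and $a_0a_2=2^{q/2}a_1^2$, the coefficients are weak $\T_q$ but $\det\begin{pmatrix}a_0&a_1\\ a_1&a_2\end{pmatrix}>0$. The paper's polynomial is $\upN f_\rho=\sum_\alpha\rho(\alpha)x^\alpha/(\alpha!)^2$. After positive diagonal congruence, its quadratic derivatives have off-diagonal entries $\rho(\gamma+e_i+e_j)$ and diagonal entries $\frac{\gamma_i+1}{\gamma_i+2}\rho(\gamma+2e_i)$. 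The weighted-copy lift realizes each factor $\frac{\gamma_i+1}{\gamma_i+2}<1$ as $1-\sum_u\mu_u^2$. This turns the Hessian into a compression of an admissible $U_{2,\widetilde E}$ matrix satisfying the $\T_q$-inequalities, to which the rank-$2$ uniform theorem applies.

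On the upper side, the same normalization introduces factors $\lambda_i=\frac{\gamma_i+2}{\gamma_i+1}\le 2$. These must be absorbed using the slack in $H_{ii}H_{k\ell}\le 2H_{ik}H_{i\ell}$ and in $H_{ii}H_{kk}\le H_{ik}^2$. The first comes from the $3\times3$ determinant after deleting diagonal entries, a case you do not actually treat. As written, your upper argument proves only the weaker containment $\P\upL_J\subseteq\Gr^{\rm w}_J(\T_2)$.
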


When $J$ is the set of bases of a matroid $M$, all monomials are squarefree, so
$\upN$ is the identity operator and 
Theorem~\ref{thm:intro-normalized-sandwich} specializes to the following matroid
sandwich theorem: for every matroid $M$, there exists $q>0$ (depending on $M$) such that
\[
\Gr_M^{\rm w}(\T_q)
\subseteq
\P\upL_M
\subseteq
\Gr_M^{\rm w}(\T_2).
\]

The necessity of the normalization operator already appears in the bivariate case.
Indeed, for $J=\Delta_2^d$ with $d\ge 2$, the inclusion
\[
\Gr_J^{\rm w}(\T_q)\subseteq \P\upL_J
\]
fails for every $q>0$. 
Thus the normalized statement is the natural general form of the sandwich theorem beyond the multi-affine setting.

\medskip

A central theme of the paper is that the entire problem reduces to the case where $J$ is the rank-$2$ uniform matroid $U_{2,n}$. In that case, weak $\T_q$-representations correspond to symmetric zero-diagonal matrices with positive off-diagonal entries satisfying certain 
``quartet inequalities'', and Lorentzianity becomes a statement about the signature of such matrices. Our key analytic input is the following theorem.

\begin{theorem}[Rank-$2$ uniform case]
\label{thm:intro-U2n}
Let $n\ge 4$, and define
\[
\varepsilon(n):=\log_2\!\left(1+\frac{1}{n-2}\right).
\]
If $A$ is a symmetric zero-diagonal matrix with positive off-diagonal entries satisfying the $\T_q$-inequalities for some $0<q\le \varepsilon(n)$, then $A$ is Lorentzian. 
\end{theorem}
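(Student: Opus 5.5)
The plan is to reduce Lorentzianity of $A$ to a negative-type condition on an auxiliary $(n-1)\times(n-1)$ matrix. The quartet inequalities then become an approximate ultrametric condition on that matrix, and the last step is a perturbation of the known fact that ultrametrics are of negative type.

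\textbf{Step 1: reduction to negative type.} Since $A$ has zero diagonal and positive off-diagonal entries, the quadratic form $f(x)=\tfrac12 x^TAx=\sum_{i<j}a_{ij}x_ix_j$ is Lorentzian iff $A$ has exactly one positive eigenvalue. Single out the index $n$ and write
\[
f = x_n L(x') + g(x'), \qquad L(x')=\sum_{i<n} a_{in}x_i,\qquad x'=(x_1,\dots,x_{n-1}).
\]
The term $x_nL$ is a hyperbolic plane and contributes signature $(1,1)$. So $A$ has exactly one positive eigenvalue iff $g$ is $\le 0$ on the hyperplane $\{L=0\}$. Substitute $x_i=z_i/a_{in}$ and set
\[
c_{ij}:=\frac{a_{ij}}{a_{in}a_{jn}} \qquad (i\neq j<n).
\]
The condition becomes $\sum_{i,j<n} c_{ij}z_iz_j\le 0$ whenever $\sum_i z_i=0$. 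In other words, the zero-diagonal matrix $C=(c_{ij})$ must be of negative type in Schoenberg's sense.

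\textbf{Step 2: translate the $\T_q$-inequalities.} Take a quartet containing $n$, say $\{i,j,k,n\}$, and divide its three Plücker products by $a_{in}a_{jn}a_{kn}$. The products become $c_{ij},c_{ik},c_{jk}$. The $\T_q$ hyperfield relation says each $p^{1/q}$ is at most the sum of the other two. This forces the largest of the three to be at most $2^{q}$ times the second largest. With $q\le\varepsilon(n)$ we get, on every triangle of $\{1,\dots,n-1\}$,
\[
\max(c_{ij},c_{ik},c_{jk})\le \Bigl(1+\tfrac{1}{n-2}\Bigr)\,\mathrm{mid}(c_{ij},c_{ik},c_{jk}).
\]
So $C$ is a $(1+\frac1{m-1})$-approximate ultrametric on $m=n-1$ points. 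Quartets not containing $n$ should not be needed.

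\textbf{Step 3 (main obstacle): approximate ultrametrics on $m$ points are of negative type.} I would proceed by induction on a hierarchical clustering. Order the distinct values, or use single-linkage at the largest scale: the approximate ultrametric inequality makes the relation $c_{ij}<t/(1+\frac1{m-1})$ behave like an equivalence relation up to a controlled fuzzy zone. This splits the points into blocks such that cross-block entries are comparable to the top scale $t$, within factor $1+\frac1{m-1}$.

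Write $C=t(J-I)-E$, where $J$ is the all-ones matrix, the within-block entries are absorbed into $E$, and the cross-block deviations are small. On $\{\sum z_i=0\}$ the term $t(J-I)$ contributes $-t\|z\|^2$. The within-block parts are handled by induction; they are of negative type on their own sum-zero subspaces, and the block-sum components are controlled separately. The cross-block errors are bounded in operator norm by roughly $(m-1)\cdot\frac{t}{m-1}$, which is exactly absorbed by $-t\|z\|^2$.

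The delicate point is that the errors should not compound across levels of the hierarchy. To avoid this I would compare directly with the subdominant (minimax-path) ultrametric $u\le C$. The plan is to show $c_{ij}\le(1+\frac1{m-1})u_{ij}$ using the triangle condition along the minimax path, and then use the explicit decomposition of $u$ into a positive combination of block indicator forms.

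The threshold $\frac1{n-2}$ should then appear as the point where the Gershgorin-type bound $(m-1)\delta$ on the error matches the spectral gap $1$ of $J-I$ on the sum-zero subspace. Once Step 3 is established, Steps 1 and 2 give Theorem~\ref{thm:intro-U2n}.
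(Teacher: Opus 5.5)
\textbf{There is a genuine gap: Step~3 is false as stated.} Your Step~1 is correct. It is the paper's diagonal normalization together with the Cayley--Menger block criterion (Lemma~\ref{lem:squared-edm-block}), normalized at $n$ instead of at $1$. The problem is Step~2, which discards exactly the information that makes the theorem true.

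The quartet $\{i,j,k,n\}$ says that $d:=C^{1/q}$ satisfies the triangle inequality on $\{i,j,k\}$. You keep only the consequence $c_{\max}\le 2^q c_{\mathrm{mid}}$, and this puts no constraint at all on the smallest entry of a triangle. A counterexample already exists for $n=4$, where $m=3$ and $K=1+\frac1{m-1}=\frac32$:
\[
C=\begin{pmatrix}0&3/2&1\\ 3/2&0&\epsilon\\ 1&\epsilon&0\end{pmatrix},
\qquad 0<\epsilon<\tfrac1{20}.
\]
This $C$ is a $\frac32$-approximate ultrametric in your sense. Yet $z=(1,4,-5)$ has $\sum_i z_i=0$ and $z^\top Cz=2(6-5-20\epsilon)>0$. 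So $C$ is not of negative type, and $\begin{pmatrix}0&\1^\top\\ \1&C\end{pmatrix}$ has two positive eigenvalues.

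The same failure occurs for every $K>1$. A $3$-point $C$ has negative type iff $\sqrt{C}$ satisfies the triangle inequality, and this fails for entries $K,1,\epsilon$ once $\sqrt K>1+\sqrt\epsilon$.

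Your fallback via the subdominant ultrametric does not rescue the argument. For this $C$ the subdominant ultrametric is $(u_{12},u_{13},u_{23})=(1,1,\epsilon)$, so $u\le C\le\frac32 u$ holds entrywise and negative type still fails. Moreover, the relaxed inequality only gives $c_{ij}\le K^{k-1}u_{ij}$ along a minimax path with $k$ edges, not $c_{ij}\le K u_{ij}$. No block or Gershgorin bookkeeping can prove a false statement.

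\textbf{What you need to keep.} The essential fact is that $d=C^{1/q}$ is a genuine metric on $m=n-1$ points. The triangle inequality forces $d_{\min}\ge d_{\max}-d_{\mathrm{mid}}$, which is precisely what rules out the example above.

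The paper then applies a quantitative snowflake theorem, Theorem~\ref{thm:snowflake-embedding} (Theorem~3.6 of \cite{FaverEtAl}). For any $m$-point metric space and $0\le p<\log_2\bigl(1+\frac1{m-1}\bigr)$, the space $(X,d^{p/2})$ embeds isometrically in Euclidean space as an affinely independent set. Equivalently, $d^{p}$ is conditionally strictly negative definite. Taking $p=q<\varepsilon(n)$ gives negative type of $C=d^q$, and hence Lorentzianity.

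The endpoint $q=\varepsilon(n)$ follows by applying this to the entrywise powers $(A_{ij}^{s})$ with $s<1$, which satisfy the $\T_{sq}$-inequalities. One then lets $s\to1$, using that the Lorentzian condition is closed.

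A self-contained replacement for your Step~3 would have to be a statement about snowflakes of genuine metrics, not about relaxed ultrametrics.
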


The proof of Theorem~\ref{thm:intro-U2n} is metric in nature. After diagonal normalization, the $\T_q$-inequalities imply that the $1/q$-powers of the matrix entries define a metric on a finite set. A quantitative finite-metric embedding theorem, in the spirit of Schoenberg theory, then implies that an appropriate power of this metric is Euclidean. This produces a squared Euclidean distance matrix whose associated Cayley--Menger type matrix has Lorentzian signature. 

Once the case of $U_{2,n}$ has been established, the general matroid case follows by
reducing to rank $2$ via contractions and then simplifying rank-$2$ matroids to uniform ones.
The case of arbitrary $M$-convex sets is then reduced to the matroid case via a weighted generalization of the ``natural matroid'' construction.

\medskip

The sandwich theorem naturally leads to a new quantitative invariant.

\begin{definition}
For a matroid $M$, define
\[
q(M):=\sup\{q>0 \mid \Gr^{\rm w}_M(\T_q)\subseteq \P\upL_M\}.
\]
For the rank-$2$ uniform matroid $U_{2,n}$, we write
\[
q(n):=q(U_{2,n}).
\]
\end{definition}

This invariant is nontrivial already in small rank. We prove that
\[
q(4)=2
\qquad\text{and}\qquad
q(5)=\log_2 3.
\]
We also prove explicit upper bounds on $q(n)$, showing in particular that $q(n)\to 0$ as $n\to\infty$. In particular, there is no universal choice of $q>0$ valid for all matroids.

More precisely, we prove the following.

\begin{proposition}
\label{prop:intro-upper-bound}
For every $n\ge 4$,
\[
q(n)\le
\begin{cases}
2\log_2\!\left(\frac{n}{n-2}\right), & \text{if } n \text{ is even},\\[1ex]
\log_2\!\left(\frac{n+1}{n-3}\right), & \text{if } n \text{ is odd}.
\end{cases}
\]
In particular, $q(n)=O(1/n)$ as $n\to\infty$.
\end{proposition}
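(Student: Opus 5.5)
We would prove the bound by exhibiting, for each $q$ just above the stated threshold, a symmetric zero-diagonal matrix $A$ with positive off-diagonal entries that satisfies the $\T_q$-inequalities but is not Lorentzian. Since $q(n)$ is a supremum, this gives $q(n)$ at most the threshold. We use the rank-$2$ dictionary recalled before Theorem~\ref{thm:intro-U2n}. A point of $\Gr^{\rm w}_{U_{2,n}}(\T_q)$ is a matrix $A=(a_{ij})$ of this kind such that, for every quartet $i,j,k,l$, the three products $P_1=a_{ij}a_{kl}$, $P_2=a_{ik}a_{jl}$, $P_3=a_{il}a_{jk}$ satisfy
\[
P_a^{1/q}\le P_b^{1/q}+P_c^{1/q}\quad\text{for all } \{a,b,c\}=\{1,2,3\}.
\]
The quadratic form $f=\sum_{i<j}a_{ij}x_ix_j$ is Lorentzian if and only if its Hessian $A$ has at most one positive eigenvalue. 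It therefore suffices to produce such an $A$ with two positive eigenvalues.

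\emph{Construction.} Split $[n]=S\sqcup T$ with $|S|=m$ and $|T|=m'$, and fix a parameter $t>1$. Set $a_{ij}=t$ if $i\ne j$ lie in the same block and $a_{ij}=1$ if they lie in different blocks.

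\emph{Quartet inequalities.} Up to symmetry there are three kinds of quartets.
\begin{itemize}
\item All four indices in one block: the three products all equal $t^2$.
\item Three indices in one block and one in the other: the three products all equal $t$.
\item Two indices in each block: the products are $t^2,1,1$.
\end{itemize}
In the first two cases the inequalities hold trivially. In the last case the only nontrivial condition is $t^{2/q}\le 2$, that is, $t^2\le 2^q$.

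\emph{Spectrum.} Consider the two invariant subspaces of $A$.
\begin{itemize}
\item On vectors summing to zero within each block, $A$ acts as $-t$. These give negative eigenvalues.
\item On vectors constant on each block, $A$ acts by the $2\times 2$ matrix $B=\begin{pmatrix} t(m-1) & m'\\ m & t(m'-1)\end{pmatrix}$.
\end{itemize}
The trace of $B$ is positive. Hence $A$ has two positive eigenvalues exactly when
\[
\det B=t^2(m-1)(m'-1)-mm'>0,\quad\text{i.e.}\quad t^2>\frac{mm'}{(m-1)(m'-1)}.
\]

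\emph{Choice of blocks.} For $n=2m$ even, take $m'=m$. The threshold becomes $t>m/(m-1)=n/(n-2)$. For $n=2m+1$ odd, take $m'=m+1$. The threshold becomes $t^2>(m+1)/(m-1)=(n+1)/(n-3)$.

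\emph{Conclusion.} Suppose $q>2\log_2(n/(n-2))$ in the even case, or $q>\log_2((n+1)/(n-3))$ in the odd case. Then there is a $t$ with
\[
\frac{mm'}{(m-1)(m'-1)}<t^2\le 2^q .
\]
The corresponding $A$ lies in $\Gr^{\rm w}_{U_{2,n}}(\T_q)$ but not in $\P\upL_{U_{2,n}}$. Hence $q(n)$ is at most the stated bound. The hypothesis $n\ge 4$ guarantees $m,m'\ge 2$, so the blocks are large enough for the computation. Finally, both expressions are $\log_2(1+O(1/n))=O(1/n)$, which gives $q(n)=O(1/n)$.

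\emph{Main obstacle.} The delicate step is confirming that the two-block matrix is the right extremal shape. Once it is chosen, the quartet check and the $2\times 2$ spectral computation are routine. We would also double-check that the $\T_q$-inequalities in the rank-$2$ dictionary are exactly the triangle inequalities on $1/q$-th powers of the three quartet products, as stated above.
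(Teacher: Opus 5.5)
Your proposal is correct and is essentially the paper's proof. It uses the same two-block matrix (entries $t$ within blocks, $1$ across), the same quartet analysis reducing everything to $t^2\le 2^q$, the same spectral splitting into the block-sum-zero subspace and the $2\times 2$ action on block indicators, and the same balanced choice of blocks.

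The ``main obstacle'' you flag is not actually an obstacle. The upper bound only needs some non-Lorentzian weak $\T_q$-representation for each $q$ above the threshold. Whether the two-block shape is extremal matters only for the sharpness question in Conjecture~\ref{conj:qn-sharp}.
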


We also conjecture the following:

\begin{conjecture}
\label{conj:intro-sharp}
The upper bounds in Proposition~\ref{prop:intro-upper-bound} are sharp for every
$n\ge 4$.
\end{conjecture}

The first interesting test case for the conjecture is $n=5$. Our general lower bound gives only $q(5)\ge \log_2(4/3)$,
whereas the sharp value predicted by Conjecture~\ref{conj:intro-sharp} is $q(5)=\log_2 3$, which we prove in Section~\ref{sec:q5}.

\medskip

Finally, we include an appendix devoted to the foundational containment
\[
\Gr^{\rm w}_J(\T_0)\subseteq \P\upL_J.
\]
In the rank-$2$ uniform case this becomes the assertion that tree distance matrices are strictly Lorentzian. Since this fact plays a central role both in \cite{Branden-Huh20} and in the present paper, and since it admits proofs from several quite different viewpoints, we have chosen to collect several different arguments: via Schoenberg embedding theory, via split decompositions, via determinant formulas for tree distance matrices, via potential theory on metric graphs, and via tropical geometry.

\medskip

The paper is organized as follows. 
In Section~2 we review the necessary background on Lorentzian polynomials,
triangular hyperfields, weak representations, and the basic matrix criterion for
Lorentzian signature used throughout the paper. In Section~3 we prove the universal upper containment $\P\upL_M\subseteq \Gr^{\rm w}_M(\T_2)$, together with its normalized form for general M-convex sets. Section~4 contains the analytic heart of the paper, the proof of the sandwich theorem for $U_{2,n}$, together with a summary of the metric-embedding results which enter the argument. Section~5 deduces the general sandwich theorem for matroids and M-convex sets, and also discusses a further $\upN_t$-interpolation theorem. Section~6 studies upper bounds for $q(n)$ and formulates the sharpness conjecture. Section~7 proves the sharp value $q(5)=\log_2 3$. 
Section~8 discusses the strong-representation setting and records several examples
showing that strong representability behaves differently from weak
representability. Appendix~A contains the various proofs of the foundational containment mentioned above.

\subsection{Acknowledgements} 

We thank Donggyu Kim for a number of helpful discussions and for his careful proofreading of this manuscript. We also thank David Renshaw for his help in formalizing the proof of Theorem~\ref{thm:q5-four-point-ptolemaic} into Lean. 

\subsection{Statement on AI usage}
\label{sec:AI-usage} 

We acknowledge the use of AI during the preparation of this manuscript.
Most notably, the proof of Theorem~\ref{thm:q5-four-point-ptolemaic} is due to ChatGPT 5.5, with some conceptual simplifications due to Claude Opus 4.8 and the authors. 
With the help of David Renshaw, we were able to get Claude Code to auto-formalize Theorem~\ref{thm:q5-four-point-ptolemaic} and Corollary~\ref{cor:q5-sharp} in Lean.
ChatGPT 5.5 also came up with the idea of using the weighted natural matroid construction in Section~\ref{sec:general-sandwich}. 
Parts of this manuscript were initially drafted by ChatGPT 5.5 and then proofread, revised, and checked line-by-line by the authors.
The authors have reviewed and take full responsibility for all content in this paper.

\section{Background and conventions}

In this section we collect the definitions and background results needed in the sequel.

\subsection{Lorentzian matrices}

We begin with some linear-algebraic preliminaries.

\begin{definition}
A real symmetric matrix $A\in \R^{n\times n}$ is \emph{zero-diagonal} if $A_{ii}=0$
for all $i$. 
\end{definition}

\begin{remark}
For a homogeneous quadratic polynomial $f$, the Hessian matrix $H_f$ is zero-diagonal
if and only if $f$ is multi-affine.
\end{remark}

\begin{definition}
A real symmetric matrix $A\in \R^{n\times n}$ is \emph{Lorentzian} if
\begin{enumerate}
\item $A_{ij}\ge 0$ for all $i,j$, and
\item $A$ has at most one positive eigenvalue.
\end{enumerate}
It is \emph{strictly Lorentzian} if $A$ has nonnegative entries and inertia
$(1,n-1,0)$.
\end{definition}

\begin{definition}
A real symmetric matrix $A\in \R^{n\times n}$ is \emph{conditionally negative
semidefinite} (CND) if
\[
x^\top A x \le 0
\qquad\text{for all }x\in \R^n \text{ with } \sum_{i=1}^n x_i = 0.
\]
It is \emph{conditionally strictly negative definite} if, in addition,
\[
x^\top A x = 0 \text{ and } \sum_{i=1}^n x_i=0
\qquad\Longrightarrow\qquad
x=0.
\]
\end{definition}

We will use the following criterion for Lorentzianity.

\begin{theorem}[{\cite[Lemma 2.2]{RSW}}]
\label{thm:principal-minor-criterion}
Let $A\in \R^{n\times n}$ be a real symmetric matrix with nonnegative entries. Then the
following are equivalent:
\begin{enumerate}
\item $A$ is Lorentzian.
\item For every principal submatrix $N$ of $A$ of size $k\ge 1$,
\[
(-1)^k \det(N)\le 0.
\]
\end{enumerate}
\end{theorem}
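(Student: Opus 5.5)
The plan is to prove the equivalence by induction on $n$, using the Cauchy interlacing theorem for one direction and a determinant-sign count for the other. Throughout, write $p(B)$ for the number of positive eigenvalues of a symmetric matrix $B$.

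For (1)$\Rightarrow$(2), let $N$ be a $k\times k$ principal submatrix of $A$. By Cauchy interlacing, $p(N)\le p(A)\le 1$. If $N$ is singular there is nothing to prove. Otherwise $N$ has no zero eigenvalues, so either $p(N)=0$ and $N$ is negative definite, or $p(N)=1$. In the second case $N$ has exactly $k-1$ negative eigenvalues and $\det N$ has sign $(-1)^{k-1}$, so $(-1)^k\det N<0$. The first case cannot occur, because $N$ has nonnegative entries. If $k=1$, negative definiteness would require $N_{ii}<0$. If $k\ge 2$, negative definiteness would force every diagonal entry to be negative, again contradicting $N\ge 0$. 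One caveat: a zero $1\times1$ block gives $\det N=0$, which is allowed. So in all cases $(-1)^k\det N\le 0$.

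For (2)$\Rightarrow$(1), I would argue by induction on $n$. The case $n=1$ is immediate, since condition (2) says $-A_{11}\le 0$, which holds, and a $1\times 1$ matrix has at most one eigenvalue. For the inductive step, every principal submatrix of $A$ inherits condition (2). So every $(n-1)\times(n-1)$ principal submatrix $A'$ satisfies $p(A')\le 1$ by induction, and interlacing gives $p(A)\le 2$. Suppose for contradiction that $p(A)=2$. Interlacing then forces every $A'$ obtained by deleting one index to have exactly one positive eigenvalue and the eigenvalue $\lambda_2(A')$ between $0$ and $\lambda_2(A)$. More usefully, I would count signs. If $A$ is nonsingular with $p(A)=2$, then $\det A$ has sign $(-1)^{n-2}=(-1)^n$, so $(-1)^n\det A>0$, contradicting (2). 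So the only remaining case is $A$ singular with exactly two positive eigenvalues.

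This singular case is the main obstacle. The first approach I would try is a perturbation: replace $A$ by $A_t=A+tJ$, where $J$ is the all-ones matrix and $t>0$ is small, or alternatively by $A-tI$, and check that (2) survives. I would rather try to avoid perturbation and use a direct argument. With $p(A)=2$, pick a $2$-dimensional positive-definite subspace $V$. Then pass to a principal submatrix: by a standard fact, a symmetric matrix with $p$ positive eigenvalues has a nonsingular principal submatrix of size equal to its rank whose inertia matches the nonzero inertia of $A$. Concretely, $A$ has a principal submatrix $N$ of size $r=\operatorname{rank}A$ that is nonsingular, and by congruence through the column space it has the same inertia as the nonzero part of $A$, namely two positive eigenvalues and $r-2$ negative ones. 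Then $\operatorname{sign}\det N=(-1)^{r-2}=(-1)^r$, contradicting (2) applied to $N$. I expect the main technical step to be verifying this principal rank submatrix fact: a rank-$r$ symmetric matrix has a nonsingular $r\times r$ principal submatrix, and the inertia is preserved because $A=C^\top N^{-1}C$-type factorization along those rows makes $A$ congruent to $N\oplus 0$. If this works, the induction is unnecessary and the proof of (2)$\Rightarrow$(1) reduces to the single contradiction above.
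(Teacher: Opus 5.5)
The paper gives no proof of this statement; it only cites \cite[Lemma 2.2]{RSW}, so your argument has to stand on its own. It does, with one correction. The direction (1)$\Rightarrow$(2) is fine: interlacing gives $p(N)\le p(A)\le 1$, and a nonsingular $N$ with nonnegative diagonal cannot be negative definite, so $\det N$ is either zero or has sign $(-1)^{k-1}$.

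For (2)$\Rightarrow$(1), your induction reduces to the case $p(A)=2$, and the principal-rank argument then finishes it. Take a nonsingular principal submatrix $N=A[S,S]$ with $|S|=r=\operatorname{rank}A$. The Schur complement of $N$ in $A$ vanishes, so $A=A[:,S]\,N^{-1}A[S,:]$ is congruent to $N\oplus 0$. Hence $N$ has inertia $(2,r-2,0)$, and $(-1)^r\det N>0$ contradicts (2). This direction never uses nonnegativity of the entries.

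What is wrong is your closing claim that the induction then becomes unnecessary. Suppose $A$ has $p$ positive eigenvalues. The same congruence gives $N$ inertia $(p,r-p,0)$, so $(-1)^r\det N$ has sign $(-1)^p$. The sign count therefore contradicts (2) only when $p$ is even. For example, take $A=I_3$. Then $r=3$, the only choice is $N=A$, and $(-1)^3\det N=-1\le 0$, so there is no contradiction at this level. The violation of (2) sits in a $2\times 2$ principal minor.

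So reducing to exactly two positive eigenvalues is an essential step, not a convenience. Either keep the induction, or use interlacing directly: deleting one index lowers the number of positive eigenvalues by at most one. Hence if $p(A)\ge 2$, some principal submatrix of $A$ has exactly two positive eigenvalues, and you apply the rank argument to that submatrix.

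A minor point: your aside that $\lambda_2(A')$ lies between $0$ and $\lambda_2(A)$ is misstated. Interlacing together with $p(A')\le 1$ places it in $[\lambda_3(A),0]$. You do not use this, though.
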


\subsection{Lorentzian polynomials and $M$-convex sets}

We now recall some basic definitions from \cite{Branden-Huh20}.

Let $n\ge 1$ and $d\ge 2$. We denote by $\Hd_n$ the space of homogeneous polynomials
of degree $d$ in $x_1,\dots,x_n$ with nonnegative real coefficients.
Define
\[
\Delta_n^d := \bigl\{\alpha\in \N^n : \alpha_1+\cdots+\alpha_n = d\bigr\}
\]
and
\[
\binom{[n]}{d} := \{0,1\}^n \cap \Delta_n^d.
\]
For $\alpha=(\alpha_1,\dots,\alpha_n)\in \Delta_n^d$, we write
\[
x^\alpha := x_1^{\alpha_1}\cdots x_n^{\alpha_n},
\qquad
\alpha! := \alpha_1!\cdots \alpha_n!.
\]

We will usually write a polynomial $f\in \Hd_n$ in the normalized form
\[
f=\sum_{\alpha\in \Delta_n^d} c_\alpha \frac{x^\alpha}{\alpha!}.
\]
Its support is
\[
\supp(f):=\{\alpha\in \Delta_n^d : c_\alpha\neq 0\}.
\]
We say that $f$ is \emph{multi-affine} if $\supp(f)\subseteq \binom{[n]}{d}$.

For $f\in \Hd_n$, we write $\partial_i f:=\frac{\partial f}{\partial x_i}$, and more
generally, for $\alpha\in \Delta_n^k$ with $0\le k\le d-2$,
\[
\partial^\alpha f := \prod_{i=1}^n \partial_i^{\alpha_i} f.
\]
The \emph{Hessian} of $f$ is the symmetric matrix
\[
H_f := \bigl(\partial_i \partial_j f\bigr)_{1\le i,j\le n}.
\]

\begin{definition}
Let $\mathring{\upL}_n^2$ be the space of homogeneous quadratic polynomials
$f\in \Htwo_n$ whose Hessian $H_f$ is strictly Lorentzian.
A homogeneous polynomial $f\in \Hd_n$ is \emph{strictly Lorentzian} if
\[
\partial^\alpha f \in \mathring{\upL}_n^2
\qquad\text{for all }\alpha\in \Delta_n^{d-2}.
\]
A \emph{Lorentzian polynomial} is a limit of strictly Lorentzian polynomials.
We write $\upL_n^d$ and $\mathring{\upL}_n^d$ for the spaces of Lorentzian and strictly
Lorentzian polynomials of degree $d$, respectively.
\end{definition}

\begin{definition}
A nonempty subset $J\subseteq \Delta_n^d$ is \emph{$M$-convex} if for all
$\alpha,\beta\in J$ and every $i\in [n]$ with $\alpha_i<\beta_i$, there exists
$j\in [n]$ with $\alpha_j>\beta_j$ such that
\[
\alpha+e_i-e_j \in J
\qquad\text{and}\qquad
\beta-e_i+e_j \in J.
\]
\end{definition}

\begin{example}
A matroid $M$ of rank $r$ on $[n]$ determines an $M$-convex subset
\[
J(M):=\Bigl\{\sum_{i\in B} e_i : B \text{ is a basis of }M\Bigr\}\subseteq \Delta_n^r.
\]
Conversely, an $M$-convex set $J$ comes from a matroid if and only if
$J\subseteq \{0,1\}^n$.
For simplicity, we will often identify a matroid with its associated $M$-convex set.
\end{example}

An important link between Lorentzian polynomials and $M$-convexity is the following.

\begin{theorem}[{\cite[Proposition 2.19 and Theorem 2.23]{Branden-Huh20}}]
\label{thm:support-M-convex}
If $f\in \Hd_n$ is Lorentzian, then $\supp(f)$ is $M$-convex.
\end{theorem}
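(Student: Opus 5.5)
The plan is to work only with the \emph{closed} conditions that pass to limits. Strictly Lorentzian polynomials typically have full support, so supports can shrink in the limit, and the argument cannot go through strictly Lorentzian polynomials directly. First I would record that every Lorentzian $f=\sum c_\alpha x^\alpha/\alpha!$ satisfies three properties. (a) Its coefficients are nonnegative. (b) For every $\gamma\in\Delta_n^{d-2}$, the Hessian of the quadratic $\partial^\gamma f$ has nonnegative entries and at most one positive eigenvalue. (c) Every partial derivative $\partial^\gamma f$ is again Lorentzian, of lower degree. Property (b) is closed under limits because the eigenvalues depend continuously on the matrix. Property (c) holds because derivatives of strictly Lorentzian polynomials are strictly Lorentzian, and differentiation commutes with limits. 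In the normalized form, the entries of $H_{\partial^\gamma f}$ are exactly the coefficients $c_{\gamma+e_i+e_j}$, and $\supp(\partial^\gamma f)=\{\alpha-\gamma:\alpha\in\supp f,\ \alpha\ge\gamma\}$.

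\emph{Degree $2$.} Here $f$ corresponds to a nonnegative symmetric matrix $C=(c_{e_i+e_j})$ with at most one positive eigenvalue. By Cauchy interlacing, every principal submatrix also has at most one positive eigenvalue; this is essentially Theorem~\ref{thm:principal-minor-criterion}. Moreover, a nonnegative symmetric matrix with a positive entry has a positive eigenvalue by Perron--Frobenius. Hence the principal submatrix on the union of two index sets that are disconnected from each other cannot have a positive entry on each side. I would verify the exchange axiom case by case.
\begin{itemize}
\item If $c_{2e_i}>0$ and $c_{2e_j}>0$, the $2\times2$ minor forces $c_{e_i+e_j}^2\ge c_{2e_i}c_{2e_j}>0$.
\item If $c_{e_i+e_j}>0$ and $c_{e_k+e_l}>0$ with $\{i,j\}\cap\{k,l\}=\emptyset$, I consider the principal submatrix on $\{i,j,k,l\}$. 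If no cross entry between $\{i,j\}$ and $\{k,l\}$ is positive, this submatrix is block diagonal with two blocks each having a positive eigenvalue, which is a contradiction.
\item The remaining overlapping patterns, such as $2e_i$ versus $e_j+e_k$, follow from the same block-diagonal argument applied to $3\times3$ submatrices.
\end{itemize}
A short check then shows that a positive cross entry is exactly an exchange $\alpha+e_i-e_j$, $\beta-e_i+e_j$ of the required form. The check needs a second application of the same argument to guarantee that the companion monomial is also present.

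\emph{General degree, by induction on $d$.} Let $\alpha,\beta\in\supp f$ and $i$ with $\alpha_i<\beta_i$. Put $\gamma=\min(\alpha,\beta)$, taken componentwise.
\begin{itemize}
\item If $\gamma\neq0$, then $\partial^\gamma f$ is Lorentzian of degree $d-|\gamma|$ and contains $\alpha-\gamma$ and $\beta-\gamma$ in its support. The inequality $\alpha_i<\beta_i$ persists after subtracting $\gamma$. By induction there is an index $j$ realizing the exchange. Because both exchanged monomials dominate $0$, adding $\gamma$ back lands in $\supp f$, and $\alpha_j>\beta_j$ is preserved.
\item If $\gamma=0$, the supports of $\alpha$ and $\beta$ are disjoint. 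This is the main obstacle.
\end{itemize}

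\emph{The disjoint case.} My first attempt is to produce an intermediate monomial $\delta\in\supp f$ that shares coordinates with both $\alpha$ and $\beta$, so that the previous case applies to the pairs $(\alpha,\delta)$ and $(\delta,\beta)$. Concretely, pick $k$ with $\alpha_k>0$ and consider the quadratics $\partial^{\alpha-e_k-e_{k'}}f$ as $k'$ varies. Walking from $\alpha$ toward $\beta$ one unit at a time, the degree-$2$ block argument applied to suitable $\partial^{\eta}f$ with $\eta\le\alpha$ should show that some monomial $\alpha-e_k+e_m$ with $m\in\supp\beta$ lies in $\supp f$. If it did not, a suitable Hessian would split into two blocks each with a positive eigenvalue.

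Doing this carefully amounts to proving that the support is connected under single exchanges and \emph{convex along exchange directions}. Once that is established, the full exchange axiom follows from Murota's local-exchange characterization of $M$-convex sets, which reduces the axiom to pairs with $|\alpha-\beta|_1=4$; these pairs are handled by the degree-$2$ case after differentiating by $\min(\alpha,\beta)$. The fallback, if the direct walk gets messy, is to prove this connectivity via the nonnegative-linear-substitution invariance of Lorentzian polynomials. That invariance would itself have to be established first from the definitions.
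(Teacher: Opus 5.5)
Your degree-$2$ case and your inductive step when $\gamma=\min(\alpha,\beta)\neq 0$ are fine. The disjoint case, however, is the entire content of the statement, and the mechanism you propose for it cannot work. If $0\neq\eta\le\alpha$, then $\beta\not\ge\eta$ because the supports are disjoint, so $\beta$ contributes nothing to $\partial^\eta f$. The only positive Hessian entries you control there come from $\alpha$ alone, and they force nothing about monomials $\alpha-e_k+e_m$ with $m\in\supp\beta$.

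A concrete witness is $f=x_1^3+x_2^3$, which has every property (a)--(c) you extracted. Both $\partial_1 f=3x_1^2$ and $\partial_2 f=3x_2^2$ are Lorentzian with M-convex supports. Every quadratic derivative has a Hessian with exactly one positive eigenvalue, so no Hessian ``splits into two blocks each with a positive eigenvalue''. Yet $\{3e_1,3e_2\}$ is not M-convex. Hence no argument that uses only nonnegativity, the quadratic Hessian conditions and induction on derivatives can settle the disjoint case. This is also why Theorem~\ref{thm:BH-limit-free} lists M-convexity of the support as a hypothesis separate from the quadratic conditions. You need a property of $f$ itself, preserved under limits of strictly Lorentzian polynomials, that is invisible in its derivatives.

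The appeal to Murota's local exchange does not repair this. The set $\{3e_1,3e_2\}$ satisfies the exchange axiom for all pairs at $\ell_1$-distance $4$ vacuously. So any local criterion needs an extra global hypothesis, such as connectivity under single exchanges or hole-freeness, and verifying that hypothesis for $\supp f$ is your disjoint case again.

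Your fallback is the right kind of input. If Lorentzian polynomials are stable under nonnegative linear substitutions, you can merge the variables of $\supp\alpha$ into $s$, merge those of $\supp\beta$ into $t$, and set the rest to $0$. This yields a bivariate Lorentzian polynomial, whose normalized coefficients have no internal zeros, and that is what excludes $x_1^3+x_2^3$. But this invariance is a substantial theorem that you leave unproved, and it must be established without presupposing M-convexity of the support of the limit. Even granting it, it only produces some $\delta\in\supp f$ with exactly one unit on $\supp\beta$. Exchanges for $(\alpha,\delta)$ and $(\delta,\beta)$ do not compose into the required pair $\alpha+e_i-e_j$, $\beta-e_i+e_j$. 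You would still need a precisely stated local-to-global criterion for M-convex sets, together with proofs of whatever global hypotheses it requires. As written, the proposal does not prove the statement.
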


For an $M$-convex set $J\subseteq \Delta_n^d$, we write $\upL_J$ for the space of
Lorentzian polynomials with support $J$, and $\P\upL_J$ for its projectivization.
The next theorem provides a useful “limit-free” criterion for Lorentzianity.

\begin{theorem}[{\cite[Theorem 2.25]{Branden-Huh20}}]
\label{thm:BH-limit-free}
Let $f\in \Hd_n$. Then the following are equivalent:
\begin{enumerate}
\item $f$ is Lorentzian.
\item $\supp(f)$ is $M$-convex and $\partial^\alpha f\in\upL_n^2$ for all
$\alpha\in \Delta_n^{d-2}$.
\end{enumerate}
\end{theorem}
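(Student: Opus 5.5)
The implication (1)$\Rightarrow$(2) should be formal, and the substance is in (2)$\Rightarrow$(1). For (1)$\Rightarrow$(2), Theorem~\ref{thm:support-M-convex} already gives that $\supp(f)$ is $M$-convex. For the derivatives, write $f=\lim f_k$ with each $f_k$ strictly Lorentzian. Each $\partial^\alpha f_k$ lies in $\mathring{\upL}^2_n$, and $\partial^\alpha$ is a continuous linear map, so $\partial^\alpha f$ lies in the closure of $\mathring{\upL}^2_n$, which is $\upL^2_n$. I would also record here that $\upL^2_n$ is exactly the set of quadratics whose Hessian is Lorentzian in the matrix sense of the previous subsection. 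This identification follows from Theorem~\ref{thm:principal-minor-criterion} together with a perturbation argument: add $\varepsilon\,\1\1^\top$ and then subtract $\delta I$ on a suitable complement.

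For (2)$\Rightarrow$(1), let $\mathcal{M}^d_n$ denote the class of polynomials satisfying (2); we must approximate any $f\in\mathcal M^d_n$ by strictly Lorentzian polynomials. The plan has three steps.

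\textbf{Step 1: closure under nonnegative linear substitutions.} Show that $\mathcal M^d_n$ is preserved by $f(x)\mapsto f(Ax)$ for every $n\times n$ matrix $A$ with nonnegative entries. I would prove this by induction on $d$, using $\partial_i\bigl(f(Ax)\bigr)=\sum_j A_{ji}(\partial_j f)(Ax)$, and treating the $M$-convexity of the new support separately. The support of $f(Ax)$ is a Minkowski-type image of $\supp(f)$ under the bipartite support of $A$, so it is $M$-convex by standard closure properties of $M$-convex sets under aggregation and induced maps. The quadratic condition is the delicate part. The derivatives of $f(Ax)$ are nonnegative combinations $\sum_j A_{ji}\,\partial_j f$, and Lorentzian quadratics are not closed under addition. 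What makes these particular sums work is that the $M$-convexity of $\supp(f)$ forces the derivatives $\partial_j f$ to be compatible.

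\textbf{Step 2: reduction to the case $\supp(f)=\Delta^d_n$.} Take $A_\varepsilon=(1-\varepsilon)I+\varepsilon\,\1\1^\top$. Then $f_\varepsilon(x)=f(A_\varepsilon x)$ has full support for $\varepsilon>0$, lies in $\mathcal M^d_n$ by Step 1, and satisfies $f_\varepsilon\to f$ as $\varepsilon\to 0$.

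\textbf{Step 3: strictification.} For $f$ of full support in $\mathcal M^d_n$, each Hessian $H_{\partial^\alpha f}$ has positive entries and at most one positive eigenvalue, so it is nondegenerate on the relevant subspace except for possible zero eigenvalues. I would remove these by passing from $f$ to $f(A_\varepsilon x)-\delta\,\sum_i x_i^d$, or an analogous small negative-definite correction, with $\delta\ll\varepsilon$. The target inertia $(1,n-1,0)$ is an open condition, and having positive entries keeps the correction admissible. One must check that the zero eigenvalues actually become negative; this can be verified via the principal-minor criterion of Theorem~\ref{thm:principal-minor-criterion}, since strict inequalities persist under small perturbations once they hold.

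The main obstacle is Step 1, namely the claim that nonnegative linear substitution preserves the quadratic Lorentzian conditions given only $M$-convex support. If a direct induction fails, I would first prove the following intermediate statement for $f\in\mathcal M^d_n$: the Hessian $H_f(x)$ has at most one positive eigenvalue for every $x\in\R^n_{>0}$. This would be proved by induction on $d$ via Euler's identity $(d-1)\,\partial_i f=\sum_j x_j\,\partial_i\partial_j f$, using $M$-convexity to handle the variables $x_i$ with $\partial_i f=0$ (the Brändén--Huh exchange argument). Given this pointwise statement, Step 1 follows: the Hessian of $f(Ax)$ at $x$ is $A^\top H_f(Ax)A$, and by Sylvester's law of inertia it has no more positive eigenvalues than $H_f(Ax)$.
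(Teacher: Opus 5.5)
The paper gives no proof of this statement; it quotes it from Brändén--Huh. So your plan has to stand on its own. The architecture (closure under nonnegative substitution, then full support, then a strictifying perturbation) is workable, and (1)$\Rightarrow$(2) is fine. However, the hard step is not actually supplied.

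\textbf{The induction for your pointwise statement.} Euler gives $(d-2)H_f(x)=\sum_i x_iQ_i$ with $Q_i:=H_{\partial_i f}(x)$. A positive combination of matrices with at most one positive eigenvalue can have several positive eigenvalues. For $x_1^3+x_2^3$, every $Q_i$ is fine and no $\partial_i f$ vanishes, yet $H_f(x)=\diag(6x_1,6x_2)$. So M-convexity is not there to handle vanishing partials (those only give zero rows); it is there to give connectivity.

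The missing mechanism is as follows. Euler also gives $Q_ix=(d-2)He_i$ with $H:=H_f(x)$, and $x^\top Q_ix>0$ for each active $i$, i.e.\ each $i$ with $\partial_if\neq0$. Reversed Cauchy--Schwarz for each $Q_i$ then yields $y^\top Hy\le y^\top HD^{-1}Hy$, where $D=\diag\bigl((Hx)_i/x_i\bigr)$ on the active indices. Hence $N:=D^{-1/2}HD^{-1/2}$ has no eigenvalue in $(0,1)$, and $D^{1/2}x$ is a positive eigenvector of $N$ with eigenvalue $1$. If $N$ is irreducible, Perron--Frobenius makes $1$ its simple top eigenvalue, so every other eigenvalue is $\le 0$. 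Irreducibility means the graph $i\sim j\Leftrightarrow \partial_i\partial_jf\neq0$ on the active variables is connected, and one exchange step of M-convexity gives exactly that.

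\textbf{Two other steps fail as written.}

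\emph{Step 1.} A pointwise bound on $H_{f(Ax)}(x)=A^\top H_f(Ax)A$ does not give $f(Ax)\in\mathcal{M}^d_n$, because mixed quadratic derivatives are not Hessians at a point. For example, $g=\tfrac1{10}x^4+\tfrac{22}{5}x^3y+6x^2y^2+\tfrac{22}{5}xy^3+\tfrac1{10}y^4$ has full support and $\det H_g<0$ on $\R^2_{\ge0}\setminus\{0\}$. Yet $\partial_x\partial_y g=\tfrac{66}{5}(x^2+y^2)+24xy$ has a Hessian with two positive eigenvalues.

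The repair is to use the pointwise bound only for the cubics $\partial^\beta f$, so it is only needed in degree $3$ and no induction is required. Since $H_{D_v\partial^\beta f}=H_{\partial^\beta f}(v)$ for $D_v=\sum_j v_j\partial_j$ with $v\in\R^n_{\ge0}$, the class is closed under $D_v$ (supports aside). Then $\partial^\alpha[f(Ax)]=(D_{a_1}^{\alpha_1}\cdots D_{a_n}^{\alpha_n}f)(Ax)$, where the $a_i$ are the columns of $A$, and its Hessian has the form $A^\top(\cdot)A$.

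\emph{Step 3.} The correction $\delta\sum_i x_i^d$ changes only the derivatives $\partial_i^{d-2}$, and those only by a multiple of $e_ie_i^\top$. For $f=(x_1+x_2+x_3)^3$, the Hessian of $\partial_1\bigl(f-\delta\sum_i x_i^3\bigr)$ is $6(\1\1^\top-\delta e_1e_1^\top)$, which is still singular. Subtract instead $\delta\sum_k (t_k\cdot x)^d$ with $t_1,\dots,t_n\in\R^n_{>0}$ linearly independent. Every quadratic derivative's Hessian then changes by $-\delta\, d!\sum_k t_k^\alpha t_kt_k^\top\prec 0$. By Weyl's inequality this gives inertia $(1,n-1,0)$ for small $\delta$, once $f$ has positive coefficients.

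\textbf{A side remark.} Your identification of $\upL^2_n$ is not needed, since only the closedness of ``at most one positive eigenvalue'' is used. The perturbation you sketch for it also fails: adding $\varepsilon\1\1^\top$ to $\diag(1,0)$ creates a second positive eigenvalue.
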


\begin{theorem}[{\cite[Theorem 3.10]{Branden-Huh20}}]
\label{thm:BH-canonical-Lorentzian}
If $J\subseteq \Delta_n^d$ is $M$-convex, then the corresponding exponential generating function
\[
f_J := \sum_{\alpha\in J} \frac{x^\alpha}{\alpha!}
\]
is Lorentzian.
\end{theorem}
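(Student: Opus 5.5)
The plan is to apply the limit-free criterion, Theorem~\ref{thm:BH-limit-free}. The support of $f_J$ is $J$, which is $M$-convex by hypothesis. So it suffices to show that $\partial^\alpha f_J\in \upL_n^2$ for every $\alpha\in\Delta_n^{d-2}$.

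\emph{Step 1 (reduction to degree two).} Differentiating termwise gives
\[
\partial^\alpha f_J=\sum_{\beta\in J,\ \beta\ge\alpha}\frac{x^{\beta-\alpha}}{(\beta-\alpha)!}=f_{J_\alpha},
\qquad J_\alpha:=\{\beta-\alpha:\beta\in J,\ \beta\ge \alpha\}\subseteq\Delta_n^2 .
\]
The normalization $x^\alpha/\alpha!$ is exactly what makes the derivative of an exponential generating function again an exponential generating function. If $J_\alpha$ is empty the derivative is $0\in\upL^2_n$. Otherwise $J_\alpha$ is again $M$-convex: an exchange between $\beta-\alpha$ and $\beta'-\alpha$ is the same as an exchange between $\beta$ and $\beta'$, and the resulting vectors stay $\ge\alpha$ since they differ from $\beta,\beta'$ only in coordinates where the two disagree. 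So we are reduced to showing that $f_K$ is Lorentzian for every $M$-convex $K\subseteq\Delta_n^2$.

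\emph{Step 2 (the Hessian and its structure).} The Hessian of $f_K$ is the $0/1$ matrix with $H_{ij}=1$ iff $e_i+e_j\in K$; this includes the diagonal, since $x_i^2/2$ has second derivative $1$. Remove the zero rows and columns (the ``loops''); call the remaining index set $E$. I would establish two facts.

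First, if $2e_i\in K$, then $e_i+e_j\in K$ for every $j\in E$. To see this, pick any $e_j+e_k\in K$ and apply the exchange axiom to $\alpha=2e_i$, $\beta=e_j+e_k$ at the index $j$; the only possible partner index is $i$, which forces $e_i+e_j\in K$.

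Second, on the set $E\setminus\{i:2e_i\in K\}$, the relation ``$i=j$ or $e_i+e_j\notin K$'' is an equivalence relation (the parallel classes). Transitivity should follow by applying the exchange axiom to $e_i+e_a$ and $e_j+e_b$ for suitable witnesses $a,b$, exactly as in the rank-$2$ matroid case.

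Consequently
\[
H=\mathbf 1\mathbf 1^\top-\sum_{S}\mathbf 1_S\mathbf 1_S^\top
\]
on $E$. Here the sum runs over the parallel classes $S$, and each index $i$ with $2e_i\in K$ forms a singleton class that is not subtracted.

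\emph{Step 3 (signature).} For $x\in\R^E$ with $\sum x_i=0$ we get
\[
x^\top Hx=\Bigl(\sum_i x_i\Bigr)^2-\sum_S\Bigl(\sum_{i\in S}x_i\Bigr)^2\le 0 .
\]
Thus $H$ is negative semidefinite on a hyperplane, so it has at most one positive eigenvalue. Its entries are nonnegative, so $H$ is Lorentzian in the matrix sense. Adding back the zero rows does not change this.

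\emph{Step 4 (closing the limit).} A quadratic form with nonnegative coefficients whose Hessian has at most one positive eigenvalue lies in $\upL^2_n$. Indeed, it is a limit of strictly Lorentzian quadratics, obtained for instance by adding $\epsilon$ times a fixed strictly Lorentzian form such as $(\sum x_i)^2-\delta\sum x_i^2$. I expect this perturbation to need only a routine check, possibly invoking Theorem~\ref{thm:principal-minor-criterion}.

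The main obstacle is Step 2: deriving cleanly from $M$-convexity that non-adjacency is transitive and that diagonal elements are adjacent to everything. Once that block structure is in hand, the signature computation is immediate.
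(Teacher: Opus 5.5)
Your Steps 1--3 are correct, and they are essentially the standard argument for this result. The paper itself only cites Brändén--Huh, but the structure you derive in Step 2 is exactly the complete multipartite description of quadratic M-convex sets from \cite[Lemma 4.5]{BHKL1}, which the paper uses in Section~\ref{subsec:general-from-matroids}.

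The transitivity you left open takes a single exchange. Suppose $i,j,k\in E\setminus D$ are distinct with $e_i+e_j,\,e_j+e_k\notin K$ but $e_i+e_k\in K$. Since $j\in E$, you can choose some $e_j+e_l\in K$. Here $l\ne j$ because $2e_j\notin K$, and $l\notin\{i,k\}$ by the two non-memberships. Exchanging $e_i+e_k$ against $e_j+e_l$ at the index $j$ puts $e_j+e_k$ or $e_i+e_j$ into $K$, which is a contradiction.

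The step that fails as written is Step 4: your perturbation does not produce strictly Lorentzian forms, for any $\delta$. Take $f=x_1^2/2=f_K$ with $K=\{2e_1\}\subseteq\Delta_2^2$.
\begin{itemize}
\item For $0<\delta<1$: adding $\epsilon\bigl((x_1+x_2)^2-\delta(x_1^2+x_2^2)\bigr)$ gives a Hessian with positive trace and determinant $2\epsilon(1-\delta)-4\epsilon^2\delta(2-\delta)$. This is positive for small $\epsilon$, so there are two positive eigenvalues.
\item For $\delta>1$: the $(2,2)$ entry $2\epsilon(1-\delta)$ is negative.
\item For $\delta=1$: take the same $f$ in three variables, i.e.\ $K=\{2e_1\}\subseteq\Delta_3^2$. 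The Hessian is $\diag(1,0,0)+2\epsilon(\mathbf 1\mathbf 1^\top-I)$, whose determinant is $-4\epsilon^2+16\epsilon^3<0$ while its trace is $1$. This again forces two positive eigenvalues.
\end{itemize}

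The statement you actually need is nevertheless true and standard: a quadratic form with nonnegative coefficients lies in $\upL^2_n$ if and only if its Hessian has at most one positive eigenvalue. This is part of the Brändén--Huh theory, and the paper uses it tacitly throughout, for instance in the identification ``$f_A$ is Lorentzian iff $A$ is Lorentzian'' in Section~\ref{sec:U2n-core}. Cite that fact instead of the perturbation; with this replacement your proof is complete.
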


In particular, every $M$-convex set supports a canonical Lorentzian polynomial.

\subsection{Tracts and generalized triangular hyperfields}

We now recall the tract-theoretic language used to define representations over triangular hyperfields.

A \emph{pointed monoid} is a commutative multiplicative monoid $F$ with unit $1$ and
a distinguished element $0$ such that $0\cdot a=0$ for all $a\in F$. Its unit group is
\[
F^\times := \{a\in F : ab=1 \text{ for some } b\in F\}.
\]
A \emph{pointed group} is a pointed monoid such that $F^\times = F\setminus\{0\}$. The \emph{ambient semiring} of a pointed group $F$ is the group semiring $F^+ := \N[F^\times]$. An \emph{ideal} of $F^+$ is an additive submonoid $I$ of $F^+$ that is closed under multiplication by $F^+$.

\begin{definition}
A \emph{tract} is a pointed group $F$ together with an ideal\footnote{In \cite{Baker-Bowler19}, the null set of a tract is not required to be closed under addition. However, since all tracts mentioned in the present paper possess this stronger property, it is convenient to use the definition above.} $N_F\subseteq F^+$, called
the \emph{null set}, such that for every $a\in F$ there is a unique $b\in F$ with
$a+b\in N_F$.
\end{definition}

The most basic example is an ordinary field, whose null set
consists of all formal sums that add to zero in the usual sense.

The triangular hyperfield $\T_1$ and its deformations $\T_q$ for $q\ge 0$, as defined below, were
introduced by Viro \cite{Viro10}.

\begin{definition}
The \emph{triangular hyperfield} $\T_1$ is the tract $\R_{\ge 0}$ with its usual
multiplication and with null set
\[
N_{\T_1}
=
\Bigl\{\sum_i a_i \;\Big|\; a_1,\dots,a_n \text{ are the side lengths of a (possibly degenerate) Euclidean }
n\text{-gon}\Bigr\}.
\]
It follows from \cite[Lemma 3.5]{BHKL1} that $N_{\T_1}$, as defined above, is in fact an ideal.
\end{definition}

In particular, $a+b+c\in N_{\T_1}$ if and only if $a,b,c$ are the side lengths of a
(possibly degenerate) Euclidean triangle. 

\begin{lemma}
\label{lem:Heron-criterion}
Let $a,b,c\in \R_{\ge 0}$. Then the following are equivalent:
\begin{enumerate}
\item $a+b+c\in N_{\T_1}$.
\item There exists a (possibly degenerate) Euclidean triangle with side lengths
$a,b,c$.
\item
$a^4+b^4+c^4 \le 2(a^2b^2+a^2c^2+b^2c^2).$
\end{enumerate}
\end{lemma}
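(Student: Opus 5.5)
We prove the cycle of implications $(1)\Leftrightarrow(2)$ and $(2)\Leftrightarrow(3)$.

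\emph{$(1)\Leftrightarrow(2)$.} By the definition of $N_{\T_1}$ with $n=3$, $a+b+c\in N_{\T_1}$ means exactly that $a,b,c$ are the side lengths of a possibly degenerate Euclidean $3$-gon, that is, a triangle. So this equivalence is a direct unpacking of the definition. If one prefers to make it concrete, a possibly degenerate triangle with sides $a,b,c$ exists if and only if the triangle inequalities $a\le b+c$, $b\le a+c$, $c\le a+b$ hold. Sufficiency follows by placing two points at distance $c$ and intersecting circles of radii $a$ and $b$; these intersect precisely when $|a-b|\le c\le a+b$. Necessity is the triangle inequality in $\R^2$.

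\emph{$(2)\Leftrightarrow(3)$.} We use the Heron-type factorization
\[
2(a^2b^2+a^2c^2+b^2c^2)-a^4-b^4-c^4=(a+b+c)(-a+b+c)(a-b+c)(a+b-c).
\]
This is a routine expansion: group the right side as $\bigl((b+c)^2-a^2\bigr)\bigl(a^2-(b-c)^2\bigr)$ and multiply out. Condition (3) is therefore equivalent to
\[
P:=(a+b+c)(-a+b+c)(a-b+c)(a+b-c)\ge 0.
\]

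If the triangle inequalities hold, all four factors are nonnegative, so $P\ge 0$.

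Conversely, suppose $P\ge 0$. If $a+b+c=0$, then $a=b=c=0$, which is a degenerate triangle. Otherwise $a+b+c>0$, so the product $Q$ of the other three factors satisfies $Q\ge 0$. The key observation is that at most one of the three factors $-a+b+c$, $a-b+c$, $a+b-c$ can be negative. Indeed, the sum of any two of them is $2c$, $2b$ or $2a$, which is $\ge 0$; so two negative factors would have a negative sum, a contradiction. Hence, if any one factor were negative, the other two would be nonnegative. If both of those were positive, then $Q<0$, contradicting $Q\ge 0$. If one of them were zero, say $a+b=c$ while $-a+b+c<0$, then $-a+b+a+b=2b<0$, which is impossible. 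So every factor is nonnegative, the triangle inequalities hold, and (2) follows by the first part.

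The only step requiring care is this sign analysis of the factors; the rest is direct.
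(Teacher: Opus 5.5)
Your proof is correct and follows essentially the same route as the paper: both treat $(1)\Leftrightarrow(2)$ as an unpacking of the definition and derive $(2)\Leftrightarrow(3)$ from the Heron factorization. The only difference is that the paper assumes without loss of generality that $a\le b\le c$, so the first three factors are automatically nonnegative and only $c\le a+b$ needs checking, whereas you handle the signs symmetrically through the pairwise sums $2a,2b,2c$, which also spells out the degenerate cases the paper leaves implicit.
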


\begin{proof}
It suffices to prove the equivalence of (2) and (3).
Without loss of generality, we can assume that $0\leq a\leq b\leq c$ and therefore $a\leq b+c$ and $b\leq a+c$. Thus $a$, $b$ and $c$ are the side lengths of a (possibly degenerate) Euclidean triangle if and only if $c\leq a+b$. This is, in turn, equivalent to
 \begin{multline*}
  2(a^2b^2+a^2c^2+b^2c^2) \ - \ (a^4+b^4+c^4) \ = \\ 
  \underbrace{(a+b+c)}_{\geq0} \,\cdot\, \underbrace{(-a+b+c)}_{\geq0} \,\cdot\, \underbrace{(a-b+c)}_{\geq0} \,\cdot\, (a+b-c) \ \geq \ 0,
 \end{multline*}
 as claimed.
\end{proof}

\begin{remark}
The equivalence of (2) and (3) in Lemma~\ref{lem:Heron-criterion} is
closely related to Heron's classical formula for the area of a triangle.
If a triangle has side lengths $a,b,c$ and semiperimeter
$s=\frac{a+b+c}{2}$, then Heron's formula says that its area $A$ satisfies
$A^2=s(s-a)(s-b)(s-c)$.
The proof of Lemma~\ref{lem:Heron-criterion} shows that (3) is equivalent to the statement that the right-hand side of Heron's formula is non-negative.
\end{remark}

\begin{definition}
Let $q>0$. The \emph{$q$-triangular hyperfield} $\T_q$ is the tract $\R_{\ge 0}$
whose null set consists of those formal sums $\sum_i a_i$ such that
$a_1^{1/q},\dots,a_n^{1/q}$ are the side lengths of a Euclidean $n$-gon.
\end{definition}

\begin{corollary}
\label{cor:Tq-Heron}
Let $q>0$. Then $a+b+c\in N_{\T_q}$ if and only if
\[
a^{4/q}+b^{4/q}+c^{4/q}
\le
2\bigl(a^{2/q}b^{2/q}+a^{2/q}c^{2/q}+b^{2/q}c^{2/q}\bigr).
\]
\end{corollary}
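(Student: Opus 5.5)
The corollary reduces directly to Lemma~\ref{lem:Heron-criterion} by a change of variables. By the definition of $\T_q$, the formal sum $a+b+c$ lies in $N_{\T_q}$ exactly when $a^{1/q},b^{1/q},c^{1/q}$ are the side lengths of a (possibly degenerate) Euclidean triangle. Here ``Euclidean $n$-gon'' should be read, as in the definition of $\T_1$, to allow degenerate polygons.

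Set $a'=a^{1/q}$, $b'=b^{1/q}$ and $c'=c^{1/q}$, which are again elements of $\R_{\ge 0}$. In the notation of Lemma~\ref{lem:Heron-criterion}, the condition above says precisely that $a'+b'+c'\in N_{\T_1}$, which is condition (1) of that lemma. Applying the equivalence (2)$\Leftrightarrow$(3) of the lemma to $a',b',c'$, this holds if and only if
\[
(a')^4+(b')^4+(c')^4\le 2\bigl((a')^2(b')^2+(a')^2(c')^2+(b')^2(c')^2\bigr).
\]
Substituting back, using $(a^{1/q})^4=a^{4/q}$ and $(a^{1/q})^2(b^{1/q})^2=a^{2/q}b^{2/q}$, gives exactly the displayed inequality of the corollary.

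There is no real obstacle here. The only points to check are that $x\mapsto x^{1/q}$ maps $\R_{\ge 0}$ into itself (including $0^{1/q}=0$), so the lemma applies, and that degenerate triangles are allowed in the definition of $N_{\T_q}$, consistent with $\T_1$.
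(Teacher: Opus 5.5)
Your proposal is correct and matches the paper's intended argument. The paper gives no separate proof: the corollary follows by applying Lemma~\ref{lem:Heron-criterion} to $a^{1/q},b^{1/q},c^{1/q}$, exactly as you do, including reading the $\T_q$ polygon condition as allowing degenerate triangles.
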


\begin{definition}
The \emph{tropical hyperfield} $\T_0$ is the tract with null set
\[
N_{\T_0}:=\bigcap_{q>0} N_{\T_q},
\]
and the \emph{degenerate triangular hyperfield} $\T_\infty$ is the tract with null set
\[
N_{\T_\infty}:=\bigcup_{q>0} N_{\T_q}.
\]
\end{definition}

We will use the following concrete description from \cite{BHKL1}.

\begin{lemma}[{\cite[Lemma 3.9]{BHKL1}}]
\label{lem:T0-Tinfty}
Let $a_1,\dots,a_n\in \R_{\ge 0}$. Then:
\begin{enumerate}
\item $\sum_i a_i \in N_{\T_0}$ if and only if the sum is identically zero or the
maximum among the $a_i$ is attained at least twice.
\item $\sum_i a_i \in N_{\T_\infty}$ if and only if the sum is identically zero, or the
maximum is attained at least twice, or at least three of the $a_i$ are nonzero.
\end{enumerate}
\end{lemma}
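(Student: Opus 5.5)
We must prove Lemma~\ref{lem:T0-Tinfty}. For each $q>0$, the condition $\sum_i a_i\in N_{\T_q}$ says that $a_1^{1/q},\dots,a_n^{1/q}$ are the side lengths of a (possibly degenerate) Euclidean $n$-gon. We use the standard polygon criterion: nonnegative reals $b_1,\dots,b_n$ are side lengths of such a polygon if and only if $2\max_i b_i\le \sum_i b_i$. This is the generalized triangle inequality, and it is implicit in \cite[Lemma 3.5]{BHKL1}. Writing $M=\max_i a_i$, the sum lies in $N_{\T_q}$ if and only if
\[
M^{1/q}\le \sum_{i\neq i_0} a_i^{1/q},
\]
where $i_0$ is an index attaining the maximum. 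If all $a_i=0$, this holds for every $q$, so we may assume $M>0$. Dividing by $M^{1/q}$, the condition becomes
\[
1\le \sum_{i\neq i_0} (a_i/M)^{1/q},
\]
with each ratio $a_i/M\in[0,1]$.

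For part (1), suppose first that the maximum is attained at some $i_1\neq i_0$. Then the right-hand side contains the term $1$, so the inequality holds for every $q>0$. Conversely, suppose the maximum is attained only once. Then each ratio $a_i/M$ with $i\neq i_0$ is strictly less than $1$, and every such term $(a_i/M)^{1/q}$ tends to $0$ as $q\to 0^+$. The finite sum therefore drops below $1$ for small $q$, and the sum fails to lie in $N_{\T_q}$ for such $q$. Hence it does not lie in the intersection $N_{\T_0}$.

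For part (2), we need the sum to lie in $N_{\T_q}$ for some $q$. Consider the case where at least three of the $a_i$ are nonzero. Then at least two indices $i\neq i_0$ have $a_i/M>0$, and each term $(a_i/M)^{1/q}$ tends to $1$ as $q\to\infty$. The sum of these terms therefore tends to a limit $\ge 2>1$, so the inequality holds for large $q$. If the maximum is attained twice, part (1) already gives membership for every $q$. Conversely, suppose the sum is not identically zero, the maximum is attained once, and at most two of the $a_i$ are nonzero. Then the right-hand side is either $0$ or the single term $(a_j/M)^{1/q}$ with $a_j<M$, which is $<1$ for every $q$. The condition therefore fails for every $q$.

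The only genuine input is the polygon criterion $2\max_i b_i\le\sum_i b_i$. It can be proved by induction on $n$, gluing a triangle onto an $(n-1)$-gon, or taken directly from the cited lemma. Everything else is an elementary limit argument in $q$.
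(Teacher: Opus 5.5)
Your proof is correct and complete. The key steps all hold: the polygon criterion $2\max_i b_i\le\sum_i b_i$, normalizing by $M$, and examining the limits of $(a_i/M)^{1/q}$ as $q\to 0^+$ (for the intersection defining $N_{\T_0}$) and as $q\to\infty$ (for the union defining $N_{\T_\infty}$). This covers every case, including zero entries and the identically zero sum.

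The paper gives no proof of this lemma; it simply imports it from \cite[Lemma 3.9]{BHKL1}. Your argument is the natural direct verification.
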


\subsection{Weak representations}

We now recall the notion of weak representation for $M$-convex sets, following
\cite{BHKL0}.

\begin{definition}
Let $J\subseteq \Delta_n^d$ be an $M$-convex set. Define
\[
\delta_J^- := \inf(J)\in \N^n,
\qquad
\delta_J^+ := \sup(J)\in \N^n,
\]
coordinatewise.
\end{definition}

\begin{definition}
Let $F$ be a tract and let $J\subseteq \Delta_n^d$ be an $M$-convex set.
A function
\[
\rho:\Delta_n^d \to F
\]
is a \emph{weak $F$-representation} of $J$ if:
\begin{enumerate}
\item $\supp(\rho)=J$, and
\item for every $\alpha\in \Delta_n^{d-2}$ and all $1\le i\le j\le k\le \ell\le n$
with
\[
\delta_J^- \le \alpha
\qquad\text{and}\qquad
\alpha + e_i+e_j+e_k+e_\ell \le \delta_J^+,
\]
the $3$-term Pl\"ucker relation
\begin{samepage}
\begin{multline*}
\rho(\alpha+e_j+e_k)\rho(\alpha+e_i+e_\ell)
\ - \
\rho(\alpha+e_i+e_k)\rho(\alpha+e_j+e_\ell)\\
\ + \
\rho(\alpha+e_i+e_j)\rho(\alpha+e_k+e_\ell)
\ \in \ N_F
\end{multline*}
\end{samepage}
holds.
\end{enumerate}
\end{definition}

\begin{definition}
Let $F$ be a tract and let $J\subseteq \Delta_n^d$ be an M-convex set. We write
$\upR_J^{\rm w}(F)$ for the set of weak $F$-representations of $J$. The multiplicative group $F^\times$ 
acts on $\upR_J^{\rm w}(F)$ by
\[
(t \cdot \rho)(\alpha) = t \cdot \rho(\alpha).
\]
We define the \emph{weak thin Schubert cell} $\Gr_J^{\rm w}(F)$ to be the quotient
\[
\Gr_J^{\rm w}(F):=\upR_J^{\rm w}(F)/F^\times .
\]
\end{definition}

Throughout this paper, we work primarily with weak representations. 
(We comment on the strong representation setting in Section~\ref{sec:strong-representations}.)
We will systematically identify a weak representation with its associated
exponential generating function $f_\rho$.

\begin{definition}
Let $J\subseteq \Delta_n^d$ be an $M$-convex set, let $F$ be a tract with underlying multiplicative group $\R_{\ge 0}$, and let
$\rho\in \upR_J^{\rm w}(F)$ be a weak representation of $J$ over $F$. We define
\[
f_\rho := \sum_{\alpha\in J} \rho(\alpha)\frac{x^\alpha}{\alpha!}.
\]
Note that when $J$ is a matroid, this is just the usual multi-affine generating polynomial of the
coefficient function $\rho$.
\end{definition}

\section{The universal upper containment}
\label{sec:universal-upper}

Let $F$ be a tract with underlying multiplicative monoid $\R_{\ge 0}$, and recall from the Introduction that the normalization operator $\upN$ sends the homogeneous polynomial
$\sum_{\alpha\in J}\rho_\alpha x^\alpha$ with coefficients in $F$
to $\sum_{\alpha\in J}\rho_\alpha \frac{x^\alpha}{\alpha!}$.
It induces a self-map on the projective space of homogeneous polynomials with support $J$ modulo the action of $F^\times$. 
The goal of this section is to prove the upper bound in the sandwich theorem.

\begin{theorem}[Universal upper containment]
\label{thm:universal-upper-containment}
Let $J\subseteq \Delta_n^d$ be an M-convex set. Then
\[
\upL_J \subseteq \upN \upR_J^{\rm w}(\T_2).
\]
Equivalently, after projectivizing we have
\[
\P\upL_J \subseteq \upN \Gr_J^{\rm w}(\T_2).
\]
In particular, if $J=M$ is a matroid, then $\upN$ acts trivially and
\[
\P\upL_M \subseteq \Gr_M^{\rm w}(\T_2).
\]
\end{theorem}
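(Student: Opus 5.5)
The plan is to show that if $f=\sum_{\alpha\in J}c_\alpha\frac{x^\alpha}{\alpha!}$ is Lorentzian with support $J$, then the coefficient function $\rho:=c$, extended by zero off $J$, is a weak $\T_2$-representation of $J$. Since $f=\upN(\rho)$ by definition of $\upN$, this gives $\upL_J\subseteq \upN\upR^{\rm w}_J(\T_2)$. Projectivizing then gives the second form, and in the matroid case $\alpha!=1$, so $\upN$ is trivial.

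Condition (1), $\supp(\rho)=J$, holds by assumption, so only the $3$-term Plücker relations need checking. Fix $\alpha\in\Delta_n^{d-2}$ and $i\le j\le k\le \ell$. The normalized form is chosen so that the Hessian of $\partial^\alpha f$ is the constant matrix $A=(a_{pq})$ with $a_{pq}=c_{\alpha+e_p+e_q}$. By Theorem~\ref{thm:BH-limit-free}, $\partial^\alpha f\in\upL^2_n$, so $A$ is a nonnegative symmetric matrix with at most one positive eigenvalue. By Corollary~\ref{cor:Tq-Heron} with $q=2$, membership of the Plücker sum in $N_{\T_2}$ means that $\sqrt{a_{jk}a_{i\ell}}$, $\sqrt{a_{ik}a_{j\ell}}$ and $\sqrt{a_{ij}a_{k\ell}}$ are the sides of a possibly degenerate triangle. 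By symmetry under relabelling $\{i,j,k,\ell\}$, it suffices to prove the single inequality
\[
\sqrt{a_{ij}a_{k\ell}}\le \sqrt{a_{ik}a_{j\ell}}+\sqrt{a_{i\ell}a_{jk}}
\]
for every Lorentzian nonnegative $A$ and all (not necessarily distinct) indices.

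The key tool is the reverse Cauchy--Schwarz inequality for a form with at most one positive eigenvalue: if $u^\top Au>0$, then $(u^\top Av)^2\ge (u^\top Au)(v^\top Av)$ for every $v$. Take $u=xe_i+ye_j$ and $v=ze_k+we_\ell$ with $x,y,z,w>0$. By nonnegativity of the entries, $u^\top Au\ge 2xy\,a_{ij}$ when $i\ne j$. When $i=j$ we have $u=(x+y)e_i$, and $u^\top Au=(x+y)^2a_{ii}\ge 4xy\,a_{ii}$; this is exactly the quantity appearing in the Plücker relation for repeated indices, so both cases are handled uniformly. Similarly $v^\top Av\ge 2zw\,a_{k\ell}$ (or $4zw\,a_{kk}$). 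If $a_{ij}a_{k\ell}=0$ there is nothing to prove. Otherwise both forms are positive, and we get
\[
u^\top Av=xz\,a_{ik}+xw\,a_{i\ell}+yz\,a_{jk}+yw\,a_{j\ell}\ \ge\ 2\sqrt{xyzw}\,\sqrt{a_{ij}a_{k\ell}}.
\]
Now divide by $\sqrt{xyzw}$ and set $r=x/y$, $s=z/w$. The left side becomes
\[
\sqrt{rs}\,a_{ik}+\tfrac{1}{\sqrt{rs}}\,a_{j\ell}+\sqrt{r/s}\,a_{i\ell}+\sqrt{s/r}\,a_{jk}.
\]
Choosing $rs=a_{j\ell}/a_{ik}$ and $r/s=a_{jk}/a_{i\ell}$ (when these entries are positive) makes it equal to $2\sqrt{a_{ik}a_{j\ell}}+2\sqrt{a_{i\ell}a_{jk}}$, which yields the desired inequality. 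If some of these four entries vanish, one instead lets the parameters tend to $0$ or $\infty$, and the corresponding terms can be made arbitrarily small.

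The main point needing care is this degenerate bookkeeping, together with the repeated-index cases $i=j$ or $k=\ell$ (and $j=k$, which needs the same check). It also has to be confirmed that the index constraints $\delta_J^-\le\alpha$ and $\alpha+e_i+e_j+e_k+e_\ell\le\delta_J^+$ only restrict which relations must hold, so they cause no difficulty. Away from these checks, the argument is just the reverse Cauchy--Schwarz estimate above.
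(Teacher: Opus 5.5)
Your reverse Cauchy--Schwarz argument is correct, but it proves a weaker statement than the theorem. It shows that the coefficients $c_\beta$ of $f=\sum_\beta c_\beta\,x^\beta/\beta!$ satisfy all weak $\T_2$-relations, i.e.\ that $f=f_c\in\upR_J^{\rm w}(\T_2)$. That is the paper's Proposition~\ref{prop:unnormalized-upper-containment}, which the paper calls the \emph{a priori weaker} unnormalized containment. The step ``$f=\upN(\rho)$ with $\rho=c$'' uses the wrong identification. In the paper a weak representation $\rho$ is identified with its exponential generating function $f_\rho$, and $\upN$ is the linear map $x^\beta\mapsto x^\beta/\beta!$. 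Hence $\upN(f_c)=\sum_\beta c_\beta\,x^\beta/(\beta!)^2\neq f$. The assertion $f\in\upN\upR_J^{\rm w}(\T_2)$ actually means that $\upN^{-1}f=f_\rho$ with $\rho(\beta)=\beta!\,c_\beta$ is a weak $\T_2$-representation. This is a stronger assertion: by Lemma~\ref{lem:N-preserves-weak-reps}, $\upN\upR_J^{\rm w}(\T_2)\subseteq\upR_J^{\rm w}(\T_2)$, and the inclusion is strict in general. It is also the only reading compatible with the sandwich theorem. Under your reading, the lower containment $\upN\upR_J^{\rm w}(\T_q)\subseteq\upL_J$ would fail for $J=\Delta_2^d$ and every $q>0$, by the $t=0$ part of Proposition~\ref{prop:bivariate-Nt}. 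So your proof is complete only in the multi-affine (matroid) case.

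The missing step is to keep track of the factorials. For $p\neq q$ one has $\rho(\alpha+e_p+e_q)=\alpha!\,(\alpha_p+1)(\alpha_q+1)\,a_{pq}$. For the diagonal, $\rho(\alpha+2e_p)=\alpha!\,(\alpha_p+1)^2\lambda_p\,a_{pp}$ with $\lambda_p=(\alpha_p+2)/(\alpha_p+1)\in(1,2]$. The factors $\alpha!(\alpha_p+1)$ are common to all three Pl\"ucker products. So you must check the $\T_2$-condition for $A$ with each diagonal entry multiplied by $\lambda_p$. That modified matrix need not be Lorentzian, so your inequality cannot simply be applied to it.

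Distinct indices are unaffected. For repeated indices you need $\lambda_i a_{ii}a_{k\ell}\le 4a_{ik}a_{i\ell}$ and $\lambda_i\lambda_k a_{ii}a_{kk}\le 4a_{ik}^2$. This requires $a_{ii}a_{k\ell}\le 2a_{ik}a_{i\ell}$ and $a_{ii}a_{kk}\le a_{ik}^2$. Your uniform estimate only gives $a_{ii}a_{k\ell}\le 4a_{ik}a_{i\ell}$ and $a_{ii}a_{kk}\le 4a_{ik}^2$.

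Your method does yield the sharp bounds if you do not weaken $u^\top Au=(x+y)^2a_{ii}\ge 4xy\,a_{ii}$ to $2xy\,a_{ii}$. Then $(u^\top Av)^2\ge 8xyzw\,a_{ii}a_{k\ell}$, respectively $16xyzw\,a_{ii}a_{kk}$, and the same optimization gives exactly what is needed. The paper obtains these bounds from the determinant $d(2bc-ad)\ge0$ of a $3\times3$ principal submatrix with two diagonal entries deleted, and from a $2\times2$ principal minor.

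With this repair, your argument is a pleasant uniform alternative to the paper's case analysis. For distinct indices the paper instead deletes the diagonal and uses the explicit Heron-type $4\times4$ determinant of Proposition~\ref{prop:U24}.
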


We first prove an \emph{a priori} weaker unnormalized containment; the normalized statement will then follow by applying the same argument to
$\upN^{-1}f$.  

\subsection{The degenerate cases $U_{2,2}$ and $U_{2,3}$}

The rank-$2$ uniform matroids on $2$ and $3$ elements are handled by the following lemma.

\begin{lemma}
\label{lem:U22-U23}
For $n=2,3$ and every $q\ge 0$, one has
\[
\upL_{U_{2,n}} = \upR_{U_{2,n}}^{\rm w}(\T_q),
\qquad
\P\upL_{U_{2,n}} = \Gr_{U_{2,n}}^{\rm w}(\T_q).
\]
\end{lemma}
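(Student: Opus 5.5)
The plan is to show that for $n=2,3$ both sides of the claimed equality are simply the set of all coefficient vectors with support exactly $J=U_{2,n}$ and positive entries. Neither side imposes any condition beyond the support. The projective statement then follows immediately by passing to the quotient by $\R_{>0}=\T_q^\times$.

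\emph{Representation side.} I would compute $\delta_J^-$ and $\delta_J^+$ and check that the index set of $3$-term Pl\"ucker relations in the definition of weak representation is empty.
\begin{itemize}
\item For $n=2$, $J=\{e_1+e_2\}$, so $\delta_J^-=\delta_J^+=(1,1)$. The only $\alpha\in\Delta_2^0$ is $\alpha=0$, and the condition $\delta_J^-\le\alpha$ already fails.
\item For $n=3$, $J=\{e_1+e_2,e_1+e_3,e_2+e_3\}$, so $\delta_J^-=0$ and $\delta_J^+=(1,1,1)$. The only candidate is again $\alpha=0$, and $e_i+e_j+e_k+e_\ell$ has coordinate sum $4>3$, so it can never be $\le(1,1,1)$.
\end{itemize}
Hence in both cases $\upR^{\rm w}_{U_{2,n}}(\T_q)$ consists of all $\rho$ with $\supp(\rho)=J$. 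This holds for every $q\ge0$, including $\T_0$, since the null set never enters.

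\emph{Lorentzian side.} Since $d=2$, Theorem~\ref{thm:BH-limit-free} says that $f$ is Lorentzian if and only if $\supp(f)$ is M-convex and $f\in\upL_n^2$, i.e.\ $H_f$ is Lorentzian. I would verify the latter with the principal-minor criterion, Theorem~\ref{thm:principal-minor-criterion}.
\begin{itemize}
\item For $n=2$, $f=c\,x_1x_2$ with $c>0$ has $H_f=\begin{pmatrix}0&c\\ c&0\end{pmatrix}$. Its eigenvalues are $\pm c$, so $H_f$ is Lorentzian.
\item For $n=3$, $f=a x_1x_2+b x_1x_3+c x_2x_3$ with $a,b,c>0$ has a zero-diagonal Hessian with off-diagonal entries $a,b,c$.
  \begin{itemize}
  \item The $1\times1$ principal minors are $0$.
  \item The $2\times2$ principal minors are $-a^2,-b^2,-c^2$, so $(-1)^2\det\le 0$.
  \item The full determinant is $2abc>0$, so $(-1)^3\det=-2abc\le 0$.
  \end{itemize}
  Hence $H_f$ is Lorentzian.
\end{itemize}
The set $J$ is M-convex in both cases, being the basis set of a matroid. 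So every positive coefficient vector on $J$ gives a Lorentzian polynomial. Conversely, every element of $\upL_{U_{2,n}}$ has support $J$ by definition. This proves the equality.

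The only delicate point is bookkeeping. One must identify a weak representation $\rho$ with its generating polynomial $f_\rho$; this is harmless because $\alpha!=1$ on $J$. One must also make sure that the convention "$\upL_J$ = Lorentzian polynomials with support exactly $J$" matches "$\supp(\rho)=J$". I do not expect any genuine obstacle; the heart of the argument is just the observation that the range of Pl\"ucker relations is empty.
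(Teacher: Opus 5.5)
Your proposal is correct and follows essentially the same route as the paper. The paper likewise notes that there are no nontrivial three-term Pl\"ucker relations for $n=2,3$, and checks Lorentzianity of the Hessians $\begin{pmatrix}0&c\\ c&0\end{pmatrix}$ and the $3\times 3$ zero-diagonal matrix via the principal-minor criterion (using $\det = 2abc>0$). Your explicit computation of $\delta_J^{\pm}$ just makes precise why the range of Pl\"ucker relations is empty.
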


\begin{proof}
For $n=2$, the matroid $U_{2,2}$ has a single basis, so there are no nontrivial
Pl\"ucker relations and the weak representation space consists of a single point up to
scaling. The corresponding quadratic form is $c\,x_1x_2$, whose Hessian
\[
\begin{pmatrix}
0 & c\\
c & 0
\end{pmatrix}
\]
has exactly one positive eigenvalue when $c>0$. Thus it is Lorentzian.

For $n=3$, there are again no nontrivial $3$-term Pl\"ucker relations. A quadratic form
with support $U_{2,3}$ has Hessian
\[
A=
\begin{pmatrix}
0 & a & b\\
a & 0 & c\\
b & c & 0
\end{pmatrix}
\qquad (a,b,c>0).
\]
All principal minors of $A$ satisfy the sign conditions of
Theorem~\ref{thm:principal-minor-criterion}: the $1\times 1$ minors vanish, the
$2\times 2$ principal minors are negative, and
\[
\det(A)=2abc>0.
\]
Hence $A$ is Lorentzian.
\end{proof}

\subsection{The case of $U_{2,4}$}

The first nontrivial case is $U_{2,4}$, where it turns out that satisfying the unique $3$-term Pl\"ucker relation over $\T_2$ is
exactly equivalent to Lorentzianity.

\begin{proposition}
\label{prop:U24}
We have
\[
\upL_{U_{2,4}} = \upR_{U_{2,4}}^{\rm w}(\T_2),
\qquad
\P\upL_{U_{2,4}} = \Gr_{U_{2,4}}^{\rm w}(\T_2).
\]
\end{proposition}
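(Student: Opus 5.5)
We reduce both sides of Proposition~\ref{prop:U24} to explicit conditions on six positive numbers and check that the conditions coincide. A polynomial with support $U_{2,4}$ is
\[
f=\sum_{i<j} a_{ij}x_ix_j \qquad (a_{ij}>0),
\]
with zero-diagonal Hessian $A=(a_{ij})$. There is one $3$-term Pl\"ucker relation, with $\alpha=0$ and $(i,j,k,\ell)=(1,2,3,4)$. By Corollary~\ref{cor:Tq-Heron} with $q=2$, this relation holds over $\T_2$ if and only if the three numbers
\[
x:=a_{12}a_{34},\qquad y:=a_{13}a_{24},\qquad z:=a_{14}a_{23}
\]
satisfy
\[
x^2+y^2+z^2\le 2(xy+xz+yz).
\]
(The signs in the Pl\"ucker relation are irrelevant in $\T_q$, since $-1=1$ there.) Equivalently, $\sqrt{x},\sqrt{y},\sqrt{z}$ are the side lengths of a possibly degenerate triangle. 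So the claim is that $A$ is Lorentzian if and only if this inequality holds.

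For Lorentzianity we apply Theorem~\ref{thm:principal-minor-criterion} to $A$.
\begin{itemize}
\item The $1\times 1$ principal minors are zero.
\item The $2\times 2$ principal minors are $-a_{ij}^2<0$.
\item The $3\times 3$ principal minors equal $2a_{ij}a_{jk}a_{ik}>0$, exactly as in Lemma~\ref{lem:U22-U23}, so they have the required sign.
\end{itemize}
Hence $A$ is Lorentzian if and only if $\det A\le 0$.

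The main step is computing $\det A$ for a zero-diagonal symmetric $4\times 4$ matrix. Expanding, only permutations without fixed points contribute.
\begin{itemize}
\item The three double transpositions, e.g.\ $(12)(34)$, contribute $x^2$, $y^2$, $z^2$ with sign $+$.
\item The six $4$-cycles come in inverse pairs, each with sign $-1$. The cycle $(1234)$ and its inverse give $a_{12}a_{23}a_{34}a_{14}=xz$. The other two pairs give $xy$ and $yz$.
\end{itemize}
Therefore
\[
\det A = x^2+y^2+z^2-2(xy+yz+xz).
\]
This is exactly the negative of the Heron expression. So $\det A\le 0$ if and only if the $\T_2$-Pl\"ucker relation holds, which gives $\upL_{U_{2,4}}=\upR^{\rm w}_{U_{2,4}}(\T_2)$.

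Both sides are invariant under positive scaling, so the projectivized statement follows immediately. On $U_{2,4}$ the operator $\upN$ acts trivially, so no normalization issue arises. The only real obstacle is the determinant identity, which is a finite check. If any sign in the expansion is in doubt, we would verify it against the specialization $a_{ij}=1$: there $\det A=-3$, and the formula gives $3-6=-3$.
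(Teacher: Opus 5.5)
Your proposal is correct and follows the paper's proof: the principal-minor criterion reduces Lorentzianity of the zero-diagonal Hessian to $\det A\le 0$, and the identity $\det A = x^2+y^2+z^2-2(xy+yz+xz)$ is exactly the negative of the $q=2$ criterion from Corollary~\ref{cor:Tq-Heron}. Your derangement expansion just makes explicit the ``direct computation'' of the determinant that the paper leaves to the reader.
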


\begin{proof}
A quadratic form with non-negative coefficients and support $U_{2,4}$ corresponds to a zero-diagonal
symmetric matrix
\[
A=
\begin{pmatrix}
0 & a & b & c\\
a & 0 & d & e\\
b & d & 0 & f\\
c & e & f & 0
\end{pmatrix},
\qquad a,b,c,d,e,f>0.
\]
By Theorem~\ref{thm:principal-minor-criterion}, such a matrix is Lorentzian if and only if
all principal minors have the alternating sign pattern. The conditions for principal minors
of size $1,2,3$ are automatic, so the only nontrivial condition is
\[
\det(A)\le 0.
\]
A direct computation gives
\[
\det(A)
=
(a f)^2 + (b e)^2 + (c d)^2
-2\bigl(a f\, b e + a f\, c d + b e\, c d\bigr).
\]
Hence $\det(A)\le 0$ is equivalent to
\[
(a f)^2 + (b e)^2 + (c d)^2
\le
2\bigl(a f\, b e + a f\, c d + b e\, c d\bigr).
\]
By Corollary~\ref{cor:Tq-Heron} with $q=2$, this is exactly the condition that
\[
\sqrt{af},\ \sqrt{be},\ \sqrt{cd}
\]
are the side lengths of a (possibly degenerate) Euclidean triangle, i.e., exactly the weak
$\T_2$-Pl\"ucker relation for $U_{2,4}$.
\end{proof}

\subsection{The unnormalized upper containment}

We now prove the unnormalized upper containment.
The following lemma is standard; we provide a proof for the reader's convenience.

\begin{lemma}
\label{lem:diagonal-deletion}
Let $H$ be a real symmetric matrix with nonnegative entries and at most one
positive eigenvalue.  Let $D$ be a nonnegative diagonal matrix such that
$0\le D_{ii}\le H_{ii}$ for all $i$.  Then $H-D$ also has nonnegative
entries and at most one positive eigenvalue.
\end{lemma}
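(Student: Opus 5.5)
The cleanest route is Weyl's eigenvalue inequalities, or equivalently the Courant--Fischer min-max characterization. Nonnegativity of the entries of $H-D$ is immediate. Off the diagonal, $H-D$ agrees with $H$, whose entries are nonnegative. On the diagonal, $(H-D)_{ii}=H_{ii}-D_{ii}\ge 0$ by the hypothesis $D_{ii}\le H_{ii}$. So the only substantive point is the eigenvalue count.

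For that, write $H-D=H+(-D)$, where $-D$ is negative semidefinite because $D_{ii}\ge 0$. Let $\lambda_1(X)\ge\lambda_2(X)\ge\cdots\ge\lambda_n(X)$ denote the ordered eigenvalues of a real symmetric $X$. Courant--Fischer gives
\[
\lambda_2(X)=\min_{\dim W=n-1}\ \max_{0\neq x\in W}\frac{x^\top X x}{x^\top x}.
\]
Since $x^\top(H-D)x\le x^\top H x$ for every $x$, we obtain $\lambda_2(H-D)\le\lambda_2(H)$. The hypothesis that $H$ has at most one positive eigenvalue says exactly that $\lambda_2(H)\le 0$. Hence $\lambda_2(H-D)\le 0$, so $H-D$ has at most one positive eigenvalue.

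For a version avoiding min-max, one can argue by contradiction. If $H-D$ had two positive eigenvalues, it would be positive definite on some $2$-dimensional subspace $V$. Then for every nonzero $x\in V$,
\[
x^\top Hx = x^\top(H-D)x + x^\top Dx > 0,
\]
so $H$ would be positive definite on $V$. That contradicts $H$ having at most one positive eigenvalue: $H$ is negative semidefinite on an $(n-1)$-dimensional subspace, and that subspace must meet $V$ nontrivially.

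There is no real obstacle; the only step needing care is this dimension-counting or monotonicity argument. Notably, the proof uses only $D\succeq 0$. The upper bound $D_{ii}\le H_{ii}$ serves solely to keep the diagonal entries nonnegative, so that $H-D$ remains within the nonnegative-entry setting of Theorem~\ref{thm:principal-minor-criterion}.
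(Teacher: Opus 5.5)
Your proof is correct, and its contradiction form is exactly the paper's argument: from $x^\top(H-D)x\le x^\top Hx$, any $2$-dimensional subspace on which $H-D$ is positive definite is also one on which $H$ is positive definite. The Courant--Fischer version is the same monotonicity in another form, and your dimension count ($\dim V+(n-1)>n$) spells out a step the paper leaves implicit.
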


\begin{proof}
The assertion about entries is immediate.  Since $D\succeq 0$, we have
\[
        x^T(H-D)x \le x^T Hx
        \qquad\text{for all }x.
\]
If $H-D$ had at least two positive eigenvalues, then its quadratic form would
be positive definite on some two-dimensional subspace.  The displayed
inequality would then imply that the quadratic form associated to $H$ is also
positive definite on this same subspace, contradicting the fact that $H$ has
at most one positive eigenvalue.  Hence $H-D$ has at most one positive
eigenvalue.
\end{proof}

\begin{proposition}
\label{prop:unnormalized-upper-containment}
Let $J\subset \Delta_n^d$ be an $M$-convex set. Then
\[
\upL_J\subseteq \upR_J^{\rm w}(\mathbb T_2).
\]
Consequently, after projectivizing,
\[
\mathbb P \upL_J\subseteq \operatorname{Gr}_J^{\rm w}(\mathbb T_2).
\]
\end{proposition}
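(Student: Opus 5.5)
The plan is to reduce every $3$-term Pl\"ucker relation to a sign condition on a small principal minor of a Hessian of a quadratic derivative of $f$. Let $f=\sum_{\beta\in J}c_\beta\,x^\beta/\beta!\in\upL_J$ and set $\rho(\beta):=c_\beta$, so that $f=f_\rho$; then $\supp(\rho)=J$. Fix $\alpha\in\Delta_n^{d-2}$ and $i\le j\le k\le \ell$ as in the definition of weak representation.

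\textbf{Step 1: reduce to a quadratic form.} By Theorem~\ref{thm:BH-limit-free}, $g:=\partial^\alpha f\in\upL^2_n$. Since $\partial_m(x^\beta/\beta!)=x^{\beta-e_m}/(\beta-e_m)!$, the Hessian of $g$ is
\[
H_{pq}=c_{\alpha+e_p+e_q}=\rho(\alpha+e_p+e_q).
\]
This matrix has nonnegative entries and at most one positive eigenvalue. Every principal submatrix of $H$ inherits this property, by Theorem~\ref{thm:principal-minor-criterion} or Cauchy interlacing. By Lemma~\ref{lem:diagonal-deletion}, so does any principal submatrix with some diagonal entries replaced by $0$.

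\textbf{Step 2: the case $i<j<k<\ell$.} Take the $4\times4$ principal submatrix on $\{i,j,k,\ell\}$ and zero out its diagonal. The result is a zero-diagonal Lorentzian matrix, so Theorem~\ref{thm:principal-minor-criterion} gives $\det\le 0$. The determinant expansion in the proof of Proposition~\ref{prop:U24} is valid for arbitrary nonnegative entries. Combined with Corollary~\ref{cor:Tq-Heron} at $q=2$, it says exactly that the square roots of the three products
\[
\rho(\alpha+e_j+e_k)\rho(\alpha+e_i+e_\ell),\quad
\rho(\alpha+e_i+e_k)\rho(\alpha+e_j+e_\ell),\quad
\rho(\alpha+e_i+e_j)\rho(\alpha+e_k+e_\ell)
\]
form a possibly degenerate triangle. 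That is the $\T_2$-relation.

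\textbf{Step 3: repeated indices.} Here I use the same tools on smaller minors and expect only bookkeeping; these cases are where I expect the main care to be needed. Abbreviate $h_{pq}:=H_{pq}$.
\begin{itemize}
\item If $i=j=k$ or $j=k=\ell$, the three products coincide, and three equal lengths always form a triangle.
\item If $i=j$ and $k=\ell$, the relation reduces to $h_{ii}h_{kk}\le 4h_{ik}^2$. This follows from the $2\times 2$ minor condition $h_{ii}h_{kk}\le h_{ik}^2$.
\item In the remaining cases, namely $i=j<k<\ell$, $i<j=k<\ell$, and $i<j<k=\ell$, two of the products are equal. The relation becomes $h_{pp}h_{rs}\le 4h_{pr}h_{ps}$ for distinct $p,r,s$, where $p$ is the repeated index.
\end{itemize}
For the last family, take the $3\times3$ principal submatrix on $\{p,r,s\}$ and zero out the diagonal entries at $r,s$ using Lemma~\ref{lem:diagonal-deletion}. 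Its determinant is $-h_{pp}h_{rs}^2+2h_{pr}h_{ps}h_{rs}$. Theorem~\ref{thm:principal-minor-criterion} requires this to be $\ge0$, giving $h_{pp}h_{rs}\le 2h_{pr}h_{ps}$ when $h_{rs}>0$. The case $h_{rs}=0$ is trivial. This bound is stronger than needed.

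\textbf{Step 4: projectivize.} All conditions are invariant under positive scaling, so the projectivized statement follows.
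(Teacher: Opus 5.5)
Your proposal is correct and follows essentially the same route as the paper. You pass to the Hessian of $\partial^\alpha f$ and split into the same four index configurations: the zero-diagonal $4\times 4$ determinant with the Heron criterion at $q=2$, the $3\times 3$ minor after deleting two diagonal entries (giving $h_{pp}h_{rs}\le 2h_{pr}h_{ps}$), the $2\times 2$ minor, and the trivial coincident case. Using the determinant expansion directly instead of citing Proposition~\ref{prop:U24} is a small improvement in rigor: that proposition is stated for strictly positive entries, whereas some $\rho(\alpha+e_p+e_q)$ may vanish, and your argument covers this.
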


\begin{proof}
Let $f\in \upL_J$, and write
\[
f=\sum_{\beta\in J}\rho(\beta)\frac{x^\beta}{\beta!}.
\]
By Theorem~\ref{thm:BH-limit-free}, for every $\alpha\in\Delta_n^{d-2}$, the quadratic derivative
$q := \partial^\alpha f$ is Lorentzian. We show that the coefficient function $\rho$ satisfies all weak
$\mathbb T_2$-Plücker relations.

Write $q$ in the normalized quadratic basis as
\[
q
=
\sum_{i=1}^n a_i \frac{x_i^2}{2}
+
\sum_{1\le i<j\le n} b_{ij} x_i x_j,
\]
and let
\[
H=
\begin{pmatrix}
a_1 & b_{12} & \cdots & b_{1n}\\
b_{12} & a_2 & \cdots & b_{2n}\\
\vdots & \vdots & \ddots & \vdots\\
b_{1n} & b_{2n} & \cdots & a_n
\end{pmatrix}
\]
be the Hessian of $q$. Since $q$ is Lorentzian, the matrix $H$ is Lorentzian.

Fix indices $1\le i\le j\le k\le \ell\le n$. The corresponding weak $\T_2$-relation is
the assertion that the three nonnegative numbers
\[
T_1:=\rho_q(e_j+e_k)\rho_q(e_i+e_\ell),\qquad
T_2:=\rho_q(e_i+e_k)\rho_q(e_j+e_\ell),\qquad
T_3:=\rho_q(e_i+e_j)\rho_q(e_k+e_\ell)
\]
satisfy the $\T_2$-triangle condition, i.e., that
$\sqrt{T_1},\ \sqrt{T_2},\ \sqrt{T_3}$
are the side lengths of a Euclidean triangle.
We distinguish four different cases.

\smallskip

\emph{Case 1: $i,j,k,\ell$ are pairwise distinct.}
Then only off-diagonal coefficients occur, and the relevant quadratic is the multi-affine
part of the restriction of $q$ to the four variables $x_i,x_j,x_k,x_\ell$. 
The multi-affine part is obtained by deleting the square terms; by
Lemma~\ref{lem:diagonal-deletion}, this operation preserves Lorentzianity (see also \cite[Corollary 3.5]{Branden-Huh20}). Therefore 
the resulting $4$-variable multi-affine quadratic is Lorentzian.
By Proposition~\ref{prop:U24}, its coefficients satisfy the weak $\T_2$-relation.
Equivalently, the triple
\[
b_{jk}b_{i\ell},\qquad b_{ik}b_{j\ell},\qquad b_{ij}b_{k\ell}
\]
satisfies the required $\T_2$-condition.

\smallskip

\emph{Case 2: exactly one index is repeated.}

Suppose next that exactly one index is repeated.  Up to relabeling, the three
terms in the corresponding weak Plücker relation have the form
\[
bc,\qquad bc,\qquad ad .
\]
More concretely, these quantities arise from a quadratic Lorentzian polynomial
whose Hessian has a principal submatrix on three indices with entries $a,b,c,d$
in the corresponding positions.  Deleting the two diagonal entries not involved
in the repeated index, Lemma~\ref{lem:diagonal-deletion} shows that the matrix
\[
M=
\begin{pmatrix}
a & b & c\\
b & 0 & d\\
c & d & 0
\end{pmatrix}
\]
still has nonnegative entries and at most one positive eigenvalue.  Hence
$\det M\ge 0$.  Since
\[
\det M=d(2bc-ad),
\]
we obtain $ad\le 2bc$ if $d>0$, while the case $d=0$ is immediate.
Therefore $ad\le 4bc$, which is precisely the required weak $\T_2$-relation
for the three terms $bc,bc,ad$.

\smallskip

\emph{Case 3: two pairs of indices are repeated.}
Then the three terms are of the form
\[
b^2,\qquad b^2,\qquad ac,
\]
coming from a $2\times 2$ principal submatrix
\[
N=
\begin{pmatrix}
a & b\\
b & c
\end{pmatrix}
\]
of $H$. Since $H$ is Lorentzian, so is $N$, and
Theorem~\ref{thm:principal-minor-criterion} gives
\[
\det(N)=ac-b^2\le 0.
\]
Thus $ac\le b^2$, hence a fortiori $ac\le 4b^2$, which is exactly the required
$\T_2$-relation.

\smallskip

\emph{Case 4: at least three indices are equal.}
Then all three terms coincide, so the
weak $\T_2$-relation is automatic.

\smallskip

It follows that $\rho$ is a weak $\mathbb T_2$-representation of $J$ and thus
\[
\upL_J\subseteq \upR_J^{\rm w}(\mathbb T_2).\qedhere    
\]
\end{proof}

\subsection{The normalized upper containment}\label{sec:defnorm}

Recall that $\upN$ is the linear operator on homogeneous polynomials defined on monomials by
\[
\upN(x^\alpha)=\frac{x^\alpha}{\alpha!}.
\]
The following lemma shows that $\upN$ preserves weak $\T_q$-representations for every
$q\ge 0$.

\begin{lemma}
\label{lem:N-preserves-weak-reps}
Let $J\subseteq \Delta_n^d$ be an M-convex set and let $0\le q\le \infty$. If
\[
\rho:\Delta_n^d\to \R_{\ge 0}
\]
is a weak $\T_q$-representation of $J$, then so is
\[
\rho^\sharp(\beta):=\frac{\rho(\beta)}{\beta!}.
\]
Equivalently,
\[
\upN \upR_J^{\rm w}(\T_q)\subseteq \upR_J^{\rm w}(\T_q).
\]
\end{lemma}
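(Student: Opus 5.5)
The plan is to check the two conditions in the definition of a weak $\T_q$-representation directly for $\rho^\sharp$, for each $q$ separately. The first condition is immediate: $\beta!>0$, so $\supp(\rho^\sharp)=\supp(\rho)=J$. The admissible index data $(\alpha,i\le j\le k\le \ell)$ depend only on $\delta_J^\pm$, so $\rho$ and $\rho^\sharp$ are subject to the same list of $3$-term Pl\"ucker relations. It therefore suffices to show that each relation for $\rho$ implies the corresponding one for $\rho^\sharp$.

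Fix $\alpha$ and $i\le j\le k\le \ell$. Write $T_1,T_2,T_3$ for the three products in the relation for $\rho$, and $T_1^\sharp,T_2^\sharp,T_3^\sharp$ for those of $\rho^\sharp$. Then $T_m^\sharp=T_m/c_m$, where $c_m$ is the product of the two factorials, for instance $c_1=(\alpha+e_j+e_k)!\,(\alpha+e_i+e_\ell)!$. The key computation is to write each $c_m$ as $(\alpha!)^2$ times a product of factors $(\alpha_p+1)$ or $(\alpha_p+1)(\alpha_p+2)$, according to how many of the two added unit vectors hit coordinate $p$. I will then split into the same four cases as in Proposition~\ref{prop:unnormalized-upper-containment}.
\begin{enumerate}
\item If $i,j,k,\ell$ are pairwise distinct, all three $c_m$ equal $(\alpha!)^2(\alpha_i+1)(\alpha_j+1)(\alpha_k+1)(\alpha_\ell+1)$. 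The relation is rescaled by a common positive constant, and null sets of the tracts $\T_q$ are invariant under multiplication by positive scalars, since these tracts have multiplicative group $\R_{>0}$ and the null set is an ideal.
\item If exactly one index is repeated, say $i=j$, two terms coincide. The terms become $x,x,y$ with $y=\rho(\alpha+2e_i)\rho(\alpha+e_k+e_\ell)$, and the corresponding $c$ for $y$ exceeds the common one by the factor $(\alpha_i+2)/(\alpha_i+1)\ge 1$.
\item If two pairs are repeated ($i=j<k=\ell$), the pattern is the same with the extra factor $\frac{(\alpha_i+2)(\alpha_k+2)}{(\alpha_i+1)(\alpha_k+1)}\ge1$.
\item If at least three indices are equal, the three terms are identical, so the relation holds automatically.
\end{enumerate}
The other placements of the repeated index in Cases 2 and 3 are handled the same way.

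Thus, after dividing by a common constant, the normalized relation is obtained from the old one $x,x,y$ by replacing $y$ with $y'=\lambda y$ for some $0<\lambda\le 1$. It remains to prove a monotonicity claim: if $x+x+y\in N_{\T_q}$ and $0\le y'\le y$, then $x+x+y'\in N_{\T_q}$. For $0<q<\infty$, by Corollary~\ref{cor:Tq-Heron} (or directly from the polygon description), $x+x+y\in N_{\T_q}$ is equivalent to $y^{1/q}\le 2x^{1/q}$, which is clearly preserved when $y$ decreases. For $q=0$, Lemma~\ref{lem:T0-Tinfty} reduces membership to $y\le x$ (the maximum attained twice), which is again preserved. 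For $q=\infty$, if $y'>0$ the three terms are nonzero, and if $y'=0$ the maximum $x$ is attained twice; either way the sum lies in $N_{\T_\infty}$.

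I expect the only real care to be in the factorial bookkeeping of Cases 2 and 3. The crucial point is to confirm that the term in which the repeated indices are paired together always picks up the larger factorial, so that normalization only shrinks it; the remaining steps are routine.
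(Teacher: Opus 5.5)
Your proposal is correct and follows essentially the same route as the paper. You compare the factorial denominators of the three Pl\"ucker products, note that they coincide when the indices are distinct, and observe that the term pairing the repeated indices picks up the larger denominator, so only the odd term shrinks; your ratio $(\alpha_i+2)/(\alpha_i+1)\ge 1$ is exactly the paper's log-convexity inequality $(m+1)!^2\le (m+2)!\,m!$. Your explicit treatment of the endpoints $q=0$ and $q=\infty$ is more careful than the paper's one-line remark, but in the $\T_\infty$ case you should say why $y'>0$ forces $x>0$: this holds because $0+0+y\notin N_{\T_\infty}$ for $y>0$.
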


\begin{proof}
It is enough to treat the case $0<q<\infty$; the endpoint cases
$q=0$ and $q=\infty$ follow from the corresponding definitions by the
same argument, with the triangle inequality interpreted in the tropical
limit.

Fix $\alpha\in \Delta_n^{d-2}$ and indices
$1\le i\le j\le k\le \ell\le n$ such that
\[
\delta_J^- \le \alpha
\qquad\text{and}\qquad
\alpha+e_i+e_j+e_k+e_\ell \le \delta_J^+.
\]
For $\rho$, the three Plücker products are
\[
T_1=\rho(\alpha+e_j+e_k)\rho(\alpha+e_i+e_\ell),
\]
\[
T_2=\rho(\alpha+e_i+e_k)\rho(\alpha+e_j+e_\ell),
\]
and
\[
T_3=\rho(\alpha+e_i+e_j)\rho(\alpha+e_k+e_\ell).
\]
Since $\rho$ is a weak $\T_q$-representation, the three numbers
\[
T_1^{1/q},\qquad T_2^{1/q},\qquad T_3^{1/q}
\]
satisfy the triangle inequality.

For $\rho^\sharp$, the corresponding Plücker products are
\[
T_r^\sharp=\frac{T_r}{D_r}\qquad (r=1,2,3),
\]
where
\[
D_1=(\alpha+e_j+e_k)!(\alpha+e_i+e_\ell)!,
\]
\[
D_2=(\alpha+e_i+e_k)!(\alpha+e_j+e_\ell)!,
\]
and
\[
D_3=(\alpha+e_i+e_j)!(\alpha+e_k+e_\ell)!.
\]

If $i,j,k,\ell$ are pairwise distinct, then the same four unit increments
$e_i,e_j,e_k,e_\ell$ occur once in each denominator product. Hence
\[
D_1=D_2=D_3,
\]
so the three Plücker products are all divided by the same positive scalar.
The weak $\T_q$-relation is therefore unchanged.

It remains to consider the case where some of $i,j,k,\ell$ coincide.  In
that case at least two of the three products $T_1,T_2,T_3$ are equal.  We
record the only nontrivial possibility explicitly.  Suppose, for example, that
$i=j<k<\ell$.  Then
\[
T_1=T_2=\rho(\alpha+e_i+e_k)\rho(\alpha+e_i+e_\ell),
\]
while
\[
T_3=\rho(\alpha+2e_i)\rho(\alpha+e_k+e_\ell).
\]
Thus the weak $\T_q$-relation for $\rho$ says precisely that
$T_3 \le 2^q T_1$,
since the two repeated side lengths are $T_1^{1/q}=T_2^{1/q}$.  

The denominators satisfy $D_1=D_2$ and $D_3\ge D_1.$
Indeed, all coordinates except the $i$-th contribute equally to $D_1$ and
$D_3$, while in the $i$-th coordinate the contribution to $D_3$ is
\[
(\alpha_i+2)!\,\alpha_i!,
\]
whereas the contribution to $D_1$ is
\[
(\alpha_i+1)!^2.
\]
The inequality
\[
(\alpha_i+1)!^2\le (\alpha_i+2)!\,\alpha_i!
\]
is the log-convexity of the factorial sequence. Hence
\[
T_3^\sharp=\frac{T_3}{D_3}
\le
\frac{T_3}{D_1}
\le
2^q\frac{T_1}{D_1}
=
2^q T_1^\sharp.
\]
Since $T_1^\sharp=T_2^\sharp$, this is exactly the weak $\T_q$-relation
for $\rho^\sharp$.

The cases $i<j=k<\ell$ and $i<j<k=\ell$ are identical, with the repeated
index playing the same role.  If two pairs coincide, say $i=j<k=\ell$, then
\[
T_1=T_2=\rho(\alpha+e_i+e_k)^2
\]
and
\[
T_3=\rho(\alpha+2e_i)\rho(\alpha+2e_k).
\]
Again $D_1=D_2$, and $D_3\ge D_1$, now by applying
\[
(m+1)!^2\le (m+2)!\,m!
\]
in the $i$- and $k$-coordinates.  The same argument gives
\[
T_3^\sharp\le 2^qT_1^\sharp.
\]
Finally, if three or four of the indices coincide, then all three Plücker
products and all three denominators are equal, so the relation is automatic.

Thus every weak $\T_q$-relation satisfied by $\rho$ is also satisfied by
$\rho^\sharp$, and therefore $\rho^\sharp$ is a weak
$\T_q$-representation of $J$.
\end{proof}

In view of Lemma~\ref{lem:N-preserves-weak-reps}, the following result strengthens the upper containment in
Proposition~\ref{prop:unnormalized-upper-containment}.

\begin{proposition}
\label{prop:normalized-upper}
Let $J\subseteq \Delta_n^d$ be an M-convex set. Then
\[
\upL_J \subseteq \upN \upR_J^{\rm w}(\T_2).
\]
Equivalently, after projectivizing,
\[
\mathbb P \upL_J \subseteq \upN \Gr_J^{\rm w}(\T_2).
\]
\end{proposition}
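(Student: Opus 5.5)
The plan is to run the proof of Proposition~\ref{prop:unnormalized-upper-containment} again, now applied to $\upN^{-1}f$, and to show that the factorial weights this introduces are absorbed by the slack in the Hessian inequalities. Let $f\in\upL_J$ and write $f=\sum_{\beta\in J}\rho(\beta)\,x^\beta/\beta!$. We must show that $\sigma(\beta):=\beta!\,\rho(\beta)$ is a weak $\T_2$-representation of $J$. Then $\sigma^\sharp=\rho$, so $f\in\upN\upR_J^{\rm w}(\T_2)$. The support condition is immediate, since $\supp(\sigma)=\supp(\rho)=J$.

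Fix $\alpha\in\Delta_n^{d-2}$ and indices $i\le j\le k\le\ell$, and let $T_1,T_2,T_3$ be the three Pl\"ucker products for $\rho$. As in the proof of Lemma~\ref{lem:N-preserves-weak-reps}, the products for $\sigma$ are $T_r D_r$, with $D_r$ the factorial products defined there. By Theorem~\ref{thm:BH-limit-free}, $\partial^\alpha f$ is Lorentzian. Its Hessian has entries $\rho(\alpha+e_u+e_v)$, because $\partial^{\alpha+e_u+e_v}$ applied to $x^\beta/\beta!$ gives $\delta_{\beta,\alpha+e_u+e_v}$. This gives exactly the matrix inequalities used in the four cases of Proposition~\ref{prop:unnormalized-upper-containment}.

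\emph{Case 1 (pairwise distinct indices).} Here $D_1=D_2=D_3$, so the $\T_2$-relation for $\sigma$ is equivalent to the one for $\rho$. That relation holds by Case~1 of Proposition~\ref{prop:unnormalized-upper-containment}.

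\emph{Case 2 (exactly one index repeated), say $i=j$.} The $\T_2$-relation for $\sigma$ reads $T_3D_3\le 4\,T_1D_1$. The proof of Proposition~\ref{prop:unnormalized-upper-containment} actually gives the sharper bound $T_3\le 2T_1$, namely $ad\le 2bc$ from $\det M=d(2bc-ad)\ge 0$. The key computation is the ratio $D_3/D_1=(\alpha_i+2)/(\alpha_i+1)\le 2$, so $T_3D_3\le 2T_1\cdot 2D_1=4T_1D_1$. The other placements of the repeated index are symmetric.

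\emph{Case 3 (two pairs repeated), $i=j$, $k=\ell$.} Here the $2\times2$ minor gives $T_3\le T_1$, and $D_3/D_1=\frac{\alpha_i+2}{\alpha_i+1}\cdot\frac{\alpha_k+2}{\alpha_k+1}\le 4$. Hence $T_3D_3\le 4T_1D_1$.

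\emph{Case 4 (three or more indices equal).} All three products coincide, so the relation is trivial.

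The main point to get right is the bookkeeping in the repeated-index cases. One must check that the unnormalized argument really yields the factor-$2$ and factor-$1$ bounds rather than only the factor $4$ needed there. One must also check that the factorial ratios are bounded by $2$ per repeated coordinate, since $(m+2)/(m+1)\le 2$ for all $m\ge 0$. Once these two facts are in place, the losses multiply to exactly the $\T_2$ constant $4=2^2$, which explains why $\T_2$ is the right hyperfield for the normalized containment.
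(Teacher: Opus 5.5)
Your proof is correct and is essentially the paper's argument. The paper rescales $\partial^\alpha(\upN^{-1}f)$ so that the off-diagonal Hessian entries are unchanged and each diagonal entry is multiplied by $\lambda_i=(\alpha_i+2)/(\alpha_i+1)\le 2$; this is exactly your factorial ratio $D_3/D_1$ (or $\lambda_i\lambda_k$ in the two-pair case), and the paper then combines it with the same sharper Hessian bounds $ad\le 2bc$ and $c_{ii}c_{kk}\le c_{ik}^2$.
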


\begin{proof}
Let $f\in \upL_J$. We must show that $\upN^{-1}f$ is a weak $\T_2$-representation
of $J$.

Fix $\alpha\in \Delta_n^{d-2}$ and set
\[
g:=\partial^\alpha(\upN^{-1}f).
\]
Since weak $\T_2$-relations are preserved by multiplying a quadratic by a positive scalar
and by positive diagonal rescaling of the variables, it is enough to verify them for
\[
h:=\frac{1}{\alpha!}\,
g\!\left(\frac{x_1}{\alpha_1+1},\dots,\frac{x_n}{\alpha_n+1}\right).
\]

Write
\[
\partial^\alpha f
=
\sum_{i=1}^n c_{ii}\frac{x_i^2}{2}
+
\sum_{1\le i<j\le n} c_{ij}x_i x_j.
\]
Since $f$ is Lorentzian, the quadratic $\partial^\alpha f$ is Lorentzian by
Theorem~\ref{thm:BH-limit-free}. A direct computation shows that
\[
h
=
\sum_{i=1}^n
\frac{\alpha_i+2}{\alpha_i+1}\,c_{ii}\frac{x_i^2}{2}
+
\sum_{1\le i<j\le n} c_{ij}x_i x_j.
\]
Thus the off-diagonal coefficients are unchanged, while the diagonal coefficient in the
$i$-th variable is multiplied by
\[
\lambda_i:=\frac{\alpha_i+2}{\alpha_i+1}\le 2.
\]

We now verify the weak $\T_2$-relations for $h$.

\smallskip

\emph{Case 1: the four indices are pairwise distinct.}
Then only off-diagonal coefficients appear, so the relevant relation is exactly the same as
for $\partial^\alpha f$. As in the proof of Proposition~\ref{prop:unnormalized-upper-containment},
this follows from Proposition~\ref{prop:U24} applied to the multi-affine part of
the restriction to those four variables.

\smallskip

\emph{Case 2: exactly one index is repeated.}

Suppose that exactly one index is repeated.  Up to relabeling, the three terms
have the form
\[
bc,\qquad bc,\qquad \lambda_i ad,
\]
where
\[
\lambda_i=\frac{\alpha_i+2}{\alpha_i+1}\le 2.
\]
As in the proof of Proposition~\ref{prop:unnormalized-upper-containment},
we delete the two irrelevant diagonal entries from the corresponding
$3\times 3$ principal submatrix of the Hessian.  By
Lemma~\ref{lem:diagonal-deletion}, the resulting matrix
\[
M=
\begin{pmatrix}
a & b & c\\
b & 0 & d\\
c & d & 0
\end{pmatrix}
\]
still has at most one positive eigenvalue.  Thus $\det M=d(2bc-ad)\ge 0$,
and hence $ad\le 2bc$.  Therefore
\[
\lambda_i ad\le 2ad\le 4bc,
\]
which is the desired weak $\T_2$-relation.
\smallskip

\emph{Case 3: two pairs of indices are repeated.}
Then the three terms are
\[
c_{ik}^2,\qquad c_{ik}^2,\qquad \lambda_i\lambda_k c_{ii}c_{kk}.
\]
The corresponding $2\times 2$ principal submatrix is
\[
N=
\begin{pmatrix}
c_{ii} & c_{ik}\\
c_{ik} & c_{kk}
\end{pmatrix},
\]
and Lorentzianity gives
\[
\det(N)=c_{ii}c_{kk}-c_{ik}^2\le 0.
\]
Thus
\[
c_{ii}c_{kk}\le c_{ik}^2.
\]
Since $\lambda_i\lambda_k\le 4$, it follows that
\[
\lambda_i\lambda_k c_{ii}c_{kk}\le 4c_{ik}^2,
\]
which is exactly the weak $\T_2$-relation.

\smallskip

All remaining index configurations are tautological. Hence $h$, and therefore also
$g$, satisfies every weak $\T_2$-Pl\"ucker relation. It follows that $\upN^{-1}f$ is a
weak $\T_2$-representation of $J$, i.e.
\[
f\in \upN \upR_J^{\rm w}(\T_2).\qedhere    
\]
\end{proof}

\begin{proof}[Proof of Theorem~\ref{thm:universal-upper-containment}]
The normalized containment
\[
\upL_J\subseteq \upN \upR_J^{\rm w}(\mathbb T_2)
\]
is precisely Proposition~\ref{prop:normalized-upper}.  Projectivizing gives
\begin{equation*}
\mathbb P \upL_J\subseteq \upN\operatorname{Gr}^{\rm w}_J(\mathbb T_2).\qedhere    
\end{equation*}
\end{proof}

\section{The rank-\texorpdfstring{$2$}{2} uniform case}
\label{sec:U2n-core}

The purpose of this section is to prove the lower inclusion in the sandwich theorem for the
rank-$2$ uniform matroid $U_{2,n}$. This is the analytic heart of the paper; in the next
section, the general case will be reduced to it.
We begin by translating the problem into the language of matrices.

\subsection{Matrix reformulation}

A weak $\T_q$-representation of $U_{2,n}$ is determined by its coefficients
\[
a_{ij} := \rho(e_i+e_j)\qquad (1\le i<j\le n),
\]
which we package into a symmetric matrix with zero diagonal.

\begin{definition}
An $n\times n$ real matrix $A$ is \emph{admissible} if it is symmetric, has zero
diagonal, and has strictly positive off-diagonal entries.
\end{definition}

\begin{definition}
Let $q\ge 0$. We say that an admissible matrix $A$ \emph{satisfies the
$\T_q$-inequalities} if for every quartet $\{i,j,k,\ell\}\subseteq [n]$ with
$i<j<k<\ell$, the three numbers
\[
s_1 := (A_{ij}A_{k\ell})^{1/q},\qquad
s_2 := (A_{ik}A_{j\ell})^{1/q},\qquad
s_3 := (A_{i\ell}A_{jk})^{1/q}
\]
are the side lengths of a (possibly degenerate) Euclidean triangle.
\end{definition}

For $q=0$, this is interpreted in the tropical sense from
Lemma~\ref{lem:T0-Tinfty}: the maximum of the three products
\[
A_{ij}A_{k\ell},\qquad A_{ik}A_{j\ell},\qquad A_{i\ell}A_{jk}
\]
must be attained at least twice.

Under the identification
\[
f_A(x_1,\dots,x_n):=\sum_{1\le i<j\le n} A_{ij}x_ix_j,
\]
the support of $f_A$ is $U_{2,n}$, and $f_A$ is Lorentzian if and only if $A$ is
Lorentzian. Likewise, $f_A$ is a weak $\T_q$-representation of $U_{2,n}$ if and only
if $A$ satisfies the $\T_q$-inequalities. Thus the desired lower inclusion for
$U_{2,n}$ is equivalent to the following matrix statement.

\begin{theorem}
\label{thm:rank-two-uniform}
For $n\ge 4$, set
\[
\varepsilon(n):=\log_2\!\left(1+\frac{1}{n-2}\right).
\]
Let $A\in \R^{n\times n}$ be admissible and suppose that $A$ satisfies the
$\T_{\varepsilon(n)}$-inequalities. Then $A$ is Lorentzian.
\end{theorem}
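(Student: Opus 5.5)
Set $q:=\varepsilon(n)$ and use the metric strategy sketched in the Introduction. Lorentzianity and the $\T_q$-inequalities are both invariant under diagonal congruence $A\mapsto DAD$ with $D$ positive diagonal, since each quartet product $A_{ij}A_{k\ell}$ gets multiplied by $d_id_jd_kd_\ell$. We would first choose a convenient normalization. A natural choice is to adjoin an auxiliary point $0$ and rescale so that we may write $A_{ij}=c_ic_j\,\delta_{ij}$. Equivalently, fix a base index $n$, set $\delta_{ij}:=A_{ij}/(A_{in}A_{jn})$ for $i,j<n$, and $\delta_{in}:=$ a suitable constant, so that the quartet products become $\delta$-products.

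The key claim is that the $\T_q$-inequality on the quartet $\{i,j,k,n\}$ becomes a triangle inequality for $D_{ij}:=A_{ij}^{1/q}$ after normalization, e.g.
\[
(A_{ij}A_{kn})^{1/q}\le (A_{ik}A_{jn})^{1/q}+(A_{i n}A_{jk})^{1/q}.
\]
Dividing by $(A_{in}A_{jn}A_{kn})^{1/q}$ gives $d_{ij}\le d_{ik}+d_{kj}$ with $d_{ij}:=\delta_{ij}^{1/q}$. So $d$ is a metric on the $n-1$ points $\{1,\dots,n-1\}$. (More symmetrically, one can work on all $n$ points using the full four-point condition, which says $\sqrt[q]{\cdot}$ of the cross products form triangles, i.e.\ $d$ is ``Ptolemaic-type''.)

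Next we invoke a quantitative embedding result. Any metric $d$ on $m$ points has the property that the snowflake $d^{\theta}$ is Euclidean (of negative type) for $\theta$ small depending on $m$. We need $\theta$ such that $d^{2\theta}$, or rather $\delta=d^{q}$, is a conditionally negative definite kernel. Concretely, we want $\delta_{ij}=d_{ij}^{q}$ to be CND on $n-1$ points whenever $d$ is a metric. This is where the exact threshold $q=\log_2(1+\frac1{n-2})$ must enter. Equivalently, $2^q=1+\frac{1}{m-1}$ with $m=n-1$, which is the known sharp exponent for which every $m$-point metric raised to the power $q$ is of negative type (Deza--Maehara type bounds). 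I would first try to prove this by reducing to the extremal case via the inequality $d_{ij}^q\ge \tfrac12\max$-type bounds. Specifically, $d^q$ takes values in $[r,2^q r]$ on scales, so $\delta$ is a small perturbation of a constant off-diagonal matrix $J-I$, which is CND with margin $1/(m-1)$. Bounding $x^\top\delta x$ for $\sum x_i=0$ by comparing $\delta$ with $s(J-I)$ then finishes, using that the ratio of any two entries is at most $2^q$ only after an ultrametric-style decomposition. This step is the main obstacle.

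Finally we convert CND back to Lorentzian. If $\delta$ is a CND zero-diagonal matrix with positive entries on $m$ points, the bordered (Cayley--Menger type) matrix $\begin{pmatrix}0&\mathbf 1^\top\\ \mathbf 1&\delta\end{pmatrix}$, and hence $\delta$ itself, has exactly one positive eigenvalue. This holds because $\delta$ is negative semidefinite on the hyperplane $\sum x_i=0$ of codimension one. Undoing the normalization is a congruence by a positive diagonal matrix together with the border row $A_{in}$, so the inertia bound transfers. Thus $A$ has at most one positive eigenvalue, and its entries are nonnegative, so $A$ is Lorentzian, by Sylvester's law of inertia. The boundary case $q<\varepsilon(n)$ follows because the $\T_q$-conditions are monotone in $q$.
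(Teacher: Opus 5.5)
Your outline is the paper's proof. A positive diagonal congruence makes the border row equal to $\1$, and the quartets through the base index show that $d=\delta^{1/q}$ is a metric on $m=n-1$ points. The quantitative snowflake theorem at exponent $\log_2(1+\frac{1}{m-1})=\varepsilon(n)$ then makes $\delta=d^{q}$ conditionally negative semidefinite. Finally, the bordered matrix has at most one positive eigenvalue because its form is $\le 0$ on the codimension-one subspace $\{(t,\alpha):\sum_i\alpha_i=0\}$; this is Lemma~\ref{lem:squared-edm-block}.

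The paper does not prove the snowflake step either. It imports it as Theorem~\ref{thm:snowflake-embedding} (Faver et al., a strict-inequality refinement of the Deza--Maehara bound), so if you cite that result your argument is complete. Two small corrections:
\begin{itemize}
\item The exponent is not sharp. For $m=4$, Blumenthal's theorem gives exponent $1$ rather than $\log_2(4/3)$; see Subsection~\ref{subsec:snowflake-bound-not-sharp}.
\item If the version you cite only covers $q<\eta(m)$, you need a limiting step at $q=\varepsilon(n)$. The paper uses $A^{(\epsilon)}=(A_{ij}^{\epsilon})$, but continuity of $p\mapsto\sum_{i,j}c_ic_jd(i,j)^p$ for the fixed metric $d$ also suffices.
\end{itemize}

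What would not work is the self-contained argument you sketch for that step.
\begin{itemize}
\item Distances of a general metric have unbounded ratios: take three collinear points at $0,1,N$. So $\delta$ is not a perturbation of $r(J-I)$. The perturbation estimate only proves a true but special fact: a zero-diagonal matrix with off-diagonal entries in $[r,(1+\frac{1}{m-1})r]$ is conditionally negative semidefinite.
\item A decomposition by scales cannot rescue it if the only input is the doubling consequence $\delta_{ij}\le 2^q\max(\delta_{ik},\delta_{kj})$ of the triangle inequality. The three-point matrix with $\delta_{12}=1$, $\delta_{13}=2^q$, $\delta_{23}=\epsilon$ satisfies this bound. On three points, conditional negative semidefiniteness means that $\sqrt{\delta}$ obeys the triangle inequality, and $2^{q/2}\le 1+\sqrt{\epsilon}$ fails for small $\epsilon$.
\end{itemize}
Any proof of the snowflake bound therefore has to use the full triangle inequality for $d$. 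That inequality does exclude this example, since $d(1,3)=2>1+\epsilon^{1/q}=d(1,2)+d(2,3)$ for $\epsilon<1$. Using it is the genuine content of the cited theorem, so either cite it precisely or supply such an argument in place of the perturbation sketch.
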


By the monotonicity of the $\T_q$-conditions in $q$, the same conclusion holds
whenever $0<q\le \varepsilon(n)$.

The rest of the section is devoted to the proof of Theorem~\ref{thm:rank-two-uniform}.

\subsection{Diagonal normalization}

The first step is a simple but important invariance.

\begin{lemma}
\label{lem:diag-congruence-U2n}
Let $D=\diag(d_1,\dots,d_n)$ with $d_i>0$, and set
\[
A' := DAD.
\]
Then:
\begin{enumerate}
\item $A$ is admissible if and only if $A'$ is admissible.
\item $A$ satisfies the $\T_q$-inequalities if and only if $A'$ does.
\item $A$ and $A'$ have the same inertia.
\end{enumerate}
\end{lemma}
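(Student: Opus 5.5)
The three assertions can be checked one at a time, straight from the definitions; no earlier results beyond Sylvester's law of inertia are needed. Throughout, the key identity is the entrywise formula
\[
A'_{ij} = d_i A_{ij} d_j \qquad (1\le i,j\le n).
\]

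For (1), symmetry is preserved because $(DAD)^\top = DA^\top D$. The diagonal entries satisfy $A'_{ii}=d_i^2A_{ii}$, so they vanish exactly when those of $A$ do. Since every $d_i>0$, an off-diagonal entry $A'_{ij}$ is positive if and only if $A_{ij}$ is. This gives the forward direction. The converse follows by applying the same argument to $D^{-1}$, using $A=D^{-1}A'D^{-1}$.

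For (2), I would fix a quartet $i<j<k<\ell$ and compute the three products:
\[
A'_{ij}A'_{k\ell}=(d_id_jd_kd_\ell)\,A_{ij}A_{k\ell},
\]
and likewise $A'_{ik}A'_{j\ell}$ and $A'_{i\ell}A'_{jk}$ carry the same factor $c:=d_id_jd_kd_\ell>0$. Hence the three quantities $s_1,s_2,s_3$ for $A'$ are the corresponding ones for $A$ multiplied by the common scalar $c^{1/q}$. The triangle condition is invariant under a common positive scaling of all three side lengths: either note this directly, or use the homogeneous quartic criterion of Corollary~\ref{cor:Tq-Heron}. So the condition holds for $A'$ if and only if it holds for $A$. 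For $q=0$ the tropical condition (the maximum of the three products is attained twice) is also invariant under scaling by $c$. Again, the reverse implication comes from using $D^{-1}$.

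For (3), $A'=DAD=D^\top A D$ is a congruence by the invertible matrix $D$, so Sylvester's law of inertia gives that $A$ and $A'$ have the same inertia. I expect no real obstacle here. The only point needing any care is noticing that all three products in a quartet pick up the identical factor $d_id_jd_kd_\ell$, which is exactly why the Plücker-type triples are scaled uniformly.
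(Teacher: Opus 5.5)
Your proof is correct and matches the paper's argument: (1) is immediate, (2) holds because all three quartet products pick up the common factor $d_id_jd_kd_\ell$ (so the side lengths are rescaled uniformly), and (3) is Sylvester's law of inertia. Your extra details, namely the converse via $D^{-1}$ and the tropical case $q=0$, fill in points the paper leaves implicit.
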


\begin{proof}
The first assertion is immediate.

For the second, fix a quartet $\{i,j,k,\ell\}$. Then
\[
(A'_{ij}A'_{k\ell})^{1/q}
=
(d_id_jd_kd_\ell)^{1/q}(A_{ij}A_{k\ell})^{1/q},
\]
and the same common positive factor multiplies the other two quantities appearing in the
$\T_q$-condition. Thus the three side lengths for $A'$ are obtained from those for $A$
by a common rescaling, so the triangle condition is unchanged.

The third assertion is Sylvester's law of inertia.
\end{proof}

\subsection{A brief Schoenberg interlude}

The proof of Theorem~\ref{thm:rank-two-uniform} passes through finite metric geometry. Since
this may be unfamiliar to some readers, we briefly summarize the ideas behind the argument.

Given a finite metric $d$ on a set $X$, a classical theorem of Schoenberg \cite{Schoenberg1938} says that
$d$ is Euclidean if and only if the matrix
\[
E=(d(i,j)^2)_{i,j\in X}
\]
is conditionally negative semidefinite; moreover, strict conditional negative definiteness
corresponds to affine independence of the embedding. 

In the present section we will not use Schoenberg's theorem directly, but rather a
quantitative finite-metric embedding theorem with a close connection to the $\T_q$-relations.

\begin{theorem}
\label{thm:snowflake-embedding}
Let $m\ge 2$, let $(X,d)$ be an $m$-point metric space, and set
\[
\eta(m):=\log_2\!\left(1+\frac{1}{m-1}\right).
\]
If $0\le p<\eta(m)$, then the metric space $(X,d^{p/2})$ admits an
isometric embedding into $\R^{m-1}$ as an affinely independent set.  
\end{theorem}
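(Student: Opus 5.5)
The plan is to reduce Theorem~\ref{thm:snowflake-embedding} to Schoenberg's criterion, and then prove the resulting conditional negative definiteness by induction on $m$, controlled by a near-ultrametric inequality.

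\textbf{Reduction.} Set $D_{ij}:=d(i,j)^p$, which is the matrix of squared distances of the candidate metric $d^{p/2}$. By the strict form of Schoenberg's theorem recalled above, it suffices to prove that $D$ is conditionally strictly negative definite. This means
\[
x^\top D x<0 \qquad\text{for every } x\neq 0 \text{ with } \textstyle\sum_i x_i=0.
\]
Strict negativity gives an affinely independent realization of the $m$ points, and such a realization automatically lies in $\R^{m-1}$. The case $p=0$ is immediate: then $D=J-I$, and $x^\top Dx=-|x|^2$ on sum-zero vectors. So assume $0<p<\eta(m)$.

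\textbf{The only input used.} From the triangle inequality,
\[
D_{ij}\le (d_{ik}+d_{kj})^p\le 2^p\max(D_{ik},D_{kj})<\Bigl(1+\tfrac{1}{m-1}\Bigr)\max(D_{ik},D_{kj}).
\]
So every triangle in $D$ is almost isosceles in the ultrametric sense: its largest side exceeds the second largest by a factor less than $m/(m-1)$. This is the same mechanism by which the $\T_q$-inequalities degenerate to the tropical condition of Lemma~\ref{lem:T0-Tinfty}. The plan never uses finer properties of $d$ than this.

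\textbf{Equivalent Gram formulation.} Fix a base point $k$ and form the Gram matrix
\[
G_{ij}=\tfrac12\,(D_{ik}+D_{jk}-D_{ij}), \qquad i,j\neq k.
\]
Conditional strict negative definiteness of $D$ is equivalent to $G$ being positive definite.

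\textbf{Induction on $m$.} The base case $m=2$ is trivial. For the inductive step, order the points so that point $m$ attains the largest distance from the rest, and take it as the new point.
\begin{enumerate}
\item By induction, the first $m-1$ points embed affinely independently; since $\eta$ is decreasing in $m$, the same $p$ is admissible there.
\item Adding point $m$ changes the determinant of the bordered Cayley--Menger matrix by a Schur complement.
\item Show this Schur complement has the correct sign, that is, show that point $m$ lies strictly off the affine hull of the others.
\end{enumerate}

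\textbf{Key estimate.} For step 3, I would express the Schur complement as $-x^\top Dx$ with $x$ the affine-dependence vector. Using the near-ultrametric inequality, I would bound the off-diagonal contributions against the diagonal ones, in a Gershgorin-type fashion. The threshold $1+\frac1{m-1}$ should then appear exactly as the condition that the $m-1$ competing terms cannot cancel the main term.

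\textbf{Main obstacle.} The hard part is this last inequality. A naive diagonal-dominance argument on $G$ does not work for arbitrary metrics, because distances can vary over many scales. I would therefore first reduce to the extremal case in which all ratios are as bad as the inequality allows. Concretely, I would group points into ultrametric-like clusters, split $x^\top D x$ into intra-cluster and inter-cluster parts, and treat each part by the inductive hypothesis. If this still fails to close, I would consult the Deza--Maehara argument for powers of finite metrics, which gives exactly this constant, and adapt its proof.
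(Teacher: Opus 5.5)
Your reduction steps are fine: it suffices to show that $D=(d(i,j)^p)$ is conditionally strictly negative definite (strict Schoenberg criterion), the case $p=0$ is handled correctly, and the Schur-complement bookkeeping is standard. These are the easy parts, though. Everything rests on the ``key estimate'' in step~3, which you do not prove. Worse, the route you announce for it cannot succeed.

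You say you will use nothing about $d$ beyond $D_{ij}<\frac{m}{m-1}\max(D_{ik},D_{kj})$. This inequality does not imply conditional negative semidefiniteness, already for $m=3$. Take $D_{12}=\tfrac{5}{4}$, $D_{13}=1$, $D_{23}=\tfrac{1}{100}$. Every triple satisfies $D_{ij}<\tfrac{3}{2}\max(D_{ik},D_{kj})$. Yet $\sqrt{D_{12}}>\sqrt{D_{13}}+\sqrt{D_{23}}$, and the zero-sum vector $x=(1,12,-13)$ gives $x^\top Dx=0.88>0$. So no Gershgorin-type bound, cluster decomposition, or reduction to an extremal configuration that sees only this inequality can close your induction. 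In this example it fails as soon as the third point is added.

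The information you discarded is the additive triangle inequality, which is what controls configurations with several scales. If $d(2,3)$ is tiny, then $d(1,2)$ and $d(1,3)$ must be nearly equal; for instance $|D_{1k}-D_{2k}|\le D_{12}$, because $d^p$ is itself a metric for $p\le 1$. The bound $D_{ij}\le 2^p\max(D_{ik},D_{kj})$, by contrast, lets them differ by the full factor $2^p$. A correct proof must use more of the metric structure than your quasi-ultrametric inequality, and your proposal has no mechanism for this. Deferring to ``the Deza--Maehara argument'' does not supply one.

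For comparison, the paper does not prove this statement itself; it takes it directly from Theorem~3.6 of \cite{FaverEtAl}. A self-contained proof would have to supply that argument.
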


\begin{proof}
This is exactly Theorem~3.6 of \cite{FaverEtAl}, after translating their notation.
\end{proof}

\begin{remark}
In the statement of Theorem~\ref{thm:snowflake-embedding}, when
$p=0$, we use the convention
\[
d^{0}(x,y)=
\begin{cases}
0, & x=y,\\
1, & x\ne y.
\end{cases}
\]
\end{remark}

The next lemma explains how a squared Euclidean distance matrix gives rise to a
Lorentzian matrix of the form needed in our application.  Recall that the
\emph{Cayley--Menger matrix} associated to a squared Euclidean distance matrix $B$ is
obtained from $B$ by adjoining a top row and left column of ones, with a zero in
the upper-left corner.

\begin{lemma}
\label{lem:squared-edm-block}
Let $B$ be a real symmetric $m\times m$ matrix, and set
\[
        C=
        \begin{pmatrix}
        0&\1^\top\\
        \1&B
        \end{pmatrix}.
\]
Then $C$ has at most one positive eigenvalue if and only if $B$ is
conditionally negative semidefinite on $\1^\perp$, i.e.
\[
        \alpha^\top B\alpha\le 0
        \qquad\text{for all }\alpha\in\R^m\text{ with }\sum_i\alpha_i=0.
\]

In particular, if $v_1,\dots,v_m$ are points in a Euclidean space and
\[
        B=(\|v_i-v_j\|^2)_{1\le i,j\le m},
\]
then $C$ has at most one positive eigenvalue.  
If, moreover, $v_1,\dots,v_m$ are affinely independent, then $C$ is nonsingular and
has inertia $(1,m,0)$.
\end{lemma}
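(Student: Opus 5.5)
The natural route is to prove the equivalence with the classical Schur-complement / Haynsworth inertia argument, applied to $C$ after a congruence that exposes the restriction of $B$ to $\1^\perp$. Then the Euclidean case follows by the standard polarization identity.

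\emph{Step 1 (congruence).} Choose an invertible change of coordinates on $\R^{m}$ of the form $x=t\,e_1+Py$, where $e_1=(1,0,\dots,0)$ and the columns of the $m\times(m-1)$ matrix $P$ form a basis of $\1^\perp$. Extend it to $\R^{1+m}$ by fixing the first (bordering) coordinate $s$. The quadratic form of $C$ is
\[
2s\,\1^\top x+x^\top Bx.
\]
Since $\1^\top Py=0$, the variable $s$ couples only to $t$ (with coefficient $2s t$), and the form becomes
\[
2st+t^2B_{11}+2t\,e_1^\top BPy+y^\top P^\top BPy.
\]
Now complete the square using the $2\times 2$ block in $(s,t)$, namely
\[
\begin{pmatrix}0&1\\1&B_{11}\end{pmatrix},
\]
which is invertible with determinant $-1$ and hence has inertia $(1,1,0)$. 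The substitution $s\mapsto s-B_{11}t/2-e_1^\top BPy$ kills the cross terms between $(s,t)$ and $y$. Sylvester's law of inertia then gives
\[
\operatorname{In}(C)=(1,1,0)+\operatorname{In}(P^\top BP).
\]

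\emph{Step 2 (conclusion).} By Step 1, $C$ has at most one positive eigenvalue if and only if $P^\top BP$ has no positive eigenvalue. That is exactly the statement $\alpha^\top B\alpha\le 0$ for all $\alpha\in\1^\perp$, which is the claimed equivalence.

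\emph{Step 3 (Euclidean case).} For $\sum_i\alpha_i=0$, expand $\|v_i-v_j\|^2=\|v_i\|^2+\|v_j\|^2-2\langle v_i,v_j\rangle$. The terms involving only $\|v_i\|^2$ or only $\|v_j\|^2$ vanish after summing against $\alpha_i\alpha_j$, leaving
\[
\alpha^\top B\alpha=-2\Bigl\|\sum_i\alpha_iv_i\Bigr\|^2\le 0.
\]
By Step 2, $C$ has at most one positive eigenvalue. If the $v_i$ are affinely independent, then $\sum_i\alpha_iv_i\neq 0$ whenever $\alpha\neq0$ and $\sum_i\alpha_i=0$. Hence $P^\top BP$ is negative definite with inertia $(0,m-1,0)$, so $C$ has inertia $(1,1,0)+(0,m-1,0)=(1,m,0)$ and is in particular nonsingular.

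The main obstacle is bookkeeping in Step 1: one must check that the chosen substitution really is an invertible block-triangular congruence, so that inertia is additive. Because the $(s,t)$ block is invertible, this is the standard Haynsworth formula and should go through cleanly.
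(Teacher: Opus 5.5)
Your proof is correct, but it takes a different route from the paper's. The paper never block-diagonalizes $C$. It uses the characterization ``at least two positive eigenvalues iff the form is positive definite on some $2$-plane'' and proves each implication separately:
\begin{itemize}
\item for the forward direction, it intersects such a $2$-plane with the hyperplane $\sum_i\alpha_i=0$;
\item for the converse, it exhibits the plane spanned by $(T,\1)$ and $(0,\alpha)$, whose Gram determinant is positive for $T\gg0$.
\end{itemize}
The affinely independent case then needs three more observations: negative definiteness on $\{(0,\alpha):\sum_i\alpha_i=0\}$, a separate kernel computation for nonsingularity, and $Q(1,0)=0$ to rule out negative definiteness.

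Your single congruence $(s,x)\mapsto(s',t,y)$ uses only $\1^\top e_1=1$ and $\1^\top P=0$. It turns $C$ into $\begin{pmatrix}0&1\\1&0\end{pmatrix}\oplus P^\top BP$. Note that your substitution for $s$ also absorbs the $B_{11}t^2$ term, so the surviving $2\times2$ block is the hyperbolic plane rather than $\begin{pmatrix}0&1\\1&B_{11}\end{pmatrix}$. This is harmless, since both have inertia $(1,1,0)$.

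What your route buys is the exact identity $\operatorname{In}(C)=(1,1,0)+\operatorname{In}(P^\top BP)$, which is stronger than the lemma. Both directions of the equivalence, nonsingularity, and the inertia $(1,m,0)$ follow at once, and you also learn that the nullity of $C$ equals the nullity of the compressed form $P^\top BP$. The paper's argument is basis-free and relies only on the variational description of the positive index. That is the same style of reasoning the paper reuses in Lemma~\ref{lem:diagonal-deletion} and in the compression step of Lemma~\ref{lem:weighted-copy-lift}.
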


\begin{proof}
Let
\[
        Q(t,\alpha)
        =
        2t\sum_i\alpha_i+\alpha^\top B\alpha
\]
be the quadratic form associated to $C$, where $t\in\R$ and
$\alpha=(\alpha_1,\dots,\alpha_m)\in\R^m$.

First suppose that $B$ is conditionally negative semidefinite on $\1^\perp$.  If
$C$ had two positive eigenvalues, then $Q$ would be positive definite on some
two-dimensional subspace $U\subset \R\oplus\R^m$.  The linear functional $(t,\alpha)\longmapsto \sum_i\alpha_i$
has a nonzero kernel on $U$, so there is a nonzero vector $(t,\alpha)\in U$ with
$\sum_i\alpha_i=0$.  For this vector, $Q(t,\alpha)=\alpha^\top B\alpha\le0$,
contradicting the positive definiteness of $Q$ on $U$.  Thus $C$ has at most one
positive eigenvalue.

Conversely, suppose that $C$ has at most one positive eigenvalue.  We prove the
conditional negative semidefiniteness of $B$ by contradiction.  If there were some
$\alpha\in\1^\perp$ with $\alpha^\top B\alpha>0$, then the vector $u:=(0,\alpha)$
would satisfy $Q(u)>0$.  For $T>0$ set $w_T:=(T,\1)$.
Then
\[
        Q(w_T)=2mT+\1^\top B\1,
\]
which is positive for $T\gg0$, while
\[
        Q(w_T,u)=\1^\top B\alpha
\]
is independent of $T$.  Therefore the determinant of the Gram matrix of $Q$ on
$\operatorname{span}\{w_T,u\}$ is
\[
        Q(w_T)\,Q(u)-Q(w_T,u)^2
        =
        \bigl(2mT+\1^\top B\1\bigr)\alpha^\top B\alpha
        -
        (\1^\top B\alpha)^2,
\]
which is positive for $T\gg0$.  Hence $Q$ is positive definite on a two-dimensional
subspace, so $C$ has at least two positive eigenvalues, a contradiction.  Thus
$\alpha^\top B\alpha\le0$ for all $\alpha\in\1^\perp$.

Now assume $B=(\|v_i-v_j\|^2)$ for points $v_1,\dots,v_m$ in a Euclidean space.  If
$\sum_i\alpha_i=0$, then
\[
\begin{aligned}
        \alpha^\top B\alpha
        &=
        \sum_{i,j}\alpha_i\alpha_j\|v_i-v_j\|^2        \\
        &=
        -2\left\|\sum_i\alpha_i v_i\right\|^2
        \le0.
\end{aligned}
\]
Thus $B$ is conditionally negative semidefinite on $\1^\perp$, and the first part of
the lemma shows that $C$ has at most one positive eigenvalue.

It remains to prove the inertia statement under affine independence.  On the
$(m-1)$-dimensional subspace
\[
        W':=\left\{(0,\alpha):\sum_i\alpha_i=0\right\},
\]
the preceding formula gives
\[
        Q(0,\alpha)
        =
        -2\left\|\sum_i\alpha_i v_i\right\|^2.
\]
By affine independence, this form is negative definite on $W'$.  Hence $C$ has at
least $m-1$ negative eigenvalues.

We next show that $C$ is nonsingular.  Suppose
\[
        C\binom{t}{\alpha}=0.
\]
The first row gives $\sum_i\alpha_i=0$, and the remaining rows give
\[
        t\1+B\alpha=0.
\]
Multiplying on the left by $\alpha^\top$, we obtain
\[
        0
        =
        t\sum_i\alpha_i+\alpha^\top B\alpha
        =
        -2\left\|\sum_i\alpha_i v_i\right\|^2.
\]
Since $\sum_i\alpha_i=0$, affine independence implies $\alpha=0$.  Then $t\1=0$, so
$t=0$.  Thus $C$ is nonsingular.

Finally, $Q(1,0)=0$, so $C$ is not negative definite.  Since $C$ is nonsingular and
has at most one positive eigenvalue, it must have exactly one positive eigenvalue.  As
$C$ has size $m+1$, the remaining $m$ eigenvalues are negative.  Hence the inertia
is $(1,m,0)$.
\end{proof}

\subsection{Proof of the rank-\texorpdfstring{$2$}{2} uniform theorem}

\begin{proof}[Proof of Theorem~\ref{thm:rank-two-uniform}]
We  distinguish two cases.

\smallskip

\noindent\emph{Case 1: $0<q<\varepsilon(n)$.}
Let
\[
D=\diag(1,A_{12}^{-1},\dots,A_{1n}^{-1}),
\qquad
A':=DAD.
\]
By Lemma~\ref{lem:diag-congruence-U2n}, it suffices to prove that $A'$ is Lorentzian.
By construction,
\[
A'_{1i}=1 \qquad (i=2,\dots,n).
\]
Write
\[
A'=
\begin{pmatrix}
0 & \mathbf{1}^\top\\
\mathbf{1} & B
\end{pmatrix},
\]
where $B$ is the principal $(n-1)\times (n-1)$ block indexed by
$X:=\{2,\dots,n\}$.

Define
\[
d(i,j):=B_{ij}^{1/q}\qquad (i\neq j,\ i,j\in X),
\qquad
d(i,i):=0.
\]
We claim that $d$ is a metric on $X$. Indeed, let $i,j,k\in X$ be distinct.
Applying the $\T_q$-condition to the quartet $\{1,i,j,k\}$, and using
$A'_{1i}=A'_{1j}=A'_{1k}=1$, we see that the three numbers
\[
(A'_{1i}A'_{jk})^{1/q}=d(j,k),\qquad
(A'_{1j}A'_{ik})^{1/q}=d(i,k),\qquad
(A'_{1k}A'_{ij})^{1/q}=d(i,j)
\]
are the side lengths of a Euclidean triangle. Hence
\[
d(i,j)\le d(i,k)+d(k,j)
\]
for all distinct $i,j,k\in X$. Since $d(i,j)>0$ for $i\neq j$ and
$d(i,j)=d(j,i)$, it follows that $d$ is a metric.
Since $|X|=n-1$ and
\[
q<\log_2\!\left(1+\frac{1}{n-2}\right)=\eta(n-1),
\]
Theorem~\ref{thm:snowflake-embedding} applies to the metric space $(X,d)$
with $m=n-1$ and $p=q$. We obtain affinely independent vectors
\[
v_2,\dots,v_n\in \R^{\,n-2}
\]
such that
\[
\|v_i-v_j\|
=
d(i,j)^{q/2}
=
B_{ij}^{1/2}
\qquad (i,j\in X).
\]
Equivalently,
\[
B=(\|v_i-v_j\|^2)_{i,j\in X}.
\]
Applying Lemma~\ref{lem:squared-edm-block} with $m=n-1$ to the points
$v_2,\dots,v_n$, we conclude that
\[
A'=
\begin{pmatrix}
0 & \mathbf{1}^\top\\
\mathbf{1} & B
\end{pmatrix}
\]
has inertia $(1,n-1)$. In particular, $A'$ is Lorentzian.

\smallskip
\noindent\emph{Case 2: $q=\varepsilon(n)$.}
    For $0<\epsilon<1$ the matrix $A^{(\epsilon)}:=(A_{ij}^\epsilon)_{i,j}$ satisfies the $\T_p$-inequalities for $p=\epsilon\cdot\varepsilon(n)<\varepsilon(n)$. Thus $A^{(\epsilon)}$ is Lorentzian by Case 1. Because the set of Lorentzian polynomials is closed, it follows that $A=\lim_{\epsilon\to1}A^{(\epsilon)}$ is Lorentzian.
\end{proof}

\begin{remark}
\label{rem:quartets-through-1}
Only the quartets containing the distinguished index $1$ enter the proof.
In particular, to deduce Lorentzianity in Theorem~\ref{thm:rank-two-uniform}, one does not need
the $\T_q$-inequalities for quartets contained entirely in $\{2,\dots,n\}$.
\end{remark}

\subsection{Non-sharpness of the lower bound}
\label{subsec:snowflake-bound-not-sharp}

The lower bound supplied by Theorem~\ref{thm:snowflake-embedding} is sufficient for
the purposes of giving an explicit positive lower bound for $q(n)$, but it is not sharp.  

For example, when $m=4$, Theorem~\ref{thm:snowflake-embedding} gives Euclidean embeddability of
$(X,d^{q/2})$ for
\[
        0 < q<\eta(4)=\log_2(4/3).
\]

A theorem of Blumenthal (see \cite{Blumenthal1936} and \cite[Theorem~2.4]{MaeharaSurvey}) 
gives a much stronger statement: if $(X,d)$ is a four-point metric space, then $(X,d^{q/2})$ Euclidean for $0<q\le1$.
In particular:

\begin{corollary}
$q(5) \geq 1$.
\end{corollary}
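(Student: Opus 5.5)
The plan is to rerun the proof of Theorem~\ref{thm:rank-two-uniform} for $n=5$, replacing Theorem~\ref{thm:snowflake-embedding} by Blumenthal's four-point theorem. Fix $0<q\le 1$ and an admissible $5\times 5$ matrix $A$ satisfying the $\T_q$-inequalities. It suffices to show that $A$ is Lorentzian. Then $\Gr^{\rm w}_{U_{2,5}}(\T_q)\subseteq \P\upL_{U_{2,5}}$ for $q\le 1$, and taking the supremum gives $q(5)\ge 1$.

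First, I normalize as in Case~1 of that proof. With $D=\diag(1,A_{12}^{-1},\dots,A_{15}^{-1})$, Lemma~\ref{lem:diag-congruence-U2n} shows that it is enough to treat $A'=DAD$. This matrix has the block form with first row $(0,\1^\top)$ and lower block $B$, indexed by $X=\{2,3,4,5\}$. Set $d(i,j):=B_{ij}^{1/q}$ for $i\ne j$. The $\T_q$-conditions on the quartets $\{1,i,j,k\}$ are exactly the triangle inequalities for $d$, so $(X,d)$ is a four-point metric space.

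Next, I apply Blumenthal's theorem in the form quoted above. For $0<q\le 1$, the space $(X,d^{q/2})$ embeds isometrically into a Euclidean space. Hence there are points $v_2,\dots,v_5$ with $\|v_i-v_j\|^2=d(i,j)^q=B_{ij}$. By the second part of Lemma~\ref{lem:squared-edm-block}, $A'$ has at most one positive eigenvalue. Its entries are nonnegative, so $A'$ is Lorentzian, and therefore so is $A$. Affine independence is not needed, since only "at most one positive eigenvalue" is required and not strictness. This removes the need for a limiting argument at the endpoint $q=1$. Alternatively, one can argue as in Case~2 of the proof of Theorem~\ref{thm:rank-two-uniform}, using closedness of the Lorentzian set.

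There is no genuine obstacle here, as the corollary is a direct combination of earlier results. The only point to check is that the cited Blumenthal statement really covers the closed range $q\le 1$, i.e.\ the exponent $q/2\le 1/2$, and gives a Euclidean embedding, not merely some weaker embedding. If only the open range $q<1$ were available, I would finish with the $A^{(\epsilon)}$ limiting argument from Case~2 of that proof.
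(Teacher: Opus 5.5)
Your proposal is correct and is exactly the argument the paper intends. The corollary is stated as an immediate consequence of Blumenthal's four-point theorem used in place of Theorem~\ref{thm:snowflake-embedding} in the proof of Theorem~\ref{thm:rank-two-uniform}, with the same normalization, the same metric check via the quartets through index~$1$, and Lemma~\ref{lem:squared-edm-block}. Your remark that affine independence (and hence any limiting argument) is unnecessary is also right, and the quoted version of Blumenthal already covers the closed range $0<q\le 1$, although the open range would suffice for the supremum anyway.
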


The exponent $1$ in Blumenthal's theorem is optimal for arbitrary four-point metric spaces.  Indeed, let
$d$ be the path-distance metric on the four-cycle $C_4$, with cyclically adjacent
distances equal to $1$ and opposite distances equal to $2$:
\[
        d(1,2)=d(2,3)=d(3,4)=d(4,1)=1,
        \qquad
        d(1,3)=d(2,4)=2.
\]
For the zero-sum vector
\[
        c=(1,-1,1,-1),
\]
we have
\[
        \sum_{i,j=1}^4 c_i c_j d(i,j)^q
        =
        4\cdot 2^q-8.
\]
Thus, by Schoenberg's theorem, $(X,d^{q/2})$ fails to be Euclidean embeddable for every $q>1$.

If the metric space $(X,d)$ arises from an admissible $5 \times 5$ matrix satisfying the $\T_q$-quartet inequalities, as in our application, we get more information than just the fact that $d$ is a metric. Recall that, after diagonal normalization, only
the quartets through the distinguished index $1$ are used to prove that $d$ restricts to a metric on the remaining $n-1$ points.  
In Section~\ref{sec:q5}, we show that the full $\T_q$-quartet conditions give more: the associated four-point metric is
\emph{Ptolemaic}.  This extra structure will enable us to improve the lower bound on $q(5)$ from $1$ to the sharp value of $\log_2 3$.

\section{The general sandwich theorem}
\label{sec:general-sandwich}

In this section we prove the general sandwich theorem.  We work throughout with
the generalized normalization operators
\[
\upN_t(x^\alpha)=\frac{x^\alpha}{(\alpha!)^t}
\qquad (t\ge 0),
\]
where $\alpha!:=\alpha_1!\cdots \alpha_n!$.  Thus $\upN_0$ is the identity and
$\upN_1=\upN$ is the normalization operator used earlier.

The main result of the section is the following.

\begin{theorem}[$\upN_t$-sandwich theorem]
\label{thm:Nt-sandwich}
Let $J\subseteq \Delta_n^d$ be M-convex, and let $t>0$.  Then there exists a
constant $q_t(J)>0$ such that
\[
\upN_t\upR_J^{\rm w}(\T_q)\subseteq \upL_J \subseteq \upN_t\upR_J^{\rm w}(\T_p)
\]
for all $0<q\le q_t(J)$ and $p(t) \leq p \leq \infty$, where $p(t):=2\max\{1,t\}$.

In particular, for $t=1$ one has $p(1)=2$, and hence
\[
\upN\upR_J^{\rm w}(\T_q)\subseteq \upL_J
\subseteq
\upN\upR_J^{\rm w}(\T_2)
\]
for all sufficiently small $q>0$.
\end{theorem}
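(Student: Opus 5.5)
The statement has two independent halves, and I would treat them separately.

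\emph{Upper containment.} I would follow the proof of Proposition~\ref{prop:normalized-upper} with $\upN$ replaced by $\upN_t$. Given $f\in\upL_J$ and $\alpha\in\Delta_n^{d-2}$, set $g=\partial^\alpha(\upN_t^{-1}f)$ and rescale variables as before. The off-diagonal coefficients of the resulting quadratic then agree, up to a common factor and a diagonal rescaling, with those of $\partial^\alpha f$. Each diagonal coefficient is multiplied by a factor of the form
\[
\lambda_i=\Bigl(\tfrac{\alpha_i+2}{\alpha_i+1}\Bigr)^{t}\le 2^t.
\]
This factor should come out by the same bookkeeping as the $t=1$ computation. I would then check the four index cases:
\begin{itemize}
\item Four distinct indices: use Proposition~\ref{prop:U24} together with Lemma~\ref{lem:diagonal-deletion}.
\item One repeated index: this gives $\lambda_i ad\le 2^t\cdot 2bc$, and the needed bound is $\le 2^p bc$, which holds when $p\ge \max\{2,1+t\}$.
\item Two repeated pairs: this gives $\lambda_i\lambda_k c_{ii}c_{kk}\le 4^t c_{ik}^2\le 2^p c_{ik}^2$, which needs $p\ge 2t$.
\end{itemize}
Since $1+t\le 2\max\{1,t\}$, the value $p(t)=2\max\{1,t\}$ suffices. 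Monotonicity of the $\T_p$-conditions in $p$ then gives the result for all $p\ge p(t)$, including $p=\infty$.

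\emph{Lower containment.} The plan is the reduction announced in the introduction. First take $J=M$ a matroid. There $\upN_t$ is trivial. By Theorem~\ref{thm:BH-limit-free}, Lorentzianity of $f_\rho$ reduces to each quadratic $\partial^{\alpha}f_\rho$ with $\alpha\in\{0,1\}^n$ of degree $r-2$ being Lorentzian. Each such derivative is the generating polynomial of the rank-$2$ contraction $M/A$, and the weak $\T_q$-relations of $\rho$ restrict to those of this rank-$2$ minor. A rank-$2$ matroid becomes simple after deleting loops and merging parallel classes. Merging a parallel class means replacing its variables by a single variable scaled by weights, which preserves both Lorentzianity and the $\T_q$-relations. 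This yields $U_{2,m}$ with $m\le n$, where Theorem~\ref{thm:rank-two-uniform} and Lemma~\ref{lem:U22-U23} apply. Hence $q(M)\ge\varepsilon(n)$.

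For a general M-convex $J$, I would pass to a weighted natural matroid. Replace variable $i$ by $\delta^+_{J,i}$ copies $x_{i,1},\dots,x_{i,\delta^+_i}$, and consider the polarization
\[
\tilde f=\sum_{\alpha}\rho(\alpha)w(\alpha)\,e_\alpha(\text{copies}),
\]
which is multi-affine with support a matroid $\tilde M$. Specializing all copies of $x_i$ back to $x_i$ recovers $f$, and specialization preserves Lorentzianity. So it suffices to show that $\rho$ being a weak $\T_q$-representation of $J$ makes $\tilde\rho$ a weak $\T_{q'}$-representation of $\tilde M$, for some $q'\le\varepsilon(|\tilde M|)$, once the weights are chosen correctly.

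The factor $(\alpha!)^{-t}$ is exactly what should make the repeated-index Plücker relations of $\tilde M$ match those of $J$. Among copies of the same variable, the relevant quartet relations have two equal terms and a third term involving ratios like $\frac{(\alpha_i+1)^t}{(\alpha_i+2)^t}$. These factors are strictly less than $1$ when $t>0$, so the slack $(\text{ratio})^{1/q}\to 0$ as $q\to 0$ forces the triangle inequality.

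This matching of the $\T_q$-relations under polarization is the main obstacle. It is also where $t>0$ is essential, consistent with the failure of the unnormalized case for $\Delta_2^d$. I would handle it by checking each type of quartet in $\tilde M$ directly: distinct original indices, and one or two original indices repeated. In each case I would choose $q_t(J)$ small enough relative to $\min_{m\le\max\delta^+}\bigl((m+1)/(m+2)\bigr)^t$ and to $\varepsilon(|\tilde M|)$.
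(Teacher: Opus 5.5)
Your upper containment is the paper's Proposition~\ref{prop:upper-Nt}, and your matroid case is the paper's Theorem~\ref{thm:matroid-sandwich}. The gap is in the passage from matroids to general $J$.

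With $\kappa_i$ symmetric copies of $x_i$, the weights are not free. Since $e_a(x_i,\dots,x_i)=\binom{\kappa_i}{a}x_i^a$, recovering $\upN_t f_\rho$ after specialization forces
\[
w(\alpha)=\Bigl((\alpha!)^{1+t}\tbinom{\kappa}{\alpha}\Bigr)^{-1}=\prod_i\frac{(\kappa_i-\alpha_i)!}{\kappa_i!\,(\alpha_i!)^{t}} .
\]
Take a quartet of $\tilde M$ consisting of two copies of $x_i$ and one copy each of $x_k,x_\ell$, over a contracted set of type $\gamma$. The odd Pl\"ucker product divided by the two equal ones is then
\[
\frac{\rho(\gamma+2e_i)\,\rho(\gamma+e_k+e_\ell)}{\rho(\gamma+e_i+e_k)\,\rho(\gamma+e_i+e_\ell)}\cdot\frac{\kappa_i-\gamma_i}{\kappa_i-\gamma_i-1}\Bigl(\frac{\gamma_i+1}{\gamma_i+2}\Bigr)^{t}.
\]
You dropped the factor $\frac{\kappa_i-\gamma_i}{\kappa_i-\gamma_i-1}$, which comes from the binomial. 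With your choice $\kappa_i=\delta^+_i$ and $\gamma_i=\delta^+_i-2$, the weight factor is $2\bigl((\delta^+_i-1)/\delta^+_i\bigr)^t$. This exceeds $1$ whenever $\delta^+_i\ge 3$ and $t\le 1$, so there is no slack at all.

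Concretely, take $J=\Delta_3^3$, $t=1$, and $\rho\equiv1$ except $\rho(3e_1)=2^q$. This is a weak $\T_q$-representation, and the relation at $\gamma=e_1$ with indices $1,1,2,3$ is tight. In $\tilde M=U_{3,9}$, contract one copy of $x_1$ and take the other two copies of $x_1$ with one copy each of $x_2,x_3$. The products are proportional to $\tfrac43\cdot2^q,1,1$. So $\tilde\rho$ is weak $\T_{q'}$ only if $q'\ge q+\log_2\tfrac43$, which is larger than $\varepsilon(9)=\log_2\tfrac87$. For $\Delta_n^3$ with $n\ge6$, the required $q'$ even exceeds $q(\tilde M)\le q(3n-1)<\log_2\tfrac43$ (Proposition~\ref{prop:qn-upper}). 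So no sharper matroid bound could rescue this step as you set it up.

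The missing idea is to use more copies than $\delta^+_i$. The weight factor is increasing in $\gamma_i$, so it is $\le1$ for all relevant $\gamma_i\le\delta^+_i-2$ as soon as
\[
1-\frac{1}{\kappa_i-\delta^+_i+2}\ \ge\ \Bigl(\frac{\delta^+_i-1}{\delta^+_i}\Bigr)^{t}.
\]
This holds for some finite $\kappa_i$ precisely because $t>0$; for $t=1$, $\kappa_i\ge2\delta^+_i-2$ suffices. Then all repeated-index factors, and their products for two repeated pairs, are $\le1$. Hence $\tilde\rho$ is a weak $\T_q$-representation of the natural matroid $\tilde M$, and $q_t(J):=q_{\tilde M}$ works. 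Relations with vanishing terms reduce via M-convexity, as for the usual natural matroid.

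Once corrected, your route is a genuine alternative to the paper's. It uses one global polarization and a single application of the matroid theorem. Integrally many equally weighted copies overshoot the needed diagonal factor, and the excess only weakens the lifted quartet conditions.

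The paper instead stays local. It simplifies each quadratic derivative (Lemma~\ref{lem:quadratic-simplification}) and lifts with non-uniform weights realizing $\theta_a=1-\sum_u\mu_{a,u}^2$ exactly (Lemma~\ref{lem:weighted-copy-lift}). As a result, the lifted quartet relations coincide with the original ones, and only the compression sees $\theta_a$.
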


\begin{remark}
The family $\upN_t$ interpolates between the unnormalized and normalized
settings.  When $t=0$, the operator $\upN_t$ is the identity, while
$\upN_1=\upN$ is the normalization operator appearing in the main
sandwich theorem.  Thus the $\upN_t$-sandwich theorem may be viewed as a
one-parameter refinement of the normalized sandwich theorem, compatible with
the two limiting unnormalized containments
\[
\upR_J^{\rm w}(\mathbb T_0)\subseteq \upL_J
\qquad\text{and}\qquad
\upL_J\subseteq \upR_J^{\rm w}(\mathbb T_\infty).
\]
\end{remark}

\subsection{The bivariate case}
We prove Theorem~\ref{thm:Nt-sandwich} in several steps.  We begin with the bivariate case, which explains
why normalization is needed and gives a simple model for the general argument.

Let $J=\Delta_2^d$.  We write
\[
f_\rho(x,y)
=
\sum_{k=0}^d a_k
\frac{x^{d-k}}{(d-k)!}\frac{y^k}{k!},
\qquad a_k:=\rho(d-k,k).
\]

\begin{lemma}
\label{lem:bivariate-lorentzian}
Assume that $a_i \geq 0$ for all $i$. The polynomial $f_\rho$ is Lorentzian if and only if the sequence
$(a_0,\ldots,a_d)$ is log-concave and has no internal zeros.
\end{lemma}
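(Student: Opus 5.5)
We use the limit-free criterion, Theorem~\ref{thm:BH-limit-free}. Since $d\ge 2$ and $n=2$, the support of $f_\rho$ is $\{(d-k,k): a_k>0\}$. A subset of $\Delta_2^d$ is M-convex exactly when it is a discrete interval $\{(d-k,k): r\le k\le s\}$. So the M-convexity half of the criterion is equivalent to $(a_k)$ having no internal zeros. It then remains to show that, when the support is an interval, the quadratic derivatives are all Lorentzian if and only if $(a_k)$ is log-concave.

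Next we compute the quadratic derivatives of $f_\rho$. For $\alpha=(d-2-m,m)\in\Delta_2^{d-2}$, differentiating the normalized monomials gives
\[
\partial^\alpha f_\rho=a_m\frac{x^2}{2}+a_{m+1}\,xy+a_{m+2}\frac{y^2}{2},
\]
because $\partial_x^{d-2-m}\partial_y^{m}$ sends $\frac{x^{d-k}}{(d-k)!}\frac{y^k}{k!}$ to $\frac{x^{m+2-k}}{(m+2-k)!}\frac{y^{k-m}}{(k-m)!}$ when $m\le k\le m+2$ and kills it otherwise. The Hessian of this derivative is the $2\times 2$ matrix
\[
H_m=\begin{pmatrix} a_m & a_{m+1}\\ a_{m+1} & a_{m+2}\end{pmatrix}
\]
with nonnegative entries.

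By Theorem~\ref{thm:principal-minor-criterion}, $H_m$ is Lorentzian if and only if $\det H_m\le 0$. The $1\times1$ minors give no constraint, since $(-1)^1 a\le 0$ holds automatically. So $\det H_m\le0$ is equivalent to $a_{m+1}^2\ge a_m a_{m+2}$. As $m$ ranges over $0,\dots,d-2$, these inequalities are exactly the log-concavity of $(a_0,\dots,a_d)$.

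For quadratics, Lorentzianity of the polynomial and of its Hessian agree: $\upL_2^2$ is the closure of the strictly Lorentzian quadratics, and the set of nonnegative symmetric matrices with at most one positive eigenvalue is closed. Combining the two halves with Theorem~\ref{thm:BH-limit-free} gives the equivalence.

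The only subtle point is the support bookkeeping. When some $a_k$ vanish, the derivative $\partial^\alpha f_\rho$ may be zero or have degenerate support; the zero polynomial still counts as Lorentzian, and a diagonal or degenerate matrix still passes the minor test. We must also make sure that log-concavity alone does not force the no-internal-zeros condition. It does not: for example $(1,0,0,1)$ is log-concave but has internal zeros. This is why the no-internal-zeros hypothesis must come from the M-convexity clause of Theorem~\ref{thm:BH-limit-free} and not from the Hessian conditions.
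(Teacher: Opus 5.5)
Your argument is correct, but it is more self-contained than the paper's. The paper rewrites $f_\rho$ in the ordinary monomial basis, notes that $b_k/\binom{d}{k}=a_k/d!$, and then cites the bivariate Lorentzian criterion of Br\"and\'en--Huh (their Example 2.26).

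You rederive that criterion instead. M-convex subsets of $\Delta_2^d$ are exactly intervals, which gives the no-internal-zeros condition. In the normalized basis each quadratic derivative is $a_m\frac{x^2}{2}+a_{m+1}xy+a_{m+2}\frac{y^2}{2}$, so the principal-minor criterion turns the Hessian conditions directly into $a_ma_{m+2}\le a_{m+1}^2$, with no binomial bookkeeping. This is essentially the proof of the cited example, written in the coordinates where it is cleanest. The paper's version gains brevity; yours depends only on Theorem~\ref{thm:BH-limit-free} and the matrix criterion.

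One step needs tightening. You justify ``a quadratic is Lorentzian iff its Hessian is a Lorentzian matrix'' by closure of the strictly Lorentzian quadratics plus closedness of the matrix condition. That argument only proves the forward inclusion: Lorentzian quadratics have Lorentzian Hessians. The ``if'' direction of the lemma needs the converse. You must show that a nonnegative matrix $\bigl(\begin{smallmatrix} a & b\\ b & c\end{smallmatrix}\bigr)$ with $ac\le b^2$ is a limit of strictly Lorentzian ones. In the $2\times 2$ case this is easy. Replace $a,b,c$ by $a+\epsilon$, $b+\sqrt{\epsilon(a+c+1)}$, $c+\epsilon$. The new matrix has positive entries and determinant at most $\epsilon^2-\epsilon<0$ for $0<\epsilon<1$, hence signature $(1,1)$. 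Alternatively, cite the standard equivalence, which the paper itself uses without comment.
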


\begin{proof}
Write $f_\rho=\sum_{k=0}^d b_kx^{d-k}y^k$ in the usual monomial basis.  Then
\[
b_k=\frac{a_k}{(d-k)!k!},
\qquad
\frac{b_k}{\binom{d}{k}}=\frac{a_k}{d!}.
\]
The bivariate Lorentzian criterion, see \cite[Example 2.26]{Branden-Huh20}, says that $f_\rho$ is Lorentzian if and only
if the sequence $\bigl(b_k/\binom{d}{k}\bigr)_{k=0}^d$ is log-concave and has no
internal zeros.  This is equivalent to the same condition for $(a_k)$.
\end{proof}

\begin{lemma}
\label{lem:bivariate-Tq-ineq}
Let $\rho:\Delta_2^d\to \R_{\ge 0}$ be a coefficient function, and write
\[
        \rho(d-k,k)=a_k
        \qquad(0\le k\le d).
\]
If $q>0$, then $\rho\in \upR_{\Delta_2^d}^{\rm w}(\T_q)$ if and only if
\[
        a_{k-1}a_{k+1}\le 2^q a_k^2
        \qquad (1\le k\le d-1).
\]
For $q=0$, one has $\rho\in \upR_{\Delta_2^d}^{\rm w}(\T_0)$ if and only if
\[
        a_{k-1}a_{k+1}\le a_k^2
        \qquad (1\le k\le d-1).
\]
\end{lemma}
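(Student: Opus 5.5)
We unwind the definition of weak representation for $J=\Delta_2^d$ and enumerate the 3-term Plücker relations explicitly. Here $n=2$, and $\delta_J^-=(0,0)$, $\delta_J^+=(d,d)$, so every $\alpha\in\Delta_2^{d-2}$ is allowed. The index constraint is vacuous, since $\alpha+e_i+e_j+e_k+e_\ell$ has coordinates at most $d$ whenever its total is $d+2$ and each coordinate of $\alpha$ is at most $d-2$. Indeed, each coordinate of $\alpha+e_i+e_j+e_k+e_\ell$ is at most $(d-2)+\#\{\text{indices equal to that coordinate}\}$. The total is $d+2$, and the other coordinate is nonnegative. If all four indices equal $1$, the first coordinate is $\alpha_1+4$, and this exceeds $d$ only when $\alpha_1>d-4$. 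In that case the relation is tautological anyway (see below), so there is no issue.

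Next we list the quadruples $1\le i\le j\le k\le \ell\le 2$. They are $(1,1,1,1)$, $(1,1,1,2)$, $(1,1,2,2)$, $(1,2,2,2)$ and $(2,2,2,2)$. When three or more indices coincide, the three Plücker products agree. For instance, with $(1,1,1,2)$ we get $T_1=T_2=T_3=\rho(\alpha+2e_1)\rho(\alpha+e_1+e_2)$, so the relation says that three equal numbers form a triangle, which always holds. The only nontrivial case is $(i,j,k,\ell)=(1,1,2,2)$. There
\[
T_1=T_2=\rho(\alpha+e_1+e_2)^2,\qquad T_3=\rho(\alpha+2e_1)\rho(\alpha+2e_2).
\]
Writing $\alpha=(d-1-k,\,k-1)$ with $1\le k\le d-1$, these become $T_1=T_2=a_k^2$ and $T_3=a_{k-1}a_{k+1}$.

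It then remains to translate the triangle condition on $(T_1^{1/q},T_1^{1/q},T_3^{1/q})$. Two equal sides $s$ and a third side $u$ form a possibly degenerate triangle iff $u\le 2s$. With $q>0$ this says $(a_{k-1}a_{k+1})^{1/q}\le 2a_k^{2/q}$, i.e.\ $a_{k-1}a_{k+1}\le 2^q a_k^2$. The equivalence is immediate by monotonicity of $x\mapsto x^{q}$ and includes the case where some $a$ vanishes. One could also cite Corollary~\ref{cor:Tq-Heron}, which gives $u^{2}(u^2-4s^2)\le 0$. For $q=0$, Lemma~\ref{lem:T0-Tinfty} states that the maximum of $\{a_k^2,a_k^2,a_{k-1}a_{k+1}\}$ must be attained at least twice, or all entries are zero. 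This is equivalent to $a_{k-1}a_{k+1}\le a_k^2$.

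One further point needs attention: the support condition $\supp(\rho)=J=\Delta_2^d$ in the definition. Read literally, it forces all $a_k>0$. We therefore interpret the lemma as stating that the Plücker relations are equivalent to the displayed inequalities, with the support condition understood as given. There is no real obstacle here; the only care needed is the bookkeeping of which $\alpha$ and index tuples occur, and confirming that every degenerate tuple is tautological.
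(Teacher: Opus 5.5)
Your argument is correct and is essentially the paper's proof: the only nontrivial relations are the $(1,1,2,2)$ ones at $\alpha=(d-1-k,k-1)$, with terms $a_k^2,a_k^2,a_{k-1}a_{k+1}$, and the $\T_q$ (resp.\ $\T_0$) condition reduces to $a_{k-1}a_{k+1}\le 2^q a_k^2$ (resp.\ $a_{k-1}a_{k+1}\le a_k^2$). Your opening claim that the constraint $\alpha+e_i+e_j+e_k+e_\ell\le\delta_J^+$ is vacuous is false as stated (it fails, e.g., for $(1,1,1,2)$ with $\alpha=(d-2,0)$), but this is harmless because the constraint only excludes tautological relations and never a $(1,1,2,2)$ one; your remark that the ``if'' direction tacitly requires all $a_k>0$ is a fair point that the paper's proof also passes over.
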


\begin{proof}
For $1\le k\le d-1$, put
\[
        \alpha=(d-k-1,k-1)\in \Delta_2^{d-2}.
\]
The weak Pl\"ucker relation with repeated indices $(1,1,2,2)$ has three
terms
\[
        a_k^2,\qquad a_k^2,\qquad a_{k-1}a_{k+1}.
\]
Since there are only two variables, every weak three-term Pl\"ucker relation
for $\Delta_2^d$ is obtained in this way, for some $1\le k\le d-1$.

Assume first that $q>0$.  Membership in the null set of $\T_q$ is equivalent
to the assertion that the $q$-th roots of the three nonnegative terms satisfy
the triangle inequalities.  Since two of the terms are equal, the only possibly
nontrivial triangle inequality is
\[
        (a_{k-1}a_{k+1})^{1/q}
        \le
        2(a_k^2)^{1/q},
\]
which is equivalent to
\[
        a_{k-1}a_{k+1}\le 2^q a_k^2.
\]
Thus, for $q>0$, the weak $\T_q$-relations are equivalent to the displayed
inequalities for all $1\le k\le d-1$.

For $q=0$, membership in the null set of $\T_0$ means that the maximum of
the three terms is attained at least twice.  Since two of the terms are equal
to $a_k^2$, this is equivalent to
\[
        a_{k-1}a_{k+1}\le a_k^2.
\]
Therefore the weak $\T_0$-relations are equivalent to the displayed
inequalities for all $1\le k\le d-1$.
\end{proof}

\begin{proposition}[Bivariate normalized inclusion]
\label{prop:bivariate-Nt}
Let $J=\Delta_2^d$ with $d \geq 2$ and let $t>0$.  Set
\[
q_t(d):=
t \cdot \min_{1\le k\le d-1}
\log_2\left(\frac{(k+1)(d-k+1)}{k(d-k)}\right).
\]
Then
\[
\upN_t\upR_J^{\rm w}(\T_q)\subseteq \upL_J
\qquad\text{for all }0<q\le q_t(d).
\]
Moreover, for $t=0$ this conclusion fails for every $q>0$:
in that case
\[
\upR_J^{\rm w}(\T_q)\not\subseteq \upL_J.
\]
On the other hand,
\[
\upR_J^{\rm w}(\T_0)=\upL_J.
\]
\end{proposition}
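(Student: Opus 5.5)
The plan is to make everything explicit using the two bivariate lemmas just proved, after fixing what $\upN_t$ does to a representation. For $\rho\in\upR_J^{\rm w}(\T_q)$ with $a_k=\rho(d-k,k)$, I read $\upN_t\rho$ as the coefficient function $\rho(\alpha)/(\alpha!)^t$, identified as usual with its exponential generating function. The associated polynomial is therefore
\[
f=\sum_{k=0}^d \frac{a_k}{((d-k)!\,k!)^{t}}\,\frac{x^{d-k}}{(d-k)!}\frac{y^k}{k!}.
\]
This convention must be the intended one, because it is the one that reproduces the constant $q_t(d)$ in the statement. Since $\supp(\rho)=J=\Delta_2^d$, all $a_k>0$, so there are no internal zeros. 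By Lemma~\ref{lem:bivariate-lorentzian}, $f$ is Lorentzian if and only if the sequence $c_k:=a_k\,((d-k)!k!)^{-t}$ is log-concave.

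The key step is a short computation. Set
\[
r_k:=\frac{(k+1)(d-k+1)}{k(d-k)}>1 .
\]
Since
\[
\frac{\bigl(k!(d-k)!\bigr)^2}{(k-1)!(d-k+1)!\,(k+1)!(d-k-1)!}=\frac{k(d-k)}{(k+1)(d-k+1)}=r_k^{-1},
\]
the condition $c_{k-1}c_{k+1}\le c_k^2$ becomes
\[
a_{k-1}a_{k+1}\le r_k^{\,t}\,a_k^2 .
\]
By Lemma~\ref{lem:bivariate-Tq-ineq}, the $\T_q$-hypothesis gives exactly $a_{k-1}a_{k+1}\le 2^q a_k^2$. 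Lorentzianity therefore follows as soon as $2^q\le r_k^t$ for every $1\le k\le d-1$. This is equivalent to
\[
q\le t\log_2 r_k \quad\text{for all }k,
\]
that is, $q\le q_t(d)$. The constant $q_t(d)$ is positive because each $r_k>1$.

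For the failure at $t=0$, fix any $q>0$ and take $a_k=2^{qk^2/2}$. Then
\[
a_{k-1}a_{k+1}=2^q a_k^2,
\]
so $\rho$ satisfies the $\T_q$-relations by Lemma~\ref{lem:bivariate-Tq-ineq} and has full support. However, $(a_k)$ is strictly log-convex, so $f_\rho$ is not Lorentzian by Lemma~\ref{lem:bivariate-lorentzian}.

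For the equality $\upR_J^{\rm w}(\T_0)=\upL_J$, compare the two lemmas directly. The $\T_0$-relations are exactly $a_{k-1}a_{k+1}\le a_k^2$. Lorentzianity of $f_\rho$ with support $\Delta_2^d$ is exactly log-concavity of $(a_k)$ with positive entries, hence with no internal zeros. The two conditions coincide.

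I do not expect a real obstacle here. The only delicate point is the bookkeeping of which factorials appear, namely the $(\alpha!)^t$ coming from $\upN_t$ and the $\alpha!$ coming from the exponential generating function. That bookkeeping is what produces the exponent $t$ rather than $t-1$, and I will fix it by checking it against the formula for $q_t(d)$.
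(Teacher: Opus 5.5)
Your proposal is correct and follows the paper's proof essentially step for step. You reduce via Lemma~\ref{lem:bivariate-lorentzian} to log-concavity of $a_k/((d-k)!\,k!)^t$, extract the factor $r_k^t$ from the factorials, and compare with the inequality $a_{k-1}a_{k+1}\le 2^q a_k^2$ from Lemma~\ref{lem:bivariate-Tq-ineq}; the equality $\upR_J^{\rm w}(\T_0)=\upL_J$ is read off from the same two lemmas. The only difference is your $t=0$ counterexample: you take $a_k=2^{qk^2/2}$, which saturates every $\T_q$-relation and is strictly log-convex, whereas the paper lowers a single entry of the constant sequence to a value in $(2^{-q/2},1)$; both work.
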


\begin{proof}
Let $\rho\in \upR_J^{\rm w}(\T_q)$, and write
\[
f_\rho
=
\sum_{k=0}^d
a_k
\frac{x^{d-k}}{(d-k)!}\frac{y^k}{k!}.
\]
Then
\[
\upN_t f_\rho
=
\sum_{k=0}^d
\frac{a_k}{\bigl((d-k)!k!\bigr)^t}
\frac{x^{d-k}}{(d-k)!}\frac{y^k}{k!}.
\]
By Lemma~\ref{lem:bivariate-lorentzian}, it suffices to prove log-concavity of
\[
b_k:=\frac{a_k}{\bigl((d-k)!k!\bigr)^t}.
\]
For $1\le k\le d-1$,
\[
\frac{b_k^2}{b_{k-1}b_{k+1}}
=
\frac{a_k^2}{a_{k-1}a_{k+1}}
\left(
\frac{(k+1)(d-k+1)}{k(d-k)}
\right)^t.
\]
By Lemma~\ref{lem:bivariate-Tq-ineq},
\[
\frac{a_k^2}{a_{k-1}a_{k+1}}\ge 2^{-q}.
\]
Thus $b_k^2\ge b_{k-1}b_{k+1}$ for all $1\le k\le d-1$ whenever
\[
2^q\le
\left(
\frac{(k+1)(d-k+1)}{k(d-k)}
\right)^t
\]
for all $k$, which is exactly the condition $q\le q_t(d)$.  This proves the
first assertion.

The equality
\[
\upR_J^{\rm w}(\T_0)=\upL_J
\]
follows from the $q=0$ case of Lemma~\ref{lem:bivariate-Tq-ineq} together with
Lemma~\ref{lem:bivariate-lorentzian}.

It remains to show that the normalized containment can fail when $t=0$.  Let 
$q>0$ and choose $1\le k\le d-1$. Set
\[
a_j=1 \quad (j\ne k),
\qquad
2^{-q/2}<a_k<1.
\]
Then the weak $\T_q$-inequalities hold. Indeed, the only potentially nontrivial
inequality centered at $k$ is
\[
a_{k-1}a_{k+1}=1\le 2^q a_k^2,
\]
which holds by the choice of $a_k$.  The inequalities centered at $k-1$ and
$k+1$ are stronger than needed, since they have the form
\[
a_{k-2}a_k=a_k<1=a_{k-1}^2
\quad\text{or}\quad
a_ka_{k+2}=a_k<1=a_{k+1}^2
\]
when the indicated indices exist; all other inequalities are identities.
Thus $\rho\in \upR_J^{\rm w}(\T_q)$.  On the other hand,
\[
a_k^2<1=a_{k-1}a_{k+1},
\]
so the coefficient sequence is not log-concave.  By
Lemma~\ref{lem:bivariate-lorentzian}, $\rho\notin \upL_J$.  Since
$\upN_0$ is the identity, this proves that the asserted containment fails for
$t=0$.
\end{proof}

In particular, the minimum in Proposition~\ref{prop:bivariate-Nt} is positive.
For large $d$, it is attained near $k=d/2$, and
\[
q_t(d)
=
t\log_2\left(1+\frac4d+O(d^{-2})\right)
=
\frac{4t}{(\ln 2)d}+O(d^{-2}).
\]

\subsection{The matroid case}
\label{subsec:matroid-reduction}

We next prove the matroid version of the sandwich theorem.  Since matroid
supports are multi-affine, the operators $\upN_t$ act trivially on them.

\begin{theorem}[Matroid sandwich theorem]
\label{thm:matroid-sandwich}
Let $M$ be a matroid on $[n]$.  Then there exists $q_M>0$ such that
\[
\upR_M^{\rm w}(\T_q)\subseteq \upL_M \subseteq \upR_M^{\rm w}(\T_2)
\]
for all $0<q\le q_M$.

Equivalently, after projectivizing,
\[
\Gr_M^{\rm w}(\T_q)\subseteq \mathbb P\upL_M
\subseteq \Gr_M^{\rm w}(\T_2)
\]
for all sufficiently small $q>0$.
\end{theorem}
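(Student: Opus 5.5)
The upper containment $\upL_M\subseteq \upR_M^{\rm w}(\T_2)$ is already Proposition~\ref{prop:unnormalized-upper-containment}, so only the lower containment needs proof. I would take
\[
q_M:=\min_{4\le k\le n}\varepsilon(k)=\varepsilon(n)=\log_2\!\left(1+\tfrac{1}{n-2}\right),
\]
or any smaller positive number, and show that every weak $\T_q$-representation $\rho$ of $M$ with $0<q\le q_M$ gives a Lorentzian $f_\rho$. Since $\supp(f_\rho)=M$ is M-convex, Theorem~\ref{thm:BH-limit-free} reduces this to showing that each quadratic derivative $\partial^\alpha f_\rho$, for $\alpha\in\Delta_n^{r-2}$, is Lorentzian. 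For multi-affine $f_\rho$ these derivatives vanish unless $\alpha=e_S$ is the indicator of an $(r-2)$-subset $S$ that is independent in $M$. In that case
\[
\partial^\alpha f_\rho=\sum_{\{i,j\}}\rho(e_S+e_i+e_j)x_ix_j,
\]
whose support is the set of bases of the rank-$2$ contraction $M/S$ (restricted to $[n]\setminus S$). Its zero-diagonal coefficient matrix $A$ has $A_{ij}=\rho(e_S+e_i+e_j)$. Every weak Plücker relation for $\rho$ at $\alpha=e_S$ with distinct $i,j,k,\ell$ is a quartet $\T_q$-relation for $A$, and the constraints $\delta^-\le\alpha$, $\alpha+\dots\le\delta^+$ only remove loops and coloops, which do not matter. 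So the problem reduces to showing: if $N$ is a rank-$2$ matroid and $A$ is a symmetric zero-diagonal nonnegative matrix supported exactly on the bases of $N$ and satisfying the $\T_q$-quartet relations, then $A$ is Lorentzian.

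Next I simplify the rank-$2$ matroid. Loops give zero rows and columns, which can be deleted without changing the nonzero spectrum. Outside loops, a rank-$2$ matroid is determined by its parallel classes $P_1,\dots,P_m$ with $m\ge2$: $A_{ij}>0$ exactly when $i,j$ lie in different classes. The key step is to show that the relations force $A$ to be constant on parallel classes up to diagonal scaling, that is, $A_{ik}/A_{jk}$ is independent of $k$ for $i,j$ parallel. To see this, take $i,j\in P_a$ and $k,\ell$ outside $P_a$ in distinct classes. The quartet $\{i,j,k,\ell\}$ has products $A_{ij}A_{k\ell}=0$, $A_{ik}A_{j\ell}$ and $A_{i\ell}A_{jk}$. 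A triangle with one side zero forces the other two sides to be equal, so $A_{ik}A_{j\ell}=A_{i\ell}A_{jk}$. If $k,\ell$ lie in the same class, the relation is trivially satisfied and proportionality is not forced directly. However, with $m\ge3$ one can pass through a third class, and with $m=2$ the matrix is bipartite. That bipartite case I would handle separately: $A=\begin{pmatrix}0&C\\C^\top&0\end{pmatrix}$ has eigenvalues $\pm$ the singular values of $C$, and the vanishing relations above (for $k,\ell$ in the other class, using $i,j$ parallel) give $\operatorname{rank}C\le1$, so $A$ is Lorentzian.

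For $m\ge3$, after a diagonal congruence (Lemma~\ref{lem:diag-congruence-U2n}, which preserves inertia and the $\T_q$-relations) I can take $A_{ik}=B_{ab}$ for $i\in P_a$, $k\in P_b$. Then $A=PBP^\top$, where $B$ is the $m\times m$ matrix indexed by classes and $P$ is the class-indicator matrix. This is congruent to $B$ padded with zeros, so the two have the same number of positive eigenvalues. The matrix $B$ is admissible, and its quartet relations are among those of $A$ (one representative per class), so $B$ is a weak $\T_q$-representation of $U_{2,m}$ with $m\le n$. Lemma~\ref{lem:U22-U23} handles $m=3$, and Theorem~\ref{thm:rank-two-uniform} handles $m\ge4$ since $q\le\varepsilon(n)\le\varepsilon(m)$. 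So $B$ is Lorentzian, and hence so is $A$.

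I expect the main obstacle to be the parallel-class reduction: checking that the equality relations from degenerate triangles really give consistent rescaling factors across all classes, and that the deleted loops and coloops do not break the correspondence between the weak Plücker relations of $M$ and the quartet relations of $A$. The uniform-case analysis itself is already supplied by Theorem~\ref{thm:rank-two-uniform}.
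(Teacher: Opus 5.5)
Your proposal is correct and follows essentially the same route as the paper. Like the paper, you reduce via Theorem~\ref{thm:BH-limit-free} to rank-$2$ contractions, use the degenerate-triangle Pl\"ucker relations to factor the coefficients through the simplification $U_{2,m}$, and invoke Theorem~\ref{thm:rank-two-uniform}. Your congruence $A=PBP^\top$ is the paper's nonnegative linear substitution $y_a=\sum_{i\in V_a}t_ix_i$, and you carry out the factorization explicitly for $m\le 3$, which the paper leaves implicit.

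One small correction: when $k,\ell$ lie in the same class, the quartet relation is not trivially satisfied. Since $A_{ij}A_{k\ell}=0$ regardless and the other two products are positive, it still forces $A_{ik}A_{j\ell}=A_{i\ell}A_{jk}$. Your own $m=2$ argument uses exactly this, and it makes the detour through a third class unnecessary.
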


\begin{proof}
The upper containment is the multi-affine case of the general upper containment
proved in Proposition~\ref{prop:unnormalized-upper-containment}.

For the lower containment, let $\rho\in \upR_M^{\rm w}(\T_q)$.  By the
quadratic-derivative characterization of Lorentzian polynomials, it suffices to
show that every nonzero squarefree derivative of $f_\rho$ of degree $2$ is
Lorentzian.  Such a derivative corresponds to a rank-$2$ contraction of $M$.
Thus it is enough to treat loopless rank-$2$ matroids.

Let $N$ be a loopless rank-$2$ matroid with parallel classes
\[
V_1,\ldots,V_m.
\]
If $m\le 3$, every admissible rank-$2$ coefficient matrix is Lorentzian, so there
is nothing to prove.  Assume $m\ge 4$.  Choose representatives $i_a\in V_a$ and
write
\[
\sigma_{ab}:=\rho(i_ai_b)
\qquad (a\ne b).
\]
The weak Pl\"ucker relations with two parallel elements imply that for
$i,i'\in V_a$ and $j,k$ outside $V_a$ in distinct parallel classes,
\[
\rho(ij)\rho(i'k)=\rho(ik)\rho(i'j).
\]
Hence, after choosing positive scalars $t_i$, one has
\[
\rho(ij)=t_i t_j\sigma_{ab}
\qquad
(i\in V_a,\ j\in V_b,\ a\ne b).
\]
Consequently $f_\rho$ is obtained from the rank-$2$ uniform polynomial
\[
g_\sigma=\sum_{1\le a<b\le m}\sigma_{ab} y_ay_b
\]
by the linear substitution
\[
y_a=\sum_{i\in V_a} t_i x_i.
\]
The coefficient system $\sigma$ is a weak $\T_q$-representation of
$U_{2,m}$.  By Theorem~\ref{thm:rank-two-uniform}, if
\[
0<q\le \varepsilon(m):=\log_2\left(1+\frac1{m-2}\right),
\]
then $g_\sigma$ is Lorentzian.  Since Lorentzian polynomials are preserved under
linear substitutions with nonnegative coefficients, $f_\rho$ is Lorentzian.

Taking the minimum of these positive numbers over the finitely many rank-$2$
contractions of $M$ gives the desired constant $q_M>0$.
\end{proof}

\begin{remark}
Recall that 
\[
q(M):=\sup\{q>0 \mid \Gr^{\rm w}_M(\T_q)\subseteq \P\upL_M\}
\]
and $q(m) := q(U_{2,m})$.
The proof of Theorem~\ref{thm:matroid-sandwich} gives an explicit lower bound for $q(M)$ in terms of the largest rank-$2$
uniform minor of $M$: if  
\[
h(M):=\max\{m:\text{$M$ has a $U_{2,m}$-minor}\}
\]
then
\[
q(M)\ge q(h(M)).
\]

Indeed, $q(\cdot)$ is monotone under minors: if $N$ is a minor of $M$, then
\[
q(N)\ge q(M).
\]
For deletions, this follows by restricting to the corresponding coordinate
subspace.  For contractions, it follows by taking the corresponding squarefree
derivative.  In both cases, weak $\T_q$-representability and Lorentzianity
descend to the minor.

Since $U_{2,m}$ is a minor of $U_{2,m+1}$, the sequence $q(m)$ is
nonincreasing in $m$.  On the other hand, every nonzero squarefree quadratic
derivative in the proof of Theorem~\ref{thm:matroid-sandwich} corresponds to a rank-$2$ contraction of $M$.  After
simplifying that contraction, one obtains $U_{2,m}$ for some $m\le h(M)$.
Thus the worst possible rank-$2$ contraction is controlled by
$U_{2,h(M)}$, giving the displayed lower bound.

For example, if $M$ is binary then it has no $U_{2,4}$-minor; since $q(3)=\infty$ we conclude that $q(M) = \infty$.
And if $M$ is ternary then it has no $U_{2,5}$-minor; since $q(4)=2$ we conclude that $q(M) \geq 2$. 

By Theorem \ref{thm:universal-upper-containment} it follows that $\P\upL_M=\Gr^{\rm w}_M(\T_\infty)=\Gr^{\rm w}_M(\T_2)$ for binary $M$ and $\P\upL_M=\Gr^{\rm w}_M(\T_2)$ for ternary $M$.
\end{remark}

\subsection{Quadratic M-convex supports}
\label{subsec:general-from-matroids}

We now pass from matroids to arbitrary M-convex supports.  The proof is local:
we analyze quadratic derivatives and then use the quadratic-derivative
criterion for Lorentzian polynomials (Theorem~\ref{thm:BH-limit-free}).

Let $K\subseteq \Delta_n^2$ be M-convex.  Define
\[
V(K):=\{i:\text{there exists }j\ne i\text{ with }e_i+e_j\in K\},
\]
\[
E(K):=\{\{i,j\}:i\ne j,\ e_i+e_j\in K\},
\qquad
D(K):=\{i:2e_i\in K\}.
\]
By \cite[Lemma 4.5(2)]{BHKL1}, the graph
$(V(K),E(K))$ is complete multipartite.  We write its parts as
\[
V(K)=V_1\sqcup\cdots\sqcup V_r.
\]
(This is the generalization to M-convex sets of the decomposition of a
rank-$2$ matroid into parallel classes.)

Let
\[
S:=\{a\in [r]:D(K)\cap V_a\ne\varnothing\}.
\]
If $a\in S$, then $V_a$ is a singleton by \cite[Lemma 4.5(1)]{BHKL1}; we denote its unique element by
$i_a$.

\begin{lemma}
\label{lem:quadratic-simplification}
Let $c=(c_\beta)_{\beta\in K}$ be a weak $\T_q$-representation of $K$.
Then there exist positive real numbers $\omega_i$, normalized by the condition
\[
\sum_{i\in V_a}\omega_i=1
\qquad (1\le a\le r),
\]
and a weak $\T_q$-representation $\bar c$ of
\[
K' :=
\{e_a+e_b:a\ne b\text{ in }[r]\}\cup \{2e_a:a\in S\}
\subseteq \Delta_r^2
\]
such that
\[
c_{e_i+e_j}
=
\omega_i\omega_j\,\bar c_{e_a+e_b}
\qquad
(i\in V_a,\ j\in V_b,\ a\ne b\text{ in }[r]),
\]
and
\[
c_{2e_{i_a}}=\bar c_{2e_a}
\qquad (a\in S,\ V_a=\{i_a\}).
\]
\end{lemma}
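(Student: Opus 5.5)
The plan is to imitate the parallel-class argument in the proof of Theorem~\ref{thm:matroid-sandwich}, now allowing diagonal terms. We first show that within each part $V_a$ the coefficients factor. Fix $a$ with $|V_a|\ge 2$; then $a\notin S$ by \cite[Lemma 4.5(1)]{BHKL1}. Take $i,i'\in V_a$ and $j,k$ lying in parts different from $V_a$. Apply the weak Pl\"ucker relation with $\alpha=0$ and the four indices $i,i',j,k$ (if $j=k$, the indices are $i,i',j,j$).

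In this relation, the term pairing $i$ with $i'$ vanishes, because $e_i+e_{i'}\notin K$: $i$ and $i'$ lie in the same part of a complete multipartite graph. If $j=k$, that term is $c_{e_i+e_{i'}}c_{2e_j}=0$ for the same reason. So exactly two terms survive. In every $\T_q$ with $q\ge 0$, a two-term null sum forces the two terms to be equal, since a degenerate ``triangle'' with one zero side has its other two sides equal. This gives
\[
c_{e_i+e_j}\,c_{e_{i'}+e_k}=c_{e_i+e_k}\,c_{e_{i'}+e_j}.
\]
The boundary conditions $\delta_K^-\le 0$ and $e_i+e_{i'}+e_j+e_k\le \delta_K^+$ need checking. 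The first is trivial. For the second, note that $\delta_K^+$ has entry at least $1$ at every vertex of $V(K)$, and entry $2$ at $j$ whenever $2e_j\in K$. If $2e_j\notin K$, the case $j=k$ simply does not occur as a constraint, and we do not need it.

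From the relation above we get that $c_{e_i+e_j}/c_{e_{i'}+e_j}$ is independent of $j\notin V_a$. Choose a base point $i_a^0\in V_a$ and set $\tau_i:=c_{e_i+e_j}/c_{e_{i_a^0}+e_j}$, which is well defined. Then normalize with $\omega_i:=\tau_i/\sum_{i'\in V_a}\tau_{i'}$. For singleton parts, $\omega_{i_a}=1$.

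Next we define $\bar c_{e_a+e_b}:=c_{e_i+e_j}/(\omega_i\omega_j)$ for any $i\in V_a$ and $j\in V_b$. By the ratio property, applied in each coordinate separately, this is independent of the choices. We also set $\bar c_{2e_a}:=c_{2e_{i_a}}$ for $a\in S$. The two displayed identities then hold by construction, and the support of $\bar c$ is exactly $K'$.

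It remains to verify that $\bar c$ satisfies every weak $\T_q$-Pl\"ucker relation of $K'$. Since $\deg=2$ we have $\alpha=0$, and the relations are indexed by $a\le b\le c\le d$ in $[r]$ with $e_a+e_b+e_c+e_d\le\delta_{K'}^+$. A repeated index is allowed only when it lies in $S$, since only those have $\delta^+=2$. Lift each index to a representative: take $i_a$ when $a\in S$, and any element of $V_a$ otherwise. Two equal indices $a=b\notin S$ are excluded by the bound on $\delta_{K'}^+$. The corresponding relation of $K$ is then legitimate, and its bound $\le\delta_K^+$ holds for the same reason.

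Each of the three terms of the lifted relation equals the corresponding $\bar c$-term times the common factor $\omega_{i_a}\omega_{i_b}\omega_{i_c}\omega_{i_d}$. The diagonal terms need separate attention: there $\omega=1$, and a term $c_{2e_{i_a}}c_{e_{i_c}+e_{i_d}}$ becomes $\bar c_{2e_a}\bar c_{e_c+e_d}$ times $\omega_{i_c}\omega_{i_d}$, with $\omega_{i_a}^2=1$, so the pattern still matches. Rescaling all terms by a common positive scalar preserves membership in $N_{\T_q}$, as in Lemma~\ref{lem:diag-congruence-U2n}. Hence each relation for $\bar c$ follows from the lifted relation for $c$.

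The main obstacle is bookkeeping: making sure the relations used in the factorization step are among the admissible ones, given the $\delta^\pm$ constraints. The case $j=k$ only arises when $2e_j\in K$, and then it is admissible; when it is not admissible we avoid it, because distinct $j,k$ suffice to establish the ratio property whenever at least two other parts exist. The degenerate situation in which $V_a$ has a single other part, so that $r=2$, must be handled separately. In that case the coefficients $c_{e_i+e_j}$ with $i\in V_1$ and $j\in V_2$ are forced to be a rank-one matrix. Either they already are, via relations of the form $(i,i',j,j')$ when $|V_2|\ge2$, or one part is a singleton and the rank-one structure is trivial.
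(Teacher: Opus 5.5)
Your proof is correct, but it obtains the factorization by a different route than the paper.

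\textbf{How the two routes differ.} The paper does not derive the weights $\omega_i$ from Pl\"ucker relations at all. It observes that a weak $\T_q$-representation is in particular a weak $\T_\infty$-representation. It then invokes \cite[Theorem~4.7]{BHKL1}, which directly supplies normalized weights and a polynomial $g$ with support $K'$ satisfying $f_c(x)=g\bigl(\sum_{i\in V_1}\omega_ix_i,\dots,\sum_{i\in V_r}\omega_ix_i\bigr)$. Comparing coefficients gives both identities.

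You instead extract the factorization from the degenerate three-term relations on quartets $\{i,i',j,k\}$ with $i,i'$ in a common part. There the $\{i,i'\}$-pairing vanishes, and the relation collapses to $c_{e_i+e_j}c_{e_{i'}+e_k}=c_{e_i+e_k}c_{e_{i'}+e_j}$. This is exactly the mechanism the paper uses for parallel classes in the proof of Theorem~\ref{thm:matroid-sandwich}.

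The last step is common to both proofs: each relation for $\bar c$ lifts to one for $c$ with all three terms scaled by the same positive factor, using $\omega_{i_a}=1$ for $a\in S$. The paper's route is shorter because it reuses a structural result already available for weak $\T_\infty$-representations. Yours is elementary and self-contained. It also makes visible that only the relations with a vanishing term are needed for the factorization, and those hold verbatim over every $\T_q$.

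\textbf{Streamlining remarks.} None of these affect correctness.
\begin{itemize}
\item The $j=k$ relation is a tautology: its two surviving terms are both $c_{e_i+e_j}c_{e_{i'}+e_j}$. So it is never needed.
\item The case $r=2$ needs no separate treatment. Distinct $j,k$ lying in the same part $V_b$ already give the required relation, since the $\{i,i'\}\{j,k\}$-term then vanishes doubly. That is in fact what your special-case argument does.
\item The condition $\delta_K^-\le 0$ is not automatic in general. For a star $K=\{e_1+e_j: j\in T\}$ one has $\delta_K^-=e_1$, and there are no relations at all. However, in every quartet you actually use, the elements $e_{i'}+e_j,\ e_i+e_j,\ e_i+e_k\in K$ show that $\delta_K^-=0$.
\item Similarly, in your final step the lifted relations are admissible whenever the downstairs ones are. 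For $r\ge 3$ this is clear. For $r=2$, $\delta_{K'}^-=0$ forces $S=\{1,2\}$ with both parts singletons.
\end{itemize}
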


\begin{remark}
    The representation $\bar c$ in Lemma \ref{lem:quadratic-simplification} should be thought of as the \emph{simplification} of $c$.
    Indeed, if $J=M$ is a matroid, the support of $\bar c$ is exactly the simplification of $M$ and Lemma \ref{lem:quadratic-simplification} says that the simplification operation preserves the property of being a $\T_q$-representation. The corresponding statement for Lorentzian polynomials is \cite[Theorem 4.7]{BHKL1}.
\end{remark}

\begin{proof}
Let
\[
f_c:=\sum_{\beta\in K} c_\beta \frac{x^\beta}{\beta!}.
\]
Since $c$ is a weak $\T_q$-representation, it is in particular a weak
$\T_\infty$-representation.  We may therefore apply \cite[Theorem 4.7]{BHKL1} to $f_c$.  It gives positive
weights $\omega_i$, normalized by
\[
\sum_{i\in V_a}\omega_i=1,
\]
and a unique polynomial
\[
g=\sum_{\kappa\in K'}\bar c_\kappa \frac{y^\kappa}{\kappa!}
\]
with support $K'$ such that
\[
f_c(x_1,\ldots,x_n)
=
g\left(\sum_{i\in V_1}\omega_i x_i,\ldots,
        \sum_{i\in V_r}\omega_i x_i\right).
\]
Comparing coefficients gives
\[
c_{e_i+e_j}
=
\omega_i\omega_j\,\bar c_{e_a+e_b}
\qquad
(i\in V_a,\ j\in V_b,\ a\ne b),
\]
and, for $a\in S$,
\[
c_{2e_{i_a}}=\bar c_{2e_a}.
\]

It remains only to check that $\bar c$ is a weak $\T_q$-representation, not merely
a weak $\T_\infty$-representation.  But every weak Pl\"ucker relation for $\bar c$
pulls back to the corresponding weak Pl\"ucker relation for $c$, with all three
terms multiplied by the same positive factor, namely the product of the relevant
weights $\omega_i$.  Since membership in $N_{\T_q}$ is invariant under multiplication
by a common positive scalar, the weak $\T_q$-relations for $c$ imply the weak
$\T_q$-relations for $\bar c$.
\end{proof}

The following result is a special case of \cite[Theorem~4.7(2)]{BHKL1}; we include a short proof for the reader's convenience.

\begin{lemma}
\label{lem:block-lorentzian-equivalence}
Let $B$ be the symmetric $r\times r$ matrix with entries
\[
B_{ab}=\bar c_{e_a+e_b}\quad(a\ne b),
\qquad
B_{aa}=\bar c_{2e_a}\quad(a\in S),
\qquad
B_{aa}=0\quad(a\notin S).
\]
Let $H$ be the symmetric matrix indexed by $V(K)$ with entries
\[
H_{ij}=\omega_i\omega_j B_{ab}
\qquad (i\in V_a,\ j\in V_b,\ a\ne b),
\]
\[
H_{i_ai_a}=B_{aa}\qquad(a\in S),
\]
and all other diagonal entries equal to zero.  Then $H$ is Lorentzian if and only
if $B$ is Lorentzian.
\end{lemma}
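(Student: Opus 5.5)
The plan is to realize $H$ as a congruence transform of $B$ padded by zero rows and columns, with one correction: the diagonal entries of $H$ inside a non-singleton block $V_a$ are zero, whereas the naive rank-one product would put $\omega_i^2B_{aa}$ there. Since $a\notin S$ whenever $|V_a|\ge 2$, we have $B_{aa}=0$ for those blocks, so the naive product has no diagonal to correct. Nonnegativity of entries is clear in both directions, because $H$ and $B$ have the same off-diagonal and diagonal entries up to positive factors. So only the condition ``at most one positive eigenvalue'' needs to be matched.

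Let $P$ be the $V(K)\times r$ matrix with $P_{ia}=\omega_i$ if $i\in V_a$, and $0$ otherwise.
\begin{itemize}
\item For $i\in V_a$, $j\in V_b$ with $a\ne b$, we get $(PBP^\top)_{ij}=\omega_i\omega_jB_{ab}=H_{ij}$.
\item For $i,j\in V_a$, we get $(PBP^\top)_{ij}=\omega_i\omega_jB_{aa}$. This equals $B_{aa}=H_{i_ai_a}$ when $a\in S$, since then $V_a=\{i_a\}$ and $\omega_{i_a}=1$ by the normalization $\sum_{i\in V_a}\omega_i=1$. It equals $0$ when $a\notin S$, which matches both the zero diagonal of $H$ and the off-diagonal entries inside $V_a$.
\end{itemize}
The last point relies on $V_a$ being a part of the complete multipartite graph, so there are no edges inside it. Hence $H=PBP^\top$ exactly.

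The columns of $P$ have disjoint nonempty supports, so $P$ has full column rank $r$. I will extend $P$ to an invertible matrix $Q=[P\mid R]$, choosing $R$ so that its columns span a complement of $\operatorname{col}(P)$. Then
\[
H=Q\begin{pmatrix}B&0\\0&0\end{pmatrix}Q^\top .
\]
By Sylvester's law of inertia, $H$ has the same inertia as $B\oplus 0$. In particular, $H$ and $B$ have the same number of positive eigenvalues. Together with the entry-sign comparison above, this gives the equivalence: $H$ is Lorentzian if and only if $B$ is.

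The only delicate point is the bookkeeping in the second bullet, namely that $B_{aa}=0$ for every non-singleton block, so that the identity $H=PBP^\top$ holds exactly rather than up to a diagonal correction. If such a correction were needed, I would instead invoke Lemma~\ref{lem:diagonal-deletion} for one direction, but here it is unnecessary. Everything else is Sylvester's law.
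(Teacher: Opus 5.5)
Your proof is correct and is essentially the paper's argument: the paper uses the surjection $L\colon\R^{V(K)}\to\R^r$, $(Lx)_a=\sum_{i\in V_a}\omega_i x_i$ (which is your $P^\top$), notes that $x^\top Hx=(Lx)^\top B(Lx)$, and compares positive indices; this is exactly your congruence $H=PBP^\top$ followed by Sylvester's law. Your explicit checks that $B_{aa}=0$ on non-singleton blocks and that $\omega_{i_a}=1$ for $a\in S$ fill in bookkeeping that the paper leaves implicit.
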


\begin{proof}
Define a linear map
\[
L:\mathbb R^{V(K)}\longrightarrow \mathbb R^r,
\qquad
(Lx)_a=\sum_{i\in V_a}\omega_i x_i.
\]
Since each $V_a$ is nonempty and the weights on $V_a$ are positive, $L$ is
surjective.  The quadratic form associated to $H$ is the pullback of the
quadratic form associated to $B$:
\[
x^\top Hx=(Lx)^\top B(Lx).
\]
Thus the positive index of $H$ equals the positive index of $B$; the kernel of
$L$ only contributes additional zero directions.  Since both matrices have
nonnegative entries, the Lorentzian condition is equivalent for $H$ and $B$.
\end{proof}

\subsection{Weighted copies}

The following construction is the key point in the proof of the lower
inclusion for $\upN_t$.
The construction replaces each index $a$ by a finite set of weighted copies and
produces a weak $\T_q$-representation on the resulting larger ground set.  

\begin{lemma}[Weighted-copy lift]
\label{lem:weighted-copy-lift}
With notation as in Lemma~\ref{lem:quadratic-simplification}, let
$\bar c$ be a weak $\T_q$-representation of
\[
K'=\{e_a+e_b:a\ne b\}\cup\{2e_a:a\in S\}\subseteq \Delta_r^2.
\]
For each $a\in S$, fix a number $\theta_a\in(0,1)$, and for
$a\notin S$ set $\theta_a=0$.

Then there exist a finite set $\widetilde E$, a surjective map
\[
\pi:\widetilde E\to [r],
\]
and a weak $\T_q$-representation $\widetilde c$ of the uniform matroid
$U_{2,\widetilde E}$ with the following property.

Let $A$ be the zero-diagonal symmetric matrix indexed by $\widetilde E$ whose
off-diagonal entries are
\[
A_{uv}=\widetilde c_{\{u,v\}}\qquad(u\ne v).
\]
Then there exist linearly independent vectors
\[
f_1,\ldots,f_r\in \mathbb R^{\widetilde E}
\]
with nonnegative entries and
\[
\supp(f_a)=\pi^{-1}(a)\qquad(a\in[r])
\]
such that the compressed matrix
\[
B=(B_{ab})_{1\le a,b\le r},
\qquad
B_{ab}:=f_a^\top A f_b,
\]
satisfies
\begin{equation}
\label{eq:weighted-copy-compression}
\begin{aligned}
B_{ab}&=\bar c_{e_a+e_b} &&\text{for } a\ne b,\\
B_{aa}&=\theta_a\,\bar c_{2e_a} &&\text{for } a\in S,\\
B_{aa}&=0 &&\text{for } a\notin S.
\end{aligned}
\end{equation}
Consequently, if $A$ is Lorentzian, then $B$ is Lorentzian.
\end{lemma}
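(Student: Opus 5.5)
We construct $\widetilde E$ by replacing each index $a\in[r]$ with a block of copies. For $a\notin S$ the block is a single element $u_a$. For $a\in S$ it is $k_a\ge 2$ copies $u_a^{(1)},\dots,u_a^{(k_a)}$, with $k_a$ chosen large enough that $\theta_a\le 1-1/k_a$; this is possible because $\theta_a<1$. We let $\pi$ send each copy to its index, and set
\[
\lambda_a:=\frac{\theta_a\,\bar c_{2e_a}}{1-1/k_a}\le \bar c_{2e_a}\qquad(a\in S).
\]
The zero-diagonal matrix $A$ has off-diagonal entries $A_{uv}=\bar c_{e_a+e_b}$ for $\pi(u)=a\ne b=\pi(v)$, and $A_{uv}=\lambda_a$ for distinct copies $u,v$ of the same $a\in S$. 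We take $f_a$ to be the uniform vector $\frac1{k_a}\sum_i e_{u_a^{(i)}}$, with $k_a=1$ when $a\notin S$. These vectors are nonnegative with $\supp(f_a)=\pi^{-1}(a)$, and they are linearly independent because their supports are disjoint.

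The compression identities \eqref{eq:weighted-copy-compression} are then a direct computation. For $a\ne b$ every pair of copies contributes the constant $\bar c_{e_a+e_b}$ and the weights in each $f_a$ sum to $1$, so $B_{ab}=\bar c_{e_a+e_b}$. For $a\in S$ we get
\[
B_{aa}=\frac{k_a(k_a-1)}{k_a^2}\,\lambda_a=\theta_a\bar c_{2e_a},
\]
and for $a\notin S$ we get $B_{aa}=0$ because $A$ has zero diagonal. For the final claim, write $B=F^\top AF$ with $F=(f_1,\dots,f_r)$. The entries of $B$ are nonnegative, and the quadratic form $y\mapsto y^\top By$ is the pullback of $x\mapsto x^\top Ax$ along the map $y\mapsto Fy$. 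Hence the positive index of $B$ is at most that of $A$, so $B$ is Lorentzian whenever $A$ is.

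The main step is checking that the off-diagonal entries of $A$ satisfy the $\T_q$-quartet inequalities; I would organize this by how a quartet $\{u,v,w,z\}\subseteq\widetilde E$ meets the blocks.
\begin{enumerate}
\item If the four elements lie in four distinct blocks, the three products are exactly those of the weak Pl\"ucker relation of $\bar c$ for four distinct indices, so the inequality holds by hypothesis.
\item If exactly two elements are copies of the same $a\in S$ and the other two lie in distinct blocks $b,c$, the products are $\lambda_a\bar c_{e_b+e_c}$ and, twice, $\bar c_{e_a+e_b}\bar c_{e_a+e_c}$. We need $\lambda_a\bar c_{e_b+e_c}\le 2^q\bar c_{e_a+e_b}\bar c_{e_a+e_c}$. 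The relation of $\bar c$ with repeated index $a$ gives $\bar c_{2e_a}\bar c_{e_b+e_c}\le 2^q\bar c_{e_a+e_b}\bar c_{e_a+e_c}$, and this suffices since $\lambda_a\le\bar c_{2e_a}$.
\item If there are two copies of $a$ and two copies of $b$, we need $\lambda_a\lambda_b\le 2^q\bar c_{e_a+e_b}^2$. This follows from the relation $\bar c_{2e_a}\bar c_{2e_b}\le 2^q\bar c_{e_a+e_b}^2$ with two pairs of repeated indices.
\item If three or four elements lie in one block, all three products coincide and the inequality is automatic.
\end{enumerate}
This case check is the only real obstacle, and the choice $\lambda_a\le\bar c_{2e_a}$ is exactly what makes it go through. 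Hence $\widetilde c$ is a weak $\T_q$-representation of $U_{2,\widetilde E}$.
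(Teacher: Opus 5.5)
Your proof is correct, and it differs from the paper's in how an arbitrary factor $\theta_a\in(0,1)$ is realized.

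\emph{The paper's route.} It keeps the within-fiber coefficient equal to $\bar c_{2e_a}$ and instead chooses a non-uniform probability vector $\mu_a$ on the fiber with $1-\sum_u\mu_{a,u}^2=\theta_a$, which needs a continuity argument. It then sets $\widetilde c_{uv}=\sqrt{\mu_u\mu_v}\,\bar c_{e_{\pi(u)}+e_{\pi(v)}}$ and $f_a=\sum_{u\in E_a}\sqrt{\mu_u}\,e_u$. The lifted matrix is a positive diagonal congruence of the block-constant ``natural matroid'' matrix. So every quartet relation upstairs is literally a downstairs relation multiplied by the common factor $\sqrt{\mu_u\mu_v\mu_w\mu_z}$, and no inequality or case analysis is needed.

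\emph{Your route.} You keep the uniform weights $1/k_a$ and absorb the discrepancy by shrinking the within-fiber entry to $\lambda_a\le\bar c_{2e_a}$. This avoids choosing $\mu_a$. In exchange you must check quartets by block type $(1,1,1,1)$, $(2,1,1)$, $(2,2)$, $(3,1)$, $(4)$. You also rely on a monotonicity property: $\lambda_a$ only ever appears in the odd product of a relation whose other two products coincide, so decreasing it preserves the $\T_q$-triangle condition. This is a $\T_q$-analogue of Lemma~\ref{lem:diagonal-deletion}. Your case list is complete and each case is verified correctly. The downstairs relations you invoke (with $\alpha=0$ and index multisets $\{a,a,b,c\}$, $\{a,a,b,b\}$) are indeed among the weak relations of $K'$. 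You might also state explicitly that $\lambda_a>0$, so $\widetilde c$ has full support on $U_{2,\widetilde E}$.

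\emph{Trade-off.} The paper's lift is exact: upstairs relations are equivalent to downstairs ones, and the argument uses only invariance of the null set under a common positive scalar. Yours is one-directional but more elementary. It also produces a block-constant $A$, which is convenient if one wants explicit spectral information about the lift.
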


\begin{proof}
For each $a\in [r]$, choose a finite set $E_a$ as follows.  If $a\in S$,
choose a finite probability vector
\[
\mu_a=(\mu_{a,u})_{u\in E_a}
\]
with all entries positive and
\[
1-\sum_{u\in E_a}\mu_{a,u}^2=\theta_a.
\]
This is possible because, for fixed $m$, the quantity
$1-\sum_{u=1}^m \mu_u^2$
ranges continuously over the interval $(0,1-1/m]$ as $\mu$ ranges over the
interior of the probability simplex.  Choosing $m$ sufficiently large makes
$1-1/m>\theta_a$, so the desired value $\theta_a$ is attained.

If $a\notin S$, take $E_a$ to consist of a single point $u$ and put
$\mu_{a,u}=1$.  Now set
\[
        \widetilde E:=\bigsqcup_{a=1}^r E_a,
\]
and let $\pi:\widetilde E\to [r]$
be the map sending every element of $E_a$ to $a$.

For notational brevity, write
\[
        \mu_u:=\mu_{\pi(u),u}.
\]
Define, for distinct $u,v\in \widetilde E$,
\[
\widetilde c_{uv}
=
\sqrt{\mu_u\mu_v}\,
\bar c_{e_{\pi(u)}+e_{\pi(v)}}.
\]
This is well-defined: if $\pi(u)=\pi(v)=a$ for distinct $u,v$, then
necessarily $a\in S$, and hence $2e_a\in K'$.

The weak $\T_q$-relations for $\widetilde c$ follow directly from those for
$\bar c$.  Indeed, in every quartet of distinct elements of $\widetilde E$, the
three Pl\"ucker products upstairs are the corresponding three Pl\"ucker
products downstairs multiplied by the same positive factor.  
(For instance, if the quartet consists of four distinct elements $u,v,w,z\in \widetilde E$, then the common
factor is $\sqrt{\mu_u\mu_v\mu_w\mu_z}$.)
Thus $\widetilde c$ is a weak $\T_q$-representation of the uniform matroid
$U_{2,\widetilde E}$.

Let $A$ be the zero-diagonal symmetric matrix indexed by $\widetilde E$ with
off-diagonal entries
\[
 A_{uv}=\widetilde c_{uv}\qquad(u\ne v).
\]

Set
\[
        f_a:=\sum_{u\in E_a}\sqrt{\mu_u}\,e_u
        \qquad(a\in [r]).
\]
Since all weights $\mu_u$ are positive, the vector $f_a$ has nonnegative
entries and support $E_a=\pi^{-1}(a)$.  In particular, the vectors
$f_1,\ldots,f_r$ have disjoint supports and are linearly independent.

If
$a\ne b$, then
\[
\begin{aligned}
f_a^\top A f_b
&=
\sum_{u\in E_a}
\sum_{v\in E_b}
\mu_u\mu_v\,\bar c_{e_a+e_b}  \\
&=
\left(\sum_{u\in E_a}\mu_u\right)
\left(\sum_{v\in E_b}\mu_v\right)
\bar c_{e_a+e_b} \\
&=
\bar c_{e_a+e_b}.
\end{aligned}
\]
If $a\in S$, then
\[
\begin{aligned}
f_a^\top A f_a
&=
\sum_{\substack{u,v\in E_a\\u\ne v}}
\mu_u\mu_v\,\bar c_{2e_a} \\
&=
\left(1-\sum_{u\in E_a}\mu_u^2\right)\bar c_{2e_a} \\
&=
\theta_a\bar c_{2e_a}.
\end{aligned}
\]
For $a\notin S$, the set $E_a$ has only one element, so the corresponding
diagonal entry is zero.

Finally, $B$ is the matrix of the restriction of the quadratic form associated
to $A$ to the span of the $f_a$, so the positive index of $B$ is at most the
positive index of $A$.  Thus Lorentzianity of $A$ implies Lorentzianity of $B$.
\end{proof}

\begin{remark}
The preceding weighted-copy construction can be viewed as a weighted refinement
of the usual natural matroid construction associated to a polymatroid (cf.~\cite{BCF23}).  In the
natural matroid construction, an element $a$ is replaced by a fiber of
$m_a:=|\pi^{-1}(a)|$ parallel copies, and one works with subsets obtained by choosing a prescribed
number of copies from each fiber.  The construction above reduces to this
situation when the weights on each fiber are all equal:
\[
        \mu_{a,u}=\frac{1}{m_a}
        \qquad
        (u\in \pi^{-1}(a)).
\]
In this uniform case, the corresponding diagonal factor is
\[
        1-\sum_{u\in \pi^{-1}(a)}\mu_{a,u}^2
        =
        1-\frac{1}{m_a}.
\]
The weighted-copy construction allows one to realize arbitrary prescribed factors
$\theta_a\in(0,1)$, rather than only the discrete values $1-1/m_a$.
\end{remark}

\subsection{The lower inclusion}

We now prove the lower containment in Theorem~\ref{thm:Nt-sandwich}.

\begin{proposition}[Lower $\upN_t$-containment]
\label{prop:lower-Nt}
Let $J\subseteq \Delta_n^d$ be M-convex, and let $t>0$.  Then there exists
$q_t(J)>0$ such that
\[
\upN_t\upR_J^{\rm w}(\T_q)\subseteq \upL_J
\qquad\text{for all }0<q\le q_t(J).
\]
\end{proposition}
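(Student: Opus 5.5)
We verify Lorentzianity through the quadratic-derivative criterion of Theorem~\ref{thm:BH-limit-free}. Let $\rho\in\upR_J^{\rm w}(\T_q)$ and $f:=\upN_t f_\rho$. Since $\supp(f)=J$ is M-convex, it suffices to show that $\partial^\alpha f\in\upL_n^2$ for every $\alpha\in\Delta_n^{d-2}$. In the normalized basis we have
\[
f=\sum_{\beta\in J}\frac{\rho(\beta)}{(\beta!)^t}\,\frac{x^\beta}{\beta!},
\]
so $\partial^\alpha f$ has normalized quadratic coefficients $\rho(\alpha+\gamma)/((\alpha+\gamma)!)^t$ for $\gamma\in\Delta_n^2$. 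If the support $K:=\{\gamma:\alpha+\gamma\in J\}$ is empty or degenerate, there is nothing to prove. Otherwise $K$ is M-convex, and $\sigma(\gamma):=\rho(\alpha+\gamma)$ is a weak $\T_q$-representation of $K$, since its Plücker relations are among those of $\rho$.

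\textbf{Step 1: isolate the effect of $\upN_t$ as a diagonal factor.} Compute
\[
(\alpha+e_i+e_j)!=\alpha!\,(\alpha_i+1)(\alpha_j+1)\quad(i\neq j),
\qquad
(\alpha+2e_i)!=\alpha!\,(\alpha_i+1)(\alpha_i+2).
\]
Now multiply $\partial^\alpha f$ by $(\alpha!)^t$ and substitute $x_i\mapsto(\alpha_i+1)^t x_i$. This is a diagonal congruence, so it preserves Lorentzianity. The resulting Hessian has off-diagonal entries $\sigma(e_i+e_j)$ and diagonal entries $\theta_i\,\sigma(2e_i)$, where
\[
\theta_i:=\Bigl(\frac{\alpha_i+1}{\alpha_i+2}\Bigr)^t\le\Bigl(\frac{d-1}{d}\Bigr)^t<1 .
\]
The point is that $t>0$ forces $\theta_i<1$, and that this bound is uniform in $\alpha$.

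\textbf{Step 2: reduce to a rank-$2$ uniform matroid.} Apply Lemma~\ref{lem:quadratic-simplification} to $\sigma$. This gives weights $\omega_i$ and a simplified weak $\T_q$-representation $\bar c$ of $K'$. The diagonal entries only live on the singleton parts $V_a=\{i_a\}$, $a\in S$. The argument of Lemma~\ref{lem:block-lorentzian-equivalence} is a pullback of quadratic forms, and it applies verbatim with $B_{aa}=\theta_{i_a}\bar c_{2e_a}$. Hence it suffices to show that the $r\times r$ matrix
\[
B_{ab}=\bar c_{e_a+e_b}\ (a\ne b),\qquad B_{aa}=\theta_a\bar c_{2e_a}\ (a\in S),\qquad B_{aa}=0\ (a\notin S)
\]
is Lorentzian, where $\theta_a:=\theta_{i_a}\in(0,1)$. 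This is exactly the compressed matrix of Lemma~\ref{lem:weighted-copy-lift}. That lemma produces a weak $\T_q$-representation $\widetilde c$ of $U_{2,\widetilde E}$ whose matrix $A$ being Lorentzian implies that $B$ is Lorentzian. If $|\widetilde E|\le 3$, Lemma~\ref{lem:U22-U23} gives the result directly. Otherwise Theorem~\ref{thm:rank-two-uniform} gives it whenever $q\le\varepsilon(|\widetilde E|)$.

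\textbf{Step 3: make the constant uniform.} This is the main point to check. In the proof of Lemma~\ref{lem:weighted-copy-lift}, each $E_a$ only needs $m$ points with $1-1/m>\theta_a$. Since $\theta_a\le((d-1)/d)^t$, a single $m=m(d,t)$ works for all $\alpha$ and all $a$. Hence $|\widetilde E|\le r\,m\le n\,m(d,t)$. Because $\varepsilon$ is decreasing, we may set
\[
q_t(J):=\varepsilon\bigl(\max\{4,\,n\,m(d,t)\}\bigr)>0 .
\]
For every $0<q\le q_t(J)$, all quadratic derivatives $\partial^\alpha f$ are then Lorentzian, and therefore $f\in\upL_J$.

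The delicate bookkeeping is in Steps 1 and 2. The factor $\theta$ must be kept out of the $\T_q$-representation: we simplify the unscaled $\sigma$, and $\theta$ is introduced only through the lift. We also need the block lemma to tolerate the scaled diagonal, and we need $\theta<1$ to be strict and uniform so that the copy sets stay of bounded size.
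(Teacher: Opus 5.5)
Your proposal is correct and follows the paper's proof step for step. It uses the same diagonal congruence producing $\theta_i=((\alpha_i+1)/(\alpha_i+2))^t$, the same simplification via Lemma~\ref{lem:quadratic-simplification} and Lemma~\ref{lem:block-lorentzian-equivalence}, and the same weighted-copy lift to $U_{2,\widetilde E}$ feeding into Theorem~\ref{thm:rank-two-uniform}. You rightly note that the block lemma must be applied with the scaled diagonal $\theta_a\bar c_{2e_a}$. The one difference is cosmetic: the paper takes a minimum of $q_{t,\gamma}$ over the finitely many $\gamma$, whereas you bound all fiber sizes uniformly via $\theta_i\le((d-1)/d)^t$, which gives the explicit constant $q_t(J)=\varepsilon(\max\{4,n\,m(d,t)\})$ depending only on $n,d,t$.
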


\begin{proof}
Let $\rho\in \upR_J^{\rm w}(\T_q)$.  We must prove that $\upN_t f_\rho$ is
Lorentzian.  Its support is still $J$, hence it is M-convex.  By Theorem~\ref{thm:BH-limit-free}, 
it suffices to prove that every nonzero quadratic derivative of $\upN_t f_\rho$ is Lorentzian.

Fix $\gamma\in\Delta_n^{d-2}$ and put
\[
K_\gamma:=\{\beta\in\Delta_n^2:\gamma+\beta\in J\}.
\]
Let
\[
c_\beta:=\rho(\gamma+\beta)
\qquad(\beta\in K_\gamma).
\]
Then $c$ is a weak $\T_q$-representation of the quadratic M-convex set
$K_\gamma$.

The quadratic polynomial $h:=\partial^\gamma\upN_t f_\rho$
has the expansion
\[
h=
\sum_{\beta\in K_\gamma}
\frac{\rho(\gamma+\beta)}{((\gamma+\beta)!)^t}
\frac{x^\beta}{\beta!}.
\]
Let $H$ be its Hessian.  For $i\ne j$,
\[
H_{ij}
=
\frac{c_{e_i+e_j}}{((\gamma+e_i+e_j)!)^t}
=
\frac{c_{e_i+e_j}}
{(\gamma!)^t(\gamma_i+1)^t(\gamma_j+1)^t}
\]
and
\[
H_{ii}
=
\frac{c_{2e_i}}{((\gamma+2e_i)!)^t}
=
\frac{c_{2e_i}}
{(\gamma!)^t(\gamma_i+1)^t(\gamma_i+2)^t}.
\]
Multiplying $H$ by the positive scalar $(\gamma!)^t$ and applying the positive
diagonal congruence with diagonal entries $(\gamma_i+1)^t$, we obtain a matrix
$H'$ with entries
\[
H'_{ij}=c_{e_i+e_j}
\qquad(i\ne j)
\]
and
\[
H'_{ii}
=
\left(\frac{\gamma_i+1}{\gamma_i+2}\right)^t c_{2e_i}.
\]
It suffices to prove that $H'$ is Lorentzian.

Applying Lemma~\ref{lem:quadratic-simplification} to $K_\gamma$ and $c$, we obtain
a simplified representation $\bar c$ on $K'$.  By
Lemma~\ref{lem:block-lorentzian-equivalence}, it is enough to prove that the
$r\times r$ matrix $B$ with entries
\[
B_{ab}=\bar c_{e_a+e_b}\qquad(a\ne b),
\]
\[
B_{aa}
=
\left(\frac{\gamma_{i_a}+1}{\gamma_{i_a}+2}\right)^t
\bar c_{2e_a}
\qquad(a\in S),
\]
and $B_{aa}=0$ for $a\notin S$, is Lorentzian.

For $a\in S$, put
\[
\theta_a:=
\left(\frac{\gamma_{i_a}+1}{\gamma_{i_a}+2}\right)^t.
\]
Then $0<\theta_a<1$.  By Lemma~\ref{lem:weighted-copy-lift}, there is a finite weighted-copy lift $\tilde c$ on a uniform
rank-2 matroid $U_{2,\widetilde E_\gamma}$ whose compressed matrix is precisely
$B$.  

Set $M_\gamma:=|\widetilde E_\gamma|$.
If $M_\gamma\le 3$, every
admissible rank-$2$ coefficient matrix on $M_\gamma$ elements is Lorentzian.  If
$M_\gamma\ge 4$, Theorem~\ref{thm:rank-two-uniform} shows that the lifted
coefficient matrix is Lorentzian whenever
\[
0<q\le
\log_2\left(1+\frac1{M_\gamma-2}\right).
\]
Therefore, for this fixed $\gamma$, there exists $q_{t,\gamma}>0$ such that
$h=\partial^\gamma\upN_t f_\rho$ is Lorentzian for all
$0<q\le q_{t,\gamma}$.

There are only finitely many $\gamma\in \Delta_n^{d-2}$ for which
$K_\gamma\ne\emptyset$.  Hence
\[
q_t(J):=\min_{\substack{\gamma\in\Delta_n^{d-2}\\ K_\gamma\ne\emptyset}}
q_{t,\gamma}>0
\]
works for all nonzero quadratic derivatives simultaneously.  The remaining
quadratic derivatives vanish identically and impose no condition. Hence $\upN_t f_\rho$ is
Lorentzian.
\end{proof}

\begin{remark}
In the natural matroid construction, where an element $a$ is replaced by a fiber of $m_a$ parallel copies,
the factor appearing on the diagonal is
\[
1-\sum_{u\in \pi^{-1}(a)}\mu_{a,u}^2
=
1-\frac{1}{m_a}
=
\frac{m_a-1}{m_a}.
\]
For the ordinary normalization operator
$\upN=\upN_1$, the relevant diagonal factor is 
\[
\frac{\gamma_i+1}{\gamma_i+2},
\]
which is obtained by taking $m_a=\gamma_i+2$.

For the more general operator $\upN_t$, however, the required diagonal factor is
\[
\left(\frac{\gamma_i+1}{\gamma_i+2}\right)^t,
\]
which is typically not of the form $1-1/m$ for an integer $m$.  
\end{remark}

\subsection{The upper inclusion}

We now prove the upper containment in Theorem~\ref{thm:Nt-sandwich}.

\begin{lemma}
\label{lem:quadratic-ineqs}
Let $Q$ be a homogeneous quadratic polynomial with nonnegative
coefficients, and let $H$ be its Hessian.
Assume that $Q$ is Lorentzian.  Then:
\begin{enumerate}
\item If $i,j,k,\ell$ are distinct, the three products
\[
H_{ij}H_{k\ell},\qquad
H_{ik}H_{j\ell},\qquad
H_{i\ell}H_{jk}
\]
satisfy the weak $\T_2$-relation.

\item If $i,k,\ell$ are distinct, then
\[
H_{ii}H_{k\ell}\le 2H_{ik}H_{i\ell}.
\]

\item If $i\ne k$, then
\[
H_{ii}H_{kk}\le H_{ik}^2.
\]
\end{enumerate}
\end{lemma}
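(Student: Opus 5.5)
The plan is to reduce each item to a small principal submatrix of $H$ and read off the inequality from the sign of one determinant, as was already done case by case in the proof of Proposition~\ref{prop:unnormalized-upper-containment}. The basic facts are these. $H$ has nonnegative entries and at most one positive eigenvalue, since $Q$ is a Lorentzian quadratic. Principal submatrices inherit this property, by Cauchy interlacing or because restricting the quadratic form to a coordinate subspace cannot increase the positive index. Deleting diagonal entries also preserves it, by Lemma~\ref{lem:diagonal-deletion}.

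For (1), I restrict to the principal $4\times 4$ submatrix on $\{i,j,k,\ell\}$ and delete its diagonal with Lemma~\ref{lem:diagonal-deletion}. The result is a zero-diagonal symmetric matrix with nonnegative entries and at most one positive eigenvalue.

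If all six off-diagonal entries are positive, Proposition~\ref{prop:U24} (equivalently, the determinant identity in its proof together with $\det\le 0$ from Theorem~\ref{thm:principal-minor-criterion}) gives exactly the $\T_2$-triangle condition for $\sqrt{H_{ij}H_{k\ell}}$, $\sqrt{H_{ik}H_{j\ell}}$, $\sqrt{H_{i\ell}H_{jk}}$.

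If some entries vanish, I use that the determinant formula
\[
(af)^2+(be)^2+(cd)^2-2(af\,be+af\,cd+be\,cd)
\]
holds for arbitrary entries. The criterion $\det\le 0$ still applies, since Theorem~\ref{thm:principal-minor-criterion} only needs nonnegative entries. Alternatively, one can argue by continuity, since the triangle condition is a closed condition. Lemma~\ref{lem:Heron-criterion} then finishes the argument.

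For (2), I take the principal submatrix on $\{i,k,\ell\}$ and delete the diagonal entries at $k$ and $\ell$, keeping $H_{ii}$. This gives
\[
M=\begin{pmatrix} H_{ii}&H_{ik}&H_{i\ell}\\ H_{ik}&0&H_{k\ell}\\ H_{i\ell}&H_{k\ell}&0\end{pmatrix},
\]
which still has nonnegative entries and at most one positive eigenvalue. By Theorem~\ref{thm:principal-minor-criterion} with $k=3$, we get $\det M\ge 0$. Expanding gives
\[
\det M=H_{k\ell}\bigl(2H_{ik}H_{i\ell}-H_{ii}H_{k\ell}\bigr).
\]
If $H_{k\ell}>0$, this yields $H_{ii}H_{k\ell}\le 2H_{ik}H_{i\ell}$. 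If $H_{k\ell}=0$, the inequality is trivial.

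For (3), the $2\times 2$ principal submatrix on $\{i,k\}$ is Lorentzian, so its determinant is $\le 0$ by Theorem~\ref{thm:principal-minor-criterion}. That determinant is $H_{ii}H_{kk}-H_{ik}^2$, which gives the claim.

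The only point needing care is the possible vanishing of entries in (1). There Proposition~\ref{prop:U24} is stated for strictly positive entries, so I would rely on the polynomial identity for $\det$ and Theorem~\ref{thm:principal-minor-criterion}, which only require nonnegativity, rather than on Proposition~\ref{prop:U24} itself. Everything else is routine.
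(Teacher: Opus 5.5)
Your proof is correct and follows the paper's argument for (1) and (3). Your handling of vanishing entries in (1), via the polynomial determinant identity together with Theorem~\ref{thm:principal-minor-criterion}, is slightly more careful than the paper's direct appeal to Proposition~\ref{prop:U24}, which is stated only for positive entries.

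For (2) you delete $H_{kk}$ and $H_{\ell\ell}$ with Lemma~\ref{lem:diagonal-deletion}, exactly as in Case~2 of Proposition~\ref{prop:unnormalized-upper-containment}. The paper's own proof instead keeps the full $3\times3$ principal minor and absorbs the diagonal terms using the $2\times2$ minors $H_{ik}^2\ge H_{ii}H_{kk}$ and $H_{i\ell}^2\ge H_{ii}H_{\ell\ell}$; both routes yield $H_{ii}H_{k\ell}\le 2H_{ik}H_{i\ell}$.
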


\begin{proof}
For (1), take the multi-affine part of the restriction of $Q$ to the variables
$i,j,k,\ell$.  The multi-affine part of a Lorentzian polynomial is Lorentzian,
and the rank-$2$ four-variable case is exactly Proposition~\ref{prop:U24}; hence
the three products satisfy the weak $\T_2$-relation.

For (3), apply the principal-minor criterion to the $2\times 2$ principal
submatrix on $\{i,k\}$.

For (2), consider the $3\times 3$ principal submatrix on $\{i,k,\ell\}$ and write
\[
a=H_{ii},\quad p=H_{kk},\quad q=H_{\ell\ell},
\quad
x=H_{ik},\quad y=H_{i\ell},\quad z=H_{k\ell}.
\]
Then the following matrix is Lorentzian:
\begin{equation*}
    \begin{pmatrix}
        a & x & y \\
        x & p & z \\
        y & z & q
    \end{pmatrix}
\end{equation*}
The $2\times 2$ principal minors give
\[
x^2\ge ap,
\qquad
y^2\ge aq.
\]
The $3\times 3$ principal-minor condition gives
\[
apq+2xyz-az^2-py^2-qx^2\ge 0.
\]
Using $py^2\ge apq$ and $qx^2\ge apq$, we obtain
\[
2xyz\ge az^2+apq.
\]
If $z=0$, the desired inequality is trivial.  If $z>0$, then
\[
2xy\ge az+\frac{apq}{z}\ge az,
\]
which proves $az\le 2xy$.
\end{proof}

\begin{proposition}[Upper $\upN_t$-containment]
\label{prop:upper-Nt}
Let $J\subseteq\Delta_n^d$ be M-convex and let $t>0$.  Then
\[
\upL_J\subseteq \upN_t\upR_J^{\rm w}(\T_{p(t)}),
\qquad
p(t)=\max\{2,1+t,2t\}.
\]
In particular, for $t=1$ one has
\[
\upL_J\subseteq \upN\upR_J^{\rm w}(\T_2).
\]
\end{proposition}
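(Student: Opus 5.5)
We follow the proof of Proposition~\ref{prop:normalized-upper}, replacing the factorial weights by their $t$-th powers. Let $f\in\upL_J$. We must show that $\rho:=\upN_t^{-1}$-coefficients of $f$, i.e.\ $\rho(\beta)=(\beta!)^t c_\beta$ where $f=\sum c_\beta x^\beta/\beta!$, is a weak $\T_{p(t)}$-representation. Fix $\gamma\in\Delta_n^{d-2}$ and indices $i\le j\le k\le\ell$. Let $H$ be the Hessian of $\partial^\gamma f$, which is Lorentzian by Theorem~\ref{thm:BH-limit-free}. As in the proof of Proposition~\ref{prop:lower-Nt}, $\rho(\gamma+e_a+e_b)=(\gamma!)^t(\gamma_a+1)^t(\gamma_b+1)^t H_{ab}$ for $a\ne b$, and $\rho(\gamma+2e_a)=(\gamma!)^t(\gamma_a+1)^t(\gamma_a+2)^tH_{aa}$. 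After removing the common scalar and the diagonal congruence by $(\gamma_a+1)^t$, which rescales all three Plücker products of a quartet by a common factor, we reduce to a matrix $H'$ with $H'_{ab}=H_{ab}$ off the diagonal and $H'_{aa}=\lambda_a^tH_{aa}$, where $\lambda_a=\frac{\gamma_a+2}{\gamma_a+1}\in(1,2]$.

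We then check the weak relations case by case, using Lemma~\ref{lem:quadratic-ineqs}. If $i,j,k,\ell$ are distinct, only off-diagonal entries appear, so part (1) gives the $\T_2$-relation. Since $p(t)\ge2$ and the $\T_q$-conditions are monotone in $q$, the $\T_{p(t)}$-relation follows. If exactly one index is repeated, the terms are $bc,bc,\lambda_a^t H_{aa}H_{k\ell}$. Part (2) gives $H_{aa}H_{k\ell}\le 2bc$, hence $\lambda_a^tH_{aa}H_{k\ell}\le 2^{1+t}bc$. The required relation is $T_3\le 2^{p}bc$, which holds because $p(t)\ge 1+t$. If two pairs repeat, the terms are $H_{ak}^2,H_{ak}^2,\lambda_a^t\lambda_k^tH_{aa}H_{kk}$. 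Part (3) gives $H_{aa}H_{kk}\le H_{ak}^2$, so the third term is at most $4^tH_{ak}^2=2^{2t}H_{ak}^2$, which is at most $2^{p}H_{ak}^2$ since $p(t)\ge 2t$. If three or more indices coincide, all terms are equal and the relation is automatic. Note that relations with $\gamma$ outside the range $\delta_J^-\le\gamma$ never arise, and vanishing entries make the inequalities trivial.

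The only delicate step is the bookkeeping showing that the factorial weights reduce exactly to the factors $\lambda_a^t$ on the diagonal with no off-diagonal distortion. This is the same computation as in Proposition~\ref{prop:lower-Nt}, now run in reverse. The remaining risk is keeping track of which exponent bound ($2$, $1+t$ or $2t$) each case needs, and together these give $p(t)=\max\{2,1+t,2t\}$. For $t=1$ this is $2$, recovering Proposition~\ref{prop:normalized-upper}. Finally, $\max\{2,1+t,2t\}=2\max\{1,t\}$, and monotonicity in $p$ extends the containment to all $p\ge p(t)$, as stated in Theorem~\ref{thm:Nt-sandwich}.
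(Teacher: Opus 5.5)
Your proposal is correct and follows essentially the same route as the paper. You reduce each quadratic derivative to its Hessian with the diagonal entries multiplied by $\bigl(\frac{\gamma_a+2}{\gamma_a+1}\bigr)^t\le 2^t$, and then apply the three parts of Lemma~\ref{lem:quadratic-ineqs} to obtain the exponent requirements $2$, $1+t$ and $2t$. Your explicit computation of that diagonal factor from $(\gamma+e_a+e_b)!$ and $(\gamma+2e_a)!$ just spells out a step the paper asserts without calculation.
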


\begin{proof}
Let $f\in\upL_J$.  We must prove that $\rho:=\upN_t^{-1}f$ 
is a weak $\T_{p(t)}$-representation.  Fix $\alpha\in\Delta_n^{d-2}$ 
and consider the quadratic derivative $Q:=\partial^\alpha f$.
Let $H$ be its Hessian.  Since $f$ is Lorentzian, $Q$ is Lorentzian.
The corresponding quadratic derivative of $\upN_t^{-1}f$ differs from $Q$ by a
positive scalar, by a positive diagonal rescaling, and by multiplying each
diagonal coefficient $H_{ii}$ by
\[
\lambda_i
:=
\left(\frac{\alpha_i+2}{\alpha_i+1}\right)^t.
\]
In particular,
\[
1\le \lambda_i\le 2^t.
\]
Positive scalars and positive diagonal rescalings multiply all three terms in a
weak Pl\"ucker relation by a common positive factor, so it remains only to check
the effect of the diagonal factors $\lambda_i$.
There are three nontrivial cases.

First, suppose the four indices are distinct.  Then no diagonal coefficient
occurs.  By Lemma~\ref{lem:quadratic-ineqs}(1), the relevant three products
already satisfy the weak $\T_2$-relation.  Since $p(t)\ge 2$, they also satisfy
the weak $\T_{p(t)}$ relation.

Next, suppose exactly one index is repeated; say the repeated index is $i$ and
the other two indices are $k,\ell$.  The three products have the form
\[
H_{ik}H_{i\ell},
\qquad
H_{ik}H_{i\ell},
\qquad
\lambda_i H_{ii}H_{k\ell}.
\]
By Lemma~\ref{lem:quadratic-ineqs}(2),
\[
H_{ii}H_{k\ell}\le 2H_{ik}H_{i\ell}.
\]
Hence
\[
\lambda_i H_{ii}H_{k\ell}
\le 2^{1+t}H_{ik}H_{i\ell}.
\]
Thus the weak $\T_{p(t)}$-relation holds provided
$p(t)\ge 1+t$.

Finally, suppose the indices form two repeated pairs, say $i,i,k,k$.  The three
products are
\[
H_{ik}^2,
\qquad
H_{ik}^2,
\qquad
\lambda_i\lambda_k H_{ii}H_{kk}.
\]
By Lemma~\ref{lem:quadratic-ineqs}(3),
\[
H_{ii}H_{kk}\le H_{ik}^2,
\]
and therefore
\[
\lambda_i\lambda_kH_{ii}H_{kk}
\le 2^{2t}H_{ik}^2.
\]
Thus the weak $\T_{p(t)}$-relation holds provided
$p(t)\ge 2t$.

The remaining cases, where at least three of the four indices are equal, are
automatic: in each such case all three Pl\"ucker products coincide.  Therefore all
weak Pl\"ucker relations for $\rho$ hold over $\T_{p(t)}$, as claimed.
\end{proof}

\begin{proof}[Proof of Theorem~\ref{thm:Nt-sandwich}]
The lower containment is Proposition~\ref{prop:lower-Nt}.  The upper containment
is Proposition~\ref{prop:upper-Nt}.  Since $p(1)=\max\{2,2\}=2$,
the displayed $t=1$ sandwich follows immediately.
\end{proof}

\begin{remark}
\label{rem:Nt-preserves-Lorentzian}
The normalization operators $\upN_t$ preserve both Lorentzian polynomials and weak triangular representations
for every finite $t\ge 0$.  

The Lorentzianity statement follows from the symbol criterion \cite[Theorem~3.2]{Branden-Huh20}.  Indeed, the $\kappa$-symbol of $\upN_t$
factors as
\[
\operatorname{Sym}_{\kappa}(\upN_t)(w,u)
=
\prod_{j=1}^n
\left(
\sum_{a=0}^{\kappa_j}
\binom{\kappa_j}{a}
\frac{w_j^a u_j^{\kappa_j-a}}{(a!)^t}
\right).
\]
By the bivariate Lorentzian criterion, each factor is Lorentzian because the
sequence
\[
\left(\frac{1}{(a!)^t}\right)_{a=0}^{\kappa_j}
\]
is log-concave; equivalently, the factorial sequence is log-convex.  Products of
Lorentzian polynomials are Lorentzian, and hence the symbol criterion applies.

Similarly,
\[
        \upN_t\upR_J^{\rm w}(\T_q)\subseteq \upR_J^{\rm w}(\T_q)
        \qquad(q\in[0,\infty],\ t\ge 0).
\]
For $t=1$ this is Lemma~\ref{lem:N-preserves-weak-reps}.  The same proof applies
for arbitrary $t\ge 0$: the denominators in the three Pl\"ucker products are
replaced by their $t$-th powers, and the only needed comparison is still the
log-convexity inequality
\[
        (m+1)!^2\le (m+2)!\,m!,
\]
this time with both sides raised to the power $t$.
\end{remark}

\section{Upper bounds for \texorpdfstring{$q(n)$}{q(n)} and the nonexistence of a universal constant}
\label{sec:upper-bounds-qn}

In Section~\ref{sec:U2n-core}, we proved the lower bound
\[
q(n)\ge \varepsilon(n):=\log_2\!\left(1+\frac{1}{n-2}\right),
\]
showing in particular that $q(n)>0$ for every $n\ge 4$.
The purpose of the present section is to prove a complementary upper bound of the same
order of magnitude. In particular, we will see that $q(n)\to 0$ as $n\to\infty$, so there
is no universal constant $q>0$ such that every weak $\T_q$-representation of every
$U_{2,n}$ is Lorentzian.

The idea is to construct, for each $n$, an explicit admissible matrix which
satisfies the $\T_q$-inequalities but has two positive eigenvalues. 

\begin{proposition}
\label{prop:qn-upper}
For every $n\ge 4$,
\[
q(n)\le
\begin{cases}
2\log_2\!\left(\dfrac{n}{n-2}\right), & \text{if } n \text{ is even},\\[1.25ex]
\log_2\!\left(\dfrac{n+1}{n-3}\right), & \text{if } n \text{ is odd}.
\end{cases}
\]
In particular,
\[
q(n)=O(1/n)
\qquad\text{as } n\to\infty.
\]
\end{proposition}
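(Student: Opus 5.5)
The plan is to exhibit, for every $q$ strictly larger than the claimed bound, an explicit admissible matrix that satisfies the $\T_q$-inequalities but has two positive eigenvalues. Such a matrix is not Lorentzian, so $\Gr^{\rm w}_{U_{2,n}}(\T_q)\not\subseteq\P\upL_{U_{2,n}}$. Because the $\T_q$-conditions are monotone in $q$, this forces $q(n)$ to be at most the bound.

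\emph{Construction.} Split $[n]=P\sqcup Q$ with $|P|=m+1$, $|Q|=m$ when $n=2m+1$ is odd, and $|P|=|Q|=m$ when $n=2m$ is even. Fix parameters $x,y\ge 1$ and let $A$ be the admissible matrix with
\begin{itemize}
\item $A_{uv}=x$ for distinct $u,v\in P$,
\item $A_{uv}=y$ for distinct $u,v\in Q$,
\item $A_{uv}=1$ for $u\in P$, $v\in Q$.
\end{itemize}

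\emph{Step 1: the quartet conditions.} Sort quartets by how they meet $P$ and $Q$.
\begin{itemize}
\item A quartet of type $4+0$ or $0+4$ has all three Pl\"ucker products equal to $x^2$, or all equal to $y^2$.
\item A quartet of type $3+1$ or $1+3$ has all three products equal to $x$, or all equal to $y$.
\end{itemize}
In these cases the triangle condition is automatic.

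For type $2+2$ the products are $xy,1,1$. Since $xy\ge1$, the triangle condition reduces to $(xy)^{1/q}\le 2$, that is, $xy\le 2^q$. So $A$ satisfies the $\T_q$-inequalities if and only if $xy\le 2^q$.

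\emph{Step 2: the spectrum.} Vectors supported in $P$ with coordinate sum zero are eigenvectors with eigenvalue $-x$. Likewise, zero-sum vectors supported in $Q$ give eigenvalue $-y$. Both are negative.

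The remaining two eigenvalues are those of the quotient matrix on block-constant vectors:
\[
\begin{pmatrix} x(|P|-1) & |Q|\\ |P| & y(|Q|-1)\end{pmatrix}.
\]
This matrix has positive trace. It therefore has two positive eigenvalues exactly when its determinant is positive, i.e. when
\[
xy\,(|P|-1)(|Q|-1)>|P|\,|Q|.
\]
\begin{itemize}
\item For odd $n$ this reads $xy>\frac{m+1}{m-1}=\frac{n+1}{n-3}$.
\item For even $n$, take $x=y$; the condition becomes $x>\frac{m}{m-1}=\frac{n}{n-2}$, i.e. $xy>\bigl(\frac{n}{n-2}\bigr)^2$.
\end{itemize}

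\emph{Step 3: conclusion.} If $q$ exceeds the stated bound, then $2^q$ exceeds the relevant threshold. We can then pick $x,y\ge1$ with the product $xy$ strictly between the threshold and $2^q$; for example $x=y=\sqrt{2^q}$ works. By Steps 1 and 2 the resulting $A$ satisfies the $\T_q$-inequalities but has two positive eigenvalues, so it is not Lorentzian. Hence $q(n)$ is at most the stated bound.

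The asymptotic statement $q(n)=O(1/n)$ follows from $\log_2(1+c/n)=O(1/n)$, since $\frac{n}{n-2}=1+\frac{2}{n-2}$ and $\frac{n+1}{n-3}=1+\frac{4}{n-3}$.

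The main obstacle is finding the right extremal family; the block-bipartite ansatz, modelled on the $K_{m,m}$ metric, is the guess. Once it is fixed, the only real check is that no quartet type other than $2+2$ imposes a condition, which is the bookkeeping in Step 1.
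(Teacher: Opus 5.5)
Your proposal is correct and is essentially the paper's proof. The paper uses the same two-block matrix with cross-block entries $1$, notes that only the $2+2$ quartets impose a condition, and applies the same positive-trace/determinant test to the quotient $2\times 2$ matrix. The differences are cosmetic: the paper uses a single within-block value $\alpha$ (your case $x=y$, and only $xy$ enters your conditions anyway), and it optimizes over all splits $a+b=n$ before arriving at the balanced split you chose directly.
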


\begin{proof}
Fix integers $a,b\ge 2$ with $a+b=n$ and fix a real number $\alpha>1$.

We partition $[n]$ into two blocks
\[
A:=\{1,\dots,a\},
\qquad
B:=\{a+1,\dots,n\},
\]
and consider the admissible matrix
\[
L=
\begin{pmatrix}
\alpha(J_a-I_a) & \mathbf 1_{a\times b}\\
\mathbf 1_{b\times a} & \alpha(J_b-I_b)
\end{pmatrix}.
\]
Thus entries within each block are equal to $\alpha$, while entries across the
two blocks are equal to $1$.  We will determine exactly when $L$ satisfies the
$\T_q$-inequalities and exactly when $L$ is Lorentzian.

\smallskip

\emph{Step 1: the $\T_q$-inequalities.}
Consider a quartet of indices. 
If a quartet contains two indices from $A$ and two from $B$, then, up to reordering,
the three products appearing in the $\T_q$-condition are $\alpha^2$, $1$, and $1$.
Since $\alpha>1$, the only nontrivial triangle inequality is
$(\alpha^2)^{1/q}\le 2$,
or equivalently $\alpha\le 2^{q/2}$.
If the quartet has block type $(3,1)$ or $(4,0)$, then all three products are
equal, so the $\T_q$-condition is automatic.

Therefore $L$ satisfies the $\T_q$-inequalities if and only if
\begin{equation}
\label{eq:qn-upper-Tq-condition}
\alpha\le 2^{q/2}.
\end{equation}

\smallskip

\emph{Step 2: spectral decomposition.}
Consider the decomposition
\[
\R^n = \Span\{1_A,1_B\} \oplus W,
\]
where
\[
W:=
\left\{
(x_A,x_B)\in \R^a\oplus\R^b :
\sum_{i\in A}x_i=0,\ \sum_{j\in B}x_j=0
\right\}.
\]
On the subspace $W$, the all-ones matrices $J_a$ and $J_b$ vanish, so the
diagonal blocks act as $-\alpha I$, while the off-diagonal all-ones blocks also
vanish because they depend only on the blockwise sums.  Thus $L$ acts as
multiplication by $-\alpha$ on $W$.  In particular, $L$ has eigenvalue
$-\alpha$ with multiplicity
\[
(a-1)+(b-1)=n-2.
\]

It remains to analyze the action of $L$ on $\Span\{1_A,1_B\}$.  With respect to
the basis $\{1_A,1_B\}$, this action is given by
\[
M=
\begin{pmatrix}
\alpha(a-1) & b\\
a & \alpha(b-1)
\end{pmatrix}.
\]
Since
\[
\tr(M)=\alpha(a+b-2)=\alpha(n-2)>0,
\]
the matrix $M$ has two positive eigenvalues if and only if $\det(M)>0$.
A direct calculation gives
\[
\det(M)=\alpha^2(a-1)(b-1)-ab.
\]
Thus $L$ fails to be Lorentzian precisely when
\[
\alpha^2(a-1)(b-1)>ab,
\]
or equivalently,
\begin{equation}
\label{eq:qn-upper-nonlor}
\alpha>r(a,b):=
\sqrt{\frac{ab}{(a-1)(b-1)}}.
\end{equation}

\smallskip

\emph{Step 3: optimize the obstruction.}
Combining \eqref{eq:qn-upper-Tq-condition} and
\eqref{eq:qn-upper-nonlor}, we see that whenever
\[
r(a,b)<\alpha\le 2^{q/2},
\]
the matrix $L$ satisfies the $\T_q$-inequalities but is not Lorentzian.  In
particular, if $q>2\log_2 r(a,b)$
then we may choose $\alpha$ with $r(a,b)<\alpha\le 2^{q/2}$
and obtain a weak $\T_q$-representation of $U_{2,n}$ which is not Lorentzian.
Therefore $q(n)\le 2\log_2 r(a,b)$ for every decomposition $a+b=n$ with $a,b\ge 2$.
To obtain the best possible upper bound, we minimize $r(a,b)$ over all such
decompositions.

Since
\[
r(a,b)^2=\frac{ab}{(a-1)(b-1)}
=\left(1-\frac{n-1}{ab}\right)^{-1},
\]
the quantity $r(a,b)$ is minimized when $ab$ is maximized, i.e. when the
partition is as balanced as possible.

If $n=2m$ is even, the minimum occurs at $a=b=m$, giving
\[
r_{\min}(n)=\frac{m}{m-1}=\frac{n}{n-2}.
\]
Therefore
\[
q(n)\le 2\log_2\!\left(\frac{n}{n-2}\right).
\]

If $n=2m+1$ is odd, the minimum occurs at $(a,b)=(m,m+1)$, giving
\[
r_{\min}(n)=\sqrt{\frac{m(m+1)}{(m-1)m}}
=\sqrt{\frac{m+1}{m-1}}
=\sqrt{\frac{n+1}{n-3}}.
\]
Therefore
\[
q(n)\le \log_2\!\left(\frac{n+1}{n-3}\right).
\]

This proves the stated upper bound.

\smallskip

\emph{Step 4: asymptotics.}
As $n\to\infty$,
\[
2\log_2\!\left(\frac{n}{n-2}\right)
=
2\log_2\!\left(1+\frac{2}{n-2}\right)
=
\frac{4}{(\ln 2)\,n}+O(n^{-2}),
\]
and similarly
\[
\log_2\!\left(\frac{n+1}{n-3}\right)
=
\log_2\!\left(1+\frac{4}{n-3}\right)
=
\frac{4}{(\ln 2)\,n}+O(n^{-2}).
\]
Hence $q(n)=O(1/n)$.
\end{proof}

The preceding proposition has two immediate consequences.

\begin{corollary}
\label{cor:no-universal-q}
There is no universal constant $q>0$ such that every weak $\T_q$-representation of
every $U_{2,n}$ is Lorentzian.
\end{corollary}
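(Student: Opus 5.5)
This follows directly from Proposition~\ref{prop:qn-upper}. Suppose, for contradiction, that some $q_0>0$ has the property that every weak $\T_{q_0}$-representation of every $U_{2,n}$ is Lorentzian. Then $\Gr^{\rm w}_{U_{2,n}}(\T_{q_0})\subseteq \P\upL_{U_{2,n}}$ for all $n$, so by the definition of $q(n)$ as a supremum we get $q(n)\ge q_0$ for all $n\ge 4$.

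Proposition~\ref{prop:qn-upper} bounds $q(n)$ above by $2\log_2\bigl(\tfrac{n}{n-2}\bigr)$ for even $n$ and by $\log_2\bigl(\tfrac{n+1}{n-3}\bigr)$ for odd $n$. Both bounds tend to $0$ as $n\to\infty$: the arguments of the logarithms tend to $1$, and the asymptotic expansion $\tfrac{4}{(\ln 2)n}+O(n^{-2})$ from Step~4 of that proof gives this explicitly. Hence there is an $n$ with $q(n)<q_0$, a contradiction.

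To avoid relying on the supremum, one can also produce a counterexample directly. Pick an even $n=2m$ large enough that $2\log_2\bigl(\tfrac{n}{n-2}\bigr)<q_0$. This is possible because $\tfrac{n}{n-2}\to 1$, so one only needs $\tfrac{n}{n-2}<2^{q_0/2}$. Then choose $\alpha$ with $\tfrac{n}{n-2}<\alpha\le 2^{q_0/2}$ and take the block matrix $L$ from the proof of Proposition~\ref{prop:qn-upper} with $a=b=m$. By Steps~1 and~2 of that proof, $L$ satisfies the $\T_{q_0}$-inequalities but has two positive eigenvalues, so it is not Lorentzian. This gives a weak $\T_{q_0}$-representation of $U_{2,n}$ that is not Lorentzian.

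The only step needing any care is checking that the upper bound eventually drops below $q_0$, which is elementary.
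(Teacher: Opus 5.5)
Your proof is correct and matches the paper, which deduces the corollary in one line from Proposition~\ref{prop:qn-upper} via $q(n)\to 0$. Your explicit block-matrix counterexample with $a=b=m$ and $\tfrac{n}{n-2}<\alpha\le 2^{q_0/2}$ just unpacks Steps~1--3 of that proposition's proof, and it is also valid.
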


\begin{proof}
By Proposition~\ref{prop:qn-upper}, we have $q(n)\to 0$ as $n\to\infty$.
\end{proof}

\begin{remark}
\label{rem:compare-upper-lower}
Proposition~\ref{prop:qn-upper} should be compared with the lower bound from
Theorem~\ref{thm:rank-two-uniform}:
\[
\log_2\!\left(\frac{n-1}{n-2}\right)
\le q(n)\le
\begin{cases}
2\log_2\!\left(\dfrac{n}{n-2}\right), & n \text{ even},\\[1.25ex]
\log_2\!\left(\dfrac{n+1}{n-3}\right), & n \text{ odd}.
\end{cases}
\]

In particular, the upper and lower bounds match up to absolute constant factors:
\[
q(n)=\Theta(1/n).
\]
\end{remark}

For $n=4$, the upper bound in Proposition~\ref{prop:qn-upper} gives
$q(4)\le 2$, and Proposition~\ref{prop:U24} shows that this is sharp:
\[
q(4)=2.
\]

We believe that the upper bound is the right answer for all $n\ge 4$.

\begin{conjecture}
\label{conj:qn-sharp}
For every $n\ge 4$,
\[
q(n)=
\begin{cases}
2\log_2\!\left(\dfrac{n}{n-2}\right), & \text{if } n \text{ is even},\\[1.25ex]
\log_2\!\left(\dfrac{n+1}{n-3}\right), & \text{if } n \text{ is odd}.
\end{cases}
\]
\end{conjecture}

\begin{remark}
\label{rem:section7-motivation}
For $n=5$, Proposition~\ref{prop:qn-upper} yields $q(5)\le \log_2 3$,
while Theorem~\ref{thm:rank-two-uniform} gives $q(5)\ge \log_2(4/3)$.
In the next section, we prove that the upper bound is sharp when $n=5$:
\[
q(5)=\log_2 3,
\]
providing a non-trivial piece of evidence for Conjecture~\ref{conj:qn-sharp}.
\end{remark}

\section{A sharp lower bound for $q(5)$}
\label{sec:q5}

In Section~\ref{sec:U2n-core}, we implicitly used the following metric principle:
if every $(n-1)$-point metric space $(X,d)$ has the property that $(X,d^{q/2})$ can be embedded in some Euclidean space,
then $q(n)\ge q$.  This is enough to recover the Blumenthal lower bound
$q(5)\ge1$, but it does not utilize all of the $\T_q$-quartet inequalities.
After diagonal normalization, the quartets through the distinguished index force a
metric on the remaining $n-1$ points; the remaining quartets force this metric to be
Ptolemaic. 

In this section, we exploit this extra Ptolemaic structure in the first open case $n=5$.
Our main result is that if $(X,d)$ is a four-point Ptolemaic metric, then
$(X,d^{q/2})$ embeds isometrically in Euclidean space for all $0<q\le\log_2 3$ (Theorem~\ref{thm:q5-four-point-ptolemaic}).
From this, we will deduce that $q(5)=\log_2(3)$ (Corollary~\ref{cor:q5-sharp}).\footnote{With the help of David Renshaw and Claude Code, we have formalized the proofs of Theorem~\ref{thm:q5-four-point-ptolemaic} and Corollary~\ref{cor:q5-sharp} in Lean. The Lean code can be found at \url{https://github.com/icarm/FourPointPtolemaic.git}}

\subsection{Ptolemaic metrics and Schoenberg theory}

\begin{definition}
A finite metric space $(X,d)$ is \emph{Ptolemaic}\footnote{The name refers to the classical Ptolemy inequality in Euclidean geometry. Ptolemaic metric spaces have a substantial literature; for example, Schoenberg proved that a real normed vector space is Ptolemaic if and only if it is an inner product space~\cite{Schoenberg1952Ptolemy}.} if, for every four points
$x,y,z,w\in X$,
\[
        d(x,y)d(z,w)
        \le
        d(x,z)d(y,w)+d(x,w)d(y,z).
\]
\end{definition}

\begin{definition}
A finite metric space $(X,d)$ has $q$-\emph{negative type} if, for every
choice of real numbers $(c_x)_{x\in X}$ with $\sum_{x\in X}c_x=0$, one has
\[
        \sum_{x,y\in X} c_xc_y d(x,y)^q \le 0.
\]

We will use the following classical result, cf.~\cite{Schoenberg1938}.
For the reader's convenience, we give a self-contained proof of the precise version of the theorem that we will use.

\begin{lemma}[Schoenberg's criterion]
\label{lem:q5-schoenberg}
Let $(X,d)$ be a finite metric space and let $q>0$. For a base point
$p\in X$, define the \emph{Schoenberg matrix based at $p$} by
\[
        G^{(p)}_{xy}
        :=
        \frac{d(x,p)^q+d(y,p)^q-d(x,y)^q}{2},
        \qquad x,y\in X\setminus\{p\}.
\]
Then the following are equivalent:
\begin{enumerate}
\item $(X,d)$ has $q$-negative type, i.e.
\[
        \sum_{x,y\in X} c_xc_y d(x,y)^q \le 0
\]
for every choice of real numbers $(c_x)_{x\in X}$ satisfying
$\sum_{x\in X} c_x=0$.
\item For some base point $p\in X$, the matrix $G^{(p)}$ is positive
semidefinite.
\item For every base point $p\in X$, the matrix $G^{(p)}$ is positive
semidefinite.
\item The snowflaked\footnote{In metric geometry, replacing a metric $d$ by $d^\alpha$ for $0<\alpha<1$ is called taking a \emph{snowflake} of the metric space; the terminology alludes to the classical Koch snowflake curve.}
metric space $(X,d^{q/2})$ embeds isometrically in Euclidean space.
\end{enumerate}
Moreover, the following stronger conditions are equivalent:
\begin{primenumerate}
\item The matrix $E=(d(x,y)^q)_{x,y\in X}$ is conditionally strictly
negative definite, i.e.
\[
        \sum_{x,y\in X} c_xc_y d(x,y)^q < 0
\]
for every nonzero vector $(c_x)_{x\in X}$ satisfying $\sum_{x\in X}c_x=0$.
\item For some base point $p\in X$, the matrix $G^{(p)}$ is positive
definite.
\item For every base point $p\in X$, the matrix $G^{(p)}$ is positive
definite.
\item The snowflaked metric space $(X,d^{q/2})$ admits an isometric
embedding into Euclidean space whose image is affinely independent.
\end{primenumerate}
\end{lemma}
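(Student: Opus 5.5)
The whole argument rests on one identity linking the quadratic form in (1) to the matrix $G^{(p)}$. Write $D_{xy}:=d(x,y)^q$ and fix a base point $p$. Given $c$ with $\sum_x c_x=0$, I set $c_p=-\sum_{x\ne p}c_x$, so $c$ is determined by its restriction $c'=(c_x)_{x\ne p}$. Expanding $\sum_{x,y}c_xc_yD_{xy}$, using $D_{pp}=0$ and substituting for $c_p$, should give
\[
\sum_{x,y\in X}c_xc_yD_{xy}=-2\,c'^\top G^{(p)}c'.
\]
Indeed, the terms involving $p$ contribute $2c_p\sum_{x\ne p}c_xD_{xp}=-2\bigl(\sum_y c_y\bigr)\bigl(\sum_x c_xD_{xp}\bigr)$. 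This matches exactly the $d(x,p)^q+d(y,p)^q$ part of $-2G^{(p)}$, and the remaining terms produce the $-d(x,y)^q$ part. The map $c\mapsto c'$ is a linear bijection from $\1^\perp$ onto $\R^{X\setminus\{p\}}$.

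With this identity, (1) holds if and only if $G^{(p)}\succeq0$, and since $p$ was arbitrary this gives (1)$\Leftrightarrow$(2)$\Leftrightarrow$(3). Likewise, strict negativity on nonzero $c$ corresponds to positive definiteness, because $c\ne0$ exactly when $c'\ne0$. This gives (1$'$)$\Leftrightarrow$(2$'$)$\Leftrightarrow$(3$'$).

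For (4), I will use Gram matrices. If $\iota\colon X\to\R^N$ is an isometric embedding of $(X,d^{q/2})$, translate so that $\iota(p)=0$ and put $v_x=\iota(x)$. The polarization identity gives
\[
\langle v_x,v_y\rangle=\tfrac12\bigl(\|v_x\|^2+\|v_y\|^2-\|v_x-v_y\|^2\bigr)=G^{(p)}_{xy},
\]
so $G^{(p)}$ is a Gram matrix and hence positive semidefinite. Conversely, if $G^{(p)}\succeq0$, factor $G^{(p)}=V^\top V$ (for example via the spectral decomposition), let $v_x$ be the columns of $V$, and set $v_p=0$. Then $\|v_x\|^2=G^{(p)}_{xx}=d(x,p)^q$, and
\[
\|v_x-v_y\|^2=G^{(p)}_{xx}+G^{(p)}_{yy}-2G^{(p)}_{xy}=d(x,y)^q .
\]
So $x\mapsto v_x$ is an isometric embedding of $(X,d^{q/2})$, proving (4)$\Leftrightarrow$(2).

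For the primed version, the points $\{v_x\}$ are affinely independent exactly when the vectors $\{v_x-v_p\}_{x\ne p}=\{v_x\}_{x\ne p}$ are linearly independent. That in turn holds exactly when their Gram matrix $G^{(p)}$ is nonsingular, i.e.\ positive definite given that it is a Gram matrix. This gives (4$'$)$\Leftrightarrow$(2$'$). Affine independence does not depend on the choice of translation or of the factorization $V$.

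The only step needing care is the bookkeeping in the expansion of the identity. Everything else is standard linear algebra, and no earlier results from the paper are required.
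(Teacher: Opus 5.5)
Your proposal is correct and follows the paper's proof essentially step for step. Both use the reparametrization $c_p=-\sum_{x\ne p}c_x$, giving $\sum_{x,y}c_xc_y\,d(x,y)^q=-2\,c'^\top G^{(p)}c'$, and then the Gram-matrix realization with $v_p=0$ (polarization in one direction, factorization in the other) for (2)$\Leftrightarrow$(4), with linear independence of $\{v_x\}_{x\ne p}$ handling the strict versions.
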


\begin{proof}
We first prove the semidefinite equivalences. Fix a base point $p\in X$ and
put $Y=X\setminus\{p\}$. For a vector $a=(a_x)_{x\in Y}$, define coefficients
$(c_z)_{z\in X}$ by
\[
        c_x=a_x \quad (x\in Y),
        \qquad
        c_p=-\sum_{x\in Y} a_x .
\]
Then $\sum_{z\in X}c_z=0$. Conversely, every vector $(c_z)_{z\in X}$ with
$\sum_z c_z=0$ arises uniquely in this way, by taking $a_x=c_x$ for
$x\in Y$.

A direct expansion gives
\[
\begin{aligned}
        \sum_{u,v\in X} c_uc_v d(u,v)^q
        &=
        \sum_{x,y\in Y} a_xa_y d(x,y)^q
        +2c_p\sum_{x\in Y} a_x d(x,p)^q  \\
        &=
        \sum_{x,y\in Y} a_xa_y d(x,y)^q
        -2\sum_{x,y\in Y} a_xa_y d(x,p)^q  \\
        &=
        \sum_{x,y\in Y} a_xa_y
        \bigl(d(x,y)^q-d(x,p)^q-d(y,p)^q\bigr)  \\
        &=
        -2 a^T G^{(p)} a .
\end{aligned}
\]
In the second line we used $c_p=-\sum_{y\in Y}a_y$, and in the third line
we symmetrized the term involving $d(x,p)^q$. Therefore, for any fixed base point $p$, the $q$-negative
type inequality is equivalent to
\[
        a^T G^{(p)}a\ge 0
        \qquad\text{for all } a\in\mathbb{R}^{Y}.
\]
In particular, $(1), (2)$, and $(3)$ are all equivalent.

Next suppose $G^{(p)}$ is positive semidefinite for some base point $p$.
Realize it as a Gram matrix:
\[
        G^{(p)}_{xy}=\langle v_x,v_y\rangle
        \qquad (x,y\in Y)
\]
for vectors $(v_x)_{x\in Y}$ in a Euclidean space, and set $v_p=0$. Then,
for $x,y\in Y$,
\[
\begin{aligned}
        \|v_x-v_y\|^2
        &=
        G^{(p)}_{xx}+G^{(p)}_{yy}-2G^{(p)}_{xy}  \\
        &=
        d(x,p)^q+d(y,p)^q
        -\bigl(d(x,p)^q+d(y,p)^q-d(x,y)^q\bigr)  \\
        &=
        d(x,y)^q.
\end{aligned}
\]
Also,
\[
        \|v_x-v_p\|^2=\|v_x\|^2=G^{(p)}_{xx}=d(x,p)^q .
\]
Hence $\|v_x-v_y\|=d(x,y)^{q/2}$ for all $x,y\in X$, so
$(X,d^{q/2})$ embeds isometrically in Euclidean space. This proves
$(2)\Longrightarrow (4)$.

Conversely, suppose $(X,d^{q/2})$ embeds isometrically in Euclidean space.
Choose an isometric embedding $x\mapsto v_x$, and translate it so that
$v_p=0$. Then, for $x,y\in Y$,
\[
        \langle v_x,v_y\rangle
        =
        \frac{\|v_x\|^2+\|v_y\|^2-\|v_x-v_y\|^2}{2}
        =
        \frac{d(x,p)^q+d(y,p)^q-d(x,y)^q}{2}
        =
        G^{(p)}_{xy}.
\]
Thus $G^{(p)}$ is a Gram matrix, and is therefore positive semidefinite.
This proves $(4)\Longrightarrow (2)$.

It remains to prove the strict assertions. The identity
\[
        \sum_{u,v\in X} c_uc_v d(u,v)^q
        =
        -2a^T G^{(p)}a
\]
above identifies nonzero vectors $c\in\mathbb{R}^X$ with $\sum_x c_x=0$
with nonzero vectors $a\in\mathbb{R}^{Y}$. Therefore $E=(d(x,y)^q)$ is
conditionally strictly negative definite if and only if $G^{(p)}$ is positive
definite. This proves $(1^\prime)\Longleftrightarrow (2^\prime)$ for any chosen base point
$p$, and hence also the equivalence with $(3^\prime)$.

Finally, if $G^{(p)}$ is positive definite, then in the Gram realization above
the vectors $(v_x)_{x\in Y}$ are linearly independent. Since $v_p=0$, this is
equivalent to the affine independence of the full set
$\{v_x:x\in X\}$. Hence $(2^\prime)\Longrightarrow (4^\prime)$.

Conversely, suppose $(X,d^{q/2})$ admits an isometric Euclidean embedding
whose image is affinely independent. Translating the embedding so that
$v_p=0$, the vectors $(v_x)_{x\in Y}$ are linearly independent. Since
$G^{(p)}$ is their Gram matrix, it is positive definite. Thus
$(4^\prime)\Longrightarrow (2^\prime)$, completing the proof of the strict equivalences.
\end{proof}

\begin{remark}[Four-point interpretation of the Schoenberg determinant]
\label{rem:four-point-schoenberg-determinant}
In the special case of four points, Lemma~\ref{lem:q5-schoenberg}
has the following elementary geometric interpretation. Let $d$ be a
four-point metric on $\{1,2,3,4\}$, and let $q>0$ be such that
$\widetilde d:=d^{q/2}$ is again a metric. 
Form two comparison triangles $134$ and $234$ in Euclidean $3$-space, using the side
lengths prescribed by $\widetilde d$. Glue these two triangles along their
common side $34$. If one rotates the second triangle about the line $34$,
the possible values of the remaining distance $|1-2|$ fill an interval
\[
        [\ell_-,\ell_+].
\]
Then the following conditions are equivalent:
\begin{enumerate}
\item The four-point metric space
\[
        \bigl(\{1,2,3,4\},\widetilde d\bigr)
\]
embeds isometrically in Euclidean space.
\item The prescribed fourth distance satisfies
\[
        \ell_- \le \widetilde d(1,2) \le \ell_+ .
\]
\item The $3\times 3$ Schoenberg matrix based at $4$,
\[
        \widetilde G^{(4)}_{ij}
        =
        \frac{\widetilde d(i,4)^2+\widetilde d(j,4)^2-\widetilde d(i,j)^2}{2},
        \qquad i,j\in\{1,2,3\},
\]
has nonnegative determinant.
\end{enumerate}

Indeed, the $2\times2$ principal minors of $\widetilde G^{(4)}$ encode
the existence of the two Euclidean comparison triangles $134$ and $234$.
Once these triangles exist, the only remaining obstruction to realizing the
four prescribed distances simultaneously in Euclidean space is whether the
prescribed value of $\widetilde d(1,2)$ lies in the interval obtained by
rotating one comparison triangle around the common side $34$. Equivalently,
in this four-point case the nonnegativity of the determinant of
$\widetilde G^{(4)}$ is precisely the final condition for positive
semidefiniteness of the Schoenberg matrix.
\end{remark}

For $m\ge3$, let
\[
        P(m):=\sup\{q>0:\text{ every $m$-point Ptolemaic metric has
        $q$-negative type}\}.
\]
\end{definition}

The following proposition makes precise the connection between $q(n)$ (the $q$-invariant of the matroid $U_{2,n}$) and the notion of Ptolemaic
negative type.

\begin{proposition}
\label{prop:qn-Pn}
For every $n\ge4$,
\[
        q(n)=P(n-1).
\]
\end{proposition}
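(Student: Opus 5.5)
The proof will be a direct dictionary, for each fixed $q>0$, between admissible $n\times n$ matrices satisfying the $\T_q$-inequalities and Ptolemaic metrics on $n-1$ points. It refines the argument in the proof of Theorem~\ref{thm:rank-two-uniform} by also using the quartets that avoid the index $1$. The goal is the equivalence
\[
\Gr^{\rm w}_{U_{2,n}}(\T_q)\subseteq \P\upL_{U_{2,n}}
\iff
\text{every $(n-1)$-point Ptolemaic metric has $q$-negative type.}
\]
Taking suprema over $q$ on both sides then gives $q(n)=P(n-1)$ immediately.

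\textbf{Step 1 (normalization and metric).} Given an admissible $A$, I use Lemma~\ref{lem:diag-congruence-U2n} to pass to $A'=DAD$ with $A'_{1i}=1$. This changes neither the $\T_q$-inequalities nor Lorentzianity. Write $A'=\begin{pmatrix}0&\1^\top\\ \1&B\end{pmatrix}$ and set $d(i,j):=B_{ij}^{1/q}$ on $X=\{2,\dots,n\}$. I translate the quartet conditions in two groups.
\begin{itemize}
\item Quartets $\{1,i,j,k\}$: as in the proof of Theorem~\ref{thm:rank-two-uniform}, the $\T_q$-conditions are exactly the triangle inequalities for $d$.
\item Quartets $\{i,j,k,\ell\}\subseteq X$: the three side lengths are $d(i,j)d(k,\ell)$, $d(i,k)d(j,\ell)$ and $d(i,\ell)d(j,k)$, so the $\T_q$-conditions say precisely that $d$ satisfies Ptolemy's inequality for every labelling.
\end{itemize}
Hence $A$ satisfies the $\T_q$-inequalities if and only if $d$ is a Ptolemaic metric. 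Notably, this condition does not depend on $q$ once $d$ is defined through the $1/q$-power.

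\textbf{Step 2 (Lorentzianity).} $A'$ has nonnegative entries. By Lemma~\ref{lem:squared-edm-block}, it has at most one positive eigenvalue if and only if $B=(d(i,j)^q)$ is conditionally negative semidefinite on $\1^\perp$, that is, if and only if $(X,d)$ has $q$-negative type. Combining Steps 1 and 2, an admissible $A$ satisfying the $\T_q$-inequalities is Lorentzian if and only if the associated Ptolemaic metric has $q$-negative type.

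\textbf{Step 3 (surjectivity and conclusion).} Conversely, every Ptolemaic metric $d$ on an $(n-1)$-point set arises this way. Label the points $2,\dots,n$, set $B_{ij}=d(i,j)^q$, border $B$ by a row and column of ones, and observe that the resulting admissible matrix satisfies the $\T_q$-inequalities by Step 1. This gives the displayed equivalence, and hence the claimed identity of the two sets of admissible $q$.

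\textbf{Main subtlety.} The sets $\{q:\ \Gr^{\rm w}_{U_{2,n}}(\T_q)\subseteq\P\upL_{U_{2,n}}\}$ and $\{q:\ \text{all Ptolemaic metrics have $q$-negative type}\}$ coincide exactly, so their suprema agree; no separate interval or monotonicity argument is needed. (It does follow from the monotonicity of $\T_q$ in $q$ that both sets are down-closed.) The remaining points are routine but should be checked: positivity of distances for distinct points, since admissible entries are positive, and the requirement $n-1\ge 3$ built into the definition of $P(m)$.
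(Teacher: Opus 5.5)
Your proposal is correct and follows essentially the same route as the paper. Both arguments normalize diagonally to $A_{1i}=1$, read the quartets through $1$ as triangle inequalities and the quartets inside $X$ as Ptolemy inequalities for $d(i,j)=B_{ij}^{1/q}$, use Lemma~\ref{lem:squared-edm-block} to identify Lorentzianity with $q$-negative type, and border an arbitrary Ptolemaic metric by ones for the converse. The only difference is presentational: you show the two defining sets of $q$ coincide for each fixed $q$ and then take suprema, whereas the paper argues separately via $q<P(n-1)$ and $q<q(n)$, which tacitly relies on the downward closure of those sets that your formulation does not need.
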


\begin{proof}
Let $A$ be an admissible $n\times n$ matrix satisfying the
$\T_q$-inequalities.  By Lemma~\ref{lem:diag-congruence-U2n}, after positive
diagonal congruence we may assume
\[
        A_{1i}=1\qquad (i=2,\dots,n).
\]
Write
\[
        A=
        \begin{pmatrix}
        0&\1^\top\\
        \1&B
        \end{pmatrix},
        \qquad
        X:=\{2,\dots,n\}.
\]
Define
\[
        d(i,j):=B_{ij}^{1/q}\qquad (i\ne j,\ i,j\in X),
        \qquad
        d(i,i):=0.
\]
The quartets $\{1,i,j,k\}$ say precisely that $d(i,j),d(i,k),d(j,k)$ are the
side lengths of a Euclidean triangle.  Hence $d$ is a metric on $X$.

The quartets contained in $X$ say that, for every four distinct
$i,j,k,\ell\in X$, the three numbers
\[
        d(i,j)d(k,\ell),\qquad
        d(i,k)d(j,\ell),\qquad
        d(i,\ell)d(j,k)
\]
are the side lengths of a Euclidean triangle.  Equivalently, all Ptolemy inequalities
on $X$ hold.  Thus $d$ is Ptolemaic.

By Lemma~\ref{lem:squared-edm-block}, $A$ is Lorentzian if and only if $d$ has $q$-negative type.  It follows that every
$q<P(n-1)$ satisfies the defining property of $q(n)$, so $q(n)\ge P(n-1)$.

Conversely, let $d$ be any $(n-1)$-point Ptolemaic metric and form
\[
        A=
        \begin{pmatrix}
        0&\1^\top\\
        \1&(d(i,j)^q)
        \end{pmatrix}.
\]
The same discussion shows that $A$ satisfies the $\T_q$-inequalities.  If
$q<q(n)$, then $A$ is Lorentzian, hence $d$ has $q$-negative type by the
equivalence above. Thus $P(n-1)\ge q(n)$.
\end{proof}

We will also use the following well-known fact, included for completeness.

\begin{lemma}[Line metrics]
\label{lem:q5-line-metrics}
Every finite subset of $\R$, with its usual metric, has $q$-negative type for
$0<q\le2$.
\end{lemma}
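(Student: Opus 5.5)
The plan is to prove the statement directly, by the classical route through $2$-negative type of the real line. Let $x_1,\dots,x_m\in\R$ be the points and $c\in\R^m$ a vector with $\sum_i c_i=0$. First I will treat the endpoint $q=2$ by expanding
\[
\sum_{i,j}c_ic_j(x_i-x_j)^2=2\Bigl(\sum_i c_i\Bigr)\Bigl(\sum_j c_jx_j^2\Bigr)-2\Bigl(\sum_i c_ix_i\Bigr)^2=-2\Bigl(\sum_i c_ix_i\Bigr)^2\le 0.
\]
Equivalently, the points themselves give an isometric embedding of $(X,d)$ into $\R$, and Lemma~\ref{lem:q5-schoenberg} with $q=2$ applies.

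For $0<q<2$ I will reduce to the case $q=2$ using the subordination identity
\[
|t|^q=C_q\int_0^\infty \frac{1-\cos(st)}{s^{1+q}}\,ds,\qquad C_q>0,
\]
which converges for $0<q<2$. Substituting $t=x_i-x_j$ and summing against $c_ic_j$, the constant term drops out because $\sum_i c_i=0$. The cosine term is handled by
\[
\sum_{i,j}c_ic_j\cos\bigl(s(x_i-x_j)\bigr)=\Bigl|\sum_i c_ie^{isx_i}\Bigr|^2\ge 0.
\]
Hence
\[
\sum_{i,j}c_ic_j|x_i-x_j|^q=-C_q\int_0^\infty s^{-1-q}\Bigl|\sum_i c_ie^{isx_i}\Bigr|^2ds\le 0.
\]

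An alternative, more elementary route avoids the integral. Since $d=|x-y|$ has $2$-negative type, the matrix $(e^{-\lambda d^2})$ is positive semidefinite for every $\lambda>0$ by Schoenberg. Writing
\[
t^{q}=c'_q\int_0^\infty\bigl(1-e^{-\lambda t^2}\bigr)\lambda^{-1-q/2}\,d\lambda,\qquad 0<q<2,
\]
gives the same conclusion. The general principle behind both arguments is that if $d$ has $2$-negative type, then $d$ has $q$-negative type for all $q\le 2$.

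The only real point needing care is the convergence and positivity of the constant $C_q$ (or $c'_q$). Near $s=0$ the integrand behaves like $s^{1-q}$, which is integrable because $q<2$. Near $\infty$ it behaves like $s^{-1-q}$, which is integrable because $q>0$. These are routine checks, so I do not expect a real obstacle.
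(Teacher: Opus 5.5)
Your proof is correct and follows the paper's argument essentially verbatim: the case $q=2$ via the identity $\sum_{i,j}c_ic_j(x_i-x_j)^2=-2(\sum_i c_ix_i)^2$, and the case $0<q<2$ via Schoenberg's integral representation of $|t|^q$ combined with $\sum_{i,j}c_ic_j\cos(s(x_i-x_j))=|\sum_i c_ie^{isx_i}|^2\ge 0$. Your Gaussian-kernel variant is also valid, but it relies on the separate fact that $(e^{-\lambda d_{ij}^2})$ is positive semidefinite (Schoenberg's exponential theorem, or the Schur product theorem). That is not the Euclidean-embedding criterion of Lemma~\ref{lem:q5-schoenberg}, and its near-zero integrability exponent is $\lambda^{-q/2}$, not $s^{1-q}$.
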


\begin{proof}
The case $q=2$ follows from the elementary identity
\[
\sum_{i,j} c_i c_j |x_i-x_j|^2
=
-2\left(\sum_i c_i x_i\right)^2
\le0
\]
whenever $\sum_i c_i=0$.

For $0<q<2$, one can use the Schoenberg integral representation
\[
        |t|^q
        =
        c_q\int_0^\infty \bigl(1-\cos(st)\bigr)\frac{ds}{s^{1+q}},
\]
where $c_q>0$; see \cite{Schoenberg38}.  If $\sum_i c_i0$, then
\[
\begin{aligned}
        \sum_{i,j}c_ic_j|x_i-x_j|^q
        &=
        c_q\int_0^\infty
        \sum_{i,j}c_ic_j\bigl(1-\cos(s(x_i-x_j))\bigr)\frac{ds}{s^{1+q}}  \\
        &=
        -c_q\int_0^\infty
        \left|\sum_i c_i e^{isx_i}\right|^2
        \frac{ds}{s^{1+q}}
        \le0.
\end{aligned}
\]
\end{proof}

\begin{remark}
Alternatively, one can deduce the case $q<2$ from the $q=2$ case using the star-shapedness in log-coordinates of the space of Lorentzian polynomials, cf.~\cite[Proposition~3.25]{Branden-Huh20} and \cite[Theorem~4.4]{BHKL1}.
\end{remark}

\subsection{Metric inversion}

Let $(X,d)$ be a finite metric space and fix $p\in X$.  
Define the inverted distance $\widehat d$ by
\[
        \widehat d(i,p)=\frac{1}{d(i,p)}\qquad (i\ne p),
        \qquad
        \widehat d(i,j)=\frac{d(i,j)}{d(i,p)d(j,p)}\qquad (i,j\ne p).
\]

\begin{lemma}
\label{lem:q5-inversion-euclidean}
With notation as above, $(X,d^{q/2})$ embeds isometrically in Euclidean space if
and only if $(X,\widehat d^{q/2})$ does.
\end{lemma}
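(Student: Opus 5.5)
The plan is to prove the equivalence via the Cayley--Menger / Lorentzian reformulation and a diagonal congruence. Lemma~\ref{lem:q5-schoenberg} says that $(X,d^{q/2})$ embeds isometrically in Euclidean space exactly when $d$ has $q$-negative type. Lemma~\ref{lem:squared-edm-block} says this happens exactly when the bordered matrix
\[
C=\begin{pmatrix}0&\1^\top\\ \1&(d(i,j)^q)_{i,j\in X}\end{pmatrix}
\]
has at most one positive eigenvalue. I will build a single zero-diagonal symmetric matrix $A$, indexed by $X\sqcup\{\infty\}$, from which both $d$ and $\widehat d$ arise by the normalization used in the proof of Theorem~\ref{thm:rank-two-uniform}: once with base index $\infty$, and once with base index $p$. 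Sylvester's law (Lemma~\ref{lem:diag-congruence-U2n}(3)) then makes the two inertias equal.

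Concretely, I set
\[
A_{\infty i}=1,\qquad A_{ij}=d(i,j)^q \qquad (i,j\in X).
\]
Then $A=C$, so $A$ has at most one positive eigenvalue iff $(X,d^{q/2})$ is Euclidean. For the other side, I normalize around the index $p$ instead. Apply the congruence $A'=DAD$ with
\[
D_{pp}=1,\qquad D_{\infty\infty}=A_{p\infty}^{-1}=1,\qquad D_{ii}=A_{pi}^{-1}=d(i,p)^{-q}\quad (i\in X\setminus\{p\}).
\]
This gives $A'_{p\cdot}\equiv 1$ off the diagonal.

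Next I read off the remaining block on $Y:=(X\setminus\{p\})\cup\{\infty\}$:
\[
A'_{ij}=\frac{d(i,j)^q}{d(i,p)^q d(j,p)^q}=\widehat d(i,j)^q,\qquad A'_{\infty i}=d(i,p)^{-q}=\widehat d(i,p)^q .
\]
So, after relabeling $\infty\leftrightarrow p$, $A'$ is exactly the bordered matrix built from $(\widehat d(u,v)^q)_{u,v\in X}$, where the new point $\infty$ plays the role of $p$. Since $D$ is positive diagonal, $A$ and $A'$ have the same inertia. Applying Lemma~\ref{lem:squared-edm-block} and Lemma~\ref{lem:q5-schoenberg} once more gives that $(X,\widehat d^{q/2})$ is Euclidean iff $A'$ has at most one positive eigenvalue, iff $A$ does, iff $(X,d^{q/2})$ is Euclidean.

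Two points need checking. The first is that Lemma~\ref{lem:q5-schoenberg} is stated for metric spaces, but $\widehat d$ need not satisfy the triangle inequality in general. I expect this to be the main subtlety. The way around it is that the Gram-matrix proof of Lemma~\ref{lem:q5-schoenberg} never uses the triangle inequality: it only needs a symmetric, zero-diagonal, positive-off-diagonal function. So I will note that the equivalence (1)$\Leftrightarrow$(4) holds verbatim for such semimetrics. The second point is the bookkeeping of indices in the congruence, which is routine.
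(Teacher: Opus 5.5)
Your proof is correct, but it takes a different route from the paper's. The paper argues directly in Euclidean space. It places $p$ at the origin of a realization of $d^{q/2}$ and applies the inversion $I(x)=x/\|x\|^2$. The identity $\|I(x_i)-I(x_j)\|=\|x_i-x_j\|/(\|x_i\|\,\|x_j\|)$ then produces the distances $\widehat d(i,j)^{q/2}$, with $p$ put back at the origin since $\|I(x_i)\|=\widehat d(i,p)^{q/2}$. The converse follows because $I$ is an involution.

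You instead turn both embeddability statements into inertia statements for bordered matrices, via Lemmas~\ref{lem:q5-schoenberg} and~\ref{lem:squared-edm-block}. You then note that renormalizing the bordered matrix of $(d^q)$ at the index $p$, rather than at the border index, gives exactly the bordered matrix of $(\widehat d^{\,q})$ once these two indices are exchanged. Your remark that the proof of Lemma~\ref{lem:q5-schoenberg} never uses the triangle inequality is exactly what this route needs. Indeed, $\widehat d$ satisfies the triangle inequality only when the Ptolemy inequalities through $p$ hold, which the lemma does not assume.

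The paper's argument is shorter, gives an explicit embedding, and does not need Schoenberg theory. Yours gives a conceptual explanation of why inversion fits the $\T_q$ picture of Section~\ref{sec:q5}. Inverting at $p$ is just moving the distinguished normalization index of the underlying symmetric zero-diagonal matrix (as in the proof of Proposition~\ref{prop:qn-Pn}) from the border index to $p$. The border index and $p$ trade places exactly as inversion trades the point at infinity and the origin. This is also why Ptolemy's inequality for $d$ becomes the triangle inequality for $\widehat d$ in Lemma~\ref{lem:q5-ptolemy-inversion}.
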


\begin{proof}
Translate a Euclidean realization of $d^{q/2}$ so that $p$ is at the origin.
Euclidean inversion
\[
        I(x)=\frac{x}{\|x\|^2}
\]
satisfies
\[
        \|I(x_i)-I(x_j)\|=\frac{\|x_i-x_j\|}{\|x_i\|\,\|x_j\|}.
\]
This realizes the inverted snowflake.  Since Euclidean inversion is its own inverse,
the converse follows in the same way.
\end{proof}

Ptolemy's inequality is closely tied to metric inversions (indeed, there is a beautiful proof of the classical Ptolemy theorem using inversion in a circle, see e.g. \cite{BogomolnyPtolemyInversion}).
The specific connection we need is the following.

\begin{lemma}
\label{lem:q5-ptolemy-inversion}
If $(X,d)$ is Ptolemaic, then the inverted distance $\widehat d$ is a metric.
\end{lemma}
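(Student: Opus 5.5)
We need to show that $\widehat d$ is symmetric, positive off the diagonal, and satisfies the triangle inequality. Symmetry and positivity are immediate, since $d(i,p)>0$ for $i\ne p$ and $d(i,j)>0$ for $i\ne j$. So the plan reduces to checking the triangle inequality $\widehat d(x,y)\le \widehat d(x,z)+\widehat d(z,y)$ for all triples of distinct points. We split according to how many of $x,y,z$ equal the base point $p$; since the points are distinct, at most one of them can be $p$.

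\emph{Case 1: none of $x,y,z$ equals $p$.} We multiply the desired inequality
\[
\frac{d(x,y)}{d(x,p)d(y,p)}\le \frac{d(x,z)}{d(x,p)d(z,p)}+\frac{d(z,y)}{d(z,p)d(y,p)}
\]
through by the positive quantity $d(x,p)d(y,p)d(z,p)$. This turns it into
\[
d(x,y)\,d(z,p)\le d(x,z)\,d(y,p)+d(z,y)\,d(x,p).
\]
That is exactly the Ptolemy inequality for the four points $x,y,z,p$, with the pairing $\{x,y\},\{z,p\}$ on the left. It therefore holds by hypothesis.

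\emph{Case 2: one of the points equals $p$.} First suppose $z=p$. The inequality reads
\[
\frac{d(x,y)}{d(x,p)d(y,p)}\le \frac1{d(x,p)}+\frac1{d(y,p)}.
\]
After multiplying by $d(x,p)d(y,p)$, this becomes $d(x,y)\le d(y,p)+d(x,p)$, which is the triangle inequality for $d$. Next suppose $y=p$ (the case $x=p$ is symmetric). The inequality becomes
\[
\frac1{d(x,p)}\le \frac{d(x,z)}{d(x,p)d(z,p)}+\frac1{d(z,p)}.
\]
Multiplying by $d(x,p)d(z,p)$ gives $d(z,p)\le d(x,z)+d(x,p)$, which is again the ordinary triangle inequality.

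We do not expect a real obstacle here. The only point needing care is to choose the correct pairing in the Ptolemy inequality for Case 1. If the definition of Ptolemaic is only literally stated for distinct points, we also have to note that degenerate coincidences have already been absorbed into Case 2.
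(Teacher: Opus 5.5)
Your proof is correct and is essentially the paper's argument. The triangle inequalities involving $p$ reduce, after clearing denominators, to triangle inequalities for $d$. For $x,y,z\neq p$, multiplying by $d(x,p)d(y,p)d(z,p)$ turns the triangle inequality for $\widehat d$ into the Ptolemy inequality $d(x,y)d(z,p)\le d(x,z)d(y,p)+d(z,y)d(x,p)$.
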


\begin{proof}
The triangle inequalities involving $p$ are equivalent to the original triangle
inequalities for $d$.  For $i,j,k\ne p$, the inequality
\[
        \widehat d(i,j)\le \widehat d(i,k)+\widehat d(k,j)
\]
becomes, after multiplying by $d(i,p)d(j,p)d(k,p)$,
\[
        d(i,j)d(k,p)\le d(i,k)d(j,p)+d(k,j)d(i,p),
\]
which is Ptolemy's inequality for $i,j,k,p$.
\end{proof}

\subsection{The star inequality}

The following lemma provides the mechanism through which the constant $\log_2 3$ will enter our proof of the sharp lower bound $q(5) \geq \log_2(3)$.

\begin{lemma}[Star inequality]
\label{lem:q5-star}
Let $1\le q\le \log_2 3$.  Then for all $a,b\ge0$,
\[
        (a+b)^q\le a^q+b^q+(ab)^{q/2}.
\]
\end{lemma}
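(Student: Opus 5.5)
By homogeneity (both sides scale by $\lambda^q$ under $(a,b)\mapsto(\lambda a,\lambda b)$) and symmetry, it suffices to treat $a=1$, $b=x\in[0,1]$. If $x=0$ the inequality is an equality, so we may take $x\in(0,1]$. Putting $x=s^2$ with $s\in(0,1]$, we must show
\[
        \phi_q(s):=1+s^{2q}+s^{q}-(1+s^2)^q\ge 0 .
\]
Equivalently, dividing by $s^q$,
\[
        s^{-q}+s^{q}+1\ \ge\ \bigl(s^{-1}+s\bigr)^q .
\]
Now write $u:=s^{-1}+s\ge 2$ and $s^{-q}+s^q=2\cosh(q\log(1/s))$. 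The inequality becomes $2\cosh(qy)+1\ge (2\cosh y)^q$ with $y=\log(1/s)\ge0$. This symmetric reformulation is the form we will work with.

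\emph{Endpoint checks.} At $y=0$ the inequality reads $3\ge 2^q$, which is exactly the hypothesis $q\le\log_2 3$; this explains the constant $\log_2 3$. As $y\to\infty$, the left side is $\sim e^{qy}$, while the right side is $(e^{y}+e^{-y})^q=e^{qy}(1+e^{-2y})^q\sim e^{qy}\bigl(1+qe^{-2y}\bigr)$. Since the left side has the extra term $+1$, whereas the right side only gains $O(e^{(q-2)y})$, which is $o(1)$ because $q<2$, the inequality holds for large $y$.

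\emph{Monotonicity in $y$.} To cover all $y\ge0$, set
\[
        F(y):=\log\bigl(2\cosh(qy)+1\bigr)-q\log\bigl(2\cosh y\bigr).
\]
Then $F(0)=\log 3-q\log2\ge0$, and $F(y)\to 0^+$ as $y\to\infty$. It would therefore suffice to show that $F$ is nonincreasing, i.e. $F'(y)\le 0$:
\[
        \frac{2q\sinh(qy)}{2\cosh(qy)+1}\le q\tanh y .
\]
This is equivalent to $2\sinh(qy)\cosh y\le \sinh y\,(2\cosh(qy)+1)$. Expanding the products,
\[
        \sinh((q+1)y)+\sinh((q-1)y)\le \sinh((q+1)y)-\sinh((q-1)y)+\sinh y,
\]
that is, $2\sinh((q-1)y)\le\sinh y$. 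For $q\ge1$ we have $q-1\le\log_2 3-1<1/2$. Since $\sinh$ is convex on $[0,\infty)$ with $\sinh 0=0$, we get $\sinh(\lambda y)\le\lambda\sinh y$ for $\lambda\in[0,1]$. Hence $2\sinh((q-1)y)\le 2(q-1)\sinh y\le\sinh y$. So $F'\le0$, and therefore $F(y)\ge\lim_{t\to\infty}F(t)=0$ for all $y\ge0$, which gives the claim. The hypothesis $q\ge1$ is used exactly to ensure $q-1\ge0$, so that this convexity bound applies.

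\emph{Main obstacle.} The hard step is obtaining the clean sign condition for $F'$, namely reducing it via the hyperbolic addition formulas to $2\sinh((q-1)y)\le\sinh y$. Everything hinges on that identity, so I would double-check it carefully. If it failed, the fallback would be to study $\phi_q(s)$ directly on $(0,1]$, locating its interior critical points, or to argue by convexity in $q$ between the endpoint exponents $q=1$ and $q=\log_23$.
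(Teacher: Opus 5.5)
Your reduction to $2\cosh(qy)+1\ge (2\cosh y)^q$ is correct, and so is your computation that $F'(y)\le 0$ is equivalent to $2\sinh((q-1)y)\le \sinh y$. The last step, however, rests on a false numerical claim: $\log_2 3-1=\log_2(3/2)\approx 0.585$, which is \emph{larger} than $1/2$. Hence $2(q-1)\le 1$ holds only for $q\le 3/2$.

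For $3/2<q\le \log_2 3$ the claimed monotonicity of $F$ is actually false. Expanding at $y=0$ gives $F'(y)=q\,y\,(2q/3-1)+O(y^3)$, which is positive for small $y>0$ once $q>3/2$. So, as written, your argument proves the lemma only for $1\le q\le 3/2$. It misses exactly the range near the sharp constant $\log_2 3$, which is the range needed for $q(5)=\log_2 3$.

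You can close the gap inside your framework by replacing monotonicity with a single-crossing argument. For $q>1$ put $\lambda:=q-1\in(0,1)$. Then $F'(y)\le 0$ if and only if $h(y):=\sinh y/\sinh(\lambda y)\ge 2$. The function $h$ is strictly increasing on $(0,\infty)$, because $(\log h)'(y)=\bigl(\psi(y)-\psi(\lambda y)\bigr)/y$ with $\psi(t):=t\coth t$, and $\psi'(t)=(\tfrac12\sinh 2t-t)/\sinh^2 t>0$.

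Thus $F'$ changes sign at most once, from positive to negative. So $F$ is nondecreasing and then nonincreasing, and therefore $F\ge \min\{F(0),\lim_{y\to\infty}F(y)\}=\min\{\log 3-q\log 2,\,0\}=0$. This is the same ``increase then decrease with nonnegative endpoint values'' reasoning the paper uses for its auxiliary function $\Theta$.

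The paper avoids needing it at the top level by working with the difference $\varphi(c)=(c^2+c+1)^m-c^{2m}-1$, in the variables $c=(a/b)^{q/2}$ and $m=1/q$. That function really is monotone on $[1,\infty)$, whereas your log-ratio $F$ is not.
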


\begin{proof}
The cases $a=0$ or $b=0$ are immediate, so assume $a,b>0$.
Put
\[
        A:=a^{q/2},\qquad B:=b^{q/2},\qquad r:=\frac{2}{q}.
\]
Then $a=A^r$, $b=B^r$, and the desired inequality becomes
\[
        (A^r+B^r)^{2/r}\le A^2+AB+B^2.
\]
Equivalently, after raising both sides to the power $1/q=r/2$, it is enough to
prove
\[
        A^r+B^r\le (A^2+AB+B^2)^{r/2}.
\]
By homogeneity, set $B=1$.  Writing $c:=A$ and $m:=r/2=1/q$, the inequality is
\[
        \varphi(c):=(c^2+c+1)^m-c^{2m}-1\ge0.
\]
The hypothesis $q\le\log_2 3$ is equivalent to
\[
        m\ge m_0:=\log_3 2.
\]
We have the symmetry
\[
        \varphi(c)=c^{2m}\varphi(1/c),
\]
so it suffices to prove the inequality for $c\ge1$.  At the symmetric point,
\[
        \varphi(1)=3^m-2\ge0.
\]

We show that $\varphi$ is nondecreasing on $[1,\infty)$.  Differentiating gives
\[
        \varphi'(c)
        =
        m\left((2c+1)(c^2+c+1)^{m-1}-2c^{2m-1}\right).
\]
For $c\ge1$, set $v:=1/c\in(0,1]$.  After dividing by the positive factor
$2mc^{2m-1}$, the inequality $\varphi'(c)\ge0$ is equivalent to
\[
        \rho_m(v):=
        \left(1+\frac v2\right)(1+v+v^2)^{m-1}\ge1.
\]
For fixed $v$, $\rho_m(v)$ is increasing in $m$, so it suffices to prove this
for $m=m_0$.  Define
\[
        \Theta(v)
        =
        \log\left(1+\frac v2\right)
        -(1-m_0)\log(1+v+v^2).
\]
Then $\Theta(0)=\Theta(1)=0$, and
\[
        \Theta'(v)>0
        \quad\Longleftrightarrow\quad
        \frac{1+v+v^2}{(2+v)(1+2v)}>1-m_0.
\]
The function
\[
        \Xi(v):=\frac{1+v+v^2}{(2+v)(1+2v)}
\]
satisfies
\[
        \Xi'(v)=\frac{3(v^2-1)}{(2+v)^2(1+2v)^2},
\]
so it is decreasing on $[0,1]$.  Since
\[
        \Xi(0)=\frac12,\qquad \Xi(1)=\frac13,
        \qquad
        \frac13<1-m_0<\frac12,
\]
the derivative $\Theta'$ changes sign exactly once.  Thus $\Theta$ increases and
then decreases, and because both endpoint values are zero, $\Theta\ge0$ on
$[0,1]$.  Hence $\varphi'\ge0$ on $[1,\infty)$, and
\[
        \varphi(c)\ge\varphi(1)\ge0.
\]
The symmetry gives the result for $0<c\le1$.
\end{proof}

\subsection{The four-point Ptolemaic theorem}

In this section we prove that a four-point Ptolemaic metric space has $q$-negative type for all $0<q\le \log_2 3$.
By Lemma~\ref{lem:q5-schoenberg}, an equivalent statement is that if $(X,d)$ is a four-point Ptolemaic metric, then
$(X,d^{q/2})$ embeds isometrically in Euclidean space for $0<q\le\log_2 3$.
This result is sharp, as the following example shows:

\begin{example}
\label{ex:q5-rhombus}
Let $d$ be the four-point metric with
\[
        d(1,2)=2,\qquad d(3,4)=1,
        \qquad
        d(1,3)=d(1,4)=d(2,3)=d(2,4)=1.
\]
This metric is Ptolemaic, since the only nontrivial Ptolemy inequality is
\[
        2\cdot 1=1\cdot1+1\cdot1.
\]
At $q=\log_2 3$, the snowflaked metric $d^{q/2}$ has
\[
        d(1,2)^{q/2}=\sqrt3,\qquad d(3,4)^{q/2}=1,
\]
and all four cross-distances equal to $1$. Thus it is realized by a Euclidean
rhombus with side length $1$ and diagonals $\sqrt3$ and $1$. Equivalently,
the matrix $(d(i,j)^q)$ is conditionally negative semidefinite at $q=\log_2 3$.

The constant $\log_2 3$ is sharp for this example. Indeed, take
\[
        w=(1,1,-1,-1),
        \qquad \sum_i w_i=0.
\]
Then
\[
\begin{aligned}
        \sum_{i,j} w_iw_j d(i,j)^q
        &=
        2\bigl(d(1,2)^q+d(3,4)^q
        -d(1,3)^q-d(1,4)^q-d(2,3)^q-d(2,4)^q\bigr) \\
        &=
        2(2^q+1-4)
        =
        2(2^q-3).
\end{aligned}
\]
For $q>\log_2 3$, this quantity is positive, so $(d(i,j)^q)$ is not conditionally
negative semidefinite. Hence $d$ does not have $q$-negative type for any
$q>\log_2 3$.
\end{example}

We require several preliminary lemmas. 

\begin{lemma}[Star metrics]
\label{lem:q5-star-metrics}
Let $1\le q\le \log_2 3$.  Every four-point star metric with center $o$ and
three leaves has $q$-negative type.
\end{lemma}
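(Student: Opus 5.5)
The plan is to verify $q$-negative type through Schoenberg's criterion, Lemma~\ref{lem:q5-schoenberg}, using the Schoenberg matrix based at the center $o$. Write the leaves as $1,2,3$ and set $a_i:=d(o,i)>0$. Since $d$ is a star metric, $d(i,j)=a_i+a_j$ for $i\ne j$. By condition (2) of Lemma~\ref{lem:q5-schoenberg}, it suffices to show that the $3\times 3$ matrix
\[
G_{ij}=\frac{a_i^q+a_j^q-(a_i+a_j)^q}{2}\quad(i\ne j),\qquad G_{ii}=a_i^q,
\]
is positive semidefinite.

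The key input for the off-diagonal entries is the star inequality, Lemma~\ref{lem:q5-star}, which applies because $1\le q\le\log_2 3$. It gives $(a_i+a_j)^q\le a_i^q+a_j^q+(a_ia_j)^{q/2}$, and hence
\[
G_{ij}\ge -\tfrac12 (a_ia_j)^{q/2}.
\]
In the other direction, convexity of $t\mapsto t^q$ for $q\ge1$ (superadditivity) gives $(a_i+a_j)^q\ge a_i^q+a_j^q$, so $G_{ij}\le 0$.

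Next I would perform a diagonal congruence. Put $u_i:=a_i^{q/2}$ and $U=\diag(u_1,u_2,u_3)$. Then $G=UKU$, where $K_{ii}=1$ and $K_{ij}=G_{ij}/(u_iu_j)\in[-\tfrac12,0]$. Positive semidefiniteness is preserved under this congruence, so it remains to show $K\succeq0$. Each row of $K$ has diagonal entry $1$ and two off-diagonal entries, each of absolute value at most $\tfrac12$. Thus $K$ is weakly diagonally dominant with nonnegative diagonal, and Gershgorin's theorem shows that every eigenvalue is $\ge 0$. Hence $G^{(o)}\succeq 0$, and Lemma~\ref{lem:q5-schoenberg} yields $q$-negative type.

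The only substantive step is the lower bound $G_{ij}\ge-\tfrac12(a_ia_j)^{q/2}$, and this is exactly the content of Lemma~\ref{lem:q5-star}; everything else is routine. One point to check is degeneracy. If a leaf coincides with the center ($a_i=0$), the space has fewer than four distinct points. In that case the corresponding row and column of $G$ vanish, since $G_{ii}=0$ and $G_{ij}=(a_j^q-a_j^q)/2=0$, and the same argument applies to the remaining block. The sign bound $G_{ij}\le0$ is what makes Gershgorin usable, because together with the lower bound it confines each normalized off-diagonal entry to $[-\tfrac12,0]$.
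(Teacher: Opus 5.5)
Your proof is correct and is essentially the paper's argument: you use the Schoenberg matrix based at the center, the diagonal congruence by $a_i^{-q/2}$, and the star inequality (together with superadditivity of $t^q$ for $q\ge 1$, which you correctly supply for the sign) to confine the normalized off-diagonal entries to $[-\tfrac12,0]$. The only difference is the final step: you invoke Gershgorin/diagonal dominance, whereas the paper computes $\det K = 1-\tfrac14\bigl(\eta_{12}^2+\eta_{13}^2+\eta_{23}^2+\eta_{12}\eta_{13}\eta_{23}\bigr)\ge 0$ and checks it together with the $2\times 2$ principal minors; both are routine and equally valid.
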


\begin{proof}
Let the leaf lengths be $r_1,r_2,r_3$, and use $o$ as base point.  After
conjugating the Schoenberg matrix by the diagonal matrix with entries $r_i^{-q/2}$,
the diagonal entries are $1$, and the off-diagonal entries are $-\eta_{ij}/2$, where
\[
        \eta_{ij}=\frac{(r_i+r_j)^q-r_i^q-r_j^q}{(r_ir_j)^{q/2}}.
\]
By Lemma~\ref{lem:q5-star}, $0\le \eta_{ij}\le1$.  The determinant is
\[
        1-\frac{\eta_{12}^2+\eta_{13}^2+\eta_{23}^2+\eta_{12}\eta_{13}\eta_{23}}{4},
\]
which is nonnegative for $0\le\eta_{ij}\le1$.  The principal $2\times2$ minors
are also nonnegative, so the Schoenberg matrix is positive semidefinite.  By
Lemma~\ref{lem:q5-schoenberg}, the metric has $q$-negative type.
\end{proof}

The following lemma provides a useful concavity result for determinants of symmetric $3 \times 3$ matrices.

\begin{lemma}
\label{lem:q5-one-entry-concavity}
Let
\[
        G=
        \begin{pmatrix}
        A & x & u\\
        x & B & v\\
        u & v & C
        \end{pmatrix}
\]
be a symmetric $3\times3$ matrix.  If $A,B,C,u,v$ are fixed and only $x$ is
allowed to vary, then $\det G$ is a quadratic polynomial in $x$ with coefficient
$-C$.  In particular, if $C\ge0$, then $\det G$ is concave as a function of
$x$.
\end{lemma}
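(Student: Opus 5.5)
The plan is to expand $\det G$ by cofactors and keep only the terms that depend on $x$. The standard formula gives
\[
\det G = ABC + 2xuv - Au^2 - Bv^2 - Cx^2 .
\]
The variable $x$ appears only in the cross term $2xuv$ and in the term $-Cx^2$. Every other summand, namely $ABC-Au^2-Bv^2$, depends only on the fixed quantities $A,B,C,u,v$. So, as a function of $x$, $\det G$ is the quadratic polynomial
\[
-C\,x^2 + 2uv\,x + (ABC - Au^2 - Bv^2),
\]
whose leading coefficient is $-C$. (When $C=0$ it degenerates to an affine function of $x$, which is still "quadratic with coefficient $0$" in the intended sense.)

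For the concavity claim, compute the second derivative in $x$, which is $-2C$. If $C\ge 0$, this is nonpositive, so $\det G$ is concave in $x$. In the boundary case $C=0$ it is affine, and affine functions are concave.

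There is no real obstacle here. The only point to be careful about is getting the sign and coefficient of the $x^2$ term right. The $x^2$ contribution comes from the $(1,2),(2,1)$ transposition paired with the fixed diagonal entry $C$, which gives $-x\cdot x\cdot C$. This can be cross-checked with the Laplace expansion along the third row.
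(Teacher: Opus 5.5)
Your proof is correct and is the same as the paper's: expand the determinant and read off $-C$ as the coefficient of $x^2$. One small slip is that the constant term is $ABC-Av^2-Bu^2$ (the cofactor of $A$ is $BC-v^2$), not $ABC-Au^2-Bv^2$; this term does not involve $x$, so it has no effect on the argument.
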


\begin{proof}
Expanding the determinant gives
\[
        \det G=ABC+2xuv-Av^2-Bu^2-Cx^2.\qedhere
\]
\end{proof}

The following lemma concerns the geometry of a triangle $PUV$ with a tail $AP$ attached at $P$.  

\begin{figure}[ht]
\centering
\begin{tikzpicture}[scale=1.05]
  \coordinate (P) at (0,0);
  \coordinate (A) at (-2.35,0);
  \coordinate (U) at (2.05,1.25);
  \coordinate (V) at (2.55,-1.05);

  \draw[thick] (A) -- (P);
  \draw[thick] (P) -- (U);
  \draw[thick] (P) -- (V);
  \draw[thick] (U) -- (V);
  \draw[dashed] (A) to[bend left=9] (U);
  \draw[dashed] (A) to[bend right=9] (V);

  \fill (A) circle (1.7pt) node[left] {$A$};
  \fill (P) circle (1.7pt) node[below left] {$P$};
  \fill (U) circle (1.7pt) node[above right] {$U$};
  \fill (V) circle (1.7pt) node[below right] {$V$};

  \node[below, fill=white, inner sep=1pt] at ($(A)!0.5!(P)$) {$y$};
  \node[above left, fill=white, inner sep=1pt] at ($(P)!0.55!(U)$) {$r$};
  \node[below left, fill=white, inner sep=1pt] at ($(P)!0.55!(V)$) {$z$};
  \node[right, fill=white, inner sep=1pt] at ($(U)!0.5!(V)$) {$h$};
  \node[above, xshift=-14pt, yshift=8pt, fill=white, inner sep=1pt] at ($(A)!0.55!(U)$) {$y+r$};
  \node[below, xshift=-14pt, yshift=-8pt, fill=white, inner sep=1pt] at ($(A)!0.55!(V)$) {$y+z$};
\end{tikzpicture}
\caption{The attached-ray extension: a segment $AP$ is attached to the triangle $PUV$ at $P$.  The dashed curves indicate the additive distances from $A$ to $U$ and $V$.}
\label{fig:attached-ray}
\end{figure}

\begin{lemma}[Attached-ray extension]
\label{lem:q5-attached-ray}
Let $1\le q\le\log_2 3$.  Suppose a four-point metric $d$ on $\{A,P,U,V\}$
satisfies
\[
        d(A,P)=y,\qquad
        d(P,U)=r,\qquad
        d(P,V)=z,\qquad
        d(U,V)=h,
\]
and
\[
        d(A,U)=y+r,\qquad d(A,V)=y+z.
\]
Then the metric has $q$-negative type.
\end{lemma}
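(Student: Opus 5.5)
We will verify $q$-negative type through the Schoenberg matrix based at the junction point $P$. By Lemma~\ref{lem:q5-schoenberg}, it suffices to show that $G^{(P)}$ on $\{A,U,V\}$ is positive semidefinite. Its entries are
\[
G_{AA}=y^q,\quad G_{UU}=r^q,\quad G_{VV}=z^q,\quad
G_{AU}=\tfrac{y^q+r^q-(y+r)^q}{2},\quad
G_{AV}=\tfrac{y^q+z^q-(y+z)^q}{2},\quad
G_{UV}=\tfrac{r^q+z^q-h^q}{2}.
\]
We conjugate by the diagonal matrix $\diag(y^{-q/2},r^{-q/2},z^{-q/2})$, as in the proof of Lemma~\ref{lem:q5-star-metrics}. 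This gives a matrix with unit diagonal and off-diagonal entries $-\eta_{AU}/2$, $-\eta_{AV}/2$ and $c$, where $c=(r^q+z^q-h^q)/(2(rz)^{q/2})$.

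By the star inequality (Lemma~\ref{lem:q5-star}), we have $0\le\eta_{AU},\eta_{AV}\le1$. This makes the $2\times2$ principal minors involving $A$ nonnegative. Since $q\le\log_2 3<2$, the triangle $PUV$ with distances $d^{q/2}$ is still a metric triangle, and every three-point metric embeds in the plane. Hence $|c|\le1$, and the $UV$ minor is nonnegative as well.

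The only remaining condition, and the main obstacle, is the $3\times3$ determinant. It equals
\[
1-\tfrac{\eta_{AU}^2+\eta_{AV}^2}{4}-c^2-\tfrac{\eta_{AU}\eta_{AV}}{2}\,c .
\]
This is not nonnegative for arbitrary $|c|\le 1$. For example, at $c=1$ it becomes $-(\eta_{AU}-\eta_{AV})^2/4$. So we must use the coupling between $h$ and $r,z$. The plan is to fix $y,r,z$ and let only $h$ vary.

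The triangle inequalities for the whole configuration reduce to $|r-z|\le h\le r+z$. The others, such as $d(A,U)\le h+d(A,V)$, reduce to the same constraint. As $h$ ranges over this interval, $c$ ranges monotonically over an interval. By Lemma~\ref{lem:q5-one-entry-concavity}, the determinant is a concave function of $c$, because the coefficient of $c^2$ is $-1$, coming from the diagonal entry indexed by $A$. Hence it suffices to check nonnegativity at the two endpoints $h=r+z$ and $h=|r-z|$.

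At $h=r+z$, the metric is exactly the four-point star metric with center $P$ and leaves $A,U,V$ of lengths $y,r,z$. It therefore has $q$-negative type by Lemma~\ref{lem:q5-star-metrics}, which uses $1\le q\le\log_2 3$. By Lemma~\ref{lem:q5-schoenberg}, the Schoenberg matrix is then positive semidefinite, so its determinant is nonnegative.

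At $h=|r-z|$, say $r\ge z$, we place the points $A,P,V,U$ on a line at coordinates $-y,0,z,r$. This realizes all six distances: $d(A,U)=y+r$, $d(A,V)=y+z$ and $d(U,V)=r-z$. The metric is therefore a line metric, which has $q$-negative type for $q\le2$ by Lemma~\ref{lem:q5-line-metrics}, and again the determinant is nonnegative.

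Concavity then gives nonnegativity of the determinant on the whole interval. Combined with the nonnegative $1\times1$ and $2\times2$ principal minors, this shows the Schoenberg matrix is positive semidefinite. By Lemma~\ref{lem:q5-schoenberg}, the metric has $q$-negative type.
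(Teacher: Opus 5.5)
Your proof is correct and is essentially the paper's argument: concavity of the Schoenberg determinant in $h^q$ via Lemma~\ref{lem:q5-one-entry-concavity}, with the endpoint $h=|r-z|$ a line metric and $h=r+z$ a star centered at $P$. The only difference is that you base the Schoenberg matrix at $P$ rather than at $A$, which makes the star endpoint literally the matrix of Lemma~\ref{lem:q5-star-metrics}; note also a harmless sign typo, since the cross term in your displayed determinant should be $+\tfrac{\eta_{AU}\eta_{AV}}{2}\,c$, which is what your evaluation at $c=1$ actually uses.
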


\begin{proof}
The triangle inequalities for $P,U,V$ give
\[
        |r-z|\le h\le r+z.
\]
Use $A$ as base point and form the Schoenberg matrix $G$ of the snowflaked metric
$d^{q/2}$ on the remaining points $P,U,V$.  
(Note that $d^{q/2}$ is a metric because $d$ is a metric and $q/2<1$.)
The $2\times2$ principal minors
are nonnegative because the corresponding three-point restrictions of $d^{q/2}$ are
metric triangles, hence Euclidean. 
Thus it remains to prove $\det G\ge0$.

The variable $H:=h^q$ appears only through the off-diagonal entry $G_{UV}$.  By
Lemma~\ref{lem:q5-one-entry-concavity}, $\det G$ is concave as a function of $H$.
Since $h\mapsto h^q$ is increasing, $H$ ranges over an interval.  It is enough to
check the endpoints of this interval.

If $h=|r-z|$, the triangle $P,U,V$ is degenerate and the whole four-point metric
is a line metric.  Hence it has $q$-negative type by Lemma~\ref{lem:q5-line-metrics}.
If $h=r+z$, then $P$ lies between $U$ and $V$, and the whole metric is a
three-leaf star with center $P$, with leaves $A,U,V$.  This endpoint is handled by
Lemma~\ref{lem:q5-star-metrics}.  Thus $\det G\ge0$ throughout the interval, so
$G\succeq0$.  By Lemma~\ref{lem:q5-schoenberg}, the metric has $q$-negative type.
\end{proof}

The following Lemma investigates the effect of inserting a point $P$ along one edge $AB$ of a triangle $ABO$, assuming the distance from $O$ to $P$ satisfies a Ptolemy-style upper bound.

\begin{figure}[ht]
\centering
\begin{tikzpicture}[scale=1.05]
  \coordinate (A) at (0,0);
  \coordinate (P) at (2.0,0);
  \coordinate (B) at (4.1,0);
  \coordinate (O) at (2.0,1.65);

  \draw[thick] (A) -- (P) -- (B);
  \draw[thick] (O) -- (A);
  \draw[thick] (O) -- (P);
  \draw[thick] (O) -- (B);

  \fill (A) circle (1.7pt) node[below] {$A$};
  \fill (P) circle (1.7pt) node[below] {$P$};
  \fill (B) circle (1.7pt) node[below] {$B$};
  \fill (O) circle (1.7pt) node[above] {$O$};

  \node[below] at ($(A)!0.5!(P)$) {$y$};
  \node[below] at ($(P)!0.5!(B)$) {$z$};
  \node[left] at ($(O)!0.5!(A)$) {$a$};
  \node[right] at ($(O)!0.5!(P)$) {$r$};
  \node[right] at ($(O)!0.5!(B)$) {$b$};
\end{tikzpicture}
\caption{Geodesic insertion: $P$ lies on the segment $AB$, and $r=d(O,P)$ is constrained by $(y+z)r\le za+yb$.}
\label{fig:geodesic-insertion}
\end{figure}

\begin{lemma}[Geodesic insertion]
\label{lem:q5-radial}
Let $1\le q\le\log_2 3$.  Let $\{O,A,P,B\}$ be a four-point metric such that
$P$ lies on a geodesic from $A$ to $B$.  Write
\[
        d(A,P)=y,\qquad d(P,B)=z,\qquad d(A,B)=y+z,
\]
and
\[
        d(O,A)=a,\qquad d(O,P)=r,\qquad d(O,B)=b.
\]
Assume
\[
        (y+z)r\le za+yb.
        \tag{$*$}
        \label{eq:q5-radial-ptolemy}
\]
Then the metric has $q$-negative type.
\end{lemma}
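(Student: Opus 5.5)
The plan is to follow the template of Lemma~\ref{lem:q5-attached-ray}: isolate one distance that enters the Schoenberg matrix in a single off-diagonal entry, use concavity in that entry to reduce to extreme configurations, and identify those extremes with cases already known to have $q$-negative type. The natural free parameter is $r=d(O,P)$. We take $A$ as base point and form the Schoenberg matrix $G$ of $d^{q/2}$ on $\{O,P,B\}$. The quantity $R:=r^q$ then appears only in the entries $G_{OP}$ and $G_{PP}$ (since $d(A,P)=y$ is fixed, $G_{PP}=y^q$ is independent of $r$). So $R$ enters only through $G_{OP}$, and Lemma~\ref{lem:q5-one-entry-concavity} applies with the fixed diagonal entry $G_{BB}=(y+z)^q\ge0$. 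Hence $\det G$ is concave in $R$. The $2\times2$ principal minors are nonnegative because every three-point subspace of the metric $d^{q/2}$ (a metric, since $q/2<1$) is Euclidean. It therefore suffices to show $\det G\ge0$ at the two endpoints of the admissible interval of $r$.

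We then determine the admissible range of $r$. The triangle inequalities in $O,A,P$ and $O,P,B$ give the lower bound $r\ge r_-:=\max(a-y,\,b-z,\,0)$, and hypothesis \eqref{eq:q5-radial-ptolemy} gives the upper bound $r\le r_+:=(za+yb)/(y+z)$. This upper bound is at most $\min(a+y,b+z)$, using the triangle inequalities $|a-b|\le y+z$. We also check that $r_-\le r_+$, again from the triangle inequalities for $O,A,B$. Since $r\mapsto r^q$ is monotone, $R$ ranges over an interval and concavity reduces us to $r=r_-$ and $r=r_+$.

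At the lower endpoint, say $r=a-y$ (the case $r=b-z$ is symmetric after swapping $A\leftrightarrow B$, $y\leftrightarrow z$), we have $d(O,A)=d(O,P)+d(P,A)$, so $P$ lies on a geodesic from $O$ to $A$. Together with $d(A,B)=y+z$, the configuration has $P$ as a branch point with $A$ attached by a ray: $d(A,O)=y+r$ and $d(A,B)=y+z$. This is exactly the hypothesis of Lemma~\ref{lem:q5-attached-ray} with $U=O$, $V=B$, $h=b$, which gives $q$-negative type. If $r_-=0$ then $O=P$ degenerates, and the metric reduces to a three-point space plus a repeated point, which is trivially fine. We handle this by a limiting argument or by noting that it is covered by the same lemma with $r=0$.

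The upper endpoint $r=r_+$ is the main obstacle. Here the Ptolemy inequality is an equality, and the four points should behave like a Euclidean configuration in which $O$'s distance to $P$ is the linear interpolation of $a$ and $b$. My first attempt is metric inversion centered at $P$. By Lemma~\ref{lem:q5-inversion-euclidean}, $q$-negative type of $d$ is equivalent to that of $\widehat d$, and equality in Ptolemy becomes equality in a triangle inequality $\widehat d(A,B)=\widehat d(A,O)+\widehat d(O,B)$. So in $\widehat d$ the point $O$ lies on a geodesic from $A$ to $B$, and $P$ satisfies $\widehat d(A,P)+\widehat d(P,B)=\widehat d(A,B)$ as well, since $y+z=d(A,B)$ inverts to an equality too. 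The inverted space is therefore a four-point metric with two geodesic relations through $A,B$, i.e.\ a star with center on $AB$ or a line. We would check that it is a three-leaf star or a line metric (or an attached-ray configuration) and conclude by Lemma~\ref{lem:q5-star-metrics}, Lemma~\ref{lem:q5-line-metrics}, or Lemma~\ref{lem:q5-attached-ray}. If the inverted configuration is not exactly of these shapes, the fallback is to compute $\det G$ at $r=r_+$ directly and reduce it to the star inequality, Lemma~\ref{lem:q5-star}.
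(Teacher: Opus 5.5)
Your setup matches the paper: base point $A$, concavity of $\det G$ in $R=r^q$ via Lemma~\ref{lem:q5-one-entry-concavity} with $G_{BB}=(y+z)^q$, and the lower endpoint via Lemma~\ref{lem:q5-attached-ray} with $(A,P,O,B)$ or $(B,P,O,A)$. Two small remarks. First, the triangle inequalities actually give $r\ge|a-y|$ and $r\ge|b-z|$. Your $\max\{a-y,b-z,0\}$ equals this only because $y+z\le a+b$, and you should say so. Second, your observation $r_+\le\min\{a+y,b+z\}$ is correct, and it lets you skip the paper's separate line-metric upper endpoints.

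\textbf{The gap is at the upper endpoint $r=r_+$}, which is the heart of the lemma. Inverting at $P$ gives
\[
\widehat d(A,B)=\widehat d(A,O)+\widehat d(O,B)=\widehat d(A,P)+\widehat d(P,B),
\]
that is, two \emph{different} midpoints between the same two endpoints. This does not force a star, a line, or an attached ray; it only confines $\widehat d(O,P)$ to an interval.

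Concretely, take $y=z=a=b=1$, so $r_+=1$. Then $d$ is the rhombus metric of Example~\ref{ex:q5-rhombus}: $d(A,B)=2$ and all other distances are $1$. Inversion at $P$ returns exactly the same metric. It is not a tree metric, since the four-point condition fails. Its only geodesic relations are $A$--$O$--$B$ and $A$--$P$--$B$, so it is not an attached-ray configuration either.

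In general, $\widehat d$ again satisfies all hypotheses of the lemma you are proving: for $\widehat d$, condition $(*)$ reduces to $y+z\le a+b$. So this route is circular. Moreover, the rhombus is precisely the configuration where $\log_2 3$ is sharp, so it cannot be waved away. The unspecified ``fallback'' determinant computation does not fill this gap.

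\textbf{The missing idea is to invert at the base point $A$ instead.} With $s=y+z$ one gets
\[
\widehat d(P,O)=\frac{r}{ay},\quad \widehat d(O,B)=\frac{b}{as},\quad \widehat d(P,B)=\frac{z}{ys},\quad \widehat d(P,A)=\frac1y,\quad \widehat d(B,A)=\frac1s .
\]
The Ptolemy equality $sr=za+yb$ becomes $\widehat d(P,O)=\widehat d(P,B)+\widehat d(B,O)$. The relation $s=y+z$ becomes $\widehat d(P,A)=\widehat d(P,B)+\widehat d(B,A)$. Both geodesics now start at $P$ and pass through $B$. This is exactly the hypothesis of Lemma~\ref{lem:q5-attached-ray}, with its labels $(A,P,U,V)$ replaced by $(P,B,A,O)$.

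You still need $\widehat d$ to be a metric. Triangles through $A$ reduce to triangle inequalities of $d$. On $\{P,B,O\}$, one inequality is the equality above, and the other two ($za\le rs+yb$ and $yb\le rs+za$) follow from $rs=za+yb$. Lemma~\ref{lem:q5-inversion-euclidean} then transfers $q$-negative type back to $d$.

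In the rhombus example, this inversion produces the equilateral three-leaf star centered at $B$. That is exactly where Lemma~\ref{lem:q5-star} and the constant $\log_2 3$ enter.
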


\begin{proof}
By scaling all distances by the positive factor $b^{-1}$, which does not affect
$q$-negative type, we may assume that $b=1$.  We retain the same notation for
the rescaled distances.  Set
\[
        s:=y+z,
        \qquad
        r_*:=\frac{za+y}{y+z}.
\]
Since $\{O,A,P,B\}$ is a four-point metric space, the four points are distinct;
in particular, $y,z>0$.

Regard $r$ as variable while keeping $a,y,z$ fixed.  The closure of the set of
positive values of $r$ for which the prescribed distances define a metric and
satisfy \eqref{eq:q5-radial-ptolemy} is the nonempty interval
$r_0\le r\le r_1$, where
\[
        r_0:=\max\{|a-y|,\ |1-z|\},
        \qquad
        r_1:=\min\{a+y,\ 1+z,\ r_*\}.
\]
Indeed, the lower bounds and the first two upper bounds are exactly the triangle
inequalities for $OAP$ and $OBP$, while
\eqref{eq:q5-radial-ptolemy} is equivalent to $r\le r_*$.  The triangle
inequalities for $OAB$ are independent of $r$ and already hold.

Use $A$ as basepoint, and let $G(r)$ be the Schoenberg matrix of $d^{q/2}$ on
$\{P,O,B\}$.  Its $1\times1$ and $2\times2$ principal minors are nonnegative:
since $0<q/2\le1$, snowflaking preserves the triangle inequalities, and every
metric triangle is Euclidean.  It therefore remains to prove that
$\det G(r)\ge0$.

The quantity $R:=r^q$ occurs only in the symmetric pair of entries
\[
        [G(r)]_{PO}=[G(r)]_{OP}
        =\frac12\bigl(y^q+a^q-R\bigr).
\]
By Lemma~\ref{lem:q5-one-entry-concavity}, $\det G(r)$ is a concave
quadratic polynomial in $R$, with quadratic coefficient
\[
        -\frac14 d(A,B)^q=-\frac14 s^q.
\]
Since $r\mapsto r^q$ maps $[r_0,r_1]$ onto $[r_0^q,r_1^q]$, it is enough
to check the endpoints $r=r_0$ and $r=r_1$.

We first consider the lower endpoint.  If $r_0=0$, then $a=y$ and $z=1$.

The formula for $G(r)$ extends continuously to $r=0$.  Since then
$a=y$ and
\[
        d(P,B)=z=1=d(O,B),
\]
the rows of $G(0)$ indexed by $P$ and $O$ are identical.  Thus
$\det G(0)=0$.

Assume now that $r_0>0$.  If $r_0=|a-y|$, then the triangle $OAP$ is
degenerate.  If $a=r_0+y$,
then Lemma~\ref{lem:q5-attached-ray} applies with
\[
        (A,P,U,V)=(A,P,O,B).
\]
If instead $y=a+r_0$,
then the triangle inequality for $OBP$ gives $1\le r_0+z$, and hence
\[
        s=y+z=a+(r_0+z)\ge a+1.
\]
The triangle inequality for $OAB$ gives $s\le a+1$.  Thus
\[
        s=a+1,
        \qquad
        r_0+z=1.
\]
The four points are therefore realized on a line in the order $A,\ O,\ P,\ B$,
with consecutive distances $a,r_0,z$.  This endpoint is handled by
Lemma~\ref{lem:q5-line-metrics}.

This settles the lower endpoint whenever $r_0=|a-y|$.  We may therefore assume
that $r_0\ne|a-y|$.  Then $r_0=|1-z|$,
so the triangle $OBP$ is degenerate.  If
$1=r_0+z$, then Lemma~\ref{lem:q5-attached-ray} applies with
\[
        (A,P,U,V)=(B,P,O,A).
\]
If instead $z=1+r_0$,
then the triangle inequality for $OAP$ gives $a\le r_0+y$, and hence
\[
        s=y+z=1+(r_0+y)\ge a+1.
\]
Again $s\le a+1$ by the triangle inequality for $OAB$, so
\[
        s=a+1,
        \qquad
        a=r_0+y.
\]
The four points are therefore realized on a line in the order $A,\ P,\ O,\ B$,
with consecutive distances $y,r_0,1$.  This endpoint is again handled by
Lemma~\ref{lem:q5-line-metrics}.

We now consider the upper endpoint.  If
$r_1=a+y$,
then \eqref{eq:q5-radial-ptolemy} gives
\[
        (y+z)(a+y)\le za+y.
\]
After canceling $za$, this becomes
\[
        y(a+y+z)\le y.
\]
Since $y>0$, we obtain $a+s\le1$.  The reverse inequality $1\le a+s$
is the triangle inequality for $OAB$.  Hence $a+s=1$, and the four points form
a line metric in the order $O,\ A,\ P,\ B$
with consecutive distances $a,y,z$.

If the preceding case does not apply and
$r_1=1+z$, then \eqref{eq:q5-radial-ptolemy} gives
\[
        (y+z)(1+z)\le za+y.
\]
After canceling $y$, this becomes
\[
        z(1+y+z)\le za.
\]
Since $z>0$, we obtain $1+s\le a$.  The reverse inequality $a\le1+s$
is the triangle inequality for $OAB$.  Hence $a=1+s$, and the four points form
a line metric in the order $O,\ B,\ P,\ A$
with consecutive distances $1,z,y$.  Thus every upper endpoint arising from
a metric bound is handled by Lemma~\ref{lem:q5-line-metrics}.

It remains to treat the case $r_1=r_*$,
with neither metric upper bound handled above.  Then
\eqref{eq:q5-radial-ptolemy} is an equality:
\[
        sr=za+y.
\]
Define the inverted distance $\widehat d$ at $A$.  We first verify that
$\widehat d$ is a metric.  Every triangle containing $A$ satisfies the triangle
inequalities, since after clearing positive denominators these are exactly the
corresponding triangle inequalities for $d$.  For the remaining triangle on
$\{P,B,O\}$, the equality above gives
\[
\begin{aligned}
        \widehat d(P,O)
        &=\frac{r}{ay}
          =\frac{1}{as}+\frac{z}{sy} \\
        &=\widehat d(O,B)+\widehat d(B,P).
\end{aligned}
\]
Thus $\widehat d$ is a metric.  Moreover, the geodesic relation $s=y+z$ gives
\[
        \widehat d(A,P)
        =\frac1y
        =\frac1s+\frac{z}{sy}
        =\widehat d(A,B)+\widehat d(B,P).
\]
Consequently Lemma~\ref{lem:q5-attached-ray} applies to $\widehat d$, with the
lemma's labels $(A,P,U,V)$ replaced by $(P,B,A,O)$.
Hence $\widehat d$ has $q$-negative type.  By
Lemmas~\ref{lem:q5-inversion-euclidean} and~\ref{lem:q5-schoenberg}, the
original metric $d$ has $q$-negative type as well.

We have proved that $\det G(r)$ is nonnegative at the two endpoints
$R=r_0^q$ and $R=r_1^q$.
By concavity, $\det G(r)\ge0$ throughout that interval, in particular
at the original value of $r$.  Together with the nonnegative principal minors of
orders $1$ and $2$, this shows that $G(r)$ is positive semidefinite.  The
conclusion now follows from Lemma~\ref{lem:q5-schoenberg}.
\end{proof}

With these preliminaries out of the way, we can now prove the main result of this section.

\begin{theorem}[Four-point Ptolemaic snowflake theorem]
\label{thm:q5-four-point-ptolemaic}
Every four-point Ptolemaic metric has $q$-negative type for
\[
        0<q\le \log_2 3.
\]
Equivalently, if $(X,d)$ is a four-point Ptolemaic metric, then
$(X,d^{q/2})$ embeds isometrically in Euclidean space for
$0<q\le\log_2 3$.
\end{theorem}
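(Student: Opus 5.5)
First I reduce to $q=\log_2 3$ and $q\ge 1$. For $0<q\le 1$ the conclusion is Blumenthal's theorem: every four-point metric has $q$-negative type for $q\le1$. Alternatively, by Lemma~\ref{lem:q5-schoenberg} the set of exponents of negative type is an interval starting at $0$, since $d^{q'}=(d^q)^{q'/q}$ and powers $\le1$ of a negative type kernel are again of negative type. So it suffices to treat $1\le q\le\log_2 3$, which is exactly the range in which Lemmas~\ref{lem:q5-star-metrics}, \ref{lem:q5-attached-ray} and \ref{lem:q5-radial} are available.

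Fix such $q$ and a Ptolemaic metric on $\{O,A,P,B\}$. I deform the metric, keeping it Ptolemaic, until it reaches one of the configurations handled by the lemmas. The mechanism is again Lemma~\ref{lem:q5-one-entry-concavity}. Choose a base point and the Schoenberg matrix $G$ of $d^{q/2}$. Its $1\times1$ and $2\times2$ minors are automatically nonnegative, because three-point metrics are Euclidean. Moreover $\det G$ is concave in $R=d(O,P)^q$, one distance not involving the base point. Therefore it suffices to check the endpoints of the interval of admissible values of $d(O,P)$, that is, those values for which the four-point space stays a Ptolemaic metric. These admissible values form an interval, cut out by the triangle inequalities and by two Ptolemy inequalities in which $d(O,P)$ appears linearly on one side. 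The argument then pushes one distance to an extremal value.

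At an endpoint, one of the following holds.
\begin{enumerate}
\item A triangle inequality becomes an equality, so some point lies on a geodesic between two others. If that geodesic triangle avoids $O$, with $P$ between $A$ and $B$, I check the remaining Ptolemy inequality $(y+z)r\le za+yb$ and conclude by Lemma~\ref{lem:q5-radial}. Otherwise I relabel so that the degenerate triangle has this shape; any three of the four points can play the role of $A,P,B$.
\item A Ptolemy inequality becomes an equality. Then I invert at a suitable point, with Lemma~\ref{lem:q5-ptolemy-inversion} and Lemma~\ref{lem:q5-inversion-euclidean} showing that this preserves both being a metric and Euclidean snowflake embeddability. Ptolemy equality $d(i,j)d(k,p)=d(i,k)d(j,p)+d(k,j)d(i,p)$ becomes the triangle equality $\widehat d(i,j)=\widehat d(i,k)+\widehat d(k,j)$, so this reduces to case (1) for the inverted metric. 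One also needs the inverted metric to still be Ptolemaic; this holds because inversion permutes the Ptolemy inequalities.
\end{enumerate}

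So the general theorem becomes: every Ptolemaic four-point metric with a geodesic triple has $q$-negative type. This is essentially Lemma~\ref{lem:q5-radial}, whose hypothesis $(*)$ is exactly the relevant Ptolemy inequality for $O,A,P,B$ when $P$ is between $A$ and $B$.

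The main obstacle is the bookkeeping in the reduction step. I must show that the admissible set in $d(O,P)$ is really an interval containing the current value, and that every endpoint type lands in case (1) or (2) with all hypotheses of Lemma~\ref{lem:q5-radial} verified, including $(*)$ with the correct orientation. Possibly the interval degenerates, and some endpoints give coincident points ($d=0$), which need a continuity argument as in the $r_0=0$ case of Lemma~\ref{lem:q5-radial}. If a single deformation does not suffice, I would iterate. The first deformation produces a geodesic triple in most cases. In the Ptolemy-equality case, I pass to the inversion, which lands in case (1) and is then handled by Lemma~\ref{lem:q5-radial} directly. This avoids needing a second deformation.
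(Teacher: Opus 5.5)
Your proposal is correct and follows the paper's proof essentially step for step: Blumenthal's theorem for $0<q\le 1$, then concavity of the Schoenberg determinant in $d(O,P)^q$ for a pair avoiding the base point, and at each endpoint of the admissible interval either Lemma~\ref{lem:q5-radial} applied directly (with $(*)$ being one of the Ptolemy inequalities), or, when a Ptolemy inequality is tight, inversion at the appropriate point to produce a geodesic triple followed by Lemma~\ref{lem:q5-radial} for $\widehat d$. Two small precision points: $d(O,P)$ enters all three Ptolemy inequalities of the quadruple (one upper bound and two lower bounds), and inversion at $p$ does not permute the Ptolemy inequalities but exchanges them with the triangle inequalities of the triple avoiding $p$, which is exactly why $\widehat d$ is Ptolemaic and why $(*)$ for $\widehat d$ reduces to a triangle inequality for $d$; also, the possible zero endpoint needs no continuity argument, since there two rows of the Schoenberg matrix coincide and its determinant vanishes.
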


\begin{proof}
For $0<q\le1$, this is Blumenthal's theorem. So we may assume that $1\le q\le\log_2 3$.
We write $X=\{1,2,3,4\}$ and use $4$ as a base point. 
Put
\[
        \rho_i=d(i,4)\qquad (i=1,2,3),
        \qquad
        \delta_{ij}=d(i,j)\qquad (1\le i<j\le3).
\]

Since $d$ is a metric, $\rho_i>0$ for $i=1,2,3$.  By
Lemma~\ref{lem:q5-schoenberg}, it is enough to show that the normalized
Schoenberg matrix $K$ based at $4$, with diagonal entries $1$ and
off-diagonal entries
\[
        K_{ij}=\frac{\rho_i^q+\rho_j^q-\delta_{ij}^q}
        {2\rho_i^{q/2}\rho_j^{q/2}}\qquad(1\le i<j\le3),
\]
is positive semidefinite.  
Its $2\times2$ principal minors are nonnegative: since
$0<q/2\le1$, the snowflaked distance $d^{q/2}$ is again a metric, and every
three-point metric embeds isometrically in the Euclidean plane.  It therefore
suffices to prove that $\det K\ge0$.
 
We obtain this by pushing a single distance to the boundary of its feasible
range.  Hold $\rho_1,\rho_2,\rho_3,\delta_{13},\delta_{23}$ fixed and regard
$\det K$ as a function of $\delta_{12}$ alone.  
Set $W:=\delta_{12}^q$.
Only the symmetric pair of entries $K_{12}=K_{21}$ depends on $W$, and
\[
        K_{12}
        =
        \frac{\rho_1^q+\rho_2^q-W}
        {2\rho_1^{q/2}\rho_2^{q/2}}.
\]
Thus $K_{12}$ is an affine function of $W$.  By
Lemma~\ref{lem:q5-one-entry-concavity}, applied with diagonal entry $C=1\ge0$,
$\det K$ is a concave quadratic polynomial in $K_{12}$, and hence a concave
quadratic polynomial in $W$.
 
With the other distances fixed, the positive values of $\delta_{12}$ for
which the prescribed distances define a four-point Ptolemaic metric are
exactly those satisfying the triangle inequalities for $\{1,2,4\}$ and
$\{1,2,3\}$ and the Ptolemy inequalities for $\{1,2,3,4\}$.  Explicitly, this set is
$(0,\infty)\cap[\delta^-,\delta^+]$,
where
\[
        \delta^-=\max\!\left\{\,|\rho_1-\rho_2|,\quad
              |\delta_{13}-\delta_{23}|,\quad
              \frac{|\rho_2\delta_{13}-\rho_1\delta_{23}|}{\rho_3}\,\right\}
\]
and
\[
        \delta^+=\min\!\left\{\,\rho_1+\rho_2,\quad
              \delta_{13}+\delta_{23},\quad
              \frac{\rho_2\delta_{13}+\rho_1\delta_{23}}{\rho_3}\,\right\}.
\]
The closed interval $[\delta^-,\delta^+]$ is nonempty, since it contains the
original positive value of $\delta_{12}$, and the map $\delta_{12}\mapsto W=\delta_{12}^q$ sends
$[\delta^-,\delta^+]$ onto $[(\delta^-)^q,(\delta^+)^q]$.
Since $\det K$ is concave as a function of $W$, its minimum over this interval
is attained at an endpoint.  Hence it is enough to prove $\det K\ge0$ when
$\delta_{12}=\delta^-$ and when $\delta_{12}=\delta^+$.

There is one possible auxiliary endpoint which is not itself a four-point
metric.  If $\delta^-=0$, then
\[
        \rho_1=\rho_2,
        \qquad
        \delta_{13}=\delta_{23}.
\]
At $\delta_{12}=0$ we therefore have
\[
        K_{12}=1,
        \qquad
        K_{13}=K_{23}.
\]
The first two rows of $K$ coincide, and hence $\det K=0$ in this case.

Every endpoint not already handled is positive.  At such an endpoint, the
modified distances define a genuine four-point Ptolemaic metric, and at least
one of the defining triangle or Ptolemy inequalities holds with equality.
We now fix such an endpoint and distinguish two cases.  

\smallskip
\noindent\emph{Case 1: A triangle inequality is tight.}
 
There are four possibilities, all handled by Lemma~\ref{lem:q5-radial}; in each
the required hypothesis~\eqref{eq:q5-radial-ptolemy} is one of the Ptolemy
inequalities of the quadruple.
\begin{itemize}
\item $\delta_{12}=\rho_1+\rho_2$: then $d(1,2)=d(1,4)+d(2,4)$, so $4$ lies on a
        geodesic from $1$ to $2$.  Apply Lemma~\ref{lem:q5-radial} with
        $(O,A,P,B)=(3,1,4,2)$; here \eqref{eq:q5-radial-ptolemy} is
        $\delta_{12}\rho_3\le\delta_{13}\rho_2+\rho_1\delta_{23}$.
\item $\delta_{12}=\delta_{13}+\delta_{23}$: then
        $d(1,2)=d(1,3)+d(3,2)$, so $3$ lies on a geodesic from $1$ to $2$.
        Apply Lemma~\ref{lem:q5-radial} with $(O,A,P,B)=(4,1,3,2)$; again
        \eqref{eq:q5-radial-ptolemy} is
        $\delta_{12}\rho_3\le\delta_{13}\rho_2+\rho_1\delta_{23}$.
\item $\delta_{12}=|\rho_1-\rho_2|$: after relabelling so that
        $\rho_1\ge\rho_2$, $d(1,4)=d(1,2)+d(2,4)$, so $2$ lies on a geodesic
        from $1$ to $4$.  Apply Lemma~\ref{lem:q5-radial} with
        $(O,A,P,B)=(3,1,2,4)$; here \eqref{eq:q5-radial-ptolemy} is
        $\rho_1\delta_{23}\le\delta_{12}\rho_3+\delta_{13}\rho_2$.
\item $\delta_{12}=|\delta_{13}-\delta_{23}|$: after relabelling so that
        $\delta_{13}\ge\delta_{23}$, $d(1,3)=d(1,2)+d(2,3)$, so $2$ lies on a
        geodesic from $1$ to $3$.  Apply Lemma~\ref{lem:q5-radial} with
        $(O,A,P,B)=(4,1,2,3)$; here \eqref{eq:q5-radial-ptolemy} is
        $\delta_{13}\rho_2\le\delta_{12}\rho_3+\delta_{23}\rho_1$.
\end{itemize}
In each case $\det K\ge0$ at the endpoint.

\smallskip
\noindent\emph{Case 2: No triangle inequality is tight.}
Then a Ptolemy inequality is tight, so one of the three products
\[
        \delta_{12}\rho_3,\qquad
        \delta_{13}\rho_2,\qquad
        \delta_{23}\rho_1
\]
is the sum of the other two.  Since $q$-negative type is invariant under
relabeling, we may assume that
\[
        \delta_{12}\rho_3
        =
        \delta_{13}\rho_2+\delta_{23}\rho_1.
\]
Invert the metric with respect to the point $3$.  By
Lemma~\ref{lem:q5-ptolemy-inversion}, the inverted distance $\widehat d$
is again a metric.  Moreover,
\[
\begin{aligned}
        \widehat d(1,2)
        &=
        \frac{\delta_{12}}{\delta_{13}\delta_{23}} \\
        &=
        \frac{\rho_2}{\rho_3\delta_{23}}
        +
        \frac{\rho_1}{\rho_3\delta_{13}} \\
        &=
        \widehat d(4,2)+\widehat d(1,4).
\end{aligned}
\]
Thus $4$ lies on a geodesic from $1$ to $2$ in the inverted metric.
We now apply Lemma~\ref{lem:q5-radial} to $\widehat d$ with $(O,A,P,B)=(3,1,4,2)$.
The required inequality is
\[
        \widehat d(1,2)\widehat d(3,4)
        \le
        \widehat d(4,2)\widehat d(3,1)
        +
        \widehat d(1,4)\widehat d(3,2).
\]
After multiplying by
$\rho_3\delta_{13}\delta_{23}$, this becomes $\delta_{12}\le\rho_1+\rho_2$,
which is the triangle inequality for the triple $\{1,2,4\}$.
Therefore Lemma~\ref{lem:q5-radial} applies to $\widehat d$.
By Lemmas~\ref{lem:q5-inversion-euclidean}
and~\ref{lem:q5-schoenberg}, the original endpoint metric has
$q$-negative type, and hence $\det K\ge0$.

\smallskip
We have shown that $\det K\ge0$ at each positive endpoint; the possible
zero endpoint was handled separately above.  Hence $\det K\ge0$ at both
$\delta_{12}=\delta^-$ and $\delta_{12}=\delta^+$.  By concavity in
$W=\delta_{12}^q$, it follows that $\det K\ge0$ for every
$\delta_{12}\in[\delta^-,\delta^+]$, and in particular at the original value
of $\delta_{12}$.  Together with the nonnegativity of the $2\times2$
principal minors, this shows that $K\succeq0$, and
Lemma~\ref{lem:q5-schoenberg} completes the proof.

\end{proof}

\begin{corollary}
\label{cor:q5-sharp}
We have
\[
        q(5)=\log_2 3.
\]
\end{corollary}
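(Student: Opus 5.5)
The plan is to prove the two inequalities $q(5)\le\log_2 3$ and $q(5)\ge\log_2 3$ separately. Both should follow quickly from results already established.

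The upper bound is the odd case $n=5$ of Proposition~\ref{prop:qn-upper}: there $\log_2\bigl(\tfrac{n+1}{n-3}\bigr)=\log_2(6/2)=\log_2 3$. Concretely, the block matrix $L$ with $(a,b)=(2,3)$ and any $\alpha$ with $\sqrt{3}<\alpha\le 2^{q/2}$ gives, for every $q>\log_2 3$, a weak $\T_q$-representation of $U_{2,5}$ that is not Lorentzian.

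For the lower bound, use Proposition~\ref{prop:qn-Pn}, which gives $q(5)=P(4)$. Theorem~\ref{thm:q5-four-point-ptolemaic} says every four-point Ptolemaic metric has $q$-negative type for all $0<q\le\log_2 3$, so $P(4)\ge\log_2 3$.

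To avoid any subtlety about the supremum in the definitions, it is cleaner to argue directly. Let $A$ be an admissible $5\times 5$ matrix satisfying the $\T_q$-inequalities with $0<q\le\log_2 3$. By Lemma~\ref{lem:diag-congruence-U2n} we may assume $A_{1i}=1$ for all $i\ge 2$, and write
\[
A=\begin{pmatrix}0&\1^\top\\ \1&B\end{pmatrix}.
\]
As in the proof of Proposition~\ref{prop:qn-Pn}, the quartets through $1$ make $d(i,j):=B_{ij}^{1/q}$ a metric on $X=\{2,3,4,5\}$. The single quartet $\{2,3,4,5\}$ makes $d$ Ptolemaic. Theorem~\ref{thm:q5-four-point-ptolemaic} then shows that $d$ has $q$-negative type, i.e.\ $B=(d(i,j)^q)$ is conditionally negative semidefinite. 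Lemma~\ref{lem:squared-edm-block} then says $A$ has at most one positive eigenvalue. Since its entries are nonnegative, $A$ is Lorentzian, and hence so is the original matrix by congruence.

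This shows $\Gr^{\rm w}_{U_{2,5}}(\T_q)\subseteq\P\upL_{U_{2,5}}$ for every $q\le\log_2 3$, so $q(5)\ge\log_2 3$. Combining with the upper bound gives equality. All of the real difficulty is already contained in Theorem~\ref{thm:q5-four-point-ptolemaic}. The only point needing care is this bookkeeping at the endpoint $q=\log_2 3$, which the direct argument handles because the theorem includes that endpoint.
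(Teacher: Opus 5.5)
Your proposal is correct and is essentially the paper's proof. The upper bound is the $n=5$ case of Proposition~\ref{prop:qn-upper}, and the lower bound comes from $q(5)=P(4)$ (Proposition~\ref{prop:qn-Pn}) together with Theorem~\ref{thm:q5-four-point-ptolemaic}; your ``direct'' argument just inlines the proof of Proposition~\ref{prop:qn-Pn} with that theorem plugged in, which harmlessly makes the endpoint $q=\log_2 3$ explicit.
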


\begin{proof}
By Proposition~\ref{prop:qn-Pn} and Theorem~\ref{thm:q5-four-point-ptolemaic},
\[
        q(5)=P(4)\ge \log_2 3.
\]
The opposite inequality is Proposition~\ref{prop:qn-upper} in the case $n=5$.
\end{proof}

\section{Strong representations and Lorentzian polynomials}
\label{sec:strong-representations}

Up to this point we have worked exclusively with \emph{weak} representations of matroids
and, more generally, $M$-convex sets. The corresponding theory of \emph{strong}
representations appears to be substantially subtler.

We will not need the full formalism here, so we refer to \cite{BHKL0} for the definition of
a strong representation over a tract. We write
\[
\upR_J^{\rm str}(F)\subseteq \upR_J^{\rm w}(F)
\]
for the space of strong $F$-representations of an $M$-convex set $J$. Thus every
strong representation is weak, but not conversely.

Our purpose in this section is not to give a definitive account of the strong case, but
rather to explain why it is nontrivial and why it does not seem to reduce easily to the weak
theory developed in the rest of the paper. On the weak side, the main theorems are driven by
local quadratic tests and reductions to rank-$2$ uniform matroids. The examples below show
that strong $\T_q$-representability involves genuinely global constraints, and that neither
a uniform passage from weak to strong nor a naive downward-induction argument can hold in
general.

We begin with one statement that does survive formally from the weak theory.

\begin{proposition}
\label{prop:strong-lower-trivial}
Let $M$ be a matroid, and let $q_M>0$ be as in 
Theorem~\ref{thm:matroid-sandwich}. Then for every $0<q\le q_M$,
\[
\upR_M^{\rm str}(\T_q)\subseteq \upR_M^{\rm w}(\T_q)\subseteq L_M.
\]
In particular,
\[
\Gr_M^{\rm str}(\T_q)\subseteq \P\upL_M.
\]
\end{proposition}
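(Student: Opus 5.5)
The argument is a direct chaining of two containments, one definitional and one already proved, so the plan is short. First I record the inclusion $\upR_M^{\rm str}(\T_q)\subseteq \upR_M^{\rm w}(\T_q)$. This holds for every tract and needs no computation. By the definition in \cite{BHKL0}, a strong representation satisfies all Pl\"ucker relations. In particular it satisfies the $3$-term relations that define weak representations, and the support condition $\supp(\rho)=M$ is common to both notions. I will state this explicitly, since it is the only input from the strong theory.

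Second, I invoke the lower containment of Theorem~\ref{thm:matroid-sandwich}. For $0<q\le q_M$ it gives $\upR_M^{\rm w}(\T_q)\subseteq \upL_M$, under the identification of a coefficient function $\rho$ with its multi-affine generating polynomial $f_\rho$. Composing the two inclusions yields the displayed chain. I will note that $L_M$ in the statement means $\upL_M$.

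Third, I pass to projectivizations. The multiplicative action of $\T_q^\times=\R_{>0}$ on $\upR_M^{\rm str}(\T_q)$ is the restriction of the action on weak representations, which is scaling of $f_\rho$. Both inclusions are therefore equivariant for positive scaling, and they descend to $\Gr_M^{\rm str}(\T_q)\subseteq \Gr_M^{\rm w}(\T_q)\subseteq \P\upL_M$.

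The only point needing any care is the first step: checking that the strong-representation definition in \cite{BHKL0} includes the $3$-term relations with the same sign and indexing conventions used here for weak representations. Over $\T_q$ this is harmless in any case, because $-1=1$ and membership in $N_{\T_q}$ does not depend on the order of the terms. Beyond that convention check, I expect no obstacle.
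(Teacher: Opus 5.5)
Your proposal is correct and matches the paper's proof: the paper likewise obtains the result immediately from the definitional inclusion $\upR_M^{\rm str}(\T_q)\subseteq \upR_M^{\rm w}(\T_q)$ combined with the lower containment of Theorem~\ref{thm:matroid-sandwich}. Your additional remarks on the scaling-equivariance needed to projectivize, and on reading $L_M$ as $\upL_M$, only make explicit what the paper leaves implicit.
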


\begin{proof}
This is immediate from the inclusion
\[
\upR_M^{\rm str}(\T_q)\subseteq \upR_M^{\rm w}(\T_q)
\]
and Theorem~\ref{thm:matroid-sandwich}.
\end{proof}

Thus the lower-bound problem becomes no harder in the strong setting. The real difficulty
is the reverse direction: unlike weak $\T_q$-representability, strong
$\T_q$-representability imposes higher Pl\"ucker-type relations, and these are not
controlled by the quadratic local tests that characterize Lorentzianity.

The first example shows that there is no uniform way to promote weak $\T_1$-representations
to strong $\T_c$-representations by changing the parameter.

\begin{proposition}
\label{prop:no-uniform-weak-to-strong}
There is no constant $c>0$ such that every weak $\T_1$-representation of every
$M$-convex set is a strong $\T_c$-representation.
\end{proposition}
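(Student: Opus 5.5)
The plan is to exhibit explicit weak $\T_1$-representations whose failure to be strong is \emph{uniform in the parameter}. Since the null sets $N_{\T_c}$ increase with $c$ and exhaust $N_{\T_\infty}$, the cleanest route is to find one M-convex set $J$ and one weak $\T_1$-representation $\rho$ of $J$ that is not even a strong $\T_\infty$-representation. That would rule out every $c>0$ at once.

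By Lemma~\ref{lem:T0-Tinfty}(2), a formal sum with at least three nonzero terms always lies in $N_{\T_\infty}$. So the only possible obstruction is a Grassmann--Pl\"ucker relation of $\rho$, among the higher relations in the definition of strong representation from \cite{BHKL0}, that has at most two nonzero terms. This forces the relation to have exactly two nonzero terms, and these two terms must take different values. I would therefore look for a matroid $M$, say of rank $3$ on roughly $6$ elements, containing a long Pl\"ucker relation in which all but two terms vanish. The vanishing should come from non-bases of $M$ that are invisible to the 3-term relations. The two surviving products $\rho(B_1)\rho(B_1')$ and $\rho(B_2)\rho(B_2')$ are then required to be equal over $\T_\infty$. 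The key step is to check that no 3-term relation, meaning no $\alpha\in\Delta_n^{d-2}$ with $\delta_J^-\le\alpha$, links these four bases. This is the step I expect to be the main obstacle: the coefficients must be free enough that their ratio can be perturbed away from $1$ while every weak $\T_1$-relation, i.e.\ every triangle inequality on products, still holds. Given that, I would start from a point of $\Gr^{\rm w}_M(\T_0)$, which is automatically weak $\T_1$, such as a tropical/valuative representation. I would then scale one coefficient by $1+\epsilon$ for small $\epsilon$. The strict triangle inequalities in the remaining 3-term relations survive this perturbation, while the two-term strong relation breaks.

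If no such two-term configuration exists in a matroid, I would fall back to a family $\rho_n$ on growing supports. Each $\rho_n$ would have a strong relation consisting of one dominant term $X_n$ and $k$ smaller terms of size $m_n$. For such a relation the $\T_c$ condition reads $X_n\le k^c m_n$, so I would arrange the weak $\T_1$ inequalities to permit $X_n/m_n\to\infty$ with $k$ bounded. Concretely, I would chain 3-term relations, each of which allows a factor of $2$ over $\T_1$, much as in Lemma~\ref{lem:bivariate-Tq-ineq} where $a_{k-1}a_{k+1}\le 2a_k^2$ is allowed. Compounding these along a path of length $n$ would give a ratio of order $2^n$ in a relation of fixed length. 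Then for any fixed $c$, once $n$ is large enough that $2^{n}>k^c$, the representation $\rho_n$ fails to be a strong $\T_c$-representation, which proves the proposition.
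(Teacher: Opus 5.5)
Your proposal is a search strategy rather than a proof, and neither branch closes.

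\textbf{The primary route cannot succeed.} No weak $\T_1$-representation fails to be a strong $\T_\infty$-representation. Over every $\T_q$, a $3$-term relation with a vanishing term forces the other two products to be equal. These degenerate $3$-term equalities already imply every Pl\"ucker relation with exactly two nonzero terms.

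This happens in the very configuration you propose. Take the rank-$3$ matroid on $[6]$ whose only non-bases are $126$ and $346$ (the support in Proposition~\ref{prop:down-not-strong}). The relation for $I=\{1,2\}$, $J=\{3,4,5,6\}$ has surviving terms $c_{123}c_{456}$ and $c_{124}c_{356}$. The degenerate relations at $\alpha=e_1$ on $\{2,3,4,6\}$ and at $\alpha=e_6$ on $\{1,3,4,5\}$ give $c_{123}c_{146}=c_{124}c_{136}$ and $c_{136}c_{456}=c_{146}c_{356}$. Their product is $c_{123}c_{456}=c_{124}c_{356}$.

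In general, contract $I\cap J$. The surviving terms are then indexed by $C\setminus H=\{x_1,x_2\}$, where $C$ is the circuit in $J$ and $H=\operatorname{cl}(I)$. Choose a basis $T$ of $H$ containing $C\setminus\{x_1,x_2\}$. Chains of degenerate $3$-term relations along basis exchanges inside $H$, resp.\ inside $M/(C\setminus\{x_1,x_2\})$ (where $x_1,x_2$ are parallel), show that $\rho(I+x_1)/\rho(I+x_2)$ and $\rho(J-x_2)/\rho(J-x_1)$ both equal $\rho(T+x_1)/\rho(T+x_2)$. The natural-matroid transfer used in Corollary~\ref{cor:no-uniform-weak-to-strong-matroid} should extend this to M-convex sets.

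So weak and strong $\T_\infty$-representations coincide. Your $(1+\epsilon)$-perturbation cannot break a two-term relation without breaking a weak one, and any failure must be quantitative in $c$.

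\textbf{The fallback has the right shape but the wrong target.} Keeping the number $k$ of terms bounded is incompatible with the compounding you describe. A relation with $m+2$ terms only involves $\rho$ on a bounded piece of the support. For matroids this is a minor on $2m+2$ elements, to which weak $\T_1$-representations restrict. So there is no path of length $n$ along which factors of $2$ could accumulate.

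In the bivariate model this is provably impossible. With $b_k=\log_2 a_k$, the weak $\T_1$-condition reads $b_{k-1}-2b_k+b_{k+1}\le 1$. In a strong relation on $\Delta_2^d$ with $s+1$ terms, a term occurring only once exceeds the others by a factor of at most $2^{s-1}$. Hence the relation holds over $\T_c$ once $2^{s-1}\le s^c$.

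The missing idea is to let the length of the relation grow linearly while the ratio grows exponentially, which is what the paper does:
\begin{itemize}
\item Take $J=\Delta_2^d$ and $a_k=\rho(d-k,k)=2^{\binom{k}{2}}$. This saturates every inequality $a_{k-1}a_{k+1}\le 2a_k^2$ of Lemma~\ref{lem:bivariate-Tq-ineq}.
\item Use the strong relation with $s=d$, $\alpha=0$, $i_0=1$, $i_1=\cdots=i_d=2$, $j_2=\cdots=j_d=1$. It consists of $a_0a_d$ once and $a_1a_{d-1}$ exactly $d$ times.
\item The polygon inequality forces $a_0a_d\le d^c a_1a_{d-1}$, i.e.\ $2^{d-1}\le d^c$, which fails for $d\gg0$.
\end{itemize}
Your closing comparison $2^n>k^c$ then goes through verbatim with $k=d$.
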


\begin{proof}
For each integer $d\ge 2$, let $J=\Delta_2^d$
and define a coefficient function $\rho_d:J\to \R_{\ge 0}$ by
\[
\rho_d(d-k,k)=a_k:=2^{\binom{k}{2}}
\qquad (0\le k\le d).
\]
Let
\[
f_d(x,y):=\sum_{k=0}^d a_k \frac{x^{d-k}}{(d-k)!}\frac{y^k}{k!}.
\]

By Lemma~\ref{lem:bivariate-Tq-ineq}, the polynomial $f_d$ is a weak $\T_1$-representation if
and only if
\[
a_{k-1}a_{k+1}\le 2a_k^2
\qquad (1\le k\le d-1).
\]
But
\[
\binom{k-1}{2}+\binom{k+1}{2}=2\binom{k}{2}+1,
\]
so
\[
a_{k-1}a_{k+1}
=
2^{\binom{k-1}{2}+\binom{k+1}{2}}
=
2^{2\binom{k}{2}+1}
=
2a_k^2.
\]
Hence $f_d$ is weak $\T_1$ for every $d\ge 2$.

Now suppose for the sake of contradiction that there exists a constant $c>0$
such that every weak $\T_1$-representation of every $M$-convex set is a strong
$\T_c$-representation. Then in particular $\rho_d$ is a strong
$\T_c$-representation for every $d$.
Apply the strong Pl\"ucker relation with
\[
s=d,\qquad \alpha=(0,0),\qquad i_0=1,\qquad i_1=\cdots=i_d=2,\qquad
j_2=\cdots=j_d=1.
\]
The condition
\[
\alpha+e_{i_0}+\cdots+e_{i_d}+e_{j_2}+\cdots+e_{j_d}\le (d,d)=\delta_J^+
\]
is satisfied, since the left-hand side is exactly $d e_1+d e_2$.

The summation index in this strong Pl\"ucker relation runs from $0$ to $d$.
The $k=0$ term is
\[
\rho_d(de_2)\rho_d(de_1)=a_d a_0.
\]
For each summation index $k=1,\dots,d$, the corresponding term is
\[
\rho_d(e_1+(d-1)e_2)\rho_d((d-1)e_1+e_2)=a_{d-1}a_1.
\]
Thus the strong Pl\"ucker
relation becomes
\[
a_0a_d+\underbrace{a_1a_{d-1}+\cdots+a_1a_{d-1}}_{d\text{ times}}\in N_{\T_c}.
\]

By definition of $\T_c$, this means that the $d+1$ nonnegative numbers
\[
(a_0a_d)^{1/c},\underbrace{(a_1a_{d-1})^{1/c},\dots,(a_1a_{d-1})^{1/c}}_{d\text{ times}}
\]
are the side lengths of a (possibly degenerate) Euclidean $(d+1)$-gon. In particular, each
side length is at most the sum of the others, so
\[
(a_0a_d)^{1/c}\le d\,(a_1a_{d-1})^{1/c}.
\]
Raising both sides to the $c$-th power yields the endpoint inequality
\[
a_0a_d\le d^c a_1a_{d-1}.
\]

Substituting $a_k=2^{\binom{k}{2}}$, and noting that $a_0=a_1=1$, we obtain
\[
2^{\binom{d}{2}}
\le
d^c\,2^{\binom{d-1}{2}}.
\]
Equivalently,
\[
2^{d-1}\le d^c.
\]
For fixed $c>0$, this fails for all sufficiently large $d$, a contradiction.

Therefore there is no constant $c>0$ such that every weak $\T_1$-representation of every
$M$-convex set is a strong $\T_c$-representation.
\end{proof}

The obstruction already appears in the matroidal setting.

\begin{corollary}
\label{cor:no-uniform-weak-to-strong-matroid}
There is no constant $c>0$ such that every weak $\T_1$-representation of every matroid
is a strong $\T_c$-representation.

More concretely, for every $c>0$ and all sufficiently large $d$, there exists a weak
$\T_1$-representation of the uniform matroid $U_{d,2d}$ which is not a strong
$\T_c$-representation.
\end{corollary}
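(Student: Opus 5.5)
The plan is to transport the bivariate counterexample of Proposition~\ref{prop:no-uniform-weak-to-strong} to a matroid via the natural matroid (copy) construction. That construction replaces each of the two variables by $d$ parallel copies. Concretely, take ground set $E=E_1\sqcup E_2$ with $|E_1|=|E_2|=d$, so that $U_{d,2d}$ is the natural matroid of the polymatroid $\Delta_2^d$ whose bases are all $d$-subsets. For a $d$-subset $B$ put
\[
\sigma(B):=a_{|B\cap E_2|}\cdot\frac{(d-k)!\,k!}{d!}\Big|_{k=|B\cap E_2|}
\]
or, more simply, $\sigma(B):=\lambda_{k}\,a_k$ with $k=|B\cap E_2|$, where the normalizing constants $\lambda_k$ are chosen so that substituting $x_u=x$ for $u\in E_1$ and $x_u=y$ for $u\in E_2$ into $f_\sigma$ recovers $f_d$ up to scaling. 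Any strong $\T_c$-Plücker relation for $\Delta_2^d$ then pulls back to a strong Plücker relation of $U_{d,2d}$, which provides the obstruction.

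\textbf{Step 1 (weak $\T_1$).} Check that $\sigma$ is a weak $\T_1$-representation of $U_{d,2d}$. A $3$-term relation lives on $\alpha$ (a $(d-2)$-set) together with four further elements $i,j,k,\ell$. The three products depend only on how many of $i,j,k,\ell$ lie in $E_2$, together with $m=|\alpha\cap E_2|$.
\begin{itemize}
\item If $0$, $1$, $3$ or $4$ of them lie in $E_2$, all three products coincide, so the relation holds trivially.
\item If exactly two lie in $E_2$, the products are $\sigma_{m+1}^2,\ \sigma_{m+1}^2,\ \sigma_m\sigma_{m+2}$ in some order. The relation then reduces to $\sigma_m\sigma_{m+2}\le 2\sigma_{m+1}^2$.
\end{itemize}
To keep this clean I would actually choose $\sigma(B)=a_k$ with no $\lambda_k$; the weak $\T_1$ condition is then exactly $a_{m}a_{m+2}\le 2a_{m+1}^2$, which holds with equality for $a_k=2^{\binom k2}$ by the computation in Proposition~\ref{prop:no-uniform-weak-to-strong}.

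\textbf{Step 2 (strong failure).} Exhibit a single strong Plücker relation whose terms are $a_0a_d$ once and $a_1a_{d-1}$ with multiplicity about $d$. The natural choice is the Grassmann--Plücker relation for the pair of complementary bases $E_1$ and $E_2$. Exchanging $u_0\in E_1$ against the elements of $E_2$ gives terms $\sigma(E_1-u_0+v)\,\sigma(E_2-v+u_0)$ for $v\in E_2$. These are $a_1a_{d-1}$ ($d$ times), together with the term $\sigma(E_2)\sigma(E_1)=a_da_0$ coming from the standard form $\sum_k \pm\rho(I+x_k)\rho(J-x_k)$ with $I=E_1-u_0$ and $J=E_2+u_0$. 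This is precisely the matroid analogue of the relation used in Proposition~\ref{prop:no-uniform-weak-to-strong}, since indices there were $i_0=1$, $i_1=\dots=i_d=2$.

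The polygon inequality in $\T_c$ then forces $2^{\binom d2}\le d^c\,2^{\binom{d-1}2}$, that is, $2^{d-1}\le d^c$. This fails for $d$ large, which gives the concrete statement and hence the first assertion.

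\textbf{Main obstacle.} The step I expect to need care is confirming, against the definition of strong representations in \cite{BHKL0}, that this particular Grassmann--Plücker relation (with $I$ of size $d-1$ and $J$ of size $d+1$) is among those imposed. I also need to check that the pulled-back terms are exactly as counted, with no extra terms and no vanishing terms. Everything else is bookkeeping of how many chosen elements fall in $E_2$.
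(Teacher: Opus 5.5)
Your proof is correct. It builds the same example as the paper, the lift of $\rho_d$ to $U_{d,2d}$ viewed as the natural matroid of $\Delta_2^d$, but it verifies the properties by hand instead of citing a transfer theorem. The paper invokes Proposition~4.8 of \cite{BHKL0}: the up operator $\Pi^\uparrow$ preserves weak $\T_1$-representability, and $\Pi^\uparrow\rho_d$ is strong over $\T_c$ if and only if $\rho_d$ is. It then inherits the failure from Proposition~\ref{prop:no-uniform-weak-to-strong}.

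You instead set $\sigma(B)=a_{|B\cap E_2|}$ and check both halves directly.
\begin{itemize}
\item For the weak relations, you split by how many of the four free elements lie in $E_2$. Only the $2$--$2$ split is nontrivial, and it reduces exactly to the bivariate inequality $a_ma_{m+2}\le 2a_{m+1}^2$, which holds with equality.
\item For the strong failure, you use a single Grassmann--Pl\"ucker relation, with $I=E_1\setminus\{u_0\}$ and $J=E_2\cup\{u_0\}$. Its $d+1$ terms are $a_0a_d$ and $d$ copies of $a_1a_{d-1}$, which is precisely the lift of the bivariate relation used in the proposition.
\end{itemize}
Your route is self-contained and needs only one direction of the transfer, namely one violated relation. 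The paper's citation is shorter and places the example inside a general statement about natural matroids of arbitrary M-convex sets.

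The obstacle you flag is not a real one. For a matroid, a strong representation must satisfy the Grassmann--Pl\"ucker relation for every $(r-1)$-set $I$ and every $(r+1)$-set $J$. In the paper's indexing this is the relation with $s=d$, $\alpha=0$, $\{i_0,\dots,i_d\}=J$ and $\{j_2,\dots,j_d\}=I$. Signs are irrelevant because $-1=1$ in $\T_c$. Since $I\cap J=\varnothing$ and $\sigma>0$ on all $d$-sets, the relation has exactly the $d+1$ nonzero terms you counted.

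Dropping the constants $\lambda_k$ is essential, not merely cosmetic, so delete that alternative from your write-up. The choice $\lambda_k\propto 1/\binom{d}{k}$, which makes the substitution recover $f_d$, turns the weak condition into
\[
a_ma_{m+2}\tbinom{d}{m+1}^2\le 2a_{m+1}^2\tbinom{d}{m}\tbinom{d}{m+2}.
\]
With $a_ma_{m+2}=2a_{m+1}^2$ this fails by strict log-concavity of the binomial coefficients. So the unweighted pullback along the collapsing map is the lift that works.
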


\begin{proof}
For each $d\ge 2$, let
\[
J_d=\Delta_2^d,
\]
and let $\rho_d$ be the weak $\T_1$-representation of $J_d$ constructed in
Proposition~\ref{prop:no-uniform-weak-to-strong}. By that proposition, for every fixed
$c>0$ and all sufficiently large $d$, the representation $\rho_d$ is not a strong
$\T_c$-representation.

Let $N_d$ be the natural matroid of $J_d$. Since $J_d=\Delta_2^d$, the ground set of
$N_d$ is the disjoint union of two blocks $E_1$ and $E_2$, each of size $d$, and a
squarefree vector $\beta\in \{0,1\}^{E_1\sqcup E_2}$ is a basis of $N_d$ exactly when
$\theta(\beta)\in \Delta_2^d$. Equivalently, $\beta$ is a basis exactly when it has total
size $d$. Therefore
\[
N_d \cong U_{d,2d}.
\]

Now apply the up operator $\Pi^\uparrow$. By Proposition~4.8 of \cite{BHKL0},
$\Pi^\uparrow\rho_d$ is a weak $\T_1$-representation of $N_d$, and
$\Pi^\uparrow\rho_d$ is a strong $\T_c$-representation if and only if $\rho_d$ is a
strong $\T_c$-representation. Hence, for every fixed $c>0$ and all sufficiently large
$d$, the matroid representation $\Pi^\uparrow\rho_d$ is weak $\T_1$ but not strong
$\T_c$.

Since $N_d\cong U_{d,2d}$, this proves the claim.
\end{proof}

Proposition~\ref{prop:no-uniform-weak-to-strong} and
Corollary~\ref{cor:no-uniform-weak-to-strong-matroid} show that strong representability
cannot be recovered from weak representability by any universal reparametrization, even on
the very concrete family of uniform matroids $U_{d,2d}$.

A second obstruction comes from the downward operator. On the Lorentzian side, downward
operators are central. For $\T_q$-representations, however, downward operators are already delicate in the
weak setting, and they behave even worse in the strong setting.

For a polynomial $f(x_1,\dots,x_6)$, write
\[
D_{5,6}(f):=f(x_1,x_2,x_3,x_4,x_5,x_5).
\]
Thus $D_{5,6}$ identifies the variables $x_5$ and $x_6$.

\begin{proposition}
\label{prop:down-not-weak}
For every $q>0$, there exists a weak $\T_q$-representation of the uniform matroid
$U_{3,6}$ whose image under $D_{5,6}$ is not a weak $\T_q$-representation.
\end{proposition}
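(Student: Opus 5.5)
The plan is to write down an explicit one-parameter family $\rho_t$ of weak representations of $U_{3,6}$ and to locate a single weak Pl\"ucker relation of $D_{5,6}f_{\rho_t}$ that is violated for $t\gg0$.

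\textbf{Step 1: compute the coefficients of $D_{5,6}f_\rho$.} Write $g:=D_{5,6}f_\rho$ in the exponential normalization used for weak representations. For distinct $i,j\in\{1,2,3,4\}$,
\[
c(e_i+e_j+e_5)=\rho(ij5)+\rho(ij6),
\qquad
c(e_i+2e_5)=2\rho(i56),
\]
where the factor $2$ comes from $\alpha!=2$. All other coefficients are $c(e_i+e_j+e_k)=\rho(ijk)$ for $i,j,k\le 4$.

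\textbf{Step 2: choose the target relation.} Take $\alpha=e_1$ and the index pattern $(2,3,5,5)$. The three Pl\"ucker products of $g$ are then
\[
c(e_1+e_2+e_5)\,c(e_1+e_3+e_5)\ \ (\text{twice})
\qquad\text{and}\qquad
c(e_1+2e_5)\,c(e_1+e_2+e_3).
\]
So the weak $\T_q$-relation for $g$ is equivalent to
\[
2\rho(156)\rho(123)\ \le\ 2^q\bigl(\rho(125)+\rho(126)\bigr)\bigl(\rho(135)+\rho(136)\bigr).
\]
The goal is therefore a weak $\T_q$-representation $\rho$ in which $\rho(123)\rho(156)$ is large compared with all four products $\rho(125)\rho(135)$, $\rho(125)\rho(136)$, $\rho(126)\rho(135)$, $\rho(126)\rho(136)$.

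\textbf{Step 3: construct $\rho$ from a tropical Pl\"ucker vector.} Set $\rho_t(B)=e^{t\,w(B)}$ for a function $w:\binom{[6]}{3}\to\R$. Then $\rho_t$ is a weak $\T_0$-representation, hence a weak $\T_q$-representation for every $q$ by monotonicity, whenever $w$ satisfies the three-term tropical Pl\"ucker relations. These relations say that, in each quartet relation, the maximum of the three sums $w(\cdot)+w(\cdot)$ is attained twice.

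The requirement from Step 2 becomes the strict inequalities
\[
w(123)+w(156)\ >\ w(1a5')+w(1b5'')
\quad\text{for all } a,b\in\{2,3\},\ a\ne b,\ 5',5''\in\{5,6\},
\]
each understood over the four pairings that appear in the expansion. If these hold, the left side of the target inequality grows like $e^{t(w(123)+w(156))}$. The right side grows at a strictly smaller exponential rate, and the constant $2^{q}/2$ cannot compensate, so the relation fails for $t$ large.

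\textbf{Step 4: find a suitable $w$ (main obstacle).} The hard part is producing $w$: one must satisfy all tropical Pl\"ucker relations of $U_{3,6}$ while making the two "crossing" bases $123$ and $156$ heavy relative to the four mixed bases $1\{2,3\}\{5,6\}$.

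Naive choices fail. For instance, matroid rank functions force $156$ to be dependent once $125$ and $126$ are. I would therefore first try a valuation coming from a realizable configuration. Take a $3\times6$ matrix over Puiseux series and define $w(B)$ as minus the valuation of the minor $\det(\text{columns }B)$. Choose the matrix so that columns $5,6$ coincide to high order and columns $2,3$ coincide to high order, while $1,2,5$ and $1,3,5$ are nearly collinear. This makes the mixed minors small relative to $\det(1,2,3)$ and $\det(1,5,6)$.

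Failing that, I would instead run a small linear feasibility check on the Dressian of $U_{3,6}$, or pick a point in one of its explicit cones. Once $w$ is fixed, checking the tropical relations is a finite verification over the $\binom{6}{4}\cdot 2$ relevant quartet relations.
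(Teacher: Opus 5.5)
There is a genuine gap, and it is fatal. Your Steps 1--2 are computed correctly, including $c(e_1+2e_5)=2\rho(156)$. The problem is that the relation you target cannot be made to fail in the way you propose, and Step~4 asks for something impossible.

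The three sums $w(123)+w(156)$, $w(125)+w(136)$ and $w(126)+w(135)$ are exactly the three terms of the tropical Pl\"ucker relation of $w$ at $\alpha=e_1$ on the quartet $\{2,3,5,6\}$. Their maximum is therefore attained twice, so no point of the Dressian can make the first sum strictly largest. The same holds for any Puiseux-series matrix, because of the identity $p_{123}p_{156}-p_{125}p_{136}+p_{126}p_{135}=0$.

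More seriously, for $q\ge 1$ your chosen relation of $D_{5,6}f_\rho$ holds for every weak $\T_q$-representation $\rho$ of $U_{3,6}$. Put $X=\rho(125)\rho(136)$ and $Y=\rho(126)\rho(135)$. The weak $\T_q$-relation of $\rho$ at $(e_1;2,3,5,6)$, together with convexity of $s\mapsto s^q$, gives
\[
2\rho(123)\rho(156)\le 2\bigl(X^{1/q}+Y^{1/q}\bigr)^q\le 2^{q}(X+Y)\le 2^{q}\bigl(\rho(125)+\rho(126)\bigr)\bigl(\rho(135)+\rho(136)\bigr).
\]
This is precisely your target inequality, so no choice of $\rho$ rescues this relation when $q\ge1$. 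For $q<1$ the relation can be broken, for example by a tropical $w$ with a tie $w(123)+w(156)=w(125)+w(136)$ strictly above the other relevant sums. Then the two sides have ratio tending to $2^{1-q}$. That argument tracks constants rather than separating exponential rates, and it does not cover the whole statement.

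The missing idea concerns where the merged coefficients land. In your relation, $\rho(1a5)+\rho(1a6)$ sits on the side that has to be small, so identifying $x_5$ with $x_6$ only helps the inequality. You need a relation of $D_{5,6}f_\rho$ in which merging inflates one product through a cross term that no single Pl\"ucker relation of $\rho$ controls.

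The paper uses $\alpha=e_5$ and the quartet $\{1,2,3,4\}$. All three products there have the form $\bigl(\rho(ab5)+\rho(ab6)\bigr)\bigl(\rho(cd5)+\rho(cd6)\bigr)$. It takes $\rho(135)=\rho(246)=2^q$ and $\rho(S)=1$ otherwise.

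Every three-term relation of $U_{3,6}$ involves only bases inside a $5$-element subset of $[6]$, and $135$ and $246$ are disjoint. So each relation has terms $1,1,1$ or $1,1,2^q$, and $\rho$ is weak $\T_q$.

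After merging, the three products become $4$, $(1+2^q)^2$ and $4$. The triangle inequality would then force $1+2^q\le 2^{1+q/2}$, which contradicts $(2^{q/2}-1)^2>0$.
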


\begin{proof}
Fix $q>0$, and define a multi-affine cubic polynomial
\[
f_q(x_1,\dots,x_6)=\sum_{S\in \binom{[6]}{3}} c_S x^S
\]
by setting
\[
c_{135}=c_{246}=2^q,
\qquad
c_S=1 \text{ for all other } S\in \binom{[6]}{3}.
\]
Since the support is all of $\binom{[6]}{3}$, this is a coefficient function on the uniform
matroid $U_{3,6}$.

We claim that $f_q$ is a weak $\T_q$-representation. Since $f_q$ is multi-affine of degree
$3$, the only relations to check are the $3$-term Pl\"ucker relations. Each such relation is
indexed by a choice of $\alpha\in \Delta_6^1$ and distinct $i,j,k,\ell\in [6]$, hence by a
$5$-element subset of $[6]$. The only exceptional coefficients are $c_{135}$ and $c_{246}$,
and these correspond to disjoint $3$-subsets. Therefore any fixed $5$-element subset of
$[6]$ contains at most one of them. It follows that in every $3$-term Pl\"ucker relation,
the three terms are either
\[
1,\ 1,\ 1
\qquad\text{or}\qquad
1,\ 1,\ 2^q.
\]
For $\T_q$, these correspond to side lengths
\[
1,\ 1,\ 1
\qquad\text{or}\qquad
1,\ 1,\ 2,
\]
and both triples satisfy the Euclidean triangle inequality. Hence $f_q$ is weak $\T_q$.

Now set
\[
g_q:=D_{5,6}(f_q)=f_q(x_1,x_2,x_3,x_4,x_5,x_5).
\]
The coefficient of $x_i x_j x_5$ in $g_q$ is $c_{ij5}+c_{ij6}$. In particular,
\[
[g_q]_{125}=[g_q]_{145}=[g_q]_{235}=[g_q]_{345}=2,
\]
while
\[
[g_q]_{135}=[g_q]_{245}=1+2^q.
\]
Consider the $3$-term Pl\"ucker relation in $\partial_5 g_q$ corresponding to the quartet
$\{1,2,3,4\}$. Its three terms are
\[
[g_q]_{235}[g_q]_{145}=4,
\qquad
[g_q]_{135}[g_q]_{245}=(1+2^q)^2,
\qquad
[g_q]_{125}[g_q]_{345}=4.
\]
If this relation belonged to $N_{\T_q}$, then the three side lengths
\[
4^{1/q},\qquad (1+2^q)^{2/q},\qquad 4^{1/q}
\]
would form a Euclidean triangle, so we would have
\[
(1+2^q)^{2/q}\le 2\cdot 4^{1/q}.
\]
Raising both sides to the $q/2$-th power gives
\[
1+2^q\le 2^{1+q/2}.
\]
But
\[
1+2^q-2^{1+q/2}=(2^{q/2}-1)^2>0
\]
for every $q>0$. This is a contradiction. Hence $g_q$ is not a weak
$\T_q$-representation.
\end{proof}

Thus, even weak $\T_q$-representability is not stable under the basic downward operation
$x_6\mapsto x_5$. The next example shows that the strong theory can fail even more
dramatically: a strong $\T_q$-representation may be sent to a polynomial that is not even
weakly $\T_q$.

\begin{proposition}
\label{prop:down-not-strong}
For every $q\in[1,2]$, there exists a strong $\T_q$-representation $F_q$ of a
rank-$3$ matroid on $[6]$ such that $D_{5,6}(F_q)$ is not even a weak
$\T_q$-representation.
\end{proposition}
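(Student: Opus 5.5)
The plan is to imitate the proof of Proposition~\ref{prop:down-not-weak}, but to start from an honest real realization so that strong representability comes for free. The restriction $q\ge 1$ suggests exactly this. If $p=(p_S)_{S\in\binom{[6]}{3}}$ is the Plücker vector of a real $3\times 6$ matrix $V$, then every strong (multi-term) Plücker relation is an honest real linear identity among the products $p_Sp_T$. Hence every term is at most the sum of the others in absolute value. Since $t\mapsto t^{1/q}$ is concave and subadditive for $q\ge1$, this polygon inequality survives after raising to the $1/q$-th power. So $F_q:=\sum_S |p_S|\,x^S$ is a strong $\T_q$-representation of the matroid $M[V]$. (If needed I would cite the general fact from \cite{BHKL0} that $|\cdot|:\R\to\T_1\to\T_q$ is a tract morphism for $q\ge 1$, so it pushes strong representations forward.)

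The point is that $D_{5,6}$ does \emph{not} commute with this pushforward. The coefficient of $x_ix_jx_5$ in $D_{5,6}(F_q)$ is $|p_{ij5}|+|p_{ij6}|$, not $|p_{ij5}+p_{ij6}|$. I would choose $V$ so that the two ``slices'' $(|p_{ij5}|)_{ij}$ and $(|p_{ij6}|)_{ij}$ are each degenerate $U_{2,4}$-configurations on $\{1,2,3,4\}$. In each slice the three Plücker products are of the form $1,1,2$, but the large product sits at a different pairing in each slice. Concretely:
\begin{itemize}
\item Take $v_5=e_3$.
\item Let $v_1,\dots,v_4$ have first two coordinates $(1,0),(0,1),(1,1),(1,-1)$. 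Then the $2\times2$ minors have absolute values $1,1,1,1,1,2$, with the $2$ at the pair $\{3,4\}$.
\item Take $v_6=e_3+w$, where $w$ lies in the span of $e_1,e_2$.
\end{itemize}
I would also allow a different planar configuration for the slice through $6$ by tilting with the third coordinates of $v_1,\dots,v_4$. The aim is that $|p_{ij6}|$ has its doubled product at a different pairing, say $\{1,3\}\{2,4\}$. Adding the two slices then produces three products in the relation for $\partial_5 D_{5,6}(F_q)$ on the quartet $\{1,2,3,4\}$ of the shape $4,\ (1+2)^2,\ \ldots$, in the spirit of the $4,(1+2^q)^2,4$ pattern from Proposition~\ref{prop:down-not-weak}.

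The last step is to verify that this three-term relation fails in $N_{\T_q}$ for every $q\in[1,2]$. The inequality to violate is of the form $T_{\max}^{1/q}\le T_a^{1/q}+T_b^{1/q}$. Since the left-minus-right side is monotone in $1/q$ once $T_{\max}>\max(T_a,T_b)$, it suffices to check it at the worst endpoint $q=2$, where it reads $\sqrt{T_{\max}}>\sqrt{T_a}+\sqrt{T_b}$. This is presumably where the upper limit $2$ in the statement comes from. One should also check that the weak relations for the other $\alpha$ are not the issue: we only need one failing relation.

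The main obstacle is the concrete choice of $V$. The sums $|p_{ij5}|+|p_{ij6}|$ must produce a sufficiently lopsided triple: the product $(|p_{135}|+|p_{136}|)(|p_{245}|+|p_{246}|)$ must beat the sum of the square roots of the other two products. I would first try integer matrices with small entries. The slice through $6$ would be arranged to reinforce the same pairing $\{1,3\},\{2,4\}$ while the other two pairings stay at size $\approx 1$. For instance, I would make $p_{ij6}$ vanish on $\{1,2\},\{3,4\},\{1,4\},\{2,3\}$, so that the underlying matroid is a non-uniform rank-$3$ matroid, which the statement permits. If the minors cannot be decoupled that way, I would do a small computer search over $\{-2,\dots,2\}$-matrices and then confirm the chosen example by hand.
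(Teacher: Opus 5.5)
\textbf{There is a genuine gap, and it is fatal exactly at the endpoint $q=2$ to which you reduce.} Your first step is fine: for $q\ge 1$, taking absolute values of a real Plücker vector gives a strong $\T_q$-representation. The trouble is that every polynomial $F=\sum_S|p_S|x^S$ obtained this way is Lorentzian.

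\emph{Why $F$ is Lorentzian.} For each $k$, the quadratic $\partial_kF$ has Hessian $(|\det(\bar v_i,\bar v_j)|)_{i,j}$, where $\bar v_i$ is the image of $v_i$ in $\R^3/\R v_k\cong\R^2$. Write $\bar v_i=r_i(\cos\theta_i,\sin\theta_i)$ and $u_i=(\cos2\theta_i,\sin2\theta_i)$. Then $|\det(\bar v_i,\bar v_j)|=\tfrac12 r_ir_j\|u_i-u_j\|$. This is a nonnegatively rescaled Euclidean distance matrix. It is conditionally negative semidefinite, because Euclidean metrics have $1$-negative type, so it has at most one positive eigenvalue. (Equivalently: $\sum_S p_S^2x^S=\det(V\diag(x)V^\top)$ is real stable, and then you take square roots of the coefficients using star-shapedness in log-coordinates.) Theorem~\ref{thm:BH-limit-free} then gives that $F$ is Lorentzian.

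\emph{Consequence for $D_{5,6}$.} $D_{5,6}$ is a linear substitution with nonnegative coefficients, so $D_{5,6}(F)$ is Lorentzian. By Proposition~\ref{prop:unnormalized-upper-containment} it is therefore a weak $\T_2$-representation. Concretely, for your quartet $\{1,2,3,4\}$ in $\partial_5D_{5,6}(F)$, the matrix $(|p_{ij5}|+|p_{ij6}|)_{i,j\le 4}$ is Lorentzian. Its determinant is $\le 0$, which by the formula in Proposition~\ref{prop:U24} says precisely $\sqrt{T_{\max}}\le\sqrt{T_a}+\sqrt{T_b}$.

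So the inequality you plan to violate at $q=2$ can never be violated, and no search over integer matrices will find a suitable $V$. More to the point, the case $q=2$ of the statement is out of reach for any realizable example. Your suggested slice through $6$ is also impossible on its own terms: $p_{ij6}=0$ for $\{1,2\},\{3,4\},\{1,4\},\{2,3\}$ with $p_{136}p_{246}\neq 0$ violates $p_{126}p_{346}-p_{136}p_{246}+p_{146}p_{236}=0$. In any realizable slice, one of the three products equals the sum of the other two.

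\textbf{The missing idea.} $F_q$ must be a strong $\T_q$-representation that is not Lorentzian, hence not the modulus of any Plücker vector, and it must depend on $q$. The paper takes $c_{126}=c_{346}=0$, $c_{135}=2^q$, and $c_S=1$ otherwise.
\begin{itemize}
\item In the slice through $5$, the quartet products are $(1,2^q,1)$. This sits exactly on the $\T_q$ boundary and is not a real Plücker triple for $q>1$.
\item Strong representability is checked by hand. Every $3$-term relation has terms $(1,1,1)$, $(0,1,1)$ or $(1,1,2^q)$, and every $4$-term relation, indexed by a $2$-subset, falls into one of five listed patterns.
\item After applying $D_{5,6}$, the quartet products become $4,\ 2(1+2^q),\ 1$.
\item The failure $(2(1+2^q))^{1/q}>1+4^{1/q}$ for $q\in[1,2)$ follows from $(a+b)^q\le 2^{q-1}(a^q+b^q)$, with a direct check $10>9$ at $q=2$.
\end{itemize}
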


\begin{proof}
Fix $q\in[1,2]$, and define a multi-affine cubic polynomial
\[
F_q(x_1,\dots,x_6)=\sum_{S\in \binom{[6]}{3}} c_S x^S
\]
by
\[
c_{126}=0,\qquad c_{346}=0,\qquad c_{135}=2^q,
\]
and
\[
c_S=1 \text{ for all other } S\in \binom{[6]}{3}.
\]
Its support is
\[
\binom{[6]}{3}\setminus\{\{1,2,6\},\{3,4,6\}\}.
\]
This is the set of bases of a rank-$3$ matroid on $[6]$: indeed, the only nonbases are
$126$ and $346$, and a direct basis-exchange check is immediate since these two forbidden
triples intersect in only one element.

We claim that $F_q$ is a strong $\T_q$-representation. Since the support is multi-affine of
rank $3$, the only nontrivial strong Pl\"ucker relations are the $3$-term and $4$-term
relations.

For the $3$-term relations, a direct inspection of the finitely many possibilities shows that
the three terms are always of one of the forms
\[
1,\ 1,\ 1,
\qquad
0,\ 1,\ 1,
\qquad
1,\ 1,\ 2^q.
\]
Each of these lies in $N_{\T_q}$: after taking $q$-th roots, the corresponding side lengths
are
\[
1,\ 1,\ 1,
\qquad
0,\ 1,\ 1,
\qquad
1,\ 1,\ 2,
\]
all of which satisfy the Euclidean triangle inequality.

For the $4$-term relations, fix a $2$-subset $J\subseteq [6]$, and write
\[
[6]\setminus J=\{a_1,a_2,a_3,a_4\}.
\]
The corresponding strong relation has the form
\[
c_{J\cup\{a_1\}}c_{[6]\setminus(J\cup\{a_1\})}
+
c_{J\cup\{a_2\}}c_{[6]\setminus(J\cup\{a_2\})}
+
c_{J\cup\{a_3\}}c_{[6]\setminus(J\cup\{a_3\})}
+
c_{J\cup\{a_4\}}c_{[6]\setminus(J\cup\{a_4\})}\in N_{\T_q}.
\]
The only non-unit coefficients are $c_{126}=0$, $c_{346}=0$, and $c_{135}=2^q$, while
their complementary triples have coefficients
\[
c_{345}=1,\qquad c_{125}=1,\qquad c_{246}=1.
\]
It follows that each of the four terms in the above relation is either $0$, $1$, or
$2^q$. A direct inspection of the $15$ choices of $J$ shows that the resulting $4$-tuple
of terms is always one of
\[
1,\ 1,\ 1,\ 1;
\qquad
0,\ 1,\ 1,\ 1;
\qquad
0,\ 0,\ 1,\ 1;
\qquad
1,\ 1,\ 1,\ 2^q;
\qquad
0,\ 1,\ 1,\ 2^q.
\]
After taking $q$-th roots, these become side-length multisets
\[
(1,1,1,1),\qquad (0,1,1,1),\qquad (0,0,1,1),\qquad (1,1,1,2),\qquad (0,1,1,2),
\]
and in each case the largest side is at most the sum of the others. Hence every
$4$-term relation belongs to $N_{\T_q}$. Therefore $F_q$ is a strong
$\T_q$-representation.

Now set
\[
G_q:=D_{5,6}(F_q)=F_q(x_1,x_2,x_3,x_4,x_5,x_5).
\]
The coefficients of the monomials involving $x_5$ are
\[
[G_q]_{125}=1,\qquad [G_q]_{145}=2,\qquad [G_q]_{235}=2,\qquad [G_q]_{345}=1,
\]
and
\[
[G_q]_{135}=1+2^q,\qquad [G_q]_{245}=2.
\]
Consider again the $3$-term Pl\"ucker relation in $\partial_5 G_q$ corresponding to the
quartet $\{1,2,3,4\}$. Its three terms are
\[
[G_q]_{235}[G_q]_{145}=4,
\qquad
[G_q]_{135}[G_q]_{245}=2(1+2^q),
\qquad
[G_q]_{125}[G_q]_{345}=1.
\]
If this belonged to $N_{\T_q}$, then the three side lengths
\[
4^{1/q},\qquad \bigl(2(1+2^q)\bigr)^{1/q},\qquad 1
\]
would form a Euclidean triangle. 
The middle term is the largest of the corresponding three side lengths, so the triangle
inequality would imply
\[
\bigl(2(1+2^q)\bigr)^{1/q}\le 1+4^{1/q}.
\]
For $1\le q\le 2$, raising to the $q$-th power and using
\[
(a+b)^q\le 2^{q-1}(a^q+b^q)
\]
gives
\[
2(1+2^q)\le (1+4^{1/q})^q\le 5\cdot 2^{q-1}.
\]
But for $1\le q<2$,
\[
2(1+2^q)-5\cdot 2^{q-1}=2-2^{q-1}>0,
\]
a contradiction. For $q=2$, we have directly
\[
2(1+2^2)=10>(1+4^{1/2})^2=(1+2)^2=9,
\]
again a contradiction. 
Thus the displayed three-term relation does not hold
over $\T_q$, and hence $G_q$ is not a weak $\T_q$-representation.
\end{proof}

These examples naturally suggest the following questions.

\begin{question}
\label{question:downward-threshold}
For each degree $d$, does there exist a constant $q_d>0$ such that the downward
operator preserves strong $\T_q$-representability for all $q\ge q_d$ on multi-affine
degree-$d$ polynomials?
\end{question}

\begin{question}
\label{question:strong-upper}
Let $J\subseteq \Delta_n^d$ be an $M$-convex set, or let $M$ be a matroid.
Does there exist a constant $p^{\rm str}(J)\in (0,\infty]$ such that
\[
\upL_J \subseteq \upN \upR_J^{\rm str}(\T_{p^{\rm str}(J)})?
\]
In the matroidal case, does there exist $p^{\rm str}(M)$ such that
\[
\P\upL_M \subseteq \Gr_M^{\rm str}(\T_{p^{\rm str}(M)})?
\]
\end{question}

\appendix

\section{Six proofs that tree distance matrices are strictly Lorentzian}
\label{app:tree-distance-matrices}

One of the foundational results in \cite{Branden-Huh20} is that every weak
$\T_0$-representation determines a Lorentzian polynomial.  In the rank-$2$
uniform case, a weak $\T_0$-representation is encoded by a positive coefficient
vector $(\rho_{ij})_{1\le i<j\le n}$ such that $(\log \rho_{ij})$ is a rank-$2$
tropical Pl\"ucker vector, or equivalently, a tree metric up to the usual lineality
$c_i+c_j$.  Thus the final linear-algebraic input needed in rank $2$ is the
following theorem.

\begin{theorem}[Exponential tree-metric theorem]
\label{thm:exp-tree-Lorentzian}
Let $n\ge 2$, and let $d=d_T$ be a tree distance on $X=\{1,\dots,n\}$, arising from a weighted
tree $T$ with positive edge lengths whose leaves are labeled by $X$.  Let
$A\in \R^{n\times n}$ be the symmetric zero-diagonal matrix
\[
A_{ij}=e^{d(i,j)}\quad (i\neq j),
\qquad
A_{ii}=0.
\]
Then $A$ has strictly Lorentzian signature $(1,n-1)$.
\end{theorem}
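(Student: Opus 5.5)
We will follow the same Schoenberg route as in the proof of Theorem~\ref{thm:rank-two-uniform}, with the tree metric playing the role of the snowflaked metric. First we normalize the matrix so that it has the block form of Lemma~\ref{lem:squared-edm-block}. Fix the leaf $1$ as a distinguished index and apply the diagonal congruence by $D=\diag(1,A_{12}^{-1},\dots,A_{1n}^{-1})$. By Lemma~\ref{lem:diag-congruence-U2n}(3) this does not change the inertia. The resulting matrix is
\[
A'=\begin{pmatrix}0&\1^\top\\ \1&B\end{pmatrix},
\qquad
B_{ij}=\exp\bigl(d(i,j)-d(1,i)-d(1,j)\bigr)=e^{-2\langle i|j\rangle_1},
\]
where $\langle i|j\rangle_1=\tfrac12(d(1,i)+d(1,j)-d(i,j))$ is the Gromov product based at $1$. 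The diagonal of $B$ is zero, which is consistent with $\langle i|i\rangle_1=d(1,i)$ and $B_{ii}=0$.

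By Lemma~\ref{lem:squared-edm-block}, it then suffices to show that $B$ is a squared Euclidean distance matrix on $X\setminus\{1\}$ realized by affinely independent points. Equivalently, $B$ should be conditionally strictly negative definite. With this, $A'$, and hence $A$, has inertia $(1,n-1,0)$.

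To prove this, root the tree $T$ at leaf $1$. For leaves $i\ne j$, the quantity $\langle i|j\rangle_1$ is the distance from the root to the branch point $i\wedge j$. We will therefore construct explicit vectors. For each edge $e$ of $T$, let $\ell_e$ be its length and $t_e$ the distance from the root to the far endpoint of $e$. Set $w_e:=e^{-2(t_e-\ell_e)}-e^{-2t_e}>0$. For a leaf $i$, define $v_i\in\R^{E(T)}$ by $(v_i)_e=\sqrt{w_e/2}$ if $e$ lies on the root-to-$i$ path, and $0$ otherwise. Then
\[
\|v_i-v_j\|^2=\tfrac12\sum_{e\in P_i\triangle P_j}w_e .
\]
The sum telescopes along the two branches below $i\wedge j$. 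Writing $s=\langle i|j\rangle_1$, it equals $\tfrac12\bigl[(e^{-2s}-e^{-2d(1,i)})+(e^{-2s}-e^{-2d(1,j)})\bigr]$, which is not quite $e^{-2s}$. So we instead augment with one extra private coordinate per leaf, of squared length $\tfrac12 e^{-2d(1,i)}$. This is exactly the required correction, giving $\|v_i-v_j\|^2=e^{-2s}=B_{ij}$.

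Affine independence should be automatic, because each $v_i$ has a private coordinate with nonzero value that no other vector uses. Hence any affine (indeed linear) dependence is trivial, and Lemma~\ref{lem:squared-edm-block} gives strictness. We expect the main obstacle to be the bookkeeping in the telescoping identity, in particular correctly handling edges above $i\wedge j$, which cancel in $P_i\triangle P_j$. Degenerate cases also need care: internal vertices of degree $2$, or leaf $1$ adjacent to a branch point with $\langle i|j\rangle_1=0$. We will check these directly, and as a fallback we will verify the $n=2,3$ cases via Lemma~\ref{lem:U22-U23}.
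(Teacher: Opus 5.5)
Your proof is correct. It takes a genuinely different route from the paper's closest argument, Proof \#1, though the two are related.

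In Proof \#1 the paper writes $d(i,j)=u(i,j)+c_i+c_j$ with $u$ an ultrametric, obtained by extending the leaves of $T$ so that they are equidistant from an auxiliary root (Lemma~\ref{lem:tree-to-ultrametric}). It notes that $e^{u}$ is again an ultrametric and embeds $\sqrt{e^{u}}$ via Timan--Vestfrid or the Pythagorean embedding of Proof \#2. It then concludes with Schoenberg's criterion together with $\1^\top B\1>0$.

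You instead normalize at the leaf $1$, in effect taking $c_i=d(1,i)$. Then row and column $1$ become all ones, and the remaining block $B_{ij}=e^{-2\langle i|j\rangle_1}$ is an ultrametric off the diagonal. This is exactly the $\T_0$-condition on the quartets through the index $1$. You then finish with the Cayley--Menger block Lemma~\ref{lem:squared-edm-block}, just as in the proof of Theorem~\ref{thm:rank-two-uniform}.

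Your explicit vectors amount to realizing this ultrametric as an equidistant tree. Give edge $e$ the new length $w_e/2$ and attach to each leaf $i$ a pendant edge of length $\tfrac12 e^{-2d(1,i)}$. Every leaf then sits at height $\tfrac12$ above the root $1$, and the Pythagorean embedding of this reparametrized tree is precisely your $v_i$.

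What your route buys is that the exponential theorem becomes literally the $q\to 0$ analogue of the Section~4 argument: metric is replaced by ultrametric, and the snowflake embedding by an explicit ultrametric embedding. It needs no Timan--Vestfrid and no detour through the ordinary tree-distance theorem. The paper's route treats all leaves symmetrically, and a single reduction lets Proofs \#2--\#5 of the ordinary theorem feed into the exponential one.

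The degenerate cases you flag are harmless, so the fallback to Lemma~\ref{lem:U22-U23} is unnecessary:
\begin{itemize}
\item The branch point of $1,i,j$ is a vertex of $T$.
\item Degree-$2$ vertices do not affect the telescoping.
\item The private coordinates give linear independence for every $n\ge 2$.
\end{itemize}
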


It is useful to separate this statement from the following closely related ordinary
tree-distance theorem.

\begin{theorem}[Ordinary tree-distance theorem]
\label{thm:tree-distance-Lorentzian}
Let $n\ge 2$, and let $d=d_T$ be a tree distance on $X=\{1,\dots,n\}$, arising from a weighted
tree $T$ with positive edge lengths whose leaves are labeled by $X$.  Let
$D\in \R^{n\times n}$ be the symmetric zero-diagonal matrix
\[
D_{ij}=d(i,j)\quad (i\neq j),
\qquad
D_{ii}=0.
\]
Then $D$ has strictly Lorentzian signature $(1,n-1)$.  In fact, $D$ is
conditionally strictly negative definite on
\[
H_0:=\left\{w\in \R^n: \sum_{i=1}^n w_i=0\right\}.
\]
\end{theorem}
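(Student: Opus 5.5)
We prove the conditional strict negative definiteness of $D$ on $H_0$; the inertia $(1,n-1)$ then follows from a counting argument. The approach is to decompose the tree metric into split (cut) pseudometrics.

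First we reduce to a clean form. Leaves of $T$ carry the labels in $X$, and every edge $e$ of $T$ has positive length $\ell_e$. Removing $e$ splits $X$ into two nonempty parts $S_e\sqcup (X\setminus S_e)$. If an internal vertex has degree $2$, we suppress it, so that every edge determines a genuine split.

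The key identity is the split decomposition
\[
d(i,j)=\sum_{e}\ell_e\,\delta_{S_e}(i,j),
\]
where $\delta_S(i,j)=1$ if exactly one of $i,j$ lies in $S$, and $0$ otherwise. For $w\in H_0$, a short computation gives
\[
\sum_{i,j}w_iw_j\delta_S(i,j)=2\,w(S)\,w(X\setminus S)=-2\,w(S)^2,
\]
where $w(S)=\sum_{i\in S}w_i$. Hence
\[
w^\top Dw=-2\sum_e \ell_e\, w(S_e)^2\le 0 .
\]

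For strictness, suppose $w^\top Dw=0$. Then $w(S_e)=0$ for every edge $e$. Each leaf $i$ has a pendant edge whose split is $\{i\}$ versus the rest. So $w_i=0$ for every $i$, and therefore $w=0$.

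This leaf argument assumes every label sits at a leaf with a pendant edge of positive length. If labels are allowed on internal vertices or at coinciding points, the degenerate cases need care, and this is the main obstacle. Positivity of the edge lengths and distinctness of the labeled points should handle it. The cleanest route is to attach a pendant edge of length $0$, observe that $d$ is then a limit of such tree metrics, and argue directly that the pendant splits still generate. Alternatively, one can use the fact that the cut vectors $\1_{S_e}$ span $\R^n$ modulo $\1$ whenever the labeled points are distinct.

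Finally, we convert this into the signature. $D$ is negative definite on the $(n-1)$-dimensional subspace $H_0$, so it has at least $n-1$ negative eigenvalues. Its diagonal is zero, so $\tr D=0$. Since $D\neq 0$ for $n\ge 2$, $D$ has a positive eigenvalue. The count $1+(n-1)=n$ then forces inertia $(1,n-1,0)$. The entries of $D$ are nonnegative, so $D$ is strictly Lorentzian.

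This mirrors Lemma~\ref{lem:squared-edm-block}: the general principle is that a conditionally strictly negative definite form on $\1^\perp$ that is not itself negative definite has exactly one positive eigenvalue.
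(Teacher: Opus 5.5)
Your proposal is correct and is essentially the paper's third proof (Section~\ref{app:split-proof}): the same split decomposition $w^\top Dw=-2\sum_e \ell_e\,w(S_e)^2$ on $H_0$, the same pendant-edge splits $\{\{i\},X\setminus\{i\}\}$ for strictness, and only a cosmetic difference in producing the positive eigenvalue (you use $\tr D=0$ with $D\neq 0$, the paper uses $\1^\top D\1>0$). Your paragraph on labels at internal vertices can be dropped. The statement places every label at a leaf, whose pendant edge has positive length, and the suggested limiting argument would not preserve strict definiteness in any case.
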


Several of the proofs below first establish
Theorem~\ref{thm:tree-distance-Lorentzian} and then deduce
Theorem~\ref{thm:exp-tree-Lorentzian}.  The reduction is short.  By
Lemma~\ref{lem:tree-to-ultrametric} below, every finite tree metric can be written in
the form
\[
d(i,j)=u(i,j)+c_i+c_j,
\]
where $u$ is an ultrametric on $X$ and $c_1,\dots,c_n\in\R$.  Hence
\[
e^{d(i,j)}=e^{c_i}e^{c_j}e^{u(i,j)}.
\]
Thus the exponential distance matrix for $d$ is positive diagonally congruent to
the exponential distance matrix for $u$.  Since increasing functions preserve the
ultrametric property, the function
\[
v(i,j):=e^{u(i,j)}\quad (i\neq j), \qquad v(i,i):=0,
\]
is again an ultrametric.  Every finite ultrametric is an equidistant tree metric, so
Theorem~\ref{thm:tree-distance-Lorentzian} applied to the tree metric $v$ gives
Theorem~\ref{thm:exp-tree-Lorentzian}.

The purpose of this appendix is to collect several proofs of these tree-metric inputs.
Proofs \#2--\#5 establish the ordinary tree-distance theorem, and therefore also the
exponential theorem by the preceding reduction.  Proofs \#1 and \#6 prove the
exponential theorem directly.

\subsection{How the exponential tree-metric theorem enters the proof}
\label{app:reductions}

The following result is proved in \cite{Branden-Huh20}; we recall a self-contained
variant of the argument here in order to clarify where the exponential theorem enters.

\begin{proposition}
\label{prop:appendix-matroid-reduction}
Assume Theorem~\ref{thm:exp-tree-Lorentzian}.  Then for every matroid $M$, every
weak $\T_0$-representation of $M$ has Lorentzian generating polynomial.
\end{proposition}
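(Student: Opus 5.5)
Following the pattern of the proof of Theorem~\ref{thm:matroid-sandwich}, I reduce to the rank-$2$ uniform case and apply Theorem~\ref{thm:exp-tree-Lorentzian}. Let $\rho$ be a weak $\T_0$-representation of $M$, of rank $r$, and put $f=f_\rho$. Its support is the basis set of $M$, which is M-convex. By Theorem~\ref{thm:BH-limit-free}, it therefore suffices to show that every nonzero quadratic derivative $\partial^\alpha f$ is Lorentzian, where $\alpha$ is squarefree and indexes an independent $(r-2)$-set $I$. Non-squarefree derivatives of a multi-affine $f$ vanish. The derivative $\partial^\alpha f$ is the generating polynomial of the coefficients $\rho(I\cup\{i,j\})$, and it is supported on the bases of the rank-$2$ contraction $M/I$ (after deleting loops). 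The three-term Plücker relations with base $\alpha$ are exactly the weak $\T_0$-relations for this restricted coefficient function. Hence I may assume that $M$ is a loopless rank-$2$ matroid.

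Next I simplify. Let the parallel classes be $V_1,\dots,V_m$. The weak $\T_0$-relations on quartets containing two parallel elements $i,i'\in V_a$ have one vanishing term, namely $\rho(ii')=0$. The tropical condition that the maximum be attained twice then forces $\rho(ij)\rho(i'k)=\rho(ik)\rho(i'j)$. As in Theorem~\ref{thm:matroid-sandwich}, this gives $\rho(ij)=t_it_j\sigma_{ab}$, so $f$ is a nonnegative linear substitution of $g_\sigma=\sum_{a<b}\sigma_{ab}y_ay_b$. Here $\sigma$ is a weak $\T_0$-representation of $U_{2,m}$. Since Lorentzianity is preserved under nonnegative linear substitutions, it is enough to treat $U_{2,m}$. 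For $m\le 3$, Lemma~\ref{lem:U22-U23} applies.

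The main step is to show that a weak $\T_0$-representation $\sigma$ of $U_{2,m}$ with $m\ge4$ has the form $\sigma_{ab}=e^{c_a+c_b}e^{d_T(a,b)}$ for a tree metric $d_T$. Then the matrix $(\sigma_{ab})$ is positively diagonally congruent to the exponential tree matrix, which is strictly Lorentzian by Theorem~\ref{thm:exp-tree-Lorentzian}. Lemma~\ref{lem:diag-congruence-U2n} transfers the signature. To obtain the tree form, I set $\delta_{ab}=\log\sigma_{ab}$. The $\T_0$-condition says that for every quartet the maximum of $\delta_{ab}+\delta_{cd}$, $\delta_{ac}+\delta_{bd}$, $\delta_{ad}+\delta_{bc}$ is attained twice, which is the four-point condition. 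Adding $K(c_a+c_b)$ with $c_a$ large and constant, namely $c_a=C$ for all $a$, makes $\delta$ positive and also metric. This uses the four-point condition with a fourth point far away, or alternatively the standard argument: a dissimilarity satisfying the four-point condition plus a large constant shift is a tree metric.

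This last point is the obstacle. The four-point condition with a constant shift does give triangle inequalities once the constant exceeds the spread, since the shift adds $2C$ to each distance and the triangle inequality then becomes $\delta_{ab}\le\delta_{ac}+\delta_{cb}+2C$. I would then invoke Buneman's theorem, that a metric satisfying the four-point condition is a tree metric with leaves labeled by $X$ and nonnegative edge lengths. Zero-length edges are contracted, and internal labeled vertices are handled by attaching pendant edges of small length $\epsilon$, which change $d$ only by a diagonal congruence $e^{\epsilon}$ per index. With that, Theorem~\ref{thm:exp-tree-Lorentzian} applies and the reduction is complete.
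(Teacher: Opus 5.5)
Your proposal is correct and takes the same route as the paper's proof. You reduce to rank-$2$ contractions via Theorem~\ref{thm:BH-limit-free}, pass through parallel classes to $U_{2,m}$, write $\log\sigma$ as a tree metric plus a lineality term $c_a+c_b$, and conclude with Theorem~\ref{thm:exp-tree-Lorentzian} and diagonal congruence. The only difference is that you justify the tree-metric-up-to-lineality step explicitly (constant shift to obtain a metric, Buneman's four-point theorem, contraction of zero-length edges, pendant edges to move labels to leaves), whereas the paper simply cites the identification of rank-$2$ tropical Pl\"ucker vectors with tree metrics.
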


\begin{proof}
Let $M$ be a matroid of rank $r$ on the ground set $E$, and let
\[
f_\rho(x):=\sum_{B\in \mathcal B(M)} \rho(B)x^B
\]
be the multi-affine generating polynomial attached to a weak $\T_0$-representation
$\rho$ of $M$.

\emph{Step 1: reduction to rank $2$ by contraction.}
For an independent set $S\subseteq E$, the squarefree derivative $\partial^S f_\rho$
is the generating polynomial of the induced weak $\T_0$-representation of the
contraction $M/S$.  By Theorem~\ref{thm:BH-limit-free}, it is therefore enough to
prove the claim for rank-$2$ contractions of $M$.

\emph{Step 2: reduction from loopless rank-$2$ matroids to uniform ones.}
Let $N$ be a loopless rank-$2$ matroid, with parallel classes
\[
V_1,\dots,V_m.
\]
Exactly as in the proof of Theorem~\ref{thm:matroid-sandwich}, the weak
$\T_0$-relations imply a factorization
\[
\rho_{ij}=t_i t_j \sigma_{ab}
\qquad (i\in V_a,\ j\in V_b,
\ a\neq b),
\]
for suitable $t_i>0$, where $\sigma$ is a weak $\T_0$-representation of the uniform
matroid $U_{2,m}$.  The corresponding quadratic polynomial satisfies
\[
f_\rho(x)=g_\sigma\!\left(\sum_{i\in V_1} t_i x_i,\dots,
\sum_{i\in V_m} t_i x_i\right).
\]
As in Section~\ref{subsec:matroid-reduction}, this linear substitution preserves the
property of having at most one positive eigenvalue.  Thus it remains to prove the
claim for $U_{2,m}$.

\emph{Step 3: passage from $U_{2,m}$ to tree metrics.}
For $U_{2,m}$, the weak $\T_0$-relations say exactly that the maximum among
\[
\rho_{ij}\rho_{k\ell},
\qquad
\rho_{ik}\rho_{j\ell},
\qquad
\rho_{i\ell}\rho_{jk}
\]
is attained at least twice for every quartet.  Equivalently, after setting
\[
w_{ij}:=\log \rho_{ij},
\]
the vector $w=(w_{ij})$ is a rank-$2$ tropical Pl\"ucker vector.  Such vectors are
precisely tree metrics up to the lineality space: there are real numbers
$c_1,\dots,c_m$ and a tree distance $d$ on $[m]$ such that
\[
w_{ij}=d(i,j)+c_i+c_j.
\]
Consequently,
\[
\rho_{ij}=e^{w_{ij}}=e^{c_i}e^{c_j}e^{d(i,j)}.
\]
The Hessian of $g_\sigma$ is therefore positive diagonally congruent to the exponential
tree-distance matrix $(e^{d(i,j)})$.  By
Theorem~\ref{thm:exp-tree-Lorentzian}, this matrix has strictly Lorentzian signature.
Hence $g_\sigma$ is Lorentzian.  This proves the rank-$2$ uniform case, and hence
the proposition.
\end{proof}

\begin{remark}
\label{rem:appendix-reduction}
Combined with the quadratic reduction argument of
Section~\ref{subsec:general-from-matroids}, Proposition~\ref{prop:appendix-matroid-reduction} yields the full containment
\[
\upR_J^{\rm w}(\T_0)\subseteq \upL_J
\]
for arbitrary $M$-convex sets $J$.
\end{remark}

\subsection{Background and terminology}
\label{app:background}

\begin{definition}
A symmetric matrix $H$ is \emph{conditionally negative semidefinite} if
\[
w^\top H w \le 0
\qquad\text{for all } w\in H_0.
\]
It is \emph{conditionally strictly negative definite} if in addition
\[
w\in H_0,
\quad
w^\top H w=0
\qquad\Longrightarrow\qquad
w=0.
\]
\end{definition}

\begin{definition}
A metric $u$ on a finite set $X$ is an \emph{ultrametric} if
\[
u(i,j)\le \max\{u(i,k),u(j,k)\}
\qquad\text{for all } i,j,k\in X.
\]
\end{definition}

We record two useful elementary facts about the relationship between weighted trees and ultrametrics.

\begin{lemma}
\label{lem:equidistant-tree-ultrametric}
Let $T$ be a rooted weighted tree with leaf set $X$, and suppose that every leaf lies
at the same distance $h$ from the root.  Then the leaf-to-leaf distance function
$u_T$ on $X$ is an ultrametric.
\end{lemma}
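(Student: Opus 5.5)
The plan is to show directly that the leaf-to-leaf distance $u_T$ satisfies the ultrametric inequality, using the lowest common ancestor structure of the rooted tree. For leaves $i,j\in X$, write $\operatorname{lca}(i,j)$ for their lowest common ancestor in $T$, and let $\operatorname{ht}(v)$ be the distance from the root to a vertex $v$. The unique path from $i$ to $j$ goes up from $i$ to $\operatorname{lca}(i,j)$ and then down to $j$. Since every leaf lies at distance $h$ from the root, this gives
\[
u_T(i,j)=2\bigl(h-\operatorname{ht}(\operatorname{lca}(i,j))\bigr)
\qquad(i\ne j),
\]
and trivially $u_T(i,i)=0$. So $u_T(i,j)$ is a strictly decreasing function of the height of $\operatorname{lca}(i,j)$. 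It is already a metric, because it is a path metric on a tree restricted to the leaves, and the positive edge lengths make distinct leaves lie at positive distance.

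The key step is then a combinatorial fact about rooted trees. For any three leaves $i,j,k$, the three vertices $\operatorname{lca}(i,j)$, $\operatorname{lca}(i,k)$ and $\operatorname{lca}(j,k)$ all lie on the path from $i$ to the root, or on the path from $j$ to the root. Among these three lowest common ancestors, two coincide, and the third is a descendant of (lies at least as deep as) the common one.

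To check this, note that $\operatorname{lca}(i,k)$ and $\operatorname{lca}(i,j)$ are both ancestors of $i$, so they are comparable. Suppose without loss of generality that $\operatorname{lca}(i,k)$ is an ancestor of $\operatorname{lca}(i,j)$. Then $\operatorname{lca}(i,k)$ is a common ancestor of $j$ and $k$, so it is an ancestor of $\operatorname{lca}(j,k)$. Conversely, $\operatorname{lca}(j,k)$ and $\operatorname{lca}(i,j)$ are both ancestors of $j$, hence comparable. If $\operatorname{lca}(j,k)$ were strictly deeper than $\operatorname{lca}(i,k)$, a short argument would produce a common ancestor of $i$ and $k$ lying below $\operatorname{lca}(i,k)$, contradicting its minimality. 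Hence $\operatorname{lca}(j,k)=\operatorname{lca}(i,k)$.

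Translating through the displayed formula, the two largest of the three distances $u_T(i,j)$, $u_T(i,k)$, $u_T(j,k)$ are equal. This is exactly the inequality $u_T(i,j)\le\max\{u_T(i,k),u_T(j,k)\}$. The degenerate cases where two of the leaves coincide are immediate.

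The only real obstacle is writing the ancestor-comparability argument cleanly. To keep it short, I would use the fact that the ancestors of any vertex form a chain under the ancestor order.
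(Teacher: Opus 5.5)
Your overall approach matches the paper's: use $u_T(i,j)=2\bigl(h-\operatorname{ht}(\operatorname{lca}(i,j))\bigr)$, together with the fact that among the three lowest common ancestors of a triple, two coincide and the third is at least as deep. The paper simply asserts this fact. Your justification of it, however, contains a false step.

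After assuming that $\operatorname{lca}(i,k)$ is an ancestor of $\operatorname{lca}(i,j)$, you conclude $\operatorname{lca}(j,k)=\operatorname{lca}(i,k)$. You argue that otherwise one obtains a common ancestor of $i$ and $k$ below $\operatorname{lca}(i,k)$. This fails when $\operatorname{lca}(i,k)=\operatorname{lca}(i,j)$. For example, take a root $r$ whose children are the leaf $i$ and a vertex $w$, where $w$ has leaf children $j$ and $k$. Then $\operatorname{lca}(i,j)=\operatorname{lca}(i,k)=r$, so your hypothesis holds. Yet $\operatorname{lca}(j,k)=w$ is strictly deeper than $r$, and there is no contradiction, since $r$ is the only common ancestor of $i$ and $k$. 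In this configuration the coinciding pair is $\operatorname{lca}(i,j)$ and $\operatorname{lca}(i,k)$, and your claimed equality $\operatorname{lca}(j,k)=\operatorname{lca}(i,k)$ is false. The ultrametric inequality of course still holds.

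The fix is a two-case split:
\begin{itemize}
\item If $\operatorname{lca}(i,k)=\operatorname{lca}(i,j)$, these two coincide. Moreover, $\operatorname{lca}(j,k)$ lies below them, because $\operatorname{lca}(i,k)$ is a common ancestor of $j$ and $k$.
\item If $\operatorname{lca}(i,k)$ is a \emph{strict} ancestor of $\operatorname{lca}(i,j)$, your minimality argument goes through. Suppose $\operatorname{lca}(j,k)$ were strictly below $\operatorname{lca}(i,k)$. Then $\operatorname{lca}(j,k)$ and $\operatorname{lca}(i,j)$ would both be ancestors of $j$ strictly below $\operatorname{lca}(i,k)$, so they would be comparable. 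The higher of the two would then be a common ancestor of $i$ and $k$ strictly below $\operatorname{lca}(i,k)$, a contradiction. Hence $\operatorname{lca}(j,k)=\operatorname{lca}(i,k)$, and $\operatorname{lca}(i,j)$ is the deeper third one.
\end{itemize}
With this split, the rest of your argument, including the translation into the ultrametric inequality, is correct.
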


\begin{proof}
For leaves $i,j\in X$, let $\operatorname{lca}(i,j)$ denote their lowest common
ancestor.  Then
\[
u_T(i,j)=2\bigl(h-h(\operatorname{lca}(i,j))\bigr),
\]
where $h(v)$ denotes the distance from the root to the vertex $v$.  Given three
leaves $i,j,k$, at least two of the three lowest common ancestors
\[
\operatorname{lca}(i,j),\quad
\operatorname{lca}(i,k),\quad
\operatorname{lca}(j,k)
\]
coincide and lie no deeper than the third.  It follows that two of the three distances
\[
u_T(i,j),\quad u_T(i,k),\quad u_T(j,k)
\]
are equal and dominate the third.  This is exactly the ultrametric inequality.
\end{proof}

\begin{lemma}
\label{lem:tree-to-ultrametric}
Let $d$ be a tree distance on the leaf set $X=\{1,\dots,n\}$.  Then there exist real
numbers $c_1,\dots,c_n$ and an ultrametric $u$ on $X$ such that
\[
d(i,j)=u(i,j)+c_i+c_j
\qquad (i\neq j).
\]
Moreover, if $E_d$ and $E_u$ are the associated zero-diagonal exponential matrices,
defined by
\[
(E_d)_{ij}=e^{d(i,j)},\qquad (E_u)_{ij}=e^{u(i,j)}
\qquad (i\neq j),
\]
and with diagonal entries equal to $0$, then
\[
E_d=C E_u C,
\qquad
C=\operatorname{diag}(e^{c_1},\dots,e^{c_n}).
\]
In particular, $E_d$ and $E_u$ have the same inertia.
\end{lemma}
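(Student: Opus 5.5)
The plan is to prove the decomposition $d(i,j)=u(i,j)+c_i+c_j$ by rooting the tree far away and subtracting leaf heights (the standard Farris transform). The matrix identity and the inertia claim then follow formally.

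\emph{Construction.} Fix an arbitrary point $o$ of the tree $T$, for instance an internal vertex. For each leaf $i$ set $h_i:=d_T(o,i)$, and let $H\ge \max_i h_i$ be a constant. Define
\[
c_i:=h_i-H,
\qquad
u(i,j):=d(i,j)-h_i-h_j+2H \quad (i\neq j),
\qquad
u(i,i):=0.
\]
Writing $m_{ij}$ for the distance from $o$ to the geodesic $[i,j]$ in $T$, the path from $o$ to $i$ passes through the point where $[i,j]$ is closest to $o$. This gives
\[
d(i,j)=h_i+h_j-2m_{ij},
\qquad\text{hence}\qquad
u(i,j)=2(H-m_{ij}).
\]
Equivalently, I would describe $u$ through a modified tree. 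Root $T$ at $o$ and lengthen each pendant edge at leaf $i$ by $H-h_i\ge 0$. The result is an equidistant rooted tree of height $H$, and its leaf distance is $2(H-m_{ij})=u(i,j)$. The displayed decomposition holds by construction.

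\emph{Ultrametricity.} Given this description, the ultrametric inequality follows from Lemma~\ref{lem:equidistant-tree-ultrametric}. A zero-length extension or a degree-two root is harmless for that lemma, since its proof only uses lowest common ancestors. Alternatively, one can argue directly: among the three meeting depths $m_{ij},m_{ik},m_{jk}$ of a triple of leaves, two are equal and the third is at least as large. This is the rooted version of the four-point condition, so two of $u(i,j),u(i,k),u(j,k)$ are equal and dominate the third.

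\emph{Positivity.} I also need $u$ to be a genuine metric, so I must check $u(i,j)>0$ for $i\ne j$. Here $m_{ij}\le \min(h_i,h_j)\le H$, with equality only if $h_i=h_j=H$ and $o$ lies on $[i,j]$ at distance $H$ from both endpoints. That configuration is impossible when $i\neq j$, since distinct leaves are then at positive distance through $o$. Choosing $H>\max_i h_i$ strictly gives $u(i,j)>0$ immediately.

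\emph{Matrix identity and inertia.} With $C=\diag(e^{c_1},\dots,e^{c_n})$, the off-diagonal entries satisfy
\[
(CE_uC)_{ij}=e^{c_i}e^{u(i,j)}e^{c_j}=e^{d(i,j)}.
\]
The diagonal entries are $e^{2c_i}\cdot 0=0$. Hence $E_d=CE_uC$, and since $C$ is invertible, Sylvester's law of inertia gives equal inertia.

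The only real content is the geodesic identity $d(i,j)=h_i+h_j-2m_{ij}$ and the equidistant reinterpretation of $u$. I would state these carefully using the median (branch point) of $o,i,j$ in the tree. The rest is bookkeeping.
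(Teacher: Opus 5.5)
Your proposal is correct and is essentially the paper's proof: root $T$, set $c_i=h_i-H$, lengthen each pendant edge by $H-h_i$ to get an equidistant rooted tree, apply Lemma~\ref{lem:equidistant-tree-ultrametric}, and conclude with Sylvester's law of inertia. Your positivity paragraph is unnecessary, and its description of the equality case is slightly off, since already with $H=\max_i h_i$ one has $u(i,j)=d(i,j)+(H-h_i)+(H-h_j)\ge d(i,j)>0$.
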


\begin{proof}
Choose a root $r$ of the given tree $T$, and for each leaf $i\in X$ let
\[
h_i:=d_T(r,i).
\]
Let
\[
H:=\max_{i\in X} h_i,
\qquad
c_i:=h_i-H\le 0.
\]
Now extend the tree by attaching to each leaf $i$ an extra segment of length
$-c_i=H-h_i$.  In the resulting rooted tree, every leaf lies at distance $H$ from
the root.  Let $u$ denote the leaf-to-leaf distance function in the extended tree.
Then, for $i\neq j$,
\[
u(i,j)=d(i,j)-c_i-c_j,
\]
or equivalently
\[
d(i,j)=u(i,j)+c_i+c_j.
\]
By Lemma~\ref{lem:equidistant-tree-ultrametric}, the metric $u$ is an ultrametric.

For $i\neq j$, the preceding identity gives
\[
(E_d)_{ij}=e^{d(i,j)}
=e^{c_i}e^{u(i,j)}e^{c_j}.
\]
Both $E_d$ and $E_u$ have zero diagonal entries, so this is precisely the matrix
identity
\[
E_d=C E_u C,
\qquad
C=\operatorname{diag}(e^{c_1},\dots,e^{c_n}).
\]
Since $C$ is invertible, Sylvester's law of inertia shows that $E_d$ and $E_u$
have the same inertia.
\end{proof}

\subsection{Proof \#1: Schoenberg plus ultrametric embedding}
\label{app:schoenberg-proof}

This is the proof employed by Br\"and\'en and Huh in \cite{Branden-Huh20}.
It relies on the following classical fact, cf.~\cite{TimanVestfrid}.

\begin{theorem}[Timan--Vestfrid]
\label{thm:Timan-Vestfrid}
Every finite ultrametric space $(X,u)$ admits an isometric embedding of
$(X,\sqrt u)$ into Euclidean space.
\end{theorem}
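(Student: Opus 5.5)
We prove the Timan--Vestfrid theorem by induction on $|X|$, using the hierarchical (equidistant-tree) structure of a finite ultrametric. By Lemma~\ref{lem:q5-schoenberg} (applied with the exponent $q=1$), the claim is equivalent to showing that $u$ has $1$-negative type. It is more convenient, however, to build the embedding explicitly, and in fact to prove the stronger statement that $(X,\sqrt u)$ embeds with \emph{affinely independent} image. This stronger form is what makes the induction work.

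\emph{Setup.} Let $D:=\max_{x\ne y}u(x,y)$ be the diameter. The relation $x\sim y \iff u(x,y)<D$ is an equivalence relation: transitivity is exactly the ultrametric inequality. Denote its classes by $X_1,\dots,X_k$, with $k\ge 2$ whenever $|X|\ge2$. Points in different classes are at distance exactly $D$, and each $X_a$ is an ultrametric space of diameter $<D$, or a single point.

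\emph{Inductive step.} By induction, each $(X_a,\sqrt u)$ embeds affinely independently in some $\R^{m_a}$. We translate each embedding so that its circumcenter is at the origin. This is possible because every ultrametric embedding produced by this construction will be inscribed in a sphere; we carry that as part of the inductive hypothesis. Write $R_a$ for the circumradius. The next step is to check that $R_a^2<D/2$; granting this, set $t_a:=\sqrt{D/2-R_a^2}>0$.

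Now place the blocks in mutually orthogonal subspaces, and add $k$ further mutually orthogonal unit directions $e_1,\dots,e_k$. Map $x\in X_a$ to $v_x+t_a e_a$. For $x,y$ in the same class, distances are unchanged. For $x\in X_a$ and $y\in X_b$ with $a\ne b$, everything is orthogonal, so
\[
\|v_x+t_ae_a-v_y-t_be_b\|^2=R_a^2+t_a^2+R_b^2+t_b^2=D,
\]
which is what is required. All image points lie at squared norm $D/2$, so the new configuration is again inscribed in a sphere, with circumradius$^2=D/2$. Affine independence follows because the blocks lie in orthogonal affine pieces, each offset by an independent direction $e_a$.

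\emph{Main obstacle: the bound $R_a^2<D/2$.} I would strengthen the inductive hypothesis to: "the embedding is affinely independent, inscribed in a sphere, and has circumradius$^2=\operatorname{diam}(u)/2$." The construction above reproduces exactly this property, since the new circumradius$^2$ is $D/2$ and $D$ is the new diameter. The base case is a single point, with radius $0$ and diameter $0$. For a block $X_a$ with $|X_a|\ge2$ the hypothesis gives $R_a^2=\operatorname{diam}(X_a)/2<D/2$, and for singleton blocks $R_a=0<D/2$. So $t_a>0$ in every case, and the induction closes.

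Alternatively, the whole argument can be phrased through Lemma~\ref{lem:equidistant-tree-ultrametric}: work with the equidistant rooted tree of $u$ and assign to each edge an orthogonal direction with a suitable weight, so that leaf-to-leaf squared distances telescope to $u$. The induction above is that construction carried out explicitly.
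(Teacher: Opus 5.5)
Your construction is correct and is essentially the route the paper relies on. The paper only cites Timan--Vestfrid, but makes the embedding explicit in Section~\ref{app:pythagorean-proof} via $\Phi(v)=\sum_{e\in[r,v]}\sqrt{w(e)}\,e^\ast$ applied to the equidistant tree of $u$. Your recursion unwinds to exactly this: $t_a^2=(D-\operatorname{diam}X_a)/2$ is the length of the edge from the root to the cluster $X_a$, and that edge gets its own orthogonal direction, just as in $\Phi$.

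One slip needs fixing. The new points lie on the sphere of squared radius $D/2$ about the origin, but their circumradius can be smaller: two points at distance $D$ land at $\sqrt{D/2}\,e_1,\sqrt{D/2}\,e_2$, whose circumradius$^2$ is $D/4$. So the invariant ``circumradius$^2=\operatorname{diam}/2$'' is false as stated. Take instead the invariant ``all image points have squared norm $\operatorname{diam}/2$'', and do not re-translate to the circumcenter; alternatively, just use $R_a^2\le\operatorname{diam}(X_a)/2$. With either fix, $t_a>0$ and the rest of your argument goes through unchanged.
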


\begin{proof}[Proof of Theorem~\ref{thm:exp-tree-Lorentzian}, first proof]
Let $d$ be a tree distance on $X$, and let
\[
A_{ij}=e^{d(i,j)}\quad (i\neq j),\qquad A_{ii}=0.
\]
By Lemma~\ref{lem:tree-to-ultrametric}, there exist real numbers $c_i$ and an
ultrametric $u$ on $X$ such that
\[
d(i,j)=u(i,j)+c_i+c_j
\qquad (i\neq j).
\]
Let $C=\operatorname{diag}(e^{c_1},\dots,e^{c_n})$, and let $B$ be the
zero-diagonal matrix with off-diagonal entries
\[
B_{ij}=e^{u(i,j)}\qquad (i\neq j).
\]
Then $A=CBC$, so it suffices to prove that $B$ has strictly Lorentzian signature.

Since the exponential function is increasing, the function
\[
v(i,j):=e^{u(i,j)}\quad (i\neq j),\qquad v(i,i):=0
\]
is again an ultrametric.  By Theorem~\ref{thm:Timan-Vestfrid}, the finite metric
space $(X,\sqrt v)$ embeds isometrically into Euclidean space.  For finite
ultrametrics, one may moreover choose the embedding to be affinely independent;
this is made completely explicit in Section~\ref{app:pythagorean-proof} below.
Hence Schoenberg's criterion, Lemma~\ref{lem:q5-schoenberg}, shows that $B$ is
conditionally strictly negative definite on $H_0$.

Finally,
\[
\mathbf 1^\top B\mathbf 1=2\sum_{i<j}v(i,j)>0,
\]
so $B$ has exactly one positive eigenvalue and is negative definite on $H_0$.
Thus $B$ has inertia $(1,n-1)$.  Since $A=CBC$ with $C$ positive diagonal,
Sylvester's law of inertia implies that $A$ also has inertia $(1,n-1)$.
\end{proof}

\subsection{Proof \#2: Explicit Pythagorean embedding of rooted trees}
\label{app:pythagorean-proof}

The Timan--Vestfrid embeddability proof in \cite{TimanVestfrid} is not constructive.
We now give a completely explicit Euclidean embedding; our construction is a metric enhancement of \cite[Theorem 3.6]{MaeharaSurvey}.

\begin{proof}[Proof of Theorem~\ref{thm:tree-distance-Lorentzian}, second proof]
Let $T=(V,E,w)$ be a weighted tree whose leaves are $X=\{1,\dots,n\}$.  Choose a
root $r$ which is not a leaf; if necessary, subdivide an edge and take the new vertex as
root.  For every edge $e\in E$, let $e^\ast$ denote the corresponding standard basis
vector of $\R^E$.  Define
\[
\Phi(v):=\sum_{e\in [r,v]} \sqrt{w(e)}\,e^\ast
\qquad (v\in V),
\]
where $[r,v]$ denotes the unique path from $r$ to $v$.

If $x,y\in V$, then the vectors $\Phi(x)$ and $\Phi(y)$ agree on the common initial
segment of the two root paths and differ exactly on the edges of the path $[x,y]$.
Therefore
\[
\|\Phi(x)-\Phi(y)\|^2
=
\sum_{e\in [x,y]} w(e)
=
d_T(x,y).
\]
In particular, for leaves $i,j\in X$,
\[
\|\Phi(i)-\Phi(j)\|^2=d(i,j).
\]
Thus $(X,\sqrt d)$ embeds isometrically into the Euclidean space $\R^E$.

We claim that the leaf images $\Phi(1),\dots,\Phi(n)$ are linearly independent.
Indeed, for each leaf $i$, let $e_i$ be the pendant edge incident to $i$.  Because the
root is not a leaf, the edge $e_i$ lies on the path from $r$ to $i$.  The coordinate of
$\Phi(i)$ in the $e_i$-direction is $\sqrt{w(e_i)}$, whereas the same coordinate
vanishes for $\Phi(j)$ when $j\neq i$.  Hence any linear relation among the
$\Phi(i)$ forces all coefficients to vanish.

In particular, the points $\Phi(1),\dots,\Phi(n)$ are affinely independent.  By
Schoenberg's criterion, the zero-diagonal matrix $D=(d(i,j))$ is
conditionally strictly negative definite on $H_0$.  As in the first proof, this implies
that $D$ has inertia $(1,n-1)$.
\end{proof}

\subsection{Proof \#3: Split decomposition}
\label{app:split-proof}

This proof utilizes the canonical split decomposition of a tree metric, which is a standard and imortant tool in phylogenetics.

Let $X=\{1,\dots,n\}$.  A \emph{split} of $X$ is a bipartition
\[
\sigma=\{A_\sigma,B_\sigma\}
\]
with $A_\sigma,B_\sigma\neq\varnothing$.  The associated split metric is
\[
\delta_\sigma(i,j)
=
\begin{cases}
1, & \text{if } i\in A_\sigma,\ j\in B_\sigma \text{ or vice versa},\\
0, & \text{otherwise}.
\end{cases}
\]

If $T$ is a weighted tree with leaf set $X$, then every edge $e$ determines a split
$\sigma_e$ by deleting $e$, and the tree distance admits the canonical decomposition
\[
d=
\sum_{e\in E(T)} w(e)\,\delta_{\sigma_e},
\]
see for example \cite{SempleSteel2003}.
Equivalently, one may group together equal splits and write
\[
d=\sum_{\sigma\in \mathcal S(X)} \lambda_\sigma\,\delta_\sigma,
\qquad
\lambda_\sigma\ge 0,
\]
where $\mathcal S(X)$ denotes the set of splits of $X$.

\begin{proposition}
\label{prop:split-CND}
Let $d$ be a tree distance on $X$, written in split form as above.  Then for every
$c=(c_i)_{i\in X}\in\R^X$ satisfying $\sum_i c_i=0$, one has
\begin{equation}
\label{eq:split-identity}
\sum_{i,j\in X} c_i c_j d(i,j)
=
-2\sum_{\sigma\in\mathcal S(X)} \lambda_\sigma
\left(\sum_{i\in A_\sigma} c_i\right)^2.
\end{equation}
In particular, $d$ is conditionally negative semidefinite.  Moreover, the right-hand
side vanishes only when $c=0$, so $d$ is conditionally strictly negative definite.
\end{proposition}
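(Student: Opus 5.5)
The plan is to compute the quadratic form directly from the split decomposition. Fix $c\in\R^X$ with $\sum_i c_i=0$. By linearity of $d\mapsto\sum_{i,j}c_ic_jd(i,j)$, it suffices to evaluate the form on a single split metric $\delta_\sigma$. For a split $\sigma=\{A,B\}$, write $s_A:=\sum_{i\in A}c_i$ and $s_B:=\sum_{j\in B}c_j$. Since $\delta_\sigma(i,j)=1$ exactly when $i,j$ lie on opposite sides, we get
\[
\sum_{i,j}c_ic_j\delta_\sigma(i,j)=2s_As_B.
\]
The hypothesis $\sum_i c_i=0$ gives $s_B=-s_A$, so this equals $-2s_A^2$. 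Summing over splits with weights $\lambda_\sigma\ge0$ yields \eqref{eq:split-identity}. Conditional negative semidefiniteness is then immediate, because the right-hand side is a nonnegative combination of $-2(\cdot)^2$.

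Strictness is the only real step. Suppose the right-hand side vanishes. Then $\sum_{i\in A_\sigma}c_i=0$ for every split $\sigma$ with $\lambda_\sigma>0$, and in particular for every split $\sigma_e$ coming from an edge of $T$. I would use the pendant edges. For each leaf $i$, the pendant edge $e_i$ has positive length and induces the split $\{\{i\},X\setminus\{i\}\}$. That split therefore appears with $\lambda\ge w(e_i)>0$. Taking the side $\{i\}$ gives $c_i=0$ for every $i$, so $c=0$.

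Some care is needed with degenerate cases. Grouping equal splits can only increase the coefficient of the singleton split, so this causes no difficulty. When $n=2$, the single edge path gives the split $\{\{1\},\{2\}\}$ directly. If a leaf were an interior vertex, it would have no pendant edge. The setup, however, says the leaves of $T$ are labeled by $X$, so each $i\in X$ is a leaf and has a pendant edge of positive length.

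The main obstacle is thus only to note that each pendant edge contributes a singleton split with positive weight. The algebraic identity itself is routine.
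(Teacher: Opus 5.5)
Your proposal is correct and follows the paper's proof essentially verbatim. You evaluate the form on each split metric to get $-2\bigl(\sum_{i\in A_\sigma}c_i\bigr)^2$, sum with the weights $\lambda_\sigma$, and deduce strictness from the singleton splits $\{\{i\},X\setminus\{i\}\}$ induced by the pendant edges. Your check that these singleton splits keep strictly positive weight after grouping equal splits is a small, welcome bit of extra care that the paper leaves implicit.
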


\begin{proof}
Fix a split $\sigma=\{A,B\}$.  Since $\delta_\sigma(i,j)=1$ exactly on
$A\times B$ and $B\times A$, we have
\[
\sum_{i,j} c_i c_j\delta_\sigma(i,j)
=
2\sum_{i\in A,\ j\in B} c_i c_j
=
2\left(\sum_{i\in A}c_i\right)\left(\sum_{j\in B}c_j\right).
\]
Because $\sum_i c_i=0$, we have $\sum_{j\in B}c_j=-\sum_{i\in A}c_i$, so
\[
\sum_{i,j} c_i c_j\delta_\sigma(i,j)
=
-2\left(\sum_{i\in A}c_i\right)^2.
\]
Multiplying by $\lambda_\sigma$ and summing over all splits gives
\eqref{eq:split-identity}.  This proves conditional negative semidefiniteness.

Now assume $c\in H_0$ and the right-hand side of \eqref{eq:split-identity} vanishes.
Then
\[
\sum_{i\in A_\sigma} c_i=0
\qquad\text{for every edge-split }\sigma.
\]
Choose a leaf $i$.  The split corresponding to the pendant edge at $i$ is
$\{\{i\},X\setminus\{i\}\}$, so the displayed relation yields $c_i=0$.  Since every
leaf appears in such a split, all coordinates of $c$ vanish.  Thus the form is strictly
negative definite on $H_0$.
\end{proof}

\begin{proof}[Proof of Theorem~\ref{thm:tree-distance-Lorentzian}, third proof]
Proposition~\ref{prop:split-CND} shows that the zero-diagonal distance matrix $D$ is
conditionally strictly negative definite on $H_0$.  As before, this implies that $D$
has at most one positive eigenvalue, while $\mathbf 1^\top D\mathbf 1>0$
shows that it has at least one positive eigenvalue.  
As before, we conclude that the inertia is $(1,n-1)$.
\end{proof}

\subsection{Proof \#4: Determinant formulas for tree distance matrices}
\label{app:determinant-proof}

We next give a proof based on determinant formulas for tree distance matrices.
The motivating result is:

\begin{theorem}[Bapat--Kirkland--Neumann]
\label{thm:BKN-distance-determinant}
Let $T=(V,E,w)$ be a weighted tree with $|V|=m\ge 2$, and let $D_T$ be its full
vertex--vertex distance matrix:
\[
(D_T)_{uv}=d_T(u,v),
\qquad
(D_T)_{uu}=0.
\]
Then
\[
\det(D_T)
=
(-1)^{m-1}2^{m-2}
\left(\sum_{e\in E}w(e)\right)\prod_{e\in E}w(e).
\]
In particular, $D_T$ is nonsingular and
\[
\operatorname{sign}\det(D_T)=(-1)^{m-1}.
\]
\end{theorem}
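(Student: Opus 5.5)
We argue by induction on $m=|V|$ by deleting a leaf. Alongside $\det(D_T)$ we track a second quantity: the cofactor sum
\[
\kappa(D_T):=\mathbf 1^\top\operatorname{adj}(D_T)\mathbf 1=-\det\begin{pmatrix}0&\mathbf 1^\top\\ \mathbf 1&D_T\end{pmatrix}.
\]
We prove simultaneously that
\[
\det(D_T)=(-1)^{m-1}2^{m-2}\Bigl(\sum_e w(e)\Bigr)\prod_e w(e),
\qquad
\kappa(D_T)=(-1)^{m-1}2^{m-1}\prod_e w(e).
\]
For $m=2$ we have $D_T=\begin{pmatrix}0&w\\ w&0\end{pmatrix}$, so $\det=-w^2$ and $\operatorname{adj}(D_T)=\begin{pmatrix}0&-w\\-w&0\end{pmatrix}$, giving $\kappa=-2w$. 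Both agree with the formulas.

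For the inductive step, let $v$ be a leaf attached to $u$ by an edge of weight $w$. Let $T'=T\setminus v$, with total weight $S'$ and weight product $P'$. Since $d(v,x)=w+d(u,x)$ for all $x\neq v$, we subtract row $u$ from row $v$ and then column $u$ from column $v$. This congruence does not change the determinant. It turns $D_T$ into
\[
\begin{pmatrix}D_{T'}& w\mathbf 1\\ w\mathbf 1^\top & -2w\end{pmatrix}.
\]
The entry at $(v,v)$ is $d(v,v)-d(u,v)-d(v,u)+d(u,u)=-2w$, and every other entry in the new row or column equals $w$. Expanding along the last row and column (bilinearity of the border) gives
\[
\det(D_T)=-2w\det(D_{T'})-w^2\,\mathbf 1^\top\operatorname{adj}(D_{T'})\mathbf 1=-2w\det(D_{T'})-w^2\kappa(D_{T'}).
\]
Substituting the inductive hypotheses gives
\[
\det(D_T)=(-1)^{m-1}2^{m-2}P'w\,(S'+w),
\]
which is the claimed formula.

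For $\kappa$ we apply the same row and column operations to the bordered matrix $\begin{pmatrix}0&\mathbf 1^\top\\ \mathbf 1&D_T\end{pmatrix}$. The border entries in positions $u$ and $v$ are both $1$, so they cancel, and we obtain
\[
\begin{pmatrix}0&\mathbf 1^\top&0\\ \mathbf 1&D_{T'}&w\mathbf 1\\ 0&w\mathbf 1^\top&-2w\end{pmatrix}.
\]
Expanding along the last row and column again, the cross term is $w^2$ times a determinant of the form $\det\begin{pmatrix}0&\mathbf 1^\top&0\\ \mathbf 1&D_{T'}&\mathbf 1\\ 0&\mathbf 1^\top&0\end{pmatrix}$. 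This determinant has two identical rows, the first and the last, so it vanishes. Hence $\kappa(D_T)=-2w\,\kappa(D_{T'})$, which gives $(-1)^{m-1}2^{m-1}P'w$ and closes the induction.

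The main obstacle is the bookkeeping in the two bordered expansions. The sign conventions relating $\kappa$ to the bordered determinant must be handled carefully, and one must check that the mixed term in the second expansion really vanishes. I would first verify both recursions by hand on the path with three vertices before writing the general step.

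The final assertions follow at once. All weights are positive, so $\det(D_T)\neq0$ and its sign is $(-1)^{m-1}$.
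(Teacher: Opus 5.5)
Your proof is correct. It takes a genuinely different route from the paper, which gives no argument of its own for this statement and simply cites \cite[Corollary 2.5]{BKN05}, with Graham--Pollak for the unweighted case.

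Your leaf-deletion induction is self-contained. It uses only simultaneous row/column operations and the bordered-determinant identity $\det\begin{pmatrix}A&b\\ c^\top&\delta\end{pmatrix}=\delta\det A-c^\top\operatorname{adj}(A)\,b$, which holds for every square $A$. I checked the base case and both recursions, $\det D_T=-2w\det D_{T'}-w^2\kappa(D_{T'})$ and $\kappa(D_T)=-2w\,\kappa(D_{T'})$, and they close the induction exactly. The cross term in the second expansion does vanish, since the first and last rows coincide. Two facts you use tacitly deserve one sentence:
\begin{enumerate}
\item a tree with $m\ge 2$ vertices has a leaf;
\item $D_{T'}$ is the principal submatrix of $D_T$ on $V\setminus\{v\}$, because no geodesic between vertices of $T'$ passes through the leaf $v$.
\end{enumerate}

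Your route buys transparency and a by-product: the cofactor-sum identity $\mathbf 1^\top\operatorname{adj}(D_T)\mathbf 1=(-1)^{m-1}2^{m-1}\prod_e w(e)$. Equivalently, it gives an explicit value for the bordered matrix $\begin{pmatrix}0&\mathbf 1^\top\\ \mathbf 1&D_T\end{pmatrix}$, of the Cayley--Menger type appearing in Lemma~\ref{lem:squared-edm-block}. The citation is shorter, and it places the formula inside Bapat--Kirkland--Neumann's broader framework relating distance matrices of trees to Laplacians, which your induction does not reproduce.
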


\begin{proof}
This is \cite[Corollary 2.5]{BKN05}; see also the earlier unweighted result of
Graham--Pollak~\cite{GP71}.
\end{proof}

The Bapat--Kirkland--Neumann formula applies to the full vertex--vertex distance
matrix of a tree.  In the present application we need signs of principal minors of a
leaf-to-leaf distance matrix.  For this we use the principal-minor formula of
Richman--Shokrieh--Wang~\cite{RSW24}, which directly generalizes the
Graham--Pollak and Bapat--Kirkland--Neumann determinant formulas in the direction
needed here. The following is a particular consequence of their general formula:

\begin{theorem}
\label{thm:tree-leaf-principal-minors}
Let $T$ be a weighted tree whose leaf set is $X=\{1,\dots,n\}$, and let
\[
D=(d_T(i,j))_{1\le i,j\le n},
\qquad
D_{ii}=0,
\]
be the leaf-to-leaf distance matrix.  Then for every subset $S\subseteq X$ with
$|S|=k\ge 2$,
\[
\operatorname{sign}\det(D[S])=(-1)^{k-1}.
\]
In particular, $D[S]$ is nonsingular.
\end{theorem}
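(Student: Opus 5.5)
The plan is to avoid the general Richman--Shokrieh--Wang formula and instead reduce Theorem~\ref{thm:tree-leaf-principal-minors} to the conditional strict negative definiteness already established in Proposition~\ref{prop:split-CND} (or in the second proof via the Pythagorean embedding), together with a standard linear-algebra fact.

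\textbf{Step 1: pass to a subtree.} Fix $S\subseteq X$ with $|S|=k\ge 2$. The restriction of $d_T$ to $S$ is again a tree metric: take the minimal subtree $T_S$ of $T$ spanning $S$ and suppress degree-two vertices, merging their edge lengths. In $T_S$ the elements of $S$ need not all be leaves, since some element of $S$ may be an interior point of a path between two others. To handle this, attach to each $i\in S$ a pendant edge of length $\epsilon>0$. This yields a tree metric $d_\epsilon$ on $S$ in which every element of $S$ is a leaf, and
\[
D_\epsilon[S]=D[S]+2\epsilon(J-I).
\]
By Proposition~\ref{prop:split-CND}, $D_\epsilon[S]$ is conditionally strictly negative definite on $\{w\in\R^S:\sum w_i=0\}$.

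\textbf{Step 2: remove the perturbation.} The main obstacle is to avoid losing strictness in the limit $\epsilon\to0$. Instead of taking that limit, I would apply the split identity \eqref{eq:split-identity} directly to $d|_S$, using the splits of $T_S$. Its edge splits separate points, so any $w$ with $\sum_i w_i=0$ whose split sums all vanish must be zero. To see this, root $T_S$ and proceed by induction from the deepest vertices: each vertex of $S$ that is not a leaf sits either below an edge, whose split isolates its subtree, or at the root. The total-sum condition then fixes the last remaining coordinate. Consequently $D[S]$ is itself conditionally strictly negative definite, with no perturbation needed.

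\textbf{Step 3: conclude.} A symmetric $k\times k$ matrix $N$ that is negative definite on a hyperplane has at least $k-1$ negative eigenvalues. If in addition $\mathbf 1^\top N\mathbf 1>0$, which holds here because the off-diagonal entries are positive, then $N$ has a positive eigenvalue. Hence the inertia of $D[S]$ is $(1,k-1,0)$. In particular $D[S]$ is nonsingular and
\[
\operatorname{sign}\det D[S]=(-1)^{k-1}.
\]
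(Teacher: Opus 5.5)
Your argument is correct, but it runs in the opposite direction from the paper.

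The paper gets the sign of $\det(D[S])$ from the Richman--Shokrieh--Wang principal-minor formula. That formula writes $\det(D[S])$ as $(-1)^{k-1}$ times an explicit positive polynomial in the edge lengths of the subtree spanned by $S$. This is what lets Proof \#4 be an independent, purely determinantal proof of Theorem~\ref{thm:tree-distance-Lorentzian} via the criterion of Theorem~\ref{thm:principal-minor-criterion}.

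You instead derive the minor signs from conditional strict negative definiteness, using the split identity \eqref{eq:split-identity} on the restricted metric, and then count inertia. Your route is self-contained and elementary. Your Step~2 induction even works for arbitrary sets of distinct vertices of a tree, which recovers the sign part of Graham--Pollak and Bapat--Kirkland--Neumann. On the other hand, it gives only the sign, not the formula. And if you inserted it into Proof \#4, that proof would become a detour through Proof \#3 rather than a genuinely different argument.

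Your worry in Step~1 is unnecessary here. Every $i\in S\subseteq X$ is a leaf of $T$, so its degree in $T_S$ is at most $1$, and hence exactly $1$ because $|S|\ge 2$. So $S$ is precisely the leaf set of $T_S$. You can drop the $\epsilon$-perturbation and the internal-vertex induction, and apply Proposition~\ref{prop:split-CND} (or Theorem~\ref{thm:tree-distance-Lorentzian} via any of its other proofs) directly to $T_S$.
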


\begin{proof}
This is a special case of the principal-minor formula of
Richman--Shokrieh--Wang~\cite{RSW24} for tree distance matrices.  Applied to the
principal submatrix indexed by $S$, their formula expresses $\det(D[S])$ as
$(-1)^{k-1}$ times a strictly positive sum of monomials in the edge lengths of the minimal subtree
spanned by $S$.  Since all edge lengths are positive, the asserted sign and
nonsingularity follow.
\end{proof}

\begin{proof}[Proof of Theorem~\ref{thm:tree-distance-Lorentzian}, fourth proof]
Let
\[
D=(d_T(i,j))_{1\le i,j\le n},
\qquad
D_{ii}=0,
\]
be the leaf-to-leaf distance matrix of $T$.

For every principal submatrix $N=D[S]$ of size $k\ge 2$,
Theorem~\ref{thm:tree-leaf-principal-minors} gives
\[
\operatorname{sign}\det(N)=(-1)^{k-1},
\]
and hence
\[
(-1)^k\det(N)<0.
\]
For $k=1$, the principal minors are zero.  Therefore all principal minors of $D$
satisfy the hypothesis of Theorem~\ref{thm:principal-minor-criterion}, so $D$ is
Lorentzian.
Moreover, Theorem~\ref{thm:tree-leaf-principal-minors} applied to $S=X$ shows that
$\det(D)\neq 0$, so $D$ has no zero eigenvalues.  
As before, it follows that the inertia of $D$ is $(1,n-1)$.
\end{proof}

\subsection{Proof \#5: Potential theory on metric graphs}
\label{app:potential-theory-proof}

We next give a proof using potential theory on metric graphs (see \cite{Baker-Faber06,Baker-Rumely07} for overviews, including all the background we need here).
Let $\Gamma$ be a metric graph, and let $r(x,y)$
denote the effective resistance between $x,y\in\Gamma$.

\begin{proposition}
\label{prop:effective-resistance-CND}
For any distinct points $x_1,\dots,x_n\in\Gamma$, the matrix
\[
H=\bigl(r(x_i,x_j)\bigr)_{1\le i,j\le n}
\]
is conditionally strictly negative definite on
\[
H_0=\left\{w\in\R^n:\sum_{i=1}^n w_i=0\right\}.
\]
\end{proposition}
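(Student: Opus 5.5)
We will prove the proposition through the Dirichlet form (energy pairing) on $\Gamma$, by expressing the quadratic form of $H$ on $H_0$ as minus a multiple of an energy, exactly parallel to the split identity \eqref{eq:split-identity}. Given $w\in H_0$, consider the signed measure
\[
\mu_w:=\sum_{i=1}^n w_i\,\delta_{x_i},
\]
which has total mass zero. Fix a base point $y\in\Gamma$. Let $g_y(x,z)$ be the Green's function normalized to vanish at $y$. We will use the standard identity
\[
g_y(x,z)=\tfrac12\bigl(r(x,y)+r(z,y)-r(x,z)\bigr),
\]
which is the potential-theoretic analogue of the Schoenberg matrix $G^{(p)}$ of Lemma~\ref{lem:q5-schoenberg}. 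Exactly as in the expansion in the proof of that lemma, the terms involving $r(\cdot,y)$ drop out because $\sum_i w_i=0$. This gives
\[
\sum_{i,j}w_iw_j\,r(x_i,x_j)=-2\sum_{i,j}w_iw_j\,g_y(x_i,x_j).
\]

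Next we will identify the right-hand side with an energy. Let $u:=\sum_i w_i\,g_y(\cdot,x_i)$. By construction $u$ is a continuous, piecewise linear function on $\Gamma$ with $\Delta u=\mu_w-\bigl(\sum_i w_i\bigr)\delta_y=\mu_w$. Integration by parts on the metric graph then gives
\[
\sum_{i,j}w_iw_j\,g_y(x_i,x_j)=\int u\,d\mu_w=\int_\Gamma |u'|^2\,dx\ \ge 0.
\]
This establishes conditional negative semidefiniteness. For strictness, suppose the energy vanishes. Then $u$ is constant on the connected graph $\Gamma$, so $\mu_w=\Delta u=0$. Since the points $x_i$ are distinct, all $w_i$ vanish.

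The main obstacle is justifying the background identities: the formula for $g_y$ in terms of effective resistance, and the Dirichlet-form identity $\int u\,\Delta u=\int|u'|^2$ with the correct sign convention for $\Delta$. We will quote both from \cite{Baker-Rumely07} rather than reprove them. An alternative route, if the Green's function normalization is awkward, is to use that $r^{1/2}$ embeds $\Gamma$ isometrically into a Hilbert space (the resistance metric is of negative type) and then appeal to Lemma~\ref{lem:q5-schoenberg}. However, affine independence is cleaner to see via the energy argument above.

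Applied to a tree, $r=d_T$. Distinct leaves then give Theorem~\ref{thm:tree-distance-Lorentzian} exactly as in the third proof: $\1^\top D\1>0$ forces exactly one positive eigenvalue.
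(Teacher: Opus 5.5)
Your proof is correct and is essentially the paper's argument: both identify $\sum_{i,j}w_iw_j\,r(x_i,x_j)$ with $-2$ times the positive definite energy pairing of the mass-zero measure $\sum_i w_i\delta_{x_i}$. The only differences are in bookkeeping. You use the base measure $\delta_y$, for which $j_{\delta_y}=g_y$ and the $r(\cdot,y)$ terms cancel because $\sum_i w_i=0$, and you check positivity directly from the Dirichlet form. The paper instead uses the canonical measure, for which $j_{\mu_{\mathrm{can}}}=-\tfrac12 r+c(\Gamma)$, and cites \cite[Theorem 10.4]{Baker-Rumely07} for positive definiteness.
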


\begin{proof}
Let $j_z(x,y)$ be the $j$-function on $\Gamma$; see
\cite[Corollary 3]{Baker-Faber06}.  For a probability measure $\mu$ on $\Gamma$,
define
\[
j_\mu(x,y):=\int_\Gamma j_z(x,y)\,d\mu(z).
\]
The energy pairing on the space $\operatorname{Meas}_0(\Gamma)$ of signed measures of total mass zero is defined by
\[
\langle \nu,\omega\rangle
:=
\iint_{\Gamma\times\Gamma} j_\mu(x,y)\,d\nu(x)\,d\omega(y),
\]
and is independent of the choice of $\mu$; see \cite[Section 10]{Baker-Rumely07}.
By \cite[Theorem 10.4]{Baker-Rumely07}, this pairing is positive definite on
$\operatorname{Meas}_0(\Gamma)$.

Moreover, there are a canonical probability measure $\mu_{\mathrm{can}}$ on $\Gamma$
and a constant $c(\Gamma)$ such that
\[
j_{\mu_{\mathrm{can}}}(x,y)
=
-\frac12 r(x,y)+c(\Gamma);
\]
see \cite[Theorem 14.1]{Baker-Rumely07}.  Therefore, for every
$\nu\in\operatorname{Meas}_0(\Gamma)$,
\[
\iint_{\Gamma\times\Gamma} r(x,y)\,d\nu(x)\,d\nu(y)
=
-2\langle \nu,\nu\rangle
\le 0,
\]
with equality if and only if $\nu=0$.

Now let $w=(w_1,\dots,w_n)\in H_0$, and define
\[
\nu:=\sum_{i=1}^n w_i\delta_{x_i}\in\operatorname{Meas}_0(\Gamma).
\]
Then
\[
w^\top H w
=
\sum_{i,j=1}^n r(x_i,x_j)w_iw_j
=
\iint_{\Gamma\times\Gamma} r(x,y)\,d\nu(x)\,d\nu(y)
=
-2\langle \nu,\nu\rangle.
\]
Thus $w^\top H w\le 0$.  If equality holds, then $\nu=0$.  Since the points
$x_1,\dots,x_n$ are distinct, this implies $w=0$.  Hence $H$ is conditionally
strictly negative definite on $H_0$.
\end{proof}

\begin{proof}[Proof of Theorem~\ref{thm:tree-distance-Lorentzian}, fifth proof]
Take $\Gamma=T$, viewed as a metric graph.  On a tree, effective resistance agrees
with path distance, so for leaves $i,j\in X$,
\[
r(i,j)=d_T(i,j).
\]
Hence the leaf distance matrix $D$ is conditionally strictly negative definite on
$H_0$ by Proposition~\ref{prop:effective-resistance-CND}.
As before, it follows that the inertia of $D$ is $(1,n-1)$.
\end{proof}

\subsection{Proof \#6: Tropicalization and logarithmic limit sets of \texorpdfstring{$\Gr(2,n)$}{Gr(2,n)}}
\label{app:tropical-proof}

We conclude with a direct proof of the exponential tree-metric theorem using ideas from tropical
geometry. This is the most novel of our six proofs.

Let $N:=\binom n2$ and let $\Gr(2,n)^\circ\subset (\C^\times)^N$
denote the intersection of the Pl\"ucker embedding of $\Gr(2,n)$ with the dense torus.
We write
\[
\upL_{U_{2,n}}^\times\subset (\R_{>0})^N
\]
for the strictly Lorentzian stratum, i.e., the set of positive coefficient vectors
\[
(p_{ij})_{1\le i<j\le n}
\]
such that the quadratic form
\[
\sum_{1\le i<j\le n} p_{ij}x_i x_j
\]
is strictly Lorentzian.

For a subset $S\subset (\R_{>0})^N$, define its \emph{logarithmic limit set} by
\[
\operatorname{LLS}(S)
:=
\lim_{t\to 0^+}\operatorname{Log}_{1/t}(S),
\]
where
\[
\operatorname{Log}_a(x_1,\dots,x_N)
:=
(\log_a x_1,\dots,
\log_a x_N).
\]

\begin{lemma}
\label{lem:LLS-star-shaped}
If $\operatorname{Log}(S)$ is closed and star-shaped with respect to the origin, then
\[
\operatorname{LLS}(S)\subseteq \operatorname{Log}(S).
\]
\end{lemma}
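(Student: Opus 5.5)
Unwinding the definition, a point of $\operatorname{LLS}(S)$ is a limit $v=\lim_{k\to\infty} v_k$, where
\[
v_k=\operatorname{Log}_{1/t_k}(x_k),\qquad x_k\in S,\qquad t_k\to 0^+ .
\]
So the plan is to rewrite each $v_k$ as a scalar multiple of a point of $\operatorname{Log}(S)$, use star-shapedness to pull it back into $\operatorname{Log}(S)$, and then pass to the limit using closedness. I will read the definition of the logarithmic limit set as the Kuratowski (or Hausdorff) limit of the family of sets $\operatorname{Log}_{1/t}(S)$. Under either reading, every element of the limit set is a limit of such a sequence $v_k$; I will state this convention explicitly at the start.

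\textbf{Rescaling.} For $a=1/t$ with $0<t<1$ we have $\log_{a}x=\log x/\log(1/t)$. Hence
\[
\operatorname{Log}_{1/t}(S)=\lambda(t)\,\operatorname{Log}(S),\qquad \lambda(t):=\frac{1}{\log(1/t)} .
\]
As $t\to0^+$, $\lambda(t)$ decreases to $0$. In particular $\lambda(t_k)\in(0,1]$ once $t_k\le 1/e$, so we may assume this for all $k$.

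\textbf{Star-shapedness.} Write $v_k=\lambda(t_k)\,w_k$ with $w_k:=\operatorname{Log}(x_k)\in\operatorname{Log}(S)$. Since $\operatorname{Log}(S)$ is star-shaped with respect to the origin and $0<\lambda(t_k)\le 1$, the point $v_k=\lambda(t_k)w_k$ lies on the segment from $0$ to $w_k$. Therefore $v_k\in\operatorname{Log}(S)$ for all $k$.

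\textbf{Closedness.} Since $v_k\to v$ and $\operatorname{Log}(S)$ is closed, we get $v\in\operatorname{Log}(S)$. This gives $\operatorname{LLS}(S)\subseteq\operatorname{Log}(S)$.

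There is no genuine obstacle here. The only care needed is in fixing the meaning of "$\lim$" in the definition of $\operatorname{LLS}$, and in noting that small $t$ makes the scaling factor at most $1$, so that star-shapedness, which only controls contraction toward the origin, actually applies.
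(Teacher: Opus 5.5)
Your proposal is correct and follows the same route as the paper's proof: write $\operatorname{Log}_{1/t}(x)=\operatorname{Log}(x)/\log(1/t)$, use star-shapedness once $\log(1/t_k)\ge 1$ to place each approximant in $\operatorname{Log}(S)$, and conclude by closedness. Your explicit remark on how to read the limit of sets is a harmless extra precision that the paper leaves implicit.
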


\begin{proof}
Let $y\in\operatorname{LLS}(S)$.  By definition, there exist $x_m\in S$ and
$t_m\to 0^+$ such that
\[
y_m:=\operatorname{Log}_{1/t_m}(x_m)\to y.
\]
But
\[
\operatorname{Log}_{1/t}(x)=\frac{\operatorname{Log}(x)}{\log(1/t)}.
\]
Since $\log(1/t_m)\ge 1$ for $m$ large, star-shapedness of $\operatorname{Log}(S)$
implies that each $y_m$ lies in $\operatorname{Log}(S)$.  Since
$\operatorname{Log}(S)$ is closed, we obtain $y\in\operatorname{Log}(S)$.
\end{proof}

For a complex algebraic variety $V\subset (\C^\times)^N$, define its amoeba by
\[
\mathcal A(V):=\operatorname{Log}\bigl(|V(\C)|\bigr)\subset \R^N,
\]
and its large-scale limit by
\[
\mathcal A_0(V):=\operatorname{LLS}\bigl(|V(\C)|\bigr).
\]

We will use the following facts:
\begin{enumerate}
\item By \cite[Theorem 5.1]{BHKL1}, the map
\[
(z_{ij})\longmapsto (|z_{ij}|^2)
\]
sends $\Gr(2,n)^\circ$ into $\upL_{U_{2,n}}^\times$.  Equivalently,
\[
2\mathcal A(\Gr(2,n)^\circ)
\subseteq
\operatorname{Log}(\upL_{U_{2,n}}^\times).
\]
\item By \cite[Theorem 4.4]{BHKL1}, the set
\[
\operatorname{Log}(\upL_{U_{2,n}}^\times)
\]
is closed and star-shaped with respect to the origin.
\item By \cite[Theorem A]{Jonsson16},
\[
\mathcal A_0(V)=\operatorname{trop}(V)
\]
for every $V\subset (\C^\times)^N$, where the tropicalization is computed with respect to the trivial valuation on $\C$.
\item By \cite[Theorem 3.4]{Speyer-Sturmfels04},
\[
\operatorname{trop}(\Gr(2,n)^\circ)=\operatorname{Dr}_{U_{2,n}},
\]
the Dressian of $U_{2,n}$, i.e., the collection of all rank-$2$ tropical Pl\"ucker vectors.
\item By \cite[Theorem 4.3.5]{Maclagan-Sturmfels15}, the Dressian $\operatorname{Dr}_{U_{2,n}}$ coincides with the space of tree metrics on $[n]$.
\end{enumerate}

\begin{proposition}
\label{prop:tropical-Dressian-inclusion}
We have
\[
\operatorname{Dr}_{U_{2,n}}
\subseteq
\operatorname{Log}(\upL_{U_{2,n}}^\times).
\]
\end{proposition}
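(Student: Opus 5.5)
The plan is to chain the five listed facts together with Lemma~\ref{lem:LLS-star-shaped}. Set $S:=|\Gr(2,n)^\circ(\C)|^{2}$, meaning the image of $\Gr(2,n)^\circ$ under $(z_{ij})\mapsto(|z_{ij}|^2)$. By fact~(1), $S\subseteq \upL^\times_{U_{2,n}}$. Hence $\operatorname{LLS}(S)\subseteq \operatorname{LLS}(\upL^\times_{U_{2,n}})$, because the logarithmic limit set is monotone in $S$.

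Next I apply Lemma~\ref{lem:LLS-star-shaped} to $\upL^\times_{U_{2,n}}$. Its hypotheses are exactly fact~(2), so
\[
\operatorname{LLS}(\upL^\times_{U_{2,n}})\subseteq \operatorname{Log}(\upL^\times_{U_{2,n}}).
\]
It remains to identify $\operatorname{LLS}(S)$ with the Dressian. We have $\operatorname{Log}_{1/t}(S)=2\operatorname{Log}_{1/t}(|\Gr(2,n)^\circ|)$, so $\operatorname{LLS}(S)=2\mathcal A_0(\Gr(2,n)^\circ)$. By fact~(3) this equals $2\operatorname{trop}(\Gr(2,n)^\circ)$. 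By fact~(4) that is $2\operatorname{Dr}_{U_{2,n}}$, and since the Dressian is a fan, $2\operatorname{Dr}_{U_{2,n}}=\operatorname{Dr}_{U_{2,n}}$. Combining these gives $\operatorname{Dr}_{U_{2,n}}\subseteq\operatorname{Log}(\upL^\times_{U_{2,n}})$.

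The main point requiring care is the closedness in fact~(2). The strictly Lorentzian stratum is open, not closed, so I must make sure the cited theorem really gives closedness of the relevant Log-image. If it only holds for the closed Lorentzian stratum $\upL_{U_{2,n}}$ with positive coefficients, I would run the argument with that set. This yields Lorentzianity of $(e^{w_{ij}})$ for every $w\in\operatorname{Dr}_{U_{2,n}}$.

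To upgrade to strict signature $(1,n-1)$, I would use that tree metrics with positive edge lengths form the relative interior of cones. A small perturbation within the Dressian, say scaling $w$ by $(1\pm\epsilon)$ or using the star-shapedness in log coordinates, keeps $w$ in the image while forcing nondegeneracy. An alternative is to note that the determinant is nonzero generically on each cone and argue by openness. I expect this closed-versus-strict bookkeeping to be the only delicate step; the rest is formal. Fact~(5) is then only needed to reinterpret the conclusion as Theorem~\ref{thm:exp-tree-Lorentzian}.
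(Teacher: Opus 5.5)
Your first three paragraphs are the paper's proof. Fact~(1) puts the squared-modulus image of $\Gr(2,n)^\circ$ inside $\upL_{U_{2,n}}^\times$. Lemma~\ref{lem:LLS-star-shaped} with fact~(2) turns this into $2\mathcal A_0(\Gr(2,n)^\circ)\subseteq\operatorname{Log}(\upL_{U_{2,n}}^\times)$, and facts~(3)--(4) identify $\mathcal A_0$ with the Dressian. The only difference is cosmetic: you remove the factor $2$ using that $\operatorname{Dr}_{U_{2,n}}$ is a cone, whereas the paper uses star-shapedness of $\operatorname{Log}(\upL_{U_{2,n}}^\times)$.

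Your worry about closedness is justified. If $\upL_{U_{2,n}}^\times$ literally meant inertia $(1,n-1,0)$, then for $n\ge 4$ its $\operatorname{Log}$-image would be a nonempty proper open subset of $\R^N$, hence not closed. Fact~(1) would also fail at every point once $n\ge5$. Write $z_{ij}=a_ib_j-a_jb_i$ with all $b_i\neq0$ and set $\zeta_i=a_i/b_i$. Then $|z_{ij}|^2=|b_i|^2|b_j|^2|\zeta_i-\zeta_j|^2$, so $(|z_{ij}|^2)$ is diagonally congruent to a squared distance matrix of $n$ points in $\R^2$, which has rank at most $4$. So facts~(1) and~(2), and hence the proposition, must be read with $\upL_{U_{2,n}}^\times$ meaning the Lorentzian forms with all coefficients positive. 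With that reading your chain is already a complete proof, and it gives exactly what the paper's argument gives.

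What does not work is your last paragraph. Scaling $w$ by $1\pm\epsilon$, or invoking ordinary star-shapedness, only keeps you inside the closed Lorentzian set and says nothing about a zero eigenvalue. Likewise, ``the determinant is nonzero generically on each cone'' together with openness of the strict locus cannot exclude singular points on a lower-dimensional subset.

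Upgrading to signature $(1,n-1)$ needs an input not among facts~(1)--(5). One option is strict star-shapedness: $t\cdot\operatorname{Log}(\upL_{U_{2,n}})\subseteq\operatorname{Log}(\text{strict stratum})$ for $0\le t<1$. Combined with $\operatorname{Dr}_{U_{2,n}}=t^{-1}\operatorname{Dr}_{U_{2,n}}$, this would give strictness on all of the Dressian. After the diagonal normalization of Section~\ref{sec:U2n-core}, it amounts to the finite-metric fact (Li--Weston) that $2$-negative type implies strict $q$-negative type for every $q<2$. Either drop that paragraph or supply such an argument.
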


\begin{proof}
Let
\[
A:=\mathcal A(\Gr(2,n)^\circ),
\qquad
A_0:=\mathcal A_0(\Gr(2,n)^\circ).
\]
By (1),
\[
2A\subseteq \operatorname{Log}(\upL_{U_{2,n}}^\times).
\]
Taking logarithmic limit sets and using (2) together with
Lemma~\ref{lem:LLS-star-shaped}, we obtain
\[
2A_0\subseteq \operatorname{Log}(\upL_{U_{2,n}}^\times).
\]
Since $\operatorname{Log}(\upL_{U_{2,n}}^\times)$ is star-shaped with respect to the
origin, it follows that
\[
A_0\subseteq \operatorname{Log}(\upL_{U_{2,n}}^\times).
\]
Now (3) and (4) give
\[
A_0
=
\operatorname{trop}(\Gr(2,n)^\circ)
=
\operatorname{Dr}_{U_{2,n}}.
\]
Therefore
\[
\operatorname{Dr}_{U_{2,n}}
\subseteq
\operatorname{Log}(\upL_{U_{2,n}}^\times),
\]
as claimed.
\end{proof}

\begin{proof}[Proof of Theorem~\ref{thm:exp-tree-Lorentzian}, sixth proof]
If $d=(d_{ij})_{1\le i<j\le n}$ is a tree metric then by (5) we have
$d\in \operatorname{Dr}_{U_{2,n}}$.
By Proposition~\ref{prop:tropical-Dressian-inclusion}, we have
\[
d\in \operatorname{Log}(\upL_{U_{2,n}}^\times).
\]
Therefore
\[
(e^{d_{ij}})_{i<j}\in \upL_{U_{2,n}}^\times,
\]
which is equivalent to the assertion that the symmetric
zero-diagonal matrix
\[
A_{ij}=e^{d_{ij}}\quad(i\neq j),
\qquad
A_{ii}=0
\]
has inertia $(1,n-1)$.
\end{proof}

\begin{small}
 \bibliographystyle{plain}
 \bibliography{lorentzian}
\end{small}

\end{document}